\numberwithin{equation}{section}
\newtheorem{Theorem}{Theorem}[section]
\newtheorem{lemma}[Theorem]{Lemma}
\newtheorem{proposition}[Theorem]{Proposition}
\newtheorem{definition}[Theorem]{Definition}
\newtheorem{assumption}{Assumption H.\!\!}
\theoremstyle{definition}
\newtheorem{example}{Example}[section]
\theoremstyle{remark}
\newtheorem{remark}{Remark}[section]
\newcommand{\cO}{\mathcal{O}}
\newcommand{\bE}{\mathbb{E}}
\newcommand{\bF}{\mathbb{F}}
\newcommand{\bP}{\mathbb{P}}
\newcommand{\bR}{\mathbb{R}}
\newcommand{\bfx}{\mathbf{x}}
\newcommand{\bodx}{\boldsymbol{x}}
\newcommand{\bodX}{\boldsymbol{X}}
\newcommand{\olbodX}{\overline{ \boldsymbol{X}} }
\newcommand{\1}{\mathbbm{1}}
\newcommand{\dd}{\mathrm{d}}
\newcommand{\hx}{ \hat{X} }
\newcommand{\hco}{ \hat{\cO} }
\begin{document}

\title{
Improved weak convergence for the long time simulation of Mean-field Langevin equations
}

\author{
\normalsize Xingyuan Chen\textit{$^{a}$} \\
        \small  X.Chen-176@sms.ed.ac.uk   
\and
 \normalsize Gon\c calo dos Reis\textit{$^{a,b,}$}\footnote{G.d.R.~acknowledges support from the FCT – Fundação para a Ciência e a Tecnologia, I.P., under the scope of the projects UIDB/00297/2020 (https://doi.org/10.54499/UIDB/00297/2020) and UIDP/00297/2020 (https://doi.org/10.54499/UIDP/00297/2020) (Center for Mathematics and Applications, NOVA Math).} \\
        \small  G.dosReis@ed.ac.uk
\and
\normalsize Wolfgang Stockinger \textit{$^{c}$} \\
        \small   w.stockinger@imperial.ac.uk
\and
\normalsize Zac Wilde \textit{$^{a}$} \\
        \small   zwilde@ed.ac.uk
}

\date{%
    \footnotesize 
    $^{a}$~School of Mathematics, University of Edinburgh, The King's Buildings, Edinburgh, UK
    \\
    $^{b}$~Centro de Matem\'atica e Aplica\c c$\tilde{\text{o}}$es (Nova Math), FCT, UNL, Portugal
    \\
    $^{c}$~Imperial College London, UK
    \\
    \longdate \today \ (\currenttime)
    \vspace{-1.0cm}
}

\maketitle

\medskip 
\noindent\textbf{Abstract.} 
We study the weak convergence behaviour of the Leimkuhler--Matthews method, a non-Markovian Euler-type scheme with the same computational cost as the Euler scheme, for the approximation of the stationary distribution of a one-dimensional McKean--Vlasov Stochastic Differential Equation (MV-SDE). The  particular class under study is known as mean-field (overdamped) Langevin equations (MFL).
We provide weak and strong error results for the scheme in both finite and infinite time. 
We work under a strong convexity assumption.

Based on a careful analysis of the variation processes and the Kolmogorov backward equation for the particle system associated with the MV-SDE, we show that the method attains a higher-order approximation accuracy in the long-time limit (of weak order convergence rate $3/2$) than the standard Euler method (of weak order $1$). 
While we use an interacting particle system (IPS) to approximate the MV-SDE, we show the convergence rate is independent of the dimension of the IPS and this includes establishing uniform-in-time decay estimates for moments of the IPS, the Kolmogorov backward equation and their derivatives. The theoretical findings are supported by numerical tests. 
\medskip

\noindent
\textbf{Key words.} Weak error, higher-order schemes, non-Markovian  Euler, stationary distribution, mean-field Langevin equations

\medskip
\noindent
\textbf{AMS subject classifications.} 
65C30, 60H35, 37M25

\footnotesize
\tableofcontents
\normalsize

\section{Introduction}
In this paper, for $t\geq 0$, we consider a class of McKean--Vlasov Stochastic Differential Equations (MV-SDE), specifically the one-dimensional mean-field Langevin (MFL) equation: 
\begin{align}
\label{Model1_intro}
    X_t 
    = \xi 
      - \int_{0}^{t} \Big(\nabla U(X_s) + \nabla V * \mu_s(X_s) \Big) \mathrm{d}s + \sigma W_t, 
\end{align}
where $\mu_t$ is the law of $X_t$, $\sigma \in \mathbb{R}$  and $\xi \in L^{p}(\Omega,\mathbb{R})$ for some given $p \geq 2$ (i.e. the initial state is an $\mathcal{F}_0$-measurable random variable with finite $p$-th moments).  Following a statistical physics interpretation \cite{cattiaux2008probabilistic},  the map $U: \mathbb{R} \to \mathbb{R}$ is labelled the confining potential and $V:\mathbb{R} \to \mathbb{R}$  the interaction potential. $*$ denotes the usual convolution operator given by $(f* \mu)(\cdot)=\int_\bR f(\cdot-y)\mu(\mathrm{d}y)$ for some given integrable function $f:\mathbb{R}\to \mathbb{R}$. 
Of particular interest is the equation's stationary distribution $\mu^*$ 
and specifically how to efficiently generate samples from it. The latter problem has garnered additional interest due to its role in the study of training neural networks (via stochastic gradient descent algorithms) in the mean-field regime 
\cite{sirignano2022mean,deBortoli2020quantitative,claisse2023mean,chizat2018global,Mei2018,lukasz}.  
We consider the case where the functions $U, V$ satisfy suitable regularity and convexity assumptions (Assumption~\ref{assum:main}), thus the process described in \eqref{Model1_intro} admits a unique stationary distribution $\mu^*$ (e.g., \cite{cattiaux2008probabilistic,chenreisstockinger2023super3}) with a well-known implicit form satisfying 
\begin{align}
\label{eq.INTRO.ergodicDistro}
    \mu^*(x) \propto \exp\Big ( - \frac2{\sigma^2} U(x) - \frac2{\sigma^2} \int_{\mathbb{R}} V(x-y)\mu^*(\mathrm{d}y) \Big ). 
\end{align}
The mainstream method to sample from $\mu^*$ is to simulate the weakly-interacting $N$-particle SDE system (IPS) that approximates \eqref{Model1_intro} in the mean-field limit. That is, the law of the solution to \eqref{Model1_intro} is approximated, for some sufficiently large $N$, by the empirical distribution associated with the IPS $(\boldsymbol{X}^{N}_t)_{t \geq 0} \coloneqq (X_t^{1,N},\ldots,X_t^{N,N})_{t \geq 0}$ with components defined for $i \in \lbrace 1, \ldots, N \rbrace$ by
\begin{align}
\label{eq.INTRO.GenereicIPS}
X_t^{i,N} 
    &
    =
     \xi^i 
     - 
     \int_{0}^{t} \Big(
      \nabla U(X^{i,N}_s) + \frac{1}{N} \sum_{j=1}^{N} \nabla V(X^{i,N}_s - X^{j,N}_s) \Big)
     \mathrm{d}s + \sigma W^i_t,
\end{align}
where $(\xi^i,W^i)_{i \in \lbrace 1, \ldots, N \rbrace}$ are a collection of i.i.d.~copies of $(\xi,W)$. 
We briefly review some important results providing quantitative convergence guarantees for the approximation of \eqref{Model1_intro} through its IPS. 
Broadly speaking, the error of approximating \eqref{Model1_intro} by an $N$-particle system \eqref{eq.INTRO.GenereicIPS} has been widely studied in the literature of Propagation of Chaos (PoC), stemming from the seminal works \cite{Sznitman1991,Meleard1996}. In a nutshell, the accuracy of the $N$-particle approximation is known to behave like $\cO(1/N)$ in the \textit{squared $L^2$-norm or squared $W^{(2)}$-Wasserstein metric} (under Lipschitz type assumptions on the interaction and confining potentials). 
These quantitative convergence results are often referred to as strong PoC \cite{Sznitman1991,Meleard1996,bossytalay1997}. 
More recently, \cite[Theorem 2.2]{Lacker2023-Hierarchies} shows that the PoC convergence rate for models of the form \eqref{Model1_intro} (in relative entropy and finite time) can be improved from $\cO(1/N)$ to $\cO(1/N^2)$ (under certain smallness conditions), with \cite[Example 2.8]{Lacker2023-Hierarchies} showing that the rate is optimal. We refer the reader to the introductions of the papers \cite{Lacker2023-Hierarchies,lacker2023sharp} and review articles \cite{chaintron2022propagation,chaintron2021propagation} for a holistic discussion on this topic. 

When the PoC holds up to $T=\infty$, e.g., \cite{cattiaux2008probabilistic,lacker2023sharp}, it is called \textit{uniform in time PoC}  -- see our Proposition~\ref{prop:basic_estimates11} for a formulation in the strong sense. Uniform in time PoC results for the model \eqref{Model1_intro} have received significant attention in the last few years; see e.g. \cite{lacker2023sharp,chen2022uniform,zimmeruniform,cattiaux2008probabilistic} and references cited therein.

Another string of interests in terms of quantitative PoC results, is the weak convergence case.  In the context of quantitative weak PoC results (i.e. the absolute difference between expectations), we refer to the recent work \cite{Chassagneux2022WeakPoC-MasterEq} establishing finite-time higher order weak PoC results via techniques from differential calculus on the space of measures along with the study of Kolmogorov backward PDEs written on the Wasserstein space. 
Also, \cite[Theorem 3.1]{haji2021simple} establishes a finite-time weak PoC result of rate $\cO(1/N)$ based on a more classical approach, via a Talay--Tubaro expansion and an analysis of a Kolmogorov backward PDE associated with the whole IPS. Both \cite{Chassagneux2022WeakPoC-MasterEq,haji2021simple} establish their results for $T<\infty$. 
\color{black}
To the best of our knowledge, the only quantitative uniform-in-time weak PoC results that apply to our setting is the very recent \cite[Theorem 1.1]{bernou2024uniform}. From a methodological perspective \cite{bernou2024uniform} is close to \cite{Chassagneux2022WeakPoC-MasterEq} (for $T<\infty$) and somewhat close to \cite{delarue2021uniform} (we note that \cite{delarue2021uniform} obtains weak uniform-in-time PoC results but over the torus and draws on the Fokker-Planck equation). 
Loosely, the result states that the accuracy of the $N$-particle approximation to $\mu_t$ behaves like $\cO(1/N)$ uniformly in time over a metric akin to the $W^{(1)}$-Wasserstein metric.   
\color{black}

Once \eqref{eq.INTRO.GenereicIPS} and a PoC rate is established, the sampling from $\mu^*$ \eqref{eq.INTRO.ergodicDistro} is obtained via discretization of \eqref{eq.INTRO.GenereicIPS} with a convenient numerical scheme (also called numerical integrator or sampler). 
There is a growing body of contributions on the topic of sampling from the (overdamped) MFL stationary distribution using this method \cite{suzuki2023convergence,kook2024sampling}. Although said works provide a variety of quantitative PoC rate type results (under a variety of conditions), \textit{the time discretization schemes used are all of Euler type} -- see \cite[Theorem 3 and 4]{suzuki2023convergence} or \cite[Section 4]{kook2024sampling} -- the final error rates or sampling guarantees encase a leading order $1$ dependence on the time-discretization stepsize. We also mention two recent contributions on unadjusted Hamiltonian Monte Carlo for the simulation of the (underdamped or kinetic) MFL \cite{bou2023-MR4610714,bou2023nonlinear}. The main emphasis of the present work is to improve the weak convergence order (to the stationary distribution) of the standard Euler scheme using a non-Markovian version of it.

We briefly mention some (recent) contributions on numerical schemes for MV-SDEs. The seminal works \cite{bossytalay1996,bossytalay1997} investigate the convergence rates of the particle system's and Euler scheme's approximation accuracy of the cumulative distribution (in $L^1$-norm) for the Burger's type MV-SDE using density estimates or using a Malliavin calculus approach \cite{antonelli2002rate}.  
In the context of finite time-horizon simulation there are many recent contributions (e.g., Euler and Milstein schemes) focusing on the approximation error stemming from the time discretization of the IPS 
\cite{chen2022SuperMeasureIMA,chenreisstockinger2023super3,reis2018simulation,bao2021first,biswas2022explicit}. Cubature type algorithms, a class of weak approximation algorithms, for (Stratonovich) MV-SDEs have been proposed in \cite{deRaynal2015cubature,crisanmcmurray2019Cubatureformv-sdes,NAITOyAMADA2022HighWeakRatesmv-sdes}. Lastly, we mention \cite{agarwal2023numerical} and its references for numerical methods to approximate MV-SDEs directly fully avoiding the IPS approach. 

\paragraph*{Motivation, weak order schemes and the non-Markovian Euler scheme for SDEs.} 
The classical Euler scheme is an easy to implement and ubiquitous method for the numerical approximation of solutions to SDEs. In the classic overdamped Langevin context, i.e. if one sets $\nabla V=0$ in \eqref{eq.INTRO.GenereicIPS}, the Euler scheme attains a strong and weak rate $\cO(h)$ (where $h$ is the time-discretization stepsize) in either finite or infinite time horizon \cite{MilsteinTretyakov2021Book,platen2010numerical}. Informally, under certain conditions the Euler scheme's weak error at time $T=Mh$ and stepsize $h>0$ ($M\in \mathbb{N}$) can be expressed (see, Talay--Tubaro \cite{talaytubaru1990expansion,MilsteinTretyakov2021Book}) in the form 
\begin{align}
\label{eq:INtro-weakerrorExpension}
    \textrm{Weak Error}^{\textrm{Euler}}(h;T) = C_T h + \cO(h^2)
    \qquad
\textrm{where}    \quad 
\lim_{T\to \infty} C_T = \textrm{Const}>0.
\end{align}
Setting $\nabla V$=0 in \eqref{Model1_intro} and denoting by $X^{h}_{t_{m}}$ the numerical approximation to $X_{t_{m}}$, the following variant of the Euler scheme was initially proposed in 2013 by \textit{Leimkuhler and Matthews} in \cite{MR3040887} and fully analyzed in the following year in \cite{leimkuhler2014long}:
\begin{align}
\label{eq:intro: def : non-Markov Euler scheme}
    X^{h}_{t_{m+1}} 
    = 
    X^{h}_{t_{m}} 
    -\nabla U(X^{h}_{t_{m}}) h  
    + \frac\sigma2 (\Delta W_{m} + \Delta W_{m+1})
    \ \ \textrm{with}\ \
    \Delta W_{m+1} = W_{t_{m+1}} - W_{t_{m}}, 
\end{align}
\color{black} 
for any $m\in \mathbb{N}$, $t_m=mh$. 
This scheme is called the \textit{Leimkuhler–Matthews method} or the \textit{non-Markovian Euler scheme} \cite{MR3040887,leimkuhler2014long} since $X^{h}_{t_{m+1}}$ is computed using the current and past Brownian increments, $\Delta W_{m+1}$ and $\Delta W_{m}$, respectively.  
\color{black} 
It is shown in \cite{leimkuhler2014long} that \eqref{eq:INtro-weakerrorExpension} holds for $T<\infty$ (with a different $C_T$), but as $T \to \infty$ one has 
\begin{align*}
\lim_{T\to \infty} C_T = 0 
\quad \Rightarrow \quad 
\lim_{T\to \infty} \textrm{Weak Error}^{\textrm{non-Mark.~Euler}}(h;T) = \cO(h^2), 
\end{align*} 
and thus the \textit{non-Markovian Euler scheme} is a weak order-2 method as $T \to \infty$. 
An intuition behind the result is offered by \cite{vilmart2015postprocessed} through the concept of \textit{Postprocessed integrators}. There, \eqref{eq:intro: def : non-Markov Euler scheme} is re-written as a two-step method where the second step corrects the $\cO(h)$ bias of the first step in such a way that in the long time limit the weak error is $\cO(h^2)$; see \cite[Equation~(2.4)]{vilmart2015postprocessed}. 
\medskip

\textit{The focus of this work} is to study the non-Markovian Euler scheme \eqref{eq:intro: def : non-Markov Euler scheme} in the context of the overdamped MFL dynamics \eqref{Model1_intro} as way to simulate \eqref{eq.INTRO.ergodicDistro} via a higher-order weak scheme. 
As described, the MFL \eqref{Model1_intro} is first approximated by the IPS \eqref{eq.INTRO.GenereicIPS} and then the IPS is time-discretized using the non-Markovian  scheme \eqref{eq:intro: def : non-Markov Euler scheme}.

In terms of proof methodologies for weak errors, for either SDEs or MV-SDEs, it is well known since the seminal work \cite{talaytubaru1990expansion}, that weak error analysis can be tackled via the Kolmogorov backward PDE \cite[Chap.~2]{MilsteinTretyakov2021Book}. This approach for MV-SDEs and the IPS is well-reviewed in \cite{haji2021simple} and is the approach we take. An alternative method is the use of  Malliavin Calculus \cite{BallyTalay1995EulerErrorviaMallCalc,kohatsuOgawa1997WeakRAteforSDEs,NAITOyAMADA2022HighWeakRatesmv-sdes} as it  offers a path to completely bypass the analysis of the Kolmogorov backward equation. In addition, we highlight the classic backward error analysis approach drawing on It\^o- or Stratonovich--Taylor expansions \cite{KloedenPlaten1992SDENumericsbook,MilsteinTretyakov2021Book,foster2024high}. In a different spirit, results showing density approximations, via Fokker--Plank PDE analysis or Malliavin calculus have been obtained in  \cite{bossytalay1997,bossytalay1996,BallyTalay1996.I.EulerErrorRate-nonsmooth,BallyTalay1996.II.EulerErrorRate-nonsmooth}.

\paragraph*{The scheme's convergence results for the MFL class.} 
The main contribution of this paper is to establish the techniques needed to understand and quantify the weak errors for the non-Markovian Euler scheme applied to \eqref{eq.INTRO.GenereicIPS}\textit{ in a way such that the convergence rate is independent of the number of particles $N$ in the IPS}. 
In terms of the convergence results, for any $T>0$, the weak approximation error for smooth test functions $g:\bR^N\to \bR$ (satisfying Assumption~\ref{assum:main_weak error}) is defined as 
\begin{align}
 \label{eq:inro:Weak Error}
    \textrm{Weak Error}:=
    \bE\big[g\big(\boldsymbol{X}_{T}^{N}\big)\big]
    -
    \bE\big[g\big(\boldsymbol{X}^{N,h}_{T}\big)\big], 
    \qquad \textrm{ with $T=Mh$ for $h>0,M\in\mathbb{N}$}, 
\end{align}
where $\boldsymbol{X}_{T}^{N}$ denotes the solution of \eqref{eq.INTRO.GenereicIPS} and $\boldsymbol{X}^{N,h}_{T}$ denotes the $\bR^N$-valued output of $M$-steps ($T=Mh$) of the non-Markovian Euler scheme applied to \eqref{eq.INTRO.GenereicIPS} (and explicitly given in \eqref{eq: def : non-Markov Euler scheme}). 
Informally, our main result (Theorem \ref{theorem: main weak convergence}) states that 
\color{black}  
\begin{align}
\label{eq:INTRO:convResults}
          \big|\, \mathbb{E}[g(\boldsymbol{X}^{N}_{T})] 
          - \mathbb{E}[g(\boldsymbol{X}^{N,h}_{T})]\, \big| 
          & 
          \leq   \exp(-\lambda_0 T)K h + K h^{3/2}, 
\end{align}
\color{black}
for some positive constants $\lambda_0, K$ independent of $h, T,M$ and $N$. In other words, the scheme is uniformly (in the number of particles) of weak order $\cO(h^{3/2})$ as $T \to \infty$ and has standard weak order $\cO(h)$ for $T< \infty$. 
We provide an in-depth technical discussion (Remarks \ref{rem:H2-diff} and \ref{rem:Losing1/2 conv rate}) on the missing $h^{1/2}$ order in the convergence rate when comparing this to the second order weak convergence result obtained in   \cite{leimkuhler2014long,vilmart2015postprocessed}.

A secondary contribution of this work (Proposition~\ref{prop:basic_estimates22} below), is the clarification of the nuance that the higher-order weak convergence of the non-Markovian Euler scheme comes at the cost of having a \textit{uniform in time} strong $L^2$-convergence order of $\cO(h^{1/2})$. It is lower than the $\cO(h)$ strong $L^2$-convergence of the classical Euler scheme.

\paragraph*{Methodology, contributions and existing literature.}
The main methodology we follow is an involved variant of the Talay--Tubaro approach to the study of weak convergence \cite{talaytubaru1990expansion,MilsteinTretyakov2021Book}, which is also the approach used by \cite{leimkuhler2014long} (for SDEs) and \cite{haji2021simple,Chassagneux2022WeakPoC-MasterEq} (to study weak quantitative PoC over $T<\infty$). At its core, this method relates the expectations appearing in the definition of the weak error \eqref{eq:inro:Weak Error} to a Kolmogorov backward PDE with terminal condition given by the function $g$ -- see the PDE \eqref{PDE:Kolmogorov} linked to the driving SDE \eqref{eq.INTRO.GenereicIPS}, which in flow form is given in \eqref{eq: def of Bi FLOW SDE in RN} and written as  $(\boldsymbol{X}^{t,\boldsymbol{x},N}_s)_{s\geq t \geq 0}$, for $\boldsymbol{x} \in \mathbb{R}^{N}$ denoting the starting point of the IPS at time $t \geq 0$.
This analysis involves establishing certain bounds for the variation processes of the IPS \eqref{eq.INTRO.GenereicIPS}, or more precisely for the flow process $(\boldsymbol{X}^{t,\boldsymbol{x},N}_s)_{s\geq t \geq 0}$. 
In this regard, our approach is closest in spirit to that of \cite{haji2021simple} as we work with Kolmogorov backward PDEs connected to the full particle system. However, our focus is on the \textit{time-discretization} analysis \emph{uniformly in $N$ over infinite time} as opposed to the weak error analysis in the number of particles \cite{haji2021simple,delarue2021uniform,Chassagneux2022WeakPoC-MasterEq} (these works consider $T<\infty$ and deal only with the continuous time IPS equation). Our case has therefore fundamental added complexities in relation to the mentioned works, as we require estimates which are not only uniform in $N$ but also in time.
\smallskip

\emph{Technical challenges.} 
As mentioned, \eqref{eq:INTRO:convResults} is proved via a Talay--Tubaro type expansion which, for the case of the non-Markovian Euler scheme, is an involved collection of terms arising from  Taylor expansions using the Kolmogorov backward equation associated with the flow equation for the IPS \eqref{eq.INTRO.GenereicIPS}. This expansion has been given in \cite[Equation~(3.17)]{leimkuhler2014long} for SDEs and we recast it to our setting (in Lemma~\ref{lemma:Weak Expansion Leimkuhler} and also in Section \ref{section: weak error expansion} and in Appendix \ref{appendix_aux_RemainderSection6.2}). In the following, we highlight several technical elements of our work and point out crucial differences to \cite{leimkuhler2014long,haji2021simple}:
\begin{itemize}
    \item[(i)] 
    The terms arising from said Taylor expansions involve up to $6$-th order (cross)-derivatives in the spatial variable of the solution to the Kolmogorov backward PDE (see our Assumption~\ref{assum:main_weak error} and Lemma~\ref{lemma: higher-order derivative of u without exp}).     
    Critically, the usual pointwise estimates from PDE theory e.g. \cite[Equation~(3.3)]{leimkuhler2014long} (or \cite{talay1990second,vilmart2015postprocessed}), do not directly apply to our case as those would not be independent of the number of particles. It is not clear how the right-hand side in \cite[Equality (3.3)]{leimkuhler2014long} depends on the problem's dimension. 
    Therefore, we derive suitable new estimates in $L^p$-norm of the solution to the  Kolmogorov backward equation that decay exponentially in time in a non-explosive way in $N$ (see, Lemma~\ref{lemma: higher-order derivative of u without exp} for an intermediate pointwise result and Lemma~\ref{lemma: higher-order derivative of u} for the final $L^{p}$-estimates used to show the main theorem) -- this is in stark contrast to \cite{haji2021simple} (in particular their Appendix B) which establishes pointwise estimates.    
    For clarity, the derivatives of the solution to the Kolmogorov backward equation are intrinsically linked to certain moment estimates for variation processes of the IPS' flow SDE $(\boldsymbol{X}^{t,\boldsymbol{x},N}_s)_{s\geq t}$ (see Lemma~\ref{lemma: higher-order derivative of u without exp}). In order to control the time dependence of the implied constants for the moment estimates of the variation processes, a careful analysis of the terms involving the convolution kernel is needed. Consequently, we are only able to establish the bounds in Lemma~\ref{lemma: higher-order derivative of u} in an $L^{p}$-sense. The estimates of Lemma~\ref{lemma: higher-order derivative of u} are obtained in \cite{haji2021simple} in a pointwise sense but crucially without the exponential time decay component (see RHS of \eqref{eq:  high-deri on u order 2 bounded by K} and \eqref{eq:  high-deri on u order 4 bounded by K}); their analysis is carried out in finite time for which this issue is not a concern.
   
    Further, our analysis requires to study the time regularity of the solution to the Kolmogorov backward PDE, which needs estimates for the differences of the IPS' flow SDE process, $(\boldsymbol{X}^{t,\boldsymbol{x},N}_s)_{s\geq t}$, concretely differences of the form $|\boldsymbol{X}^{t,\boldsymbol{x},N}_u-\boldsymbol{X}^{s,\boldsymbol{x},N}_u|$ for $0 \leq t\leq s\leq u$. 
    
    Lastly, it is noteworthy to highlight that the weak error test function $g$ in \eqref{eq:inro:Weak Error} depends on the whole IPS (as in \cite{haji2021simple} but not as in \cite{bencheikh2019bias}\footnote{The analysis of the Kolmogorov backward PDE over a single-particle $X^{i,N}$ instead of $\boldsymbol{X}^{N}=(X^{1,N},\ldots,X^{N,N})$ on the test function $g$ enables an advantageous simplifying decoupling effect at a later point; such is not the case here.}) making the analysis much more involved.

    \item[(ii)] Before addressing the estimates for the  Kolmogorov backward equation, we derive $L^p$-norm estimates for the \textit{variation processes} of the flow of the IPS (decaying over time uniformly over $N$) up to general $n$-order (although only $6$-orders are needed). This is done in Section \ref{section: analysis of var process total}.     
    Our approach shows a way to analyze the terms arising from the interacting kernel and their recurring contributions across the different orders of the variation processes and \textit{across different particle indices} -- compare \eqref{eq: first var result addinitional} and \eqref{eq: first var result 2}  
    for the first order case and check Lemma~\ref{lemma: n-var process result} for general cases. A further crucial component of the analysis is to establish the correct decay in terms of number of particles across different orders of variation processes. This, in particular, subsequently allows to control the growth of the derivatives of the solution to the Kolmogorov backward equation.  
   The depth of the analysis is well beyond the results of \cite{haji2021simple} who carry out a related approach over finite time (or in \cite{delarue2021uniform} over the torus over infinite time horizon).  
    \item[(iii)] Regarding the strong convergence analysis of the non-Markovian Euler scheme, the one-timestep error propagation analysis requires analysing 3 sub-steps of the scheme which is in contrast to such analysis for the standard Euler scheme (loosely speaking, only 1 sub-step is analyzed) and thus, the analysis is lengthier than usual -- see e.g. the proof of Proposition~\ref{prop:basic_estimates22}.
\end{itemize}

\paragraph*{Gaps, conjectures and pathways for further study.} 
Our analysis addresses MFL dynamics in $\bR$ through $\bR^N$-valued IPS. It is believed that our results could be established in the multi-dimensional case $d>1$ if the measure dependence in \eqref{Model1_intro} was of the form $\bE[h(X_t)]$ instead of an interaction kernel. 
The tools and techniques we have employed to show our main result do not use Lions measure derivatives (due to the simplicity of the underlying model) or concentration inequalities. It seems possible, although presently unclear, that drawing on Log-Sobolev inequalities and related machinery would provide means to lift the technical constraint in dimension arising from the convolution term.
Overall, to establish higher $L^p$-moments for the variation processes in Section \ref{section: analysis of var process total}, we require the \textit{symmetrization trick} (in Remark \ref{Remark:symmetrization trick} to deal with \eqref{eq: symmetric trick}) and 
an inequality of the type $(|x|^{p-2}x -|y|^{p-2}y ) \cdot \left( x-y \right) \geq 0$ to hold -- this would not naturally hold in the $d>1$ case (this issued is hinted at in \cite{chen2022SuperMeasureIMA} and appears explicitly in \cite{chenreisstockinger2023super3}). 
A possible alternative methodology to establish our main result is the postprocessed integrators machinery presented in \cite{vilmart2015postprocessed}. To use it, we benefit from all the results shown in this work. However, some others would still need to be established, e.g. one has to derive results that imply Assumption~2.3 or Theorem 4.1 of \cite{vilmart2015postprocessed} in the IPS \eqref{eq.INTRO.GenereicIPS} setting that remain uniform in the particle number; this is yet to be explored and left for future research. 
In addition, it would be interesting to see if techniques from Malliavin calculus could be used \cite{BallyTalay1995EulerErrorviaMallCalc,kohatsuOgawa1997WeakRAteforSDEs,NAITOyAMADA2022HighWeakRatesmv-sdes}, even in the standard SDE context, to establish the weak convergence results shown in this article. It is interesting to question if the weak results in our manuscript could be extended to the difference between the densities of \eqref{Model1_intro} and \eqref{eq.INTRO.GenereicIPS} as in \cite[Corollary 2.1]{BallyTalay1996.II.EulerErrorRate-nonsmooth} (as the diffusion of \eqref{eq.INTRO.GenereicIPS} is uniformly elliptic) -- in fact, the question is also pertinent in the context of standard SDEs itself (do the results of \cite{leimkuhler2014long,vilmart2015postprocessed} also hold for densities as in \cite{BallyTalay1996.II.EulerErrorRate-nonsmooth}).

Further afield and more broadly is if these results could be established under the setting of common-noise MFL dynamics \cite{maillet2023note}, or in the context of the kinetic/underdamped MFL \cite{MR4333408,chen2023uniformKINETIC-EJP,bernou2024uniform}. 
Our work also paves the way to study stochastic gradient descent convergence \cite{suzuki2023convergence,kook2024sampling} but using the non-Markovian Euler scheme as the update instead of the standard Euler one.

\paragraph{Paper organization.} This paper is organized as follows.
In Section \ref{section: framework and scheme}, we state  the main assumptions, introduce the non-Markovian scheme and state basic results regarding wellposedness of the underlying model. In Section \ref{section: Weak Error expansion}, we present our  weak error expansions based on the result in \cite{leimkuhler2014long}. We state the main technical difficulties when applying the scheme to the IPS and explain why we cannot reach weak rate of order $2$ in the case for classical SDEs. All proofs of Section \ref{section: framework and scheme} and \ref{section: Weak Error expansion} are postponed to the final part of the paper. Section \ref{section: analysis of var process total} contains the results relating to the analysis of several variation processes, while Section \ref{section: analysis of the dus} contains the decay estimates for the solution to the Kolmogorov backward equation. Section \ref{section: weak error expansion} contains the proof of the weak error result (of Section \ref{section: Weak Error expansion}). 
 An illustrative numerical example is provided in Section \ref{sec:Numericalexample}.

\paragraph*{Acknowledgments.} The authors are grateful to (in no particular order): B.~Leimkuhler (U.~of Edinburgh), A.~Teckentrup (U.~of Edinburgh), D.~Higham (U.~of Edinburgh), M.~Tretyakov (U.~of Nottingham), F.~Delarue (U.~C\^ote d'Azur) and A.-L. Haji-Ali (Heriot-Watt U.) for the helpful discussions.

\section{Framework and numerical scheme}
\label{section: framework and scheme}
\subsection{Notation and spaces}
For a vector in $\mathbb{R}^d$, $d \geq 1$, we will write $\boldsymbol{x} \coloneqq (x_1,\ldots, x_d) \in \mathbb{R}^d$. The inner product of two vectors $\boldsymbol{a}, \boldsymbol{b} \in \mathbb{R}^d$ is denoted by $\left \langle \boldsymbol{a}, \boldsymbol{b} \right \rangle$ and for standard Euclidean norm we will use the notation $|\cdot|$. Throughout this article $\mathcal{O}(\cdot)$ refers to the standard Landau (big `O') 
 notation. 
For a twice continuously differentiable function $f:\mathbb{R}^{d} \to \mathbb{R}$, we denote by $\partial_{x_i}f: \mathbb{R}^{d} \to \mathbb{R}$ the partial derivative with respect to the $i$-th component, by $\nabla f: \mathbb{R}^{d} \to \mathbb{R}^{d}$ its gradient $\nabla f=(\partial_{x_1}f,\ldots,\partial_{x_d}f)$, 
and by $\nabla^{2}f:\mathbb{R}^{d} \to \mathbb{R}^{d \times d}$ its Hessian. 
For a multi-index $\ell =(\ell_1, \ldots, \ell_d) \in \mathbb{N}^{d}$, we denote higher-order derivatives as \begin{align*}
    \partial^{d}_{x_{\ell_1},\ldots, x_{\ell_d}} f.
\end{align*} 
The sup-norm of $f$ will be denoted by
$|f|_\infty \coloneqq \sup_{\boldsymbol{x} \in \mathbb{R}^d} |f(\boldsymbol{x})|$.

 Let our probability space be a completion of $(\Omega, \bF, \mathcal{F},\bP)$ with $\bF=\lbrace \mathcal{F}_t \rbrace_{t\geq 0}$ being the natural filtration of the one-dimensional Brownian motion $W =(W_t)_{t \geq 0}$, augmented with a sufficiently rich sub $\sigma$-algebra $\mathcal{F}_0$ independent of $W$. We denote by $\bE[\cdot]=\bE^\bP[\cdot]$, the expectation with respect to $\bP$. 

For any $ p\geq 2$, we define $L^p(\Omega, \bR^d)$ as the space of $\bR^d$-valued measurable random variables such that $\bE[\,|X|^p  ]^\frac1p <\infty$. Let $\mathcal{P}_p(\bR^d)$ denote the space of probability measures $\mu$ on $\bR^d$ such that $\int_{\bR^d} |x|^p \mu(\mathrm{d}x) <\infty$. Let
\begin{align*} 
W^{(p)}(\mu,\nu) := \inf_{\pi\in\Pi(\mu,\nu)} \Big(\int_{\bR^d\times \bR^d} |x-y|^p\pi(\dd x,\dd y)\Big)^\frac1p, \quad \mu,\nu\in \mathcal{P}_p(\bR^d),
\end{align*}  
be the Wasserstein distance, where $\Pi(\mu,\nu)$ is the set of couplings for $\mu$ and $\nu$ such that $\pi\in\Pi(\mu,\nu)$ is a probability measure on $\bR^d\times \bR^d$ with $\pi(\cdot\times \bR^d)=\mu$ and $\pi(\bR^d \times \cdot)=\nu$. 

\subsection{Theoretical framework and preliminary results}

We consider the following one-dimensional MV-SDE, for $t \geq 0$, 
\begin{align}
\label{Model1xx}
    X_t 
    = \xi 
      - \int_{0}^{t} \Big(\nabla U(X_s) + \nabla V * \mu_s(X_s) \Big) \mathrm{d}s + \sigma W_t, 
\end{align}
where $\sigma \in  \mathbb{R}$, 
and $\xi \in L^{p}(\Omega,\mathbb{R})$ for some given $p \geq 2$. $U: \mathbb{R} \to \mathbb{R}$ is the confining potential and  $V:\mathbb{R} \to \mathbb{R}$ is the interaction potential, with $*$ denoting the usual convolution operator where $(f* \mu)(\cdot)=\int_\bR f(\cdot-y)\mu(\mathrm{d}y)$. We impose the following standard assumptions on $U$ and $V$.

\begin{assumption}
\label{assum:main}
Let 
$U: \mathbb{R} \to \mathbb{R}$ and $V:\mathbb{R} \to \mathbb{R}$ be twice continuously differentiable functions with globally Lipschitz continuous gradients. Further suppose that
\begin{enumerate}[(1)]
\item $U$ is uniformly convex in the sense that there exists $\lambda >0$ such that for all $x,y \in \mathbb{R}$, 
\begin{align}
\label{eq: def of lambda}
     \big( \nabla U(x)- \nabla U(y)\big) \big(x-y \big) \geq \lambda | x-y |^2,
\end{align}
which implies $ \nabla^2 U \geq \lambda $. 
\item $V$ is even (thus $\nabla V$ is odd), and convex, i.e., for all $x,y \in \mathbb{R}$, 
\begin{equation*}
     \big(\nabla V(x)- \nabla V(y)\big) (x-y ) \geq 0,
\end{equation*}
and there exists $K_V>0$ such that  $ |\nabla^2 V|_\infty \leq K_V$.
\end{enumerate}
\end{assumption}

\paragraph*{The Interacting particle system (IPS).} Define the $\mathbb{R}^N$-valued map $B$ as
\begin{equation*}
\mathbb{R}^N \ni \boldsymbol{x} = (x_1, \ldots, x_N)  
\mapsto
    B(\boldsymbol{x}) := \big(B_1(x_1, \ldots,x_N), \ldots, B_N(x_1, \ldots,x_N)\big),
\end{equation*}
where 
\begin{align}
\label{eq:def of func B}
    B_i(\boldsymbol{x})=B_i(x_1, \ldots,x_N) \coloneqq - \nabla U(x_i) - \frac{1}{N} \sum_{j=1}^{N} \nabla V(x_i - x_j).
\end{align}
Let $(\xi^{i},W^{i})$ for $i \in \lbrace 1, \ldots, N \rbrace$ be i.i.d.\ copies of $(\xi,W)$ and define the IPS associated with \eqref{Model1xx} to be
\begin{align}
\label{ModelIPS}
        X_t^{i,N} 
    &
    =
     \xi^i + \int_{0}^{t} B_i(X^{1,N}_s, \ldots, X^{N,N}_s) \mathrm{d}s + \sigma W^i_t, 
     \\
     \label{ModelIPS as in RN}   
     \boldsymbol{X}_t^{N} &
    =
     \boldsymbol{\xi} + \int_{0}^{t} B(\boldsymbol{X}_s^{N} ) \mathrm{d}s + \sigma \boldsymbol{W}_t,
\end{align}
with solution process $(\boldsymbol{X}^{N}_t)_{t \geq 0} \coloneqq (X_t^{1,N},\ldots,X_t^{N,N})_{t \geq 0}$, where we introduced $\boldsymbol{\xi}=(\xi^1,\ldots,\xi^N)$ and $(\boldsymbol{W}_t)_{t \geq 0}:=(W^1_t,\ldots,W^N_t)_{t \geq 0}$.

\subsubsection*{Preliminary results.}
The next proposition collects some basic properties of the MFL equation \eqref{Model1xx} and the IPS \eqref{ModelIPS}. 
\begin{proposition}
\label{prop:basic_estimates11}
Let Assumption~\ref{assum:main} hold and let $\xi \in L^{p}(\Omega,\mathbb{R})$ for some $p \geq 2$. 
Then the following hold:
\begin{enumerate}[(1)]
\item 
The MV-SDE \eqref{Model1xx} and the IPS \eqref{ModelIPS} each admit a unique strong solution. 
There exist constants $\kappa \in (0,\lambda)$ and $K \geq 0$ {\color{black}($K,\kappa$ are independent of $t$ and $N$)} such that for any $t \geq 0$
\begin{equation*}
 \mathbb{E}\big[\,|X_t|^{p}\big] \leq K\big(1 
 + \mathbb{E}[\,|\xi|^{p}] e^{-p\kappa t}\big)
 \quad\textrm{and}\quad
 \max_{i \in \lbrace 1, \ldots, N \rbrace } \mathbb{E}\big[\,|X_t^{i,N}|^{p}\big] \leq K\big(1 + \mathbb{E}[\,|\xi|^{p}] e^{-p\kappa t}\big).
\end{equation*}
\color{black}
\item Uniform in time strong propagation of chaos (PoC) holds, i.e., there exist constants $\kappa \in (0,\lambda)$ and $K \geq 0$ ($K,\kappa$ are independent of $t$ and $N$) such that for every $N \geq 1$
\begin{equation*}
    \max_{i \in \lbrace 1, \ldots, N \rbrace }  \sup_{t \geq 0} \mathbb{E}\big[\,|X^{i}_t-X_t^{i,N}|^2\big] \leq \frac{K}{N}\Big(1 
 + \mathbb{E}[\,|\xi|^{2}] e^{-2\kappa t}\Big),
\end{equation*}
\color{black}
where $X^{i}$ is the solution of \eqref{Model1xx} with $(\xi,W)$ replaced by $(\xi^{i},W^{i})$ (i.e., the so-called non-interacting particle system).

\item There exists a unique stationary distribution for  \eqref{Model1xx} and \eqref{ModelIPS}, denoted by $\mu^{*}$ and $\mu^{N,*}$, respectively. Moreover, $W^{(2)}(\mu_t,\mu^*)\to 0$ and $W^{(2)}(\mu_t^N,\mu^{N,*})\to 0$ as $t \to  \infty$.

 \end{enumerate}
\end{proposition}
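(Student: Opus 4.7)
All three parts rest on a common toolkit: It\^o's formula, the strong convexity of $U$, and a \emph{symmetrization trick} for the interaction term that exploits $\nabla V$ being odd and nondecreasing. I outline each part in turn, leaving the interaction-term cancellation as the single nontrivial step.

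For part (1), well-posedness of the IPS \eqref{ModelIPS} is classical since $B$ is globally Lipschitz on $\mathbb{R}^N$, and well-posedness of \eqref{Model1xx} would follow from the standard Sznitman-type Picard iteration in $C([0,T], \mathcal{P}_2(\mathbb{R}))$, using that $\mu \mapsto \nabla V * \mu$ is Lipschitz in $W^{(1)}$ with constant $K_V$. For the moment bound, I would apply It\^o's formula to $|X_t|^p$ and bound the $\nabla U$-contribution via $\nabla U(x)x \geq \lambda |x|^2 - |\nabla U(0)||x|$, while the interaction contribution is neutralized by introducing an independent copy $Y_t$ of $X_t$ (same law $\mu_t$), so that $(\nabla V * \mu_t)(X_t) = \mathbb{E}[\nabla V(X_t - Y_t) \mid X_t]$ and, after swapping $X_t \leftrightarrow Y_t$ and using oddness of $\nabla V$,
\begin{align*}
    \mathbb{E}\big[|X_t|^{p-2} X_t \,(\nabla V * \mu_t)(X_t)\big] = \tfrac{1}{2}\, \mathbb{E}\big[(|X_t|^{p-2}X_t - |Y_t|^{p-2}Y_t)\,\nabla V(X_t - Y_t)\big] \geq 0,
\end{align*}
because $\nabla V$ is odd and nondecreasing and $x \mapsto |x|^{p-2}x$ is nondecreasing. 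Young's inequality then gives $\tfrac{d}{dt}\mathbb{E}|X_t|^p \leq -p\kappa \mathbb{E}|X_t|^p + C$ for any $\kappa \in (0,\lambda)$, and Gr\"onwall delivers the claim. The IPS bound is obtained identically after exchangeability and the analogous double-sum symmetrization over particle pairs $(i,j)$.

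For part (2), I would set up the synchronous coupling: let $X^i$ solve \eqref{Model1xx} driven by $(\xi^i, W^i)$, so that $e_i(t) := X_t^i - X_t^{i,N}$ satisfies $e_i(0) = 0$ and an ODE (no martingale term). Differentiating $\tfrac{1}{N}\sum_i \mathbb{E}|e_i(t)|^2$ produces three contributions: a dissipative $-2\lambda$-term from $U$; a coupling term $-\tfrac{1}{N^2}\sum_{i,j}\mathbb{E}\{e_i[\nabla V(X^i - X^j) - \nabla V(X^{i,N} - X^{j,N})]\}$ which, after double-symmetrization, equals $-\tfrac{1}{2N^2}\sum_{i,j}\mathbb{E}\{(e_i - e_j)[\nabla V(X^i - X^j) - \nabla V(X^{i,N} - X^{j,N})]\} \leq 0$ by monotonicity of $\nabla V$; and a fluctuation term
\begin{align*}
    R_i(t) := (\nabla V * \mu_t)(X_t^i) - \tfrac{1}{N}\sum_{j=1}^N \nabla V(X_t^i - X_t^j),
\end{align*}
whose second moment is $\mathcal{O}(1/N)$ by the standard variance bound for empirical means of i.i.d.\ samples (using $|\nabla V(z)| \leq K_V |z|$ and the moment estimate from part (1)). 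Applying Young and Gr\"onwall with $e_i(0)=0$ then yields the claimed $K/N\,(1 + \mathbb{E}|\xi|^2 e^{-2\kappa t})$ bound, uniformly in $t$ and $N$.

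For part (3), existence and uniqueness of $\mu^*$ and $\mu^{N,*}$ can be invoked from \cite{cattiaux2008probabilistic, chenreisstockinger2023super3}, or rederived by showing that the MV-SDE and IPS semigroups are $W^{(2)}$-contractions on their respective Wasserstein spaces via the same synchronous-coupling plus symmetrization argument applied to two solutions with different initial laws but a shared Brownian driver; $W^{(2)}(\mu_t, \mu^*) \to 0$ is then immediate by coupling the $\mu_0$-initialized flow to one started from $\mu^*$, giving $\mathbb{E}|X_t - \tilde X_t|^2 \leq e^{-2\kappa t}\mathbb{E}|\xi - \tilde\xi|^2$. \textbf{The main technical obstacle} throughout is the symmetrization step itself, which crucially relies on the one-dimensional pointwise monotonicity $(|x|^{p-2}x - |y|^{p-2}y)\nabla V(x - y) \geq 0$ and its coupling analogue $(e_i - e_j)[\nabla V(X^i - X^j) - \nabla V(X^{i,N} - X^{j,N})] \geq 0$; without this cancellation the effective dissipation rate would degrade to $\lambda - K_V$ and force a smallness assumption on $K_V/\lambda$. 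This is precisely the structural limitation the authors flag in their ``Gaps, conjectures and pathways'' discussion when commenting on extensions to $d>1$.
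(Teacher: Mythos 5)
Your proposal is correct, and for part (1) it takes a genuinely different (and arguably cleaner) route than the paper. The paper's proof of the $L^p$-moment bound for the IPS does \emph{not} neutralize the interaction term by symmetrization; instead it first establishes an auxiliary uniform-in-time bound $\mathbb{E}[\,|X_t^{i,N}-X_t^{j,N}|^p]\le K$ (referring the reader to \cite{chen2022SuperMeasureIMA} for those calculations), then applies Young's inequality with the Lipschitz bound $|\nabla V(z)|\le K_V|z|$ to split the interaction contribution into a small $\varepsilon$-multiple of $\mathbb{E}[\,|X_s^{i,N}|^p]$ plus a term absorbed by the auxiliary bound. Your route instead invokes the symmetrization identity (for the MV-SDE via an independent copy $Y_t\sim\mu_t$, for the IPS via the double sum over $(i,j)$ together with exchangeability) to show the interaction contribution is sign-definite and can simply be dropped; this yields the full dissipation rate $\lambda$ without needing the intermediate pairwise-difference estimate. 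Interestingly, this is exactly the ``symmetrization trick'' the paper itself isolates as Remark~\ref{Remark:symmetrization trick} and uses systematically in Section~\ref{section: analysis of var process total} for the variation processes — you have merely observed that it also works one step earlier, for the moment estimate, where the paper opted for a different device. Your observation that without this cancellation the effective rate degrades to $\lambda-K_V$, and that this is precisely the $d>1$ obstacle flagged by the authors, is accurate.

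For parts (2) and (3) the paper simply invokes \cite[Theorem~1.2]{Malrieu2001} for uniform-in-time PoC and \cite{cattiaux2008probabilistic} for existence/uniqueness of the stationary measure, whereas you sketch the underlying synchronous-coupling argument (decomposition into dissipative, symmetrized-coupling, and $\mathcal{O}(1/N)$ fluctuation terms) and the $W^{(2)}$-contraction argument. These sketches faithfully reproduce the structure of the cited proofs and are correct as stated; you should simply be aware that the paper does not actually carry them out.
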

\begin{proof}
See Appendix \ref{appendix_proof_basic11}.
\end{proof}

\subsection{The non-Markovian Euler scheme}

Let $h\in(0,1)$ denote the timestep and take $m \in \lbrace 0, \ldots, M-1 \rbrace$ for a given $M \in \mathbb{N}$. Inspired by \cite[Equation~(1.7)]{leimkuhler2014long} (also \cite{vilmart2015postprocessed}), we introduce the following non-Markovian Euler scheme 
\begin{align}
\label{eq: def : non-Markov Euler scheme}
    X^{i,N,h}_{t_{m+1}} = X^{i,N,h}_{t_{m}} - \Big(  \nabla U(X^{i,N,h}_{t_{m}}) + \frac{1}{N}\sum_{j=1}^{N} \nabla V(X_{t_{m}}^{i,N,h} - X_{t_m}^{j,N,h}) \Big)  h   + \frac\sigma2
    (\Delta W_{m}^i + \Delta W_{m+1}^i),
\end{align}
with $X^{i,N,h}_{t_0} = X^{i,N}_{t_0}$, to approximate the IPS \eqref{ModelIPS}, where we set the time grid points as $t_m := mh$ up to some time $T:=t_M=Mh$, and the random increments as $\Delta W_{m+1}^i = W^i_{t_{m+1}} - W^i_{t_{m}}$ with $\Delta W^{i}_{0} = 0$. 
In analogy to the IPS, we write for the solution process 
$( \boldsymbol{X}^{N,h}_{t_m})_{m \in \lbrace 0, \ldots,M \rbrace} \coloneqq (X^{1,N,h}_{t_m},\ldots, X^{N,N,h}_{t_m})_{m \in \lbrace 0, \ldots,M \rbrace}$. We aim to analyze the behaviour of this scheme as $T\to \infty$.

The following result establishes some fundamental properties for the non-Markovian Euler scheme \eqref{eq: def : non-Markov Euler scheme}: moment estimates and $L^2$-strong convergence (we were unable to find a proof in the literature regarding the $L^2$-strong convergence for this scheme (even for SDEs) and we thus provide it here). Critically, the moment estimates obtained are independent of the time horizon (i.e., the constant $K$ appearing below is independent of $T$). Lastly, as in \cite{leimkuhler2014long} or \cite{vilmart2015postprocessed}, the result holds for a sufficiently small timestep $h$. 
\begin{proposition}
\label{prop:basic_estimates22}
Let Assumption~\ref{assum:main} hold, let $\xi \in L^{p}(\Omega,\mathbb{R})$ for some $p \geq 2$, let $N,M\in \mathbb{N}$, $h>0$ and set $T = t_M = Mh$.   
Then the following statements hold for the process defined in \eqref{eq: def : non-Markov Euler scheme}.  
\begin{enumerate}[(1)]
\item 
There exist $\kappa,K >0$ (both are independent of $h,T,M$ and $N$) such that for any sufficiently small timestep $h$ (chosen independently of  $T,M,N$) and satisfying $0 < h < \min \{1/2\lambda,1\}$ it holds for all  $m \in \lbrace 0, \ldots, M \rbrace$ that
\begin{equation*}
    \max_{i \in \lbrace 1, \ldots, N \rbrace } \mathbb{E}\big[\,|X_{t_m}^{i,N,h}|^{p}\big] 
    \leq K \big(1 + \mathbb{E}\big[\,|\xi|^{p}\big] e^{-\kappa  t_m}\big).
\end{equation*}
\color{black}
\item \textit{$L^2$-strong error}.  
There exists $K >0$ (independent of $h,T,M$ and $N$) such that for any sufficiently small timestep $h$ (chosen independently of  $T,M,N$)  and satisfying $0 < h < \min \{1/2\lambda,1\}$, it holds that 
\begin{align*} 
        \max_{i \in \lbrace 1, \ldots, N \rbrace}
        \max_{m \in \lbrace 0, \ldots, M \rbrace}
        \bE    \big[\,|  X_{t_m}^{i,N} -   X_{t_m}^{i,N,h}  |^2  \big]  \leq Kh\big(1 + \mathbb{E}\big[\,|\xi|^{2}\big]  \big),
    \end{align*}
\color{black}
where $X^{i,N}$ and $X^{i,N,h}$ are the processes defined in \eqref{ModelIPS} and \eqref{eq: def : non-Markov Euler scheme} respectively.    
\end{enumerate}
\end{proposition}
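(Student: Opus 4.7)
The plan is to treat both parts through energy-type estimates at the level of the particle-averaged moments $\tfrac{1}{N}\sum_{i=1}^N \mathbb{E}[|\cdot|^p]$, exploiting three ingredients: the exchangeability of the particles (so that $\max_i$ may be replaced by any fixed $i$), the strong convexity of $U$ from Assumption~\ref{assum:main}(1), and the symmetrization identity
\[
    \tfrac{1}{N}\sum_{i,j=1}^N (x_i - x_j)\bigl(\nabla V(x_i - x_j) - \nabla V(y_i - y_j)\bigr) \geq 0,
\]
which combines the convexity and oddness of $\nabla V$ in Assumption~\ref{assum:main}(2) and neutralises the interaction contribution after summing over $i$. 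All constants must be tracked so as to remain independent of $h, N, T$; in particular one must never bound $\tfrac{1}{N}\sum_j \nabla V(x_i-x_j)$ by its Lipschitz constant times $N$, but rather keep it inside the symmetrized sum.

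For Part~(1), I expand $|X^{i,N,h}_{t_{m+1}}|^2$ from \eqref{eq: def : non-Markov Euler scheme}, sum over $i$ and divide by $N$. The drift--state cross term becomes $-2\lambda h\cdot \tfrac{1}{N}\sum_i (X^{i,N,h}_{t_m})^2$ (up to an absolute constant) after invoking $\nabla U$-convexity and discarding the $\nabla V$ contribution through the symmetrization identity. Squared-increment terms contribute $\mathcal{O}(h)$ from the Brownian variance and $\mathcal{O}(h^2)$ from $|B_i|^2$, controlled inductively via Lipschitzness of $\nabla U$ and $\nabla V$. The only genuinely non-standard issue is that $\Delta W^i_m$ is $\mathcal{F}_{t_m}$-measurable and is correlated with $X^{i,N,h}_{t_m}$ through the previous-step identity; a one-step rewrite yields $\mathbb{E}[X^{i,N,h}_{t_m}\Delta W^i_m] = \tfrac{\sigma h}{2}$, and the drift$\times \Delta W^i_m$ cross terms are $\mathcal{O}(h^{3/2})$ after Cauchy--Schwarz and Young. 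Combining these gives the one-step contraction
\[
    \tfrac{1}{N}\sum_{i=1}^{N} \mathbb{E}\bigl[|X^{i,N,h}_{t_{m+1}}|^{2}\bigr] \leq (1 - c h)\,\tfrac{1}{N}\sum_{i=1}^{N} \mathbb{E}\bigl[|X^{i,N,h}_{t_{m}}|^{2}\bigr] + C h,
\]
with $c, C$ independent of $h, N, T$ and $1 - ch \in (0,1)$ for $h$ sufficiently small; iteration delivers the claimed exponential-in-time estimate. The $p>2$ case is handled analogously using $|x+y|^p \leq |x|^p + p|x|^{p-2}xy + C_p(|x|^{p-2}y^2 + |y|^p)$ together with Young's inequality.

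For Part~(2), set $e^i_m := X^{i,N}_{t_m} - X^{i,N,h}_{t_m}$; subtracting \eqref{eq: def : non-Markov Euler scheme} from the integrated form of \eqref{ModelIPS} on $[t_m,t_{m+1}]$ yields
\[
    e^i_{m+1} = e^i_m + \int_{t_m}^{t_{m+1}}\!\!\bigl[B_i(\boldsymbol{X}^{N}_s) - B_i(\boldsymbol{X}^{N,h}_{t_m})\bigr]\,\mathrm{d}s + \tfrac{\sigma}{2}\bigl(\Delta W^i_{m+1} - \Delta W^i_m\bigr).
\]
The key device is the telescoping shift $\tilde e^i_m := e^i_m - \tfrac{\sigma}{2}\Delta W^i_m$ (with $\tilde e^i_0 = 0$), under which the extra noise term cancels and one obtains the noise-free recursion $\tilde e^i_{m+1} = \tilde e^i_m + \int_{t_m}^{t_{m+1}}[B_i(\boldsymbol{X}^{N}_s) - B_i(\boldsymbol{X}^{N,h}_{t_m})]\,\mathrm{d}s$. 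Squaring, averaging over $i$, and splitting $B_i(\boldsymbol{X}^N_s) - B_i(\boldsymbol{X}^{N,h}_{t_m}) = [B_i(\boldsymbol{X}^N_s) - B_i(\boldsymbol{X}^N_{t_m})] + [B_i(\boldsymbol{X}^N_{t_m}) - B_i(\boldsymbol{X}^{N,h}_{t_m})]$: the first bracket is $\mathcal{O}(\sqrt{s-t_m})$ in $L^2$ by Lipschitzness of $B_i$ and the standard regularity of the continuous IPS built from Part~(1), contributing $\mathcal{O}(h^2)$ per step; the second bracket, paired with $\tilde e^i_m$ and rewritten via $\tilde e^i_m = e^i_m - \tfrac{\sigma}{2}\Delta W^i_m$, gives a true contraction $-2\lambda h\cdot \tfrac{1}{N}\sum_i (e^i_m)^2$ via $\nabla U$-convexity and the $V$-symmetrization identity, while the residual $\Delta W^i_m$ terms are $\mathcal{O}(h^2)$ after Young. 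The recursion $\bar E_{m+1}\leq (1-ch)\bar E_m + Ch^2$ with $\bar E_m := \tfrac{1}{N}\sum_i \mathbb{E}[|\tilde e^i_m|^2]$ iterates to $\bar E_m \leq Ch$ uniformly in $m$, and the identity $|e^i_m|^2 \leq 2|\tilde e^i_m|^2 + \tfrac{\sigma^2}{2}|\Delta W^i_m|^2$ together with exchangeability yields $\mathbb{E}[|e^i_m|^2] \leq C h$ uniformly in $i$ and $m$.

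The principal obstacle is the non-Markovian coupling: $\Delta W^i_m$ appears in $X^{i,N,h}_{t_m}$ via the previous step and again in $X^{i,N,h}_{t_{m+1}}$, breaking the usual independence between state and increment and generating extra drift--noise cross terms in the energy estimate. The telescoping substitution $\tilde e^i_m$ (which only works because the $\Delta W^i_m$ contribution to the scheme is linear and shared across two successive steps) is what saves the uniform-in-time $L^2$ rate: were one to square the $e^i$-recursion directly, an additive $\mathcal{O}(h)$ noise variance would be injected at every step and the cumulative $L^2$-error would grow linearly in $T$ rather than remain $\mathcal{O}(h)$.
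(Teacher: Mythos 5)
Your treatment of Part~(2) is essentially the paper's own proof, just phrased more transparently: the telescoping shift $\tilde e^i_m := e^i_m - \tfrac{\sigma}{2}\Delta W^i_m$ is exactly the paper's postprocessed auxiliary process in disguise. Indeed, with $\hat X^{i,N,h}_{t_{m+1}} = X^{i,N,h}_{t_{m+1}} - \tfrac{\sigma}{2}\Delta W^i_{m+1}$ (which is the paper's \eqref{eq: def : aux Euler scheme 0}), one has $\tilde e^i_{m+1} = X^{i,N}_{t_{m+1}} - \hat X^{i,N,h}_{t_{m+1}}$, and the paper's $\Delta_{i,m}$ differs from your $\tilde e^i_{m+1}$ only by the $\mathcal{O}(h)$ drift increment $\int_{t_m}^{t_{m+1}} B_i(\boldsymbol{X}^N_s)\,\mathrm{d}s$, which is harmless at the $\mathcal{O}(h)$ level. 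Your noise-free recursion and the subsequent splitting (time-regularity of the continuous IPS for the first bracket, convexity plus $V$-symmetrization for the second) reproduces the paper's split into $R^{i,1}_{t_m}$ and $R^{i,2}_{t_m}$. This half of your plan is sound as written.

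Part~(1) is where the proposal is materially under-specified, and it is precisely the place where the paper expends most of its effort. Your observation that $\mathbb{E}[X^{i,N,h}_{t_m}\Delta W^i_m] = \tfrac{\sigma h}{2}$ handles the state--noise correlation exactly, but only for $p=2$. For $p>2$ the problematic quantity is
\[
\mathbb{E}\big[\,|X^{i,N,h}_{t_m}|^{p-2}X^{i,N,h}_{t_m}\,\Delta W^i_m\big],
\]
and since $x\mapsto |x|^{p-2}x$ is nonlinear you cannot simply compute this as a covariance. You must decompose $X^{i,N,h}_{t_m} = Y + \tfrac{\sigma}{2}\Delta W^i_m$ with $Y$ $\mathcal{F}_{t_{m-1}}$-measurable, Taylor/Young the map $y\mapsto |y|^{p-2}y$ around $Y$, and use the independence of $Y$ from $\Delta W^i_m$. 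This produces terms proportional to $h\,\mathbb{E}[\,|Y|^{p-2}]$ (and higher), and $Y$ itself contains $X^{i,N,h}_{t_{m-1}}$ and $\Delta W^i_{m-1}$, which are again correlated, so one is forced to expand back further. The paper's execution of this leads to a three-term recursion
\[
a_m \le (1 - K_{p,3}h)\,a_{m-1} + K_{p,4}h\,(a_{m-2} + a_{m-3}) + K_{p,5}h
\]
with the constraint $K_{p,3} > 2K_{p,4}$ arranged via carefully chosen Young weights, and closes it with a non-trivial auxiliary sequence lemma (Lemma~\ref{prop: sequence inequality 1}). Saying the $p>2$ case is ``handled analogously'' passes over exactly this bookkeeping, and it is not routine: the coefficients in front of the lagged terms must be made strictly smaller than the contraction rate before the recursion can be closed uniformly in $T$. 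A secondary, minor remark: your direct symmetrization of the moment estimate (exploiting that $x\mapsto |x|^{p-2}x$ is increasing) legitimately avoids the paper's preliminary step of first bounding the differences $\mathbb{E}[|X^{i,N,h}_{t_m}-X^{j,N,h}_{t_m}|^p]$, which is a genuine simplification; but it does not touch the state--noise correlation issue, which is the true bottleneck and must be addressed before the argument is complete.
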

\begin{proof}
See Appendix  \ref{appendix_proof_basic22}.
\end{proof}

\section{The weak error expansion}
\label{section: Weak Error expansion}
Before presenting the framework for the weak error analysis and our main result, we require the following definition which will be helpful to characterize and analyze the higher-order variation processes. 
\begin{definition}
    \label{def: set of sequence pi} Let $n, m,N \in \mathbb{N}$ with $N\gg n, m$ be  given integers.
    Define the set of multi-indices
\begin{align*}
    \Pi_n^N:=\big\{ \gamma=(\gamma_1,\dots,\gamma_n):   \gamma_i\in \lbrace 1, \ldots, N \rbrace ~\textit{for all}~ i\in \lbrace 1, \ldots,n \rbrace\big\}, 
\end{align*}
    with $ \Pi_0^N \coloneqq \emptyset$ denoting the empty set. For a subset $\overline{\gamma} \subseteq \gamma$, let $|\overline{\gamma}|$ be its length. 
    
    For a given $\gamma \in \Pi_n^{N}$, let
    $\hat{\gamma}$ be a set of length $N$ counting the frequency of each $j \in \lbrace 1, \ldots, N \rbrace$ in $\gamma$, and define
    $\hco(\gamma):=\{ \text{number of non-zero values in}~\hat{\gamma} \}$. 
 We also use the following three operations for the multi-indices: for  $\gamma^{(1)} \in \Pi_n^{N}$ and $\gamma^{(2)} \in \Pi_m^{N}$, the difference $ \gamma^{(1)}\setminus \gamma^{(2)} \in \Pi_k^{N}$ is specified through the counting set (as $|\hat \gamma^{(1)}|=|\hat \gamma^{(2)}|=N$) 
 \begin{align*}
     \hat{\gamma}^{\textrm{diff}} \coloneqq \lbrace \max \lbrace \hat{\gamma}^{(1)}_1- \hat{\gamma}^{(2)}_1,0 \rbrace, \ldots, \max \lbrace \hat{\gamma}^{(1)}_N- \hat{\gamma}^{(2)}_N,0 \rbrace \rbrace,
 \end{align*} 
 where $k$ is the number of non-zero elements in\footnote{In this work the difference $\gamma^{(1)}\setminus \gamma^{(2)}$ ( also, the intersection $\gamma^{(1)}\bigcap\gamma^{(2)}$) is \textit{never} used directly, only the quantity $\hco( \gamma^{(1)}\setminus \gamma^{(2)})$ (also, $\hco( \gamma^{(1)}\bigcap \gamma^{(2)})$) will be used and thus we require only $\hat{\gamma}^{\textrm{diff}}$ (also, $ \hat{\gamma}^{\textrm{intersec}}$). For practical purposes, one can think of the $\gamma$ as being ordered vectors (in increasing order) -- see Example \ref{example:Counting gamma multiindices}.}  $\hat{\gamma}^{\textrm{diff}}$.  

\color{black}
Similarly, the intersection $ \gamma^{(1)}\bigcap\gamma^{(2)}\in \Pi_k^{N}$ is specified through the counting set 
 \begin{align*}
     \hat{\gamma}^{\textrm{intersec}} \coloneqq \lbrace \min \lbrace \hat{\gamma}^{(1)}_1, \hat{\gamma}^{(2)}_1 \rbrace, \ldots, \min \lbrace \hat{\gamma}^{(1)}_N, \hat{\gamma}^{(2)}_N \rbrace \rbrace,
 \end{align*} 
 where $k$ is the number of non-zero elements. 
\color{black}
  
 The union is defined by 
 \begin{align*}
     \gamma^{(1)}\bigcup\gamma^{(2)}=(\gamma^{(1)}_1,\dots,\gamma^{(1)}_{n}, \gamma^{(2)}_1,\dots,\gamma^{(2)}_{m}) \in \Pi_{n+m}^{N}.
 \end{align*}

 For two sets of multi-indices, $\Pi_n^{N}$ and $\Pi_m^{N}$, with $n \neq m$, the union is defined as
\begin{align*}
         \Pi_n^{N} \bigcup \Pi_m^{N} = \lbrace \gamma: \gamma \in \Pi_n^{N} \text{ or } \gamma \in \Pi_m^{N} \rbrace.
\end{align*}
\end{definition}

The shuffle product\footnote{
\color{black}
The shuffle product $\shuffle$ is the binary operation that takes two ordered sequences (multi-indices for our purposes) and creates all possible ways of interleaving them while preserving the relative order of elements in each sequence. 
In a nutshell, the shuffle operation ensures that the relative order of elements within each original sequence is preserved and all possible interleavings are included  -- see Example \ref{example:Counting gamma multiindices}.
\color{black}
} 
(see, \cite{shuffledefinition}) for two multi-indices $\gamma^{(1)}\in \Pi_n^{N}$ and $\gamma^{(2)} \in \Pi_m^{N}$ is denoted by $\gamma^{(1)} \shuffle \gamma^{(2)}$. We write $\gamma^{(1)} \simeq \gamma^{(2)}$ if $m = n $ and there exists a permutation $\pi \in S_n$ such that $(\gamma^{(1)}_1,\ldots,\gamma^{(1)}_n)=(\gamma^{(2)}_{\pi(1)},\ldots,\gamma^{(2)}_{\pi(n)})$, where $S_n$ is the symmetric group on the set $\{1,\ldots,N\}$. 
\begin{example}
\label{example:Counting gamma multiindices}
   We present the following examples to make Definition \ref{def: set of sequence pi} clearer: For $N =3$, we have:
    \begin{align*}
        \Pi_1^3=\big\{( 1 &),( 2 ),( 3 )\big\},
        \quad
        \Pi_2^3=\big\{(1,1 ),(1,2 ) ,( 1,3),(2,1 ),(2 , 2 ),( 2 , 3 ),( 3 , 1 ),(3  , 2),(  3, 3 )\big\}.
    \end{align*}
    For $N \geq 3$, let  $(1,1,1 )$ (i.e., $\hat{\gamma} =(3,0,\dots,0),~|\hat{\gamma}|=N$), $(1,1,2 )$ (i.e., $\hat{\gamma} =(2,1,0,\dots,0)$) , $(1,2,2 ) \in \Pi_3^{N} $ (i.e., $\hat{\gamma} =(1,2,0,\dots,0)$). Then we have  
\begin{align*}
    \hco\big(( 1,1,1 )\big)=1, 
        \quad
        \hco\big((1,1,2 )\big)=2,
        \quad
        \hco\big((1,2,3 )\big)=3.
\end{align*}

     With regards to the set operations, we offer an example for $\alpha=(1,2,3,3),\beta = (3,5)$:
      \begin{align*}
        \alpha \setminus \beta =  ( 1,2,3 ),
        \quad 
         \alpha \bigcup \beta =  ( 1,2,3,3,3,5 ).
    \end{align*}
    In regards to the shuffle product, take for example $\alpha=(1,2)$, $\beta = (3)$, then 
    \begin{align*}
        \alpha \shuffle \beta 
        & = \big\{ ( 3,1,2 ),(1,3,2  ) ,( 1,2,3) \big\}.
    \end{align*}
\color{black}
It should be apparent that $\hco$ yields the same result for all elements in the shuffle product $\alpha \shuffle \beta$, and we can understand $( 1,2,3) \simeq (1,3,2  )
        \simeq ( 2,1,3)\simeq ( 2,3,1) \simeq ( 3,1,2 )\simeq( 3,2,1)$.
\color{black}
        
Lastly, the following summation of the elements in an $N\times N$ matrix with elements   $a_{i,j}$ for $i,j \in \lbrace 1, \ldots, N \rbrace $,  demonstrates the meaning of $\hco(\cdot)$ in the definition for the case $n=2$:
    \begin{align*}
        \sum_{i=1}^N \sum_{j=1}^N a_{i,j} 
        = \sum_{\gamma\in \Pi_2^N} a_{{\gamma_1},{\gamma_2}}
        = 
        \sum_{ \substack{ \gamma\in \Pi_2^N,   \hco(\gamma )=1 }   } a_{{\gamma_1},{\gamma_2}}
        +
        \sum_{ \substack{ \gamma\in \Pi_2^N,  \hco(\gamma )=2 }   } a_{{\gamma_1},{\gamma_2}},
    \end{align*}
    where we partitioned the summation into diagonal and off-diagonal elements. 
\end{example} 
For our analysis, we impose the following assumption.  
\begin{assumption}
\label{assum:main_weak error}
Assumption~\ref{assum:main} holds. Further, suppose that:  
\begin{enumerate}[(1)]
    \item The potentials $U, V \in \mathcal{C}^{7}(\mathbb{R})$, 
    and all derivatives of $\nabla U, \nabla V$ are uniformly bounded. (This in particular implies that $\nabla U, \nabla V$ are Lipschitz continuous.)
\item The convexity parameters $\lambda,K_V$ satisfy $\lambda \geq  7 K_V$.  
\item 
 \color{black}
Let $N \in \mathbb{N}$ with $N\gg 6$.  
For any $n \in \lbrace 1, \ldots, 6 \rbrace$ and $ ( \gamma_1,\dots,\gamma_{|\gamma|} )= \gamma \in \bigcup_{k=1}^n \Pi_k^N$, with integers $\gamma_j \in \lbrace 1, \ldots, N \rbrace$, 
there exists a constant $K$ independent of $N$ such that 
the function $g:\mathbb{R}^N \to \bR$, satisfies 
$|\partial^{|\gamma|}_{{x_{\gamma_1},\dots,x_{\gamma_{|\gamma|}}}} g|_{\infty} \leq  K \cdot N^{-\hco(\gamma)}$.
\color{black}
\item The function $g$ and its derivatives up to order $n$ are Lipschitz. (Note that item (3) implies that the function $g$ and its derivatives up to order $n-1$ are Lipschitz.) 
\end{enumerate}
\end{assumption}

\begin{remark}(On point (1) of Assumption~\ref{assum:main_weak error}\label{rem:H2-diff}): 
Our analysis has a small reduction regarding the order of regularity when compared to \cite{leimkuhler2014long} who require the drift of the underlying model to be 8-times continuously differentiable. In \cite{leimkuhler2014long}, in an intermediate step, weak convergence of order $\mathcal{O}(h)$ is first established for the non-Markovian Euler scheme (which only requires the drift to be 6-times continuously differentiable). 

The intermediate result is then employed to show weak convergence of a certain term where the test function involves 2nd order derivatives of the drift and the solution to the Kolmogorov PDE. In our notation this test function is denoted by $L$ and is precisely defined in \eqref{eq:def of B0} below.
Since $L$ already involves second order derivatives (of the potentials), the higher regularity is needed. We are not able to show that $L$ possesses sufficient regularity properties (i.e., item (3) in Assumption~\ref{assum:main_weak error}) to apply the intermediate result concerning the first order weak convergence and resort to a strong convergence result instead (as a consequence we only derive the weak convergence rate 1.5 instead of 2); see Remark \ref{rem:Losing1/2 conv rate} for full details.  
\end{remark}

\begin{remark}[On point (2) of Assumption~\ref{assum:main_weak error}] 
The constraint $\lambda \geq  7K_V$ is not sharp and relates to the need of sufficient convexity as we analyze the $n$-th order variation processes (for the process defined in \eqref{eq: def of Bi FLOW SDE in RN}).  
For instance, Lemma~\ref{lemma: second variation bound noiid details} establishes moment bounds of the $2$nd variation processes of \eqref{eq: def of Bi FLOW SDE in RN} and for it we require $\lambda > (2+1/N)K_V$ in \eqref{eq: second-var I2pst result}. For the moment bounds of the $6$-{th} variation processes (in Lemma~\ref{lemma: n-var process result}), the requirement ends up being $\lambda> (6+1/N)K_V$ and we streamline it to $\lambda \geq 7K_V$. This is a technical constraint of the analysis stemming from the interplay between the confinement potential and the interaction kernel functions and will be made more precise in the proofs of the lemmas. Lastly, this assumption is not comparable to those in  \cite{leimkuhler2014long} or \cite{vilmart2015postprocessed} as neither have  interaction kernels; only confining potentials (see \eqref{Model1xx}). 
\end{remark}

\begin{remark}[On point (3) of Assumption~\ref{assum:main_weak error}]
\label{rem:ON Assumtion H2 part (3)}   
\color{black}
   Typical examples for $g$ satisfying the above assumptions would be $g(\boldsymbol{x}) = \tilde{g}\left( \frac{1}{N} \sum_{i=1}^{N} f(x_i) \right)$, for some functions $f, \tilde{g}: \mathbb{R} \to \mathbb{R}$ that are sufficiently often differentiable with bounded derivatives. We provide a few examples.

   \begin{itemize}
       \item    For instance, consider the case $n = 3$   
   and let $ \gamma \in    \Pi_{1}^N \bigcup \Pi_2^{N} \bigcup \Pi_{3}^N$ for which $\hat{\mathcal{O}}( \gamma)=3$ (e.g.,
   $ \gamma = ( 1,2,3 )$). Therefore, our assumption requires $|\partial^{3}_{x_1, x_2,x_3} {g}|_{\infty} = \mathcal{O}(N^{-3})$, which is satisfied for regular enough functions $\tilde{g}$ and $f$. 
   
   As a further example, consider    $ \gamma = ( 1,1,3 )$ for which $\hat{\mathcal{O}}( \gamma)=2$, and hence, $|\partial^{3}_{x_1, x_1,x_3} {g}|_{\infty} = \mathcal{O}(N^{-2})$. 

   \item  If $f=\textrm{id} $ then for any $| \gamma|$-order derivative, one has $|\partial^{| \gamma|}_{{x_{ \gamma_1},\dots,x_{ \gamma_{| \gamma|}}}} {g}|_{\infty} = \mathcal{O}(N^{-| \gamma|})$.

   \item Let $\gamma$ be $k$-dimensional with $k\geq 2$ and set $\tilde g=\textrm{id}$. 
  
   Then, any cross derivative of $g$ satisfies $\partial^2_{x_i x_j}g(\boldsymbol{x})=0$ for $i\neq j$ and only when $i=j$ we have $\partial^2_{x_i x_i}g(\bfx)= \frac1N f''(x_i)$. Thus $|\partial^{| \gamma|}_{{x_{ \gamma_1},\dots,x_{ \gamma_{| \gamma|}}}} {g}|_{\infty} = 0$ whenever $\hat{\mathcal{O}}( \gamma)\geq 2$ and $|\partial^{| \gamma|}_{{x_{ \gamma_1},\dots,x_{ \gamma_{| \gamma|}}}} {g}|_{\infty} = \cO(N^{-1})$ whenever $\hat{\mathcal{O}}( \gamma)=1$.

  \item These observations hold if adapted to the multi-dimensional case at the cost of an increased notational complexity. Without offering too many details, assume the particle system is set in $\boldsymbol{x} = (x_1, \ldots, x_N) \in (\bR^d)^N$ where $x_i=(x_{i,1},\ldots,x_{i,d})\in \bR^d$ and let $g:(\bR^d)^N\to \bR$, $\tilde g:\bR\to \bR$ and $f:\bR^d \to \bR$. Let $i,j\in\{1,\ldots,N\}$.

  A few easy calculations show immediately, using the $\nabla$ gradient notation (as maps are multidimensional), that
  \[
  |\nabla_{x_i} g(\boldsymbol{x})|_\infty = \mathcal{O}(N^{-1})
  \quad \textrm{and}\quad 
  |\nabla_{x_j}\big(\nabla_{x_i} g(\boldsymbol{x}) \big)|_\infty = \mathcal{O}(N^{-1})\1_{i=j}+\mathcal{O}(N^{-2}).   
  \]
   \end{itemize}
Lastly, for the reader familiar with the language of Lions derivatives and flows of measures, it is possible to find (sufficiency) conditions on the derivatives of the measure functional $G:\mathcal P_2(\bR)\to \bR$ so that $g(\boldsymbol{x}):=G\big(\frac{1}{N} \sum_{i=1}^{N} \delta_{x_{i}}\big)$  satisfies the Assumption~3.3 (3). For the case $n=2$ (with $ \gamma \in    \Pi_{1}^N \bigcup \Pi_2^{N}$), this is easily seen using the idea of empirical projection map and \cite[Proposition 5.91 and Proposition 5.35]{CarmonaDelarue2017book1}. In essence, for the case $n=2$, a (strong) sufficient condition would be for the derivatives $\partial_v \partial_{\mu} G$ and $\partial^2_{\mu\mu} G$ to exist and be uniformly bounded. 
\color{black}
\end{remark}   

\paragraph*{Weak discretization error.} 
We define the weak discretization error induced by the non-Markovian scheme (see, \eqref{eq: def : non-Markov Euler scheme}) approximating the IPS $\boldsymbol{X}_{T}^{N}$ \eqref{ModelIPS} as follows: Let the test function  $g:\bR^N\to \bR$ satisfy Assumption~\ref{assum:main_weak error}. For any $T>0$, the weak approximation error is the worst case (over $g$) of 
\begin{align}
\label{eq:Weak Error}
    \textrm{Weak (discretization) Error}:=
    \bE\big[g\big(\boldsymbol{X}_{T}^{N}\big)\big]
    -
    \bE\big[g\big(\boldsymbol{X}^{N,h}_{T}\big)\big].
\end{align}

\paragraph*{The Kolmogorov backward equation for the flow and the weak error expansion.}

We study the weak error \eqref{eq:Weak Error} via an analysis of the Kolmogorov backward equation for the stochastic flow equation associated with the dynamics of \eqref{ModelIPS as in RN}. Concretely, let $ N \in \mathbb{N}$, $\boldsymbol{x} \in \mathbb{R}^N$,  and $0 \leq t \leq s $. Then we introduce  $\boldsymbol{X}^{t,\boldsymbol{x},N}_s = (X^{t,x_1,1,N}_s,\ldots,X^{t,x_N,N,N}_s)$, where
\begin{align}
    \label{eq: def of Bi}
    X^{t,x_i,i,N}_s 
    & 
    = x_i + \int_{t}^{s} B_i( X^{t,x_1,1,N}_u,\ldots, X^{t,x_N,N,N}_u) \mathrm{d}u + \sigma (W^{i}_s-W^{i}_t), 
    \quad i \in \lbrace 1, \ldots, N\rbrace,
\\ \label{eq: def of Bi FLOW SDE in RN}
     \boldsymbol{X}_s^{t, \boldsymbol{x},  N} &
    =
     \boldsymbol{x} 
     + \int_{t}^{s} B( \boldsymbol{X}_r^{t, \boldsymbol{x},  N} ) \mathrm{d}r 
     + \sigma (\boldsymbol{W}_s-\boldsymbol{W}_t).
\end{align}
The wellposedness of \eqref{eq: def of Bi} or \eqref{eq: def of Bi FLOW SDE in RN} under our assumptions is clear. The generator for \eqref{eq: def of Bi FLOW SDE in RN} is defined by  
\begin{align*}
    \mathcal{L}_N 
    &= \sum_{i=1}^{N} B_i \partial_{x_i} 
    + \frac{1}{2} \sigma^2 \partial^2_{x_i, x_i} ,
\end{align*}
where $B_i$ is the drift term for the $i$-th particle as in \eqref{eq: def of Bi}. 
Now, for $u:[0,T] \times \mathbb{R}^N \to \mathbb{R}$, we introduce the Kolmogorov backward equation:  
\begin{equation}
\label{PDE:Kolmogorov}
 \partial_t u + \mathcal{L}_N u = 0, \quad t \in [0,T), \quad u(T,\boldsymbol{x} ) = g(\boldsymbol{x}),
\end{equation}
for the above test function $g: \mathbb{R}^{N} \to \mathbb{R}$.
Under the above assumptions, the solution of the above PDE is given by the Feynman-Kac formula \cite{MilsteinTretyakov2021Book,F1971booksdeaaa}:
\begin{align}
    \label{eq: def of u in Ex form}
     u(t,\boldsymbol{x} ) 
     =   
     \mathbb{E} \left[ g(
     \boldsymbol{X}_T^{N})
      |   
      X_t^{i,N} = x_i, i \in \lbrace 1, \ldots ,N \rbrace \right].
\end{align}

To analyze the weak error \eqref{eq:Weak Error}, we need to expand it akin to a Talay--Tubaro expansion \cite{talaytubaru1990expansion} (see also  \cite{MilsteinTretyakov2021Book,haji2021simple}), but with certain fundamental differences. The following expansion is shown in \cite{leimkuhler2014long}.  
\begin{lemma}[Weak error expansion, Equation~(3.17) in \cite{leimkuhler2014long}]
\label{lemma:Weak Expansion Leimkuhler}

Let Assumption~\ref{assum:main_weak error} hold, then the following expansion of the weak error holds for the processes defined in \eqref{ModelIPS} and \eqref{eq: def : non-Markov Euler scheme}. 

Take any sufficiently small timestep $h$ (chosen independently of  $T,M,N$) and satisfying $0 < h < \min \{1/2\lambda,1\}$. Set $m \in \lbrace 0, \ldots, M-1 \rbrace$ for a given $M \in \mathbb{N}$ (recall $T = t_M = Mh$), we then have
\begin{align}
    \label{eq: weak error exp}
    \mathbb{E}[g(\boldsymbol{X}^{N}_{T})] - \mathbb{E}[g(\boldsymbol{X}^{N,h}_{T})] &= h^2 \mathbb{E} \left[\sum_{m=0}^{M-1} L(t_m,\boldsymbol{X}^{N,h}_{t_m}) \right]  +  \mathbb{E} \left[ \sum_{m=0}^{M-1} R(t_m,\boldsymbol{X}^{N,h}_{t_m}) \right],
\end{align}
where $L:\bR_{+}\times \bR^N \to \bR$ is defined via the map $u$ defined in \eqref{eq: def of u in Ex form} and the drifts $( B_i )_{i \in \{1,\ldots,N\}}$ in \eqref{ModelIPS} as 
\begin{align} 
    \nonumber
    L(t,\boldsymbol{x}) &= \frac{1}{2} \Big[ \sum_{i,j=1}^{N}B_{j}(\boldsymbol{x}) \partial_{x_j} B_{i}(\boldsymbol{x}) \partial_{x_i}u(t,\boldsymbol{x}) 
      +\frac{\sigma^2}{2} \sum_{i,j=1}^{N} \partial_{x_j}B_i(\boldsymbol{x})  \partial^2_{x_i, x_j} u(t,\boldsymbol{x}) 
    \\
\label{eq:def of B0}
    & \quad \quad \quad + \frac{\sigma^2}{2} \sum_{i,j=1}^{N} \partial^2_{x_j, x_j}B_i(\boldsymbol{x})  \partial_{x_i} u(t,\boldsymbol{x}) \Big],
    \quad t\geq 0 ,~ \boldsymbol{x}\in  \bR^N, 
\end{align}
and $R(\cdot,\cdot)$ is a collection of remainder terms (discussed and analyzed in Section \ref{sec:Remainder terms R}).
\end{lemma}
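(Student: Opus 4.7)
The plan is to follow the Talay--Tubaro telescoping approach, carefully adapted to the non-Markovian increment structure $\tfrac{\sigma}{2}(\Delta\boldsymbol{W}_m+\Delta\boldsymbol{W}_{m+1})$ and to the vectorial drift $B$. Using the Feynman--Kac representation \eqref{eq: def of u in Ex form} together with the initial condition $\boldsymbol{X}^{N,h}_0=\boldsymbol{X}^N_0$, one has $\mathbb{E}[g(\boldsymbol{X}^N_T)]=\mathbb{E}[u(0,\boldsymbol{X}^{N,h}_0)]$, while by the terminal condition $\mathbb{E}[g(\boldsymbol{X}^{N,h}_T)]=\mathbb{E}[u(T,\boldsymbol{X}^{N,h}_T)]$. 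Telescoping over the grid points thus converts the weak error \eqref{eq:Weak Error} into
\begin{align*}
\mathbb{E}[g(\boldsymbol{X}^N_T)]-\mathbb{E}[g(\boldsymbol{X}^{N,h}_T)] = -\sum_{m=0}^{M-1}\mathbb{E}\big[u(t_{m+1},\boldsymbol{X}^{N,h}_{t_{m+1}})-u(t_m,\boldsymbol{X}^{N,h}_{t_m})\big],
\end{align*}
so the task reduces to a careful one-step expansion of each summand.

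For each $m$, I would Taylor expand $u(t_{m+1},\cdot)$ around $(t_m,\boldsymbol{X}^{N,h}_{t_m})$ to sufficient order in both time and space, using the backward PDE $\partial_t u=-\mathcal{L}_N u$ to replace all time derivatives of $u$ by the generator applied to $u$. With $\Delta\boldsymbol{X}^{h}_m := \boldsymbol{X}^{N,h}_{t_{m+1}}-\boldsymbol{X}^{N,h}_{t_m} = B(\boldsymbol{X}^{N,h}_{t_m})h+\tfrac{\sigma}{2}(\Delta\boldsymbol{W}_m+\Delta\boldsymbol{W}_{m+1})$, the one-step difference becomes a polynomial in $h$ and in products of the two Brownian increments, weighted by mixed spatial derivatives $\partial_{x_i}u, \partial^2_{x_i,x_j}u,\ldots$ of $u$ evaluated at $(t_m,\boldsymbol{X}^{N,h}_{t_m})$. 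For the classical Euler scheme the increment would be simply $\sigma\Delta\boldsymbol{W}_{m+1}$, which is independent of $\mathcal{F}_{t_m}$, and the PDE combined with It\^o-isometry identities cancels the $\mathcal{O}(h)$ contributions, leaving an $\mathcal{O}(h^2)$ leading error.

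The key subtlety, and the main obstacle, is that in the non-Markovian scheme $\Delta\boldsymbol{W}_m$ is $\mathcal{F}_{t_m}$-measurable and already enters $\boldsymbol{X}^{N,h}_{t_m}$, so monomials in $\Delta\boldsymbol{W}_m$ do not vanish under $\mathcal{F}_{t_m}$-conditional expectation. Following \cite{leimkuhler2014long}, the resolution is to condition on $\mathcal{F}_{t_{m-1}}$ instead and to perform a secondary Taylor expansion of the quantities depending on $\boldsymbol{X}^{N,h}_{t_m}$ around $\boldsymbol{X}^{N,h}_{t_{m-1}}$, thereby exposing the dependence on $\Delta\boldsymbol{W}_m$ explicitly. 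Using independence of $\Delta\boldsymbol{W}_m$ and $\Delta\boldsymbol{W}_{m+1}$ together with $\mathbb{E}[(\Delta\boldsymbol{W}_m)^2]=h$, the cross-terms combining the two appearances of $\Delta\boldsymbol{W}_m$ yield precisely the three summands in $L$ appearing in \eqref{eq:def of B0}, each multiplied by $h^2$.

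Since the IPS is an $N$-dimensional SDE, the adaptation from the scalar derivation of \cite[Eq.~(3.17)]{leimkuhler2014long} is essentially notational: every spatial derivative of $u$ and $B$ carries indices $i,j\in\{1,\ldots,N\}$ which are summed against $B_i, \partial_{x_j}B_i, \partial^2_{x_j,x_j}B_i$, producing the double sums in \eqref{eq:def of B0}. All remaining Taylor remainders, as well as all products of Brownian increments with spatial derivatives of $u$ of order $\geq 3$, are collected into $R(t_m,\boldsymbol{X}^{N,h}_{t_m})$ for later analysis. The regularity imposed in Assumption~\ref{assum:main_weak error} on $g$, $U$, $V$ is exactly what is needed to legitimize all Taylor expansions carried out here; the uniform-in-$N$ and uniform-in-time control of the various derivative-weighted terms is deferred to Sections \ref{section: analysis of var process total}--\ref{section: analysis of the dus}, where the delicate combinatorics of the IPS comes into play.
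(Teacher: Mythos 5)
Your proposal reconstructs the same Talay--Tubaro telescoping argument that the paper's proof delegates to Leimkuhler--Matthews--Tretyakov (see the paper's one-paragraph proof, which simply identifies $L$ with $B_0$ in their Theorem 3.4 and refers to \cite[Eqs.~(3.8), (3.10)--(3.16)]{leimkuhler2014long} for the remainder decomposition), so in substance you are recasting the cited derivation in the $\bR^N$-valued IPS notation, exactly as the paper intends. The one imprecision is the expansion point: the secondary Taylor expansion is not carried out around $\boldsymbol{X}^{N,h}_{t_{m-1}}$ but around the predictor $\hat{\boldsymbol{X}}^{N,h}_{t_m} = \boldsymbol{X}^{N,h}_{t_m} - \tfrac{\sigma}{2}\Delta\boldsymbol{W}_m$ (the paper's \eqref{eq: def : aux Euler scheme 0} and \eqref{process:taylor}), which satisfies $\hat{\boldsymbol{X}}^{N,h}_{t_m} = \boldsymbol{X}^{N,h}_{t_{m-1}} + B(\boldsymbol{X}^{N,h}_{t_{m-1}})h + \tfrac{\sigma}{2}\Delta\boldsymbol{W}_{m-1}$ and is therefore $\mathcal{F}_{t_{m-1}}$-measurable and independent of $\Delta\boldsymbol{W}_m,\Delta\boldsymbol{W}_{m+1}$; it is this specific splitting that cleanly isolates the $\Delta\boldsymbol{W}_m$-dependence and produces the three summands of $L$ at order $h^2$. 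Since your sketch already invokes the right conditioning and the right heuristic (exposing the twice-appearing $\Delta\boldsymbol{W}_m$), this is a cosmetic, not a logical, discrepancy.
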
 

\begin{proof}
This expansion is derived and presented in \cite[Equation~(3.17)]{leimkuhler2014long} and we do not reproduce it here. Our function $L$ given in \eqref{eq:def of B0} is denoted as $B_0$ in \cite{leimkuhler2014long} (see their Theorem 3.4). 
The sum of remainders $R(t_m,\cdot)$ in \eqref{eq: weak error exp} corresponds to the second sum of remainders $h^3 r(t_m,\cdot)$ in  \cite[Equation~(3.17)]{leimkuhler2014long} -- where the $h^3 r(t_m,\cdot)$ itself is a linear combination of remainders $h^3 r_j$ for $j \in \lbrace 1,\ldots,8 \rbrace$ appearing in Equations (3.8), (3.10)--(3.16) in \cite[p7-9]{leimkuhler2014long}. This derivation is discussed in more depth in Section \ref{sec:Remainder terms R}. 
\end{proof}
We aim to control the growth of $L$ and the remainder $R$ in terms of quantities that do not grow in $N$ -- this is a central technical difference to \cite{leimkuhler2014long}. A key point in the growth analysis for $L$ (and $R$) is the suitable control of moment bounds for the variation processes of $(\boldsymbol{X}^{t,\boldsymbol{x},N}_s)_{s \geq t \geq 0}$, which will be discussed in the next section.
We end this section with this manuscript's main result. Its proof is given in Section \ref{section: weak error expansion} after establishing a large collection of auxiliary results in Sections \ref{section: analysis of var process total} and \ref{section: analysis of the dus}, and the appendix.
\begin{Theorem}
\label{theorem: main weak convergence}
\color{black}
    Let Assumption~\ref{assum:main_weak error} hold, let  $\xi \in L^{10}(\Omega, \bR)$ and take a timestep $h$ (chosen independently of  $T,M,N$) sufficiently small satisfying $0 < h < \min \{1/2\lambda,1\}$. 
    Then, the following expansion for the weak error for the processes defined in \eqref{ModelIPS} and \eqref{eq: def : non-Markov Euler scheme} holds: for any $N,M\in \mathbb{N}$ (with $T$ defined as $T = Mh$), 
\begin{align*}
        \big|  \mathbb{E}[g(\boldsymbol{X}^{N}_{T})] 
          &- \mathbb{E}[g(\boldsymbol{X}^{N,h}_{T})] \big| \leq |C_0(T)| h + K h^{3/2},
    \end{align*}
where 
\begin{align*}
    C_0(T) & \coloneqq \mathbb{E} \left[\int_{0}^{T} L(t,\boldsymbol{X}^{N}_t) \mathrm{d}t \right], 
\quad \textrm{and} \quad 
|C_0(T)|  \leq K \exp(-\lambda_0 T)+Kh^{1/2},
\end{align*}
\color{black}
for some positive constants $\lambda_0, K$ independent of $h, T,M$ and $N$.
\end{Theorem}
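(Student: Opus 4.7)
The plan is to start from the Talay--Tubaro-type expansion in Lemma~\ref{lemma:Weak Expansion Leimkuhler}, which splits the weak error as $h^2 \sum_{m=0}^{M-1} \mathbb{E}[L(t_m, \boldsymbol{X}^{N,h}_{t_m})] + \sum_{m=0}^{M-1} \mathbb{E}[R(t_m, \boldsymbol{X}^{N,h}_{t_m})]$. The proof then proceeds in three conceptual steps: (i) convert the principal $L$-sum into $h\, C_0(T) + \mathcal{O}(h^{3/2})$; (ii) bound $|C_0(T)|$ by $K e^{-\lambda_0 T} + K h^{1/2}$; (iii) bound the remainder by $\mathcal{O}(h^{3/2})$. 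Uniformity in both $N$ and $T$ is enforced throughout by Propositions~\ref{prop:basic_estimates11}--\ref{prop:basic_estimates22} and by the $L^p$-bounds on derivatives of $u$ in Lemma~\ref{lemma: higher-order derivative of u}, which crucially decay as $e^{-\lambda_0(T-t)}$.

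For step (i), I rewrite the discrepancy $h \sum_m \mathbb{E}[L(t_m, \boldsymbol{X}^{N,h}_{t_m})] - C_0(T)$ as a telescoping integral $\sum_m \int_{t_m}^{t_{m+1}} \mathbb{E}[L(t_m, \boldsymbol{X}^{N,h}_{t_m}) - L(t, \boldsymbol{X}^N_t)]\,dt$ and split its integrand into a time-regularity piece $L(t_m, \boldsymbol{X}^N_{t_m}) - L(t, \boldsymbol{X}^N_t)$ and a strong-error piece $L(t_m, \boldsymbol{X}^{N,h}_{t_m}) - L(t_m, \boldsymbol{X}^N_{t_m})$. The former uses the standard $L^2$-time regularity $\mathbb{E}[|\boldsymbol{X}^N_t - \boldsymbol{X}^N_{t_m}|^2]^{1/2} \leq K h^{1/2}$ plus Lipschitz control of $L(t, \cdot)$ in $\boldsymbol{x}$; the latter uses Proposition~\ref{prop:basic_estimates22}(2), which delivers $\mathbb{E}[|\boldsymbol{X}^N_{t_m} - \boldsymbol{X}^{N,h}_{t_m}|^2]^{1/2} \leq K h^{1/2}$. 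Since the Lipschitz constant of $L(t, \cdot)$ inherits an $e^{-\lambda_0(T-t)}$ decay from the $u$-derivative estimates, the weighted geometric sum $h \sum_m e^{-\lambda_0(T-t_m)}$ is bounded uniformly in $T, N$, so the conversion error is at most $K h^{1/2}$, and multiplying by the outer $h$ yields $\mathcal{O}(h^{3/2})$ as desired. This is also the precise location at which the loss of half a convergence order originates, in line with Remark~\ref{rem:Losing1/2 conv rate}.

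For step (ii), I combine uniform-in-time exponential ergodicity of $\boldsymbol{X}^N$ (available under the strong convexity of $U$ and convexity of $V$, cf.~Proposition~\ref{prop:basic_estimates11}) with a Leimkuhler--Matthews-type cancellation at the invariant measure: integration by parts on $\mathbb{R}^N$ against the explicit density \eqref{eq.INTRO.ergodicDistro} yields $\mathbb{E}_{\mu^{N,*}}[L(t, \cdot)] = 0$ (up to a residual absorbed into the $K h^{1/2}$ slack). Consequently, $|\mathbb{E}[L(t, \boldsymbol{X}^N_t)]| \leq K e^{-\kappa t} |L(t, \cdot)|_{\mathrm{Lip}} \leq K e^{-\kappa t} e^{-\lambda_0 (T-t)}$, and integrating in $t$ produces $|C_0(T)| \leq K T\, e^{-\min(\kappa, \lambda_0)\, T}$, which is dominated by $K' e^{-\lambda_0' T}$ for a slightly smaller rate $\lambda_0'$.

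For step (iii), each summand of $R$ is a Taylor leftover featuring products of $u$-derivatives (up to sixth order), Brownian increments, and a prefactor of at least $h^3$. Applying the $L^p$-bounds of Lemma~\ref{lemma: higher-order derivative of u} (decaying as $e^{-\lambda_0(T-t_m)}$), the uniform $L^p$-moment bounds of Proposition~\ref{prop:basic_estimates22}(1), and standard Gaussian moment identities, each term is bounded by $K h^3 e^{-\lambda_0(T-t_m)}$; the geometric sum gives $\mathcal{O}(h^2)$, comfortably within the $\mathcal{O}(h^{3/2})$ budget. The hardest step will be (ii): establishing the cancellation identity for $L$ at $\mu^{N,*}$ \emph{uniformly in $N$} is delicate, since $L$ couples the IPS drift derivatives with the terminal-time-dependent solution $u(t, \cdot)$ over the full $N$-particle state space, and the mixing rate $\kappa$ must be finely matched with the decay rate $\lambda_0$ arising from the $u$-derivative estimates of Section~\ref{section: analysis of the dus}.
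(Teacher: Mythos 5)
Your proposal follows the paper's high-level route (the same Talay--Tubaro expansion, the same telescoping of the $L$-sum, the same ergodic cancellation of $L$ against $\mu^{N,*}$, and the same remainder budget), and steps (ii)--(iii) are essentially right in spirit. However, step (i) has a genuine gap. You split the integrand into a strong-error piece $L(t_m,\boldsymbol{X}^{N,h}_{t_m})-L(t_m,\boldsymbol{X}^N_{t_m})$ and a "time-regularity piece" $L(t_m,\boldsymbol{X}^N_{t_m})-L(t,\boldsymbol{X}^N_t)$, and you claim the latter is controlled by $L^2$-time regularity of $\boldsymbol{X}^N$ plus spatial Lipschitz control of $L(t,\cdot)$. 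But that difference changes the time argument in two distinct places: not only in the process $\boldsymbol{X}^N$, but also in the first argument of $L$ itself -- and $L(\cdot,\boldsymbol{x})$ depends on $t$ through the Kolmogorov solution $u(t,\cdot)$ and its derivatives. The time-regularity of $L$, i.e.\ the difference $L(t_m,\boldsymbol{x})-L(s,\boldsymbol{x})$ at a \emph{fixed} state $\boldsymbol{x}$, is a separate contribution that cannot be captured by spatial Lipschitz estimates. This is precisely the term \eqref{eq:  rb sum term2} in the paper's proof of Lemma~\ref{lemma: analysis of the b0 term summation integration}, and controlling it uniformly in $N$ and in $T$ is one of the harder parts of the argument: it requires time-regularity estimates for the derivatives $\partial^n u(t,\cdot)$, which in turn reduce to moment bounds for \emph{differences of variation processes started at different times}, as established in Proposition~\ref{propsition:  first variation bound prop v2}, Proposition~\ref{prop: second var extension} and Lemma~\ref{lemma: 4 moment difference}. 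Without this piece your estimate of the conversion error is incomplete. (Also note that for step (iii) the paper actually obtains only $\mathcal{O}(h^{3/2})$, not $\mathcal{O}(h^2)$, because the $u$-derivative bounds of Lemma~\ref{lemma: higher-order derivative of u} are only in $L^p$, so Hölder is forced and a further $h^{1/2}$ is lost -- the same phenomenon as in Remark~\ref{rem:Losing1/2 conv rate}; the final answer is unchanged, but your stated gain is not available.)
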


\begin{proof}
    This result follows as a consequence of Lemmas \ref{lemma: analysis of the b0 term summation integration} and \ref{lemma: analysis of the residual term} in Section \ref{section: weak error expansion}.
\end{proof}
It is clear from the main statement that, as $T \to \infty$, the weak error is of order ${3/2}$ uniformly in $N$, and at finite time $T<\infty$ it is of order $1$ uniformly in $N$. We flag that for standard SDEs (without concern for uniformity in $N$), both \cite{leimkuhler2014long,vilmart2015postprocessed} obtain an error of order $h^2$ (as  $T \to \infty$). This gap in our result is of technical nature (appearing in Section \ref{section: weak error expansion}) and is detailed in Remark \ref{rem:Losing1/2 conv rate}. The assumption that $\xi \in L^{10}(\Omega,\mathbb{R})$ stems from the analysis of the remainder terms $R$ appearing in the weak error expansion \eqref{eq: weak error exp} (see Section \ref{sec:Remainder terms R}). The analysis of the $L$ term only requires $\xi \in L^{4}(\Omega,\mathbb{R})$ (see Section \ref{section: proof of B0 terms}). 
\medskip 

\color{black}
Before moving into proofs and auxiliary results, we flag that Theorem \ref{theorem: main weak convergence} controls the weak discretization error between the non-Markovian scheme \eqref{eq: def : non-Markov Euler scheme} and the IPS' solution \eqref{ModelIPS as in RN}. To control the weak error between the non-Markovian scheme \eqref{eq: def : non-Markov Euler scheme} and the initial MV-SDE \eqref{Model1xx} one requires only a triangle inequality with the IPS' solution \eqref{ModelIPS as in RN} alongside a quantitative uniform in time weak propagation of chaos result controlling the error from \eqref{ModelIPS as in RN} to \eqref{Model1xx}. The latter is provided by \cite[Theorem 1.1]{bernou2024uniform} stating that such error is of order $\cO(1/N)$ uniformly over time $T$. In the time limit $T\to \infty$, the non-Markovian scheme approximates the original MV-SDE (in weak error) as $\cO(N^{-1})+\cO(h^{3/2})$.
\color{black}

\section{Analysis of Variation processes}
\label{section: analysis of var process total}
The results for the variation processes established below are key to studying the uniform-in-$N$ and uniform-in-time decay of the solution to the Kolmogorov backward equation. For completeness, we state the following lemma regarding wellposedness of the multiple variation process used throughout this section and drawing from classical SDE theory. The subsequent results of the section are devoted to establishing $L^p$-estimates of these processes that decay exponentially in time in a non-explosive way in $N$. 

\begin{lemma}
Let Assumption~\ref{assum:main_weak error} hold, and let $\mathbb{N} \ni n \leq 6 $ and $T>0$. For any $\boldsymbol{x} \in \mathbb{R}^N$, and $T \geq s \geq t \geq 0$, let $\boldsymbol{X}^{t,\boldsymbol{x},N}_s$ be defined by \eqref{eq: def of Bi}. Then its first $n$-variation processes given by
\eqref{first_var_process noiid},
\eqref{second_var_process noiid}, and \eqref{n_var_process noiid} have unique solutions.
\end{lemma}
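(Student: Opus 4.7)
The plan is to invoke standard wellposedness theory for linear SDEs with bounded adapted coefficients, applied inductively in the variation order $n$. Under Assumption~\ref{assum:main_weak error}(1), the drift $B = (B_1, \ldots, B_N)$ inherits from $\nabla U$ and $\nabla V$ the property of being $C^{6}$ with uniformly bounded derivatives of orders $1$ through $6$, since every derivative of $\nabla U$ and $\nabla V$ is assumed uniformly bounded. This is the key regularity that allows formal differentiation of the flow SDE \eqref{eq: def of Bi FLOW SDE in RN} to produce the variation processes as genuine SDEs rather than formal expressions.

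First I would handle the first variation. Differentiating \eqref{eq: def of Bi FLOW SDE in RN} once with respect to the initial condition produces a linear SDE whose drift is the matrix-valued process $\big(\partial_{x_j} B_i(\boldsymbol{X}^{t,\boldsymbol{x},N}_s)\big)_{i,j}$ evaluated along the flow, while the diffusion contribution vanishes because the noise $\sigma \boldsymbol{W}$ in \eqref{eq: def of Bi FLOW SDE in RN} is additive. Since these coefficients are adapted and uniformly bounded, existence and uniqueness of a strong solution with finite $L^p$-moments for every $p \geq 2$ follows from the classical theory of linear SDEs with bounded adapted coefficients (the argument proceeds via a Picard iteration or a Gronwall estimate applied to the difference of two candidate solutions).

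For the higher-order variations, $n \in \{2,\ldots,6\}$, I would proceed by induction on $n$. Repeated formal differentiation of the flow equation yields, for each $n$, an SDE that is again linear in the $n$-th variation (with leading coefficient $\partial_{x_j} B_i$ along the flow), together with an inhomogeneous forcing term of Fa\`a di Bruno type — a polynomial in the strictly lower-order variations multiplied by derivatives $\partial^{k}_{x_{\gamma_1},\ldots,x_{\gamma_k}} B_i$ for $2 \leq k \leq n \leq 6$. The induction hypothesis provides wellposedness of the lower-order variations; combined with the uniform boundedness of $\partial^{k} B$ for $k \leq 6$, this forcing is adapted and in $L^p$ for every $p \geq 2$. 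The same linear-SDE argument as above then delivers existence and uniqueness of the $n$-th variation.

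The main obstacle, such as it is, is purely notational: carefully writing the $n$-th variation equation in the canonical form (linear part in the highest-order variation) $+$ (forcing built from strictly lower orders) so that the induction proceeds cleanly. Since only existence and uniqueness are claimed here — the quantitative moment bounds with exponential-in-time decay uniform in $N$ are postponed to the subsequent lemmas of Section~\ref{section: analysis of var process total} — no refined constants need to be tracked at this stage, and the proof reduces to classical SDE bookkeeping.
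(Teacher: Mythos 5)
Your proof is correct, and it takes a genuinely different route from the paper's. The paper dispatches this lemma in one paragraph by verifying that Assumption~\ref{assum:main_weak error} implies Assumption~(A) and conditions (5.12), (5.15) of Friedman \cite{F1971booksdeaaa}, then invoking Theorems~5.3 and~5.4 there (together with the remark in that reference that they extend to higher-order derivatives of the flow). You instead give a self-contained inductive argument: because the noise in \eqref{eq: def of Bi FLOW SDE in RN} is additive, each variation equation is a linear random \emph{ODE} with no martingale part; the first variation has bounded adapted coefficients $\partial_{x_l}B_i(\boldsymbol{X}^{t,\boldsymbol{x},N}_u)$ and initial datum $\delta_{i,j}$, and the $n$-th variation is linear in itself with a Fa\`a di Bruno type forcing built from derivatives of $B$ of order at most $n$ (all bounded by Assumption~\ref{assum:main_weak error}(1)) multiplied by variations of order strictly below $n$, so the induction closes. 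Both approaches prove the same fact; the paper's is shorter but offloads the verification to a textbook, whereas yours is elementary and makes explicit from the start the additive-noise simplification that the variation equations are ODEs rather than SDEs — a structural observation the paper only records later, in the remark following \eqref{first_var_process noiid} that this ODE structure lets one obtain all $L^p$-moments of the first variation without extra integrability of $\boldsymbol{X}^{t,\boldsymbol{x},N}$.
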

\begin{proof}
For any fixed $N$ and $T>0$, Assumption~\ref{assum:main_weak error} implies Assumption~(A) (p108), condition (5.12) and condition (5.15) (at any higher-order; see pages 120 and 122, respectively) in \cite{F1971booksdeaaa}. This suffices to ensure the wellposedness of the first \eqref{first_var_process noiid}, second \eqref{second_var_process noiid} and higher-order \eqref{n_var_process noiid} variation processes via Theorem 5.3 and Theorem 5.4 in \cite{F1971booksdeaaa} 
(see the comment after the proof of \cite[Theorem 5.5 (p123)]{F1971booksdeaaa} regarding the extension of Theorem 5.4 to higher order derivatives). Note that the analysis in the later sections of this article is carried out for an arbitrary $T>0$, and $T \to \infty$ is only considered in the final step; in particular, a wellposedness result for the variation processes in finite time suffices for our purposes.
\end{proof}

\subsection{First Variation process}
Here and below let $T>0$ be an arbitrary terminal time and let $T \geq s \geq t\geq 0$, $N\in \mathbb{N}$. The first variation process of $(\boldsymbol{X}^{t,\boldsymbol{x},N}_s)_{s \geq t\geq 0 }$ defined in \eqref{eq: def of Bi}, is given by 
    \begin{align}
        \label{first_var_process noiid}
        X^{t,x_i,i,N}_{s,x_j} 
        = \delta_{i,j} 
        + \int_{t}^{s} \sum_{l=1}^{N} \partial_{x_l}B_i(\bodX_{u}^{t,\bodx,N} )  X^{t,x_l,l,N}_{u,x_j} \mathrm{d}u,
    \end{align}
where  $\delta_{i,j}$ is the usual Kronecker symbol. The subindex $x_j$ in $X^{t,x_i,i,N}_{s,x_j}$ indicates the perturbation with respect to the $j$-th component of the initial data $\boldsymbol{x} \in \mathbb{R}^N$ of the flow process $(\boldsymbol{X}^{t,\boldsymbol{x},N}_s)_{s \geq t\geq 0 }$. Note that the processes $(X^{t,x_i,i,N}_{s,x_j})_{s \geq t \geq 0}$ (for different indices $i,j$) are, in general, not identically distributed. However, if the starting positions $x_i$, for $i \in \lbrace 1, \ldots, N \rbrace$, are all sampled from the same distribution, then the `diagonal' elements of $(\boldsymbol{X}^{t,\boldsymbol{x},N}_s)_{s \geq t\geq 0 }$ are identically distributed. (The same argument applies to the off-diagonal ones). This first lemma accounts for the different behaviours of $L^p$-moments for \eqref{first_var_process noiid}.
\begin{lemma}
\label{lemma: first variation bound noiid}
  Let Assumption~\ref{assum:main_weak error} hold and let $p \geq 2$.   
    Consider the first variation process $( X^{t,x_i,i,N}_{s,x_j})_{i,j \in \lbrace 1, \ldots, N \rbrace}$ defined by \eqref{first_var_process noiid} for $T \geq s \geq t \geq 0$. Then there exist constants $\lambda_1 \in (0,\lambda)$ and  $K>0$ (both are independent of  $s,t,T$ and $N$) such that for any $T \geq  s\geq t \geq 0$  
    \begin{equation}
    \label{eq: first var result}
          \sum_{i=1}^{N}  \mathbb{E}\Big[|X^{t,x_i,i,N}_{s,x_j}|^p\Big] \leq  K e^{- {\lambda p }(s-t)}
          \qquad \textrm{and} \qquad 
          \sum_{i=1, i\neq j}^{N}  \mathbb{E}\Big[|X^{t,x_i,i,N}_{s,x_j}|^p\Big] \leq  \frac{K}{N^{p-1}} e^{-\lambda_1 p(s-t)}.
    \end{equation} 
\end{lemma}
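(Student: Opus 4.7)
The plan is to derive a deterministic (pathwise) differential inequality for $\Phi_s^{(j)} := \sum_{i=1}^N |X^{t,x_i,i,N}_{s,x_j}|^p$ and then refine the argument to capture the $1/N^{p-1}$ decay of the off-diagonal sum. First, from \eqref{eq:def of func B} one computes
\begin{align*}
    \partial_{x_i} B_i(\boldsymbol{x})
    &= -\nabla^2 U(x_i) - \tfrac{1}{N}\sum_{k \neq i}\nabla^2 V(x_i - x_k),
    \qquad
    \partial_{x_l} B_i(\boldsymbol{x})
    = \tfrac{1}{N}\nabla^2 V(x_i - x_l)\ \text{for } l \neq i,
\end{align*}
so by Assumption~\ref{assum:main_weak error} the diagonal part contributes at most $-\lambda$, and the off-diagonal contribution is controlled by $K_V/N$ with $\nabla^2 V \geq 0$ even.

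For the first estimate in \eqref{eq: first var result}, I would write $Y^{i,j}_s := X^{t,x_i,i,N}_{s,x_j}$ and differentiate $\Phi_s^{(j)}$ using \eqref{first_var_process noiid}. The diagonal contribution $p\sum_i \partial_{x_i}B_i |Y^{i,j}|^p$ is bounded above by $-p\lambda\,\Phi_s^{(j)}$. For the cross term $\tfrac{p}{N}\sum_{i\neq l} \nabla^2 V(X^i_s - X^l_s)|Y^{i,j}|^{p-2}Y^{i,j}Y^{l,j}$, I would apply the symmetrization trick (swap $i \leftrightarrow l$, exploiting that $\nabla^2 V$ is even) together with Young's inequality $|a|^{p-2}ab \leq \tfrac{p-1}{p}|a|^p + \tfrac{1}{p}|b|^p$ to obtain the bound $\tfrac{p}{N}\sum_{i\neq l}\nabla^2 V|Y^{i,j}|^p$. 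This exactly cancels the negative convolution piece $-\tfrac{p}{N}\sum_{i\neq k}\nabla^2 V|Y^{i,j}|^p$ coming from $\partial_{x_i}B_i$. Thus $\tfrac{d}{ds}\Phi_s^{(j)} \leq -p\lambda\,\Phi_s^{(j)}$, and since $\Phi_t^{(j)} = 1$, Gronwall yields $\Phi_s^{(j)} \leq e^{-p\lambda(s-t)}$.

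For the second, sharper bound, the key observation is that for $i \neq j$ we have $Y^{i,j}_t = 0$, and the "source" driving $Y^{i,j}$ is the term $\partial_{x_j}B_i\cdot Y^{j,j} = \tfrac{1}{N}\nabla^2 V(X^i - X^j) Y^{j,j}$, which is of order $1/N$. This suggests the natural rescaling $Z^{i,j}_s := N\,Y^{i,j}_s$ for $i\neq j$. I would then repeat the $L^p$-energy argument on $\tilde\Psi_s := \sum_{i\neq j}|Z^{i,j}_s|^p = N^p \sum_{i\neq j}|Y^{i,j}_s|^p$. Splitting the drift according to $l = i$, $l=j$, and $l \notin\{i,j\}$, one gets
\begin{align*}
    \tfrac{d}{ds}\tilde\Psi_s
    \leq \bigl[-p\lambda + (2p-1)K_V\bigr]\tilde\Psi_s + (N-1)K_V\,|Y^{j,j}_s|^p,
\end{align*}
where the convexity assumption $\lambda \geq 7K_V$ (and $p \geq 2$) ensures $\lambda_1 := \lambda - (2p-1)K_V/p > 0$. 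The $l\notin\{i,j\}$ part is handled with the same symmetrization/Young trick as above, while the $l=j$ part is controlled by plain Young's inequality producing the forcing term $(N-1)K_V|Y^{j,j}_s|^p$. Using $|Y^{j,j}_s|^p \leq \Phi_s^{(j)} \leq e^{-p\lambda(s-t)}$ from Step~1 and Gronwall's inequality with $\tilde\Psi_t = 0$,
\begin{align*}
    \tilde\Psi_s \leq (N-1)K_V \int_t^s e^{-\lambda_1 p(s-u)}e^{-\lambda p(u-t)}\,\mathrm{d}u
    \leq K\,N\,e^{-\lambda_1 p(s-t)},
\end{align*}
and dividing by $N^p$ returns the claim. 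Taking expectations at the end (since the inequalities are pathwise) finishes the proof.

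The main technical difficulty is the second bound: naively bounding $\sum_{i\neq j}\mathbb{E}[|Y^{i,j}|^p]$ using the first estimate only yields $O(e^{-\lambda_1 p(s-t)})$, missing the crucial $1/N^{p-1}$ decay. The rescaling $Z^{i,j} = N\,Y^{i,j}$ combined with the symmetrization trick (in the spirit of Remark~\ref{Remark:symmetrization trick}) and a two-sided Young's inequality is precisely what separates the "diagonal" mode ($j$-th component, of order $1$) from the "off-diagonal" modes ($i\neq j$, of order $1/N$), and ensures the convexity margin $\lambda \geq 7K_V$ is large enough to absorb the $K_V$ corrections in the exponential rate.
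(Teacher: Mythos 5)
Your argument is correct and essentially tracks the paper's own proof: the convexity of $U$ supplies the $-p\lambda$ decay, the symmetrization trick (exploiting $\nabla^2 V$ even and $\geq 0$) kills the cross-convolution term, and Young's inequality isolates the $j$-th ``source'' term with the correct $1/N^{p-1}$ weight before closing via Gronwall. Your rescaling $Z^{i,j} = N\,Y^{i,j}$ and the pathwise (rather than expectation-level) formulation are cosmetic reframings of the paper's computation — the latter is legitimate because \eqref{first_var_process noiid} is a linear ODE with random coefficients and no martingale part, a point the paper itself makes at the end of its discussion.
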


This lemma and the earlier Remark \ref{rem:ON Assumtion H2 part (3)} highlight the main difficulty faced in this manuscript's analysis. The above inequalities suggest that the term $i=j$ delivers the $\cO(1)$ behaviour while all other (cross-derivative $i\neq j$) elements decay proportionally to the number of particles. Our analysis throughout this section is involved, as this behaviour needs to be tracked across  higher-order variation processes.

Lastly, since \eqref{first_var_process noiid} is a linear ODE with random coefficients (bounded in $\bodX^{t ,\bodx,N}$) and initial condition $\delta_{i,j}$ we are able to obtain all $L^p$-moments without imposing further constraints on the integrability of $\bodX^{t ,\bodx,N}$.

\begin{remark}[The `symmetrization trick']
\label{Remark:symmetrization trick}
    We employ a recurring argument in this proof which we coin as the \textit{symmetrization trick}. This trick  exploits that $\nabla^2 V$ is an even function.  That is, for any $x \in \mathbb{R}$ we have $\nabla^2 V(x)=\frac{1}{2}(\nabla^2 V(x)+\nabla^2 V(-x))$. 
\end{remark}

\begin{proof}
    Note that in the following proof, the positive constant $K$ is independent of $s,t,T,N$ and may change line by line. The essence of this proof is the application of It\^o's formula followed by standard domination arguments. 
    
    Applying It\^{o}'s formula yields for any $i, j \in \lbrace 1, \ldots, N \rbrace$  
    \begin{align*}
    &e^{\lambda p (s-t)}\mathbb{E}\Big[|X^{t,x_i,i,N}_{s,x_j}|^p\Big]
        \\
        &
        \leq  \delta_{i,j} + p \int_{t}^{s} e^{\lambda p (u-t)}
        \bigg( 
        \mathbb{E} \Big[   X^{t,x_i,i,N}_{u,x_j}\cdot	\Big(  \sum_{l=1}^{N} \partial_{x_l}B_i(\bodX_{u}^{t,\bodx,N}
        ) X^{t,x_l,l,N}_{u,x_j} \Big)  |X^{t,x_i,i,N}_{u,x_j}|^{p-2} \Big]
        + \lambda   \mathbb{E}\Big[|X^{t,x_i,i,N}_{u,x_j}|^p\Big]
        \bigg) 
        \mathrm{d}u \\
    & = \delta_{i,j} - p \int_{t}^{s} e^{\lambda p (u-t)}
        \bigg(  \mathbb{E}\Big[  X^{t,x_i,i,N}_{u,x_j} \cdot	\Big( \nabla^{2} U(X^{t,x_i,i,N}_u) X^{t,x_i,i,N}_{u,x_j}    \Big) |X^{t,x_i,i,N}_{u,x_j}|^{p-2} \Big] 
        - \lambda   \mathbb{E}\Big[|X^{t,x_i,i,N}_{u,x_j}|^p\Big]
        \bigg)
        \mathrm{d}u 
    \\
    & \quad   -   p \int_{t}^{s}   e^{\lambda p (u-t)}\mathbb{E} \Big[  X^{t,x_i,i,N}_{u,x_j}\cdot	\Big(  \sum_{l=1}^{N} \frac{1}{N}\sum_{k=1}^{N} 
\partial_{x_l} \nabla V(X^{t,x_i,i,N}_u-X^{t,x_k,k,N}_u)  X^{t,x_l,l,N}_{u,x_j}  \Big)   |X^{t,x_i,i,N}_{u,x_j}|^{p-2} \Big] \mathrm{d}u 
    \\
    & \leq \delta_{i,j} 
    -  p \int_{t}^{s} e^{\lambda p (u-t)} \mathbb{E} \Big[ X^{t,x_i,i,N}_{u,x_j} \cdot	\Big(  \sum_{l=1}^{N} \frac{1}{N}\sum_{k = 1}^{N} \partial_{x_l} \nabla V(X^{t,x_i,i,N}_u-X^{t,x_k,k,N}_u)  X^{t,x_l,l,N}_{u,x_j} \Big)   |X^{t,x_i,i,N}_{u,x_j}|^{p-2}\Big] \mathrm{d}u ,
    \end{align*}
where we used \eqref{eq: def of lambda} of Assumption~\ref{assum:main} to obtain
\begin{align}
\label{eq: first var func V}
    -  X^{t,x_i,i,N}_{u,x_j}\cdot	\Big(  \nabla^{2} U(X^{t,x_i,i,N}_u) X^{t,x_i,i,N}_{u,x_j} \Big)  \leq -\lambda |X^{t,x_i,i,N}_{u,x_j}|^2.
\end{align}
We further note that by the chain rule,
\begin{align}
    \label{eq: first var: sum derivative on convolution}
    \nonumber 
    \sum_{l=1}^{N}
     \frac{1}{N}\sum_{k = 1}^{N} & \partial_{x_l} \nabla V(X^{t,x_i,i,N}_u
      -X^{t,x_k,k,N}_u)  X^{t,x_l,l,N}_{u,x_j}  
     \\
     & 
     =
     \frac{1}{N} \sum_{l=1}^{N}  
        \nabla^2 V(X^{t,x_i,i,N}_u-X^{t,x_l,l,N}_u)  ( X^{t,x_i,i,N}_{u,x_j}-X^{t,x_l,l,N}_{u,x_j}).
\end{align}
Hence taking summation over $i  \in \lbrace 1, \ldots, N\rbrace$ in \eqref{eq: first var: sum derivative on convolution}, we have 
\allowdisplaybreaks
\begin{align}
\nonumber
&\sum_{i = 1}^{N} \mathbb{E} \Big[   
        X^{t,x_i,i,N}_{u,x_j} \cdot \Big( 
        \sum_{l=1}^{N}     \frac{1}{N}\sum_{k = 1}^{N} \partial_{x_l} \nabla V(X^{t,x_i,i,N}_u-X^{t,x_k,k,N}_u)  X^{t,x_l,l,N}_{u,x_j}\Big)  
       |X^{t,x_i,i,N}_{u,x_j}|^{p-2} \Big]
     \\
     \nonumber
   & = \bE \Big[ \frac{1}{N} \sum_{i = 1}^{N}\sum_{l=1}^{N} \Big( | X^{t,x_i,i,N}_{u,x_j}|^{p-2}X^{t,x_i,i,N}_{u,x_j} \cdot	
        \nabla^2 V(X^{t,x_i,i,N}_u-X^{t,x_l,l,N}_u)  ( X^{t,x_i,i,N}_{u,x_j}-X^{t,x_l,l,N}_{u,x_j}) 
       \Big)  \Big]
    \\
    \nonumber
   & = \bE \Big[ \frac{1}{2N} \sum_{i = 1}^{N}\sum_{l=1}^{N} \Big( | X^{t,x_i,i,N}_{u,x_j}|^{p-2}X^{t,x_i,i,N}_{u,x_j} \cdot	
        \nabla^2 V(X^{t,x_i,i,N}_u-X^{t,x_l,l,N}_u)  ( X^{t,x_i,i,N}_{u,x_j}-X^{t,x_l,l,N}_{u,x_j}) 
      \Big)   \Big]
    \\
   & \quad + \bE \Big[ \frac{1}{2N} \sum_{l=1}^{N}\sum_{i = 1}^{N} \Big( | X^{t,x_l,l,N}_{u,x_j}|^{p-2}X^{t,x_l,l,N}_{u,x_j} \cdot	
        \nabla^2 V(X^{t,x_l,l,N}_u-X^{t,x_i,i,N}_u)  ( X^{t,x_l,l,N}_{u,x_j}-X^{t,x_i,i,N}_{u,x_j}) 
       \Big)  \Big]
       \label{eq: symmetric trick}
    \\
    \nonumber
   & = \bE \Big[ \frac{1}{2N} \sum_{i = 1}^{N}\sum_{l=1}^{N}  \Big(~\big( | X^{t,x_i,i,N}_{u,x_j}|^{p-2}X^{t,x_i,i,N}_{u,x_j}-| X^{t,x_l,l,N}_{u,x_j}|^{p-2}X^{t,x_l,l,N}_{u,x_j} \big)
   \\
   \label{eq: evenness of laplacian of V}
   & \hspace{4cm} \cdot	
        \nabla^2 V(X^{t,x_i,i,N}_u-X^{t,x_l,l,N}_u)  ( X^{t,x_i,i,N}_{u,x_j}-X^{t,x_l,l,N}_{u,x_j}) 
       \Big) \Big] 
    \geq 0,
\end{align}
where in \eqref{eq: evenness of laplacian of V}, we used the inequality $ \left( |x|^{p-2}x -|y|^{p-2}y \right) \cdot \left( x-y \right) \geq 0$ for all $x,y \in \bR$ and the fact that $ \nabla^2 V(x)\geq 0$, for all $x \in \mathbb{R}$. 
In accordance to Remark \ref{Remark:symmetrization trick}, we used the \textit{symmetrization trick} to derive \eqref{eq: symmetric trick}. Hence, taking summation over $i \in \lbrace 1, \ldots, N \rbrace$, we deduce that
\begin{align*}
e^{\lambda p (s-t)}
    \sum_{i = 1}^{N} \mathbb{E}\Big[|X^{t,x_i,i,N}_{s,x_j}|^p\Big] &\leq 
    \sum_{i = 1}^{N}  \delta_{i,j} 
    = 
    1
\end{align*}
and consequently for all $j \in \lbrace 1, \ldots, N \rbrace$, we have
\begin{align}
    \label{eq: first var result 1}
    \sum_{i = 1}^{N} \mathbb{E}\Big[|X^{t,x_i,i,N}_{s,x_j}|^p\Big]  \leq e^{- \lambda p(s-t)}.
\end{align}
To prove the claim of the second result in \eqref{eq: first var result}, we first derive for any $j  \in \lbrace 1, \ldots, N \rbrace$ , $\lambda_1 
 \in (0,\lambda)$ (using  It\^{o}'s formula and \eqref{eq: first var func V}), 
\begin{align*}
    e^{\lambda_1 p(s-t)}&\sum_{i=1,i\neq j}^N
    \mathbb{E}\Big[|X^{t,x_i,i,N}_{s,x_j}|^p\Big]
\leq    p \int_{t}^{s}  (\lambda_1-\lambda)e^{\lambda_1 (u-t)}\sum_{i=1,i\neq j}^N \mathbb{E} \Big[ |X^{t,x_i,i,N}_{u,x_j}|^{p} \Big]  
\\
& \hspace{-0.7cm}  -  e^{\lambda_1 (u-t)} \sum_{i=1,i\neq j}^N
\mathbb{E} \Big[ |X^{t,x_i,i,N}_{u,x_j}|^{p-2} 
\\
& \hspace{2.0cm}
\times X^{t,x_i,i,N}_{u,x_j} \cdot	\Big( 
\sum_{l=1}^{N} \frac{1}{N}\sum_{k = 1}^{N} \partial_{x_l}\nabla V(X^{t,x_i,i,N}_u-X^{t,x_k,k,N}_u)  X^{t,x_l,l,N}_{u,x_j} \Big)  \Big] \mathrm{d}u.
\end{align*}
For the last term above, we note that
\begin{align}
\nonumber
&- \sum_{i=1,i\neq j}^{N} \mathbb{E} \Big[ |X^{t,x_i,i,N}_{u,x_j}|^{p-2} 
        X^{t,x_i,i,N}_{u,x_j} \cdot	\Big( 
        \sum_{l=1}^{N}     \frac{1}{N}\sum_{k = 1}^{N} \partial_{x_l} \nabla V(X^{t,x_i,i,N}_u-X^{t,x_k,k,N}_u)  X^{t,x_l,l,N}_{u,x_j}  
       \Big) \Big]
     \\
     \nonumber
   & = - \bE \Big[ \frac{1}{N} \sum_{i=1,i\neq j}^{N}
   \sum_{l=1,l\neq j}^{N} \Big(|X^{t,x_i,i,N}_{u,x_j}|^{p-2}  X^{t,x_i,i,N}_{u,x_j}\cdot	
        \nabla^2 V(X^{t,x_i,i,N}_u-X^{t,x_l,l,N}_u)  ( X^{t,x_i,i,N}_{u,x_j}-X^{t,x_l,l,N}_{u,x_j}) 
        \Big) \Big]
    \\
    \nonumber
    & \quad  - \bE \Big[ \frac{1}{N} \sum_{i=1,i\neq j}^{N}\Big( |X^{t,x_i,i,N}_{u,x_j}|^{p-2}\cdot	  X^{t,x_i,i,N}_{u,x_j},
        \nabla^2 V(X^{t,x_i,i,N}_u-X^{t,x_j,j,N}_u)  ( X^{t,x_i,i,N}_{u,x_j}-X^{t,x_j,j,N}_{u,x_j}) 
       \Big)  \Big]
    \\
    \nonumber
   & \leq   -\bE \Big[ \frac{1}{2N} \sum_{i = 1,i\neq j}^{N}\sum_{l=1,l\neq j}^{N}  \Big( ~\big(  |X^{t,x_i,i,N}_{u,x_j}|^{p-2}X^{t,x_i,i,N}_{u,x_j}-|X^{t,x_l,l,N}_{u,x_j}|^{p-2} X^{t,x_l,l,N}_{u,x_j}\big)\cdot	
        \nabla^2 V(X^{t,x_i,i,N}_u-X^{t,x_l,l,N}_u)  
     \\
     \label{eq: symmetrization trick 2}
    & \quad   \cdot	( X^{t,x_i,i,N}_{u,x_j}-X^{t,x_l,l,N}_{u,x_j}) 
      \Big)  \Big] + \bE \Big[ \frac{1}{N} \sum_{i=1,i\neq j}^{N}  
        |\nabla^2 V(X^{t,x_i,i,N}_u-X^{t,x_j,j,N}_u)|   ~|X^{t,x_i,i,N}_{u,x_j}|^{p-1} ~
        |X^{t,x_j,j,N}_{u,x_j}|  
      \Big]
     \\
     \label{eq: convexity Kv}
    & \leq K_V 
      \bE \Big[  \sum_{i=1,i\neq j}^{N}  
        ~|X^{t,x_i,i,N}_{u,x_j}|^{p-1}~
         \frac{|X^{t,x_j,j,N}_{u,x_j}| }{N}  
      \Big]
    \\
    \label{eq: first var separation trick}
    &\leq 
    \varepsilon  \Big( \sum_{i=1,i\neq j}^{N}
    \bE \Big[|X^{t,x_i,i,N}_{u,x_j}|^p
    \Big] \Big) + \frac{K }{N^{p-1}}  
    \bE \Big[|X^{t,x_j,j,N}_{u,x_j}|^p
    \Big],
\end{align}
where in \eqref{eq: symmetrization trick 2}, we used the \textit{symmetrization trick} again. \eqref{eq: convexity Kv} follows from Assumption~\ref{assum:main}(2) and \eqref{eq: first var separation trick} is a consequence of Young's inequality, where $\varepsilon$ is some  positive constant which can be chosen to be arbitrarily small. Note that one can choose $\varepsilon, K$ to be positive constants (both independent of $s,t,N$)  satisfying $\varepsilon \in (0,\lambda-\lambda_1)$  for any $\lambda_1 \in (0,\lambda)$.
 Thus, we conclude that  
\begin{align*}
\nonumber 
    e^{\lambda_1 p (s-t)}&\sum_{i=1,i\neq j}^N\mathbb{E}\Big[|X^{t,x_i,i,N}_{s,x_j}|^p\Big]
    \\ 
& 
\leq   p \int_{t}^{s}  
 e^{\lambda_1 p (u-t)} \bigg((\lambda_1+\varepsilon - \lambda )
\sum_{i=1,i\neq j}^N \mathbb{E} \Big[ |X^{t,x_i,i,N}_{u,x_j}|^{p} \Big]  
+ 
 \frac{K}{N^{p-1}}
      \mathbb{E} \Big[ |X^{t,x_j,j,N}_{u,x_j}|^{p} \Big] \bigg)\mathrm{d}u 
\\\nonumber 
& 
\leq  
 \frac{Kp}{N^{p-1}}
\int_{t}^{s}     e^{p (\lambda_1-\lambda)(u-t)} \mathrm{d}u
\leq  
 \frac{K}{N^{p-1}   (\lambda-\lambda_1)  }  
,
\end{align*} 
where we noted that $\varepsilon$ can be chosen to be arbitrarily small, such that $\lambda_1+\varepsilon-\lambda$ remains negative (so the summation term can be upper bounded by zero).
We used \eqref{eq: first var result 1} to bound $\mathbb{E}  \big[ |X^{t,x_j,j,N}_{u,x_j}|^{p}  \big]$ and then noted that $\int_{t}^{s}     e^{p (\lambda_1-\lambda)(u-t)} \mathrm{d}u \leq 1/p(\lambda-\lambda_1)  $ to conclude the result. 

Therefore, for all   $\lambda_1 \in (0,\lambda)$, we have 
\begin{align*}
 \sum_{i=1,i\neq j}^N\mathbb{E}\Big[|X^{t,x_i,i,N}_{s,x_j}|^p\Big]
\leq \frac{K}{N^{p-1}}  e^{ -\lambda_1 p (s-t)} .
\end{align*} 
\end{proof}


The following proposition provides $L^2$-estimates for the differences of the processes defined in \eqref{first_var_process noiid} with the same initial points, but at different starting times. The results are used in Section \ref{section: proof of B0 terms} to establish time-regularity estimates for the derivatives of the function $u$. 
\begin{proposition}
    \label{propsition:  first variation bound prop v2}
    Let Assumption~\ref{assum:main_weak error} hold. Consider the first variation process with components $ (X^{t,x_i,i,N}_{s,x_j}    )_{ s \geq t \geq 0}$ defined by \eqref{first_var_process noiid} for $i,j \in \lbrace 1, \ldots, N \rbrace$ and assume that the starting positions $x_i\in L^4(\Omega, \bR)$ {are $\mathcal{F}_t$-measurable}  random variables  that are identically distributed over all $ i  \in \lbrace 1, \ldots, N \rbrace$.     
    Then there exist 
     $\lambda_2\in (0,\min\{   \lambda-2K_V,\lambda_1\} )$, $\lambda_3\in (0,\min\{   \lambda-2K_V,\lambda_2\} )$, and  
    $K>0$ (all independent of $s,t,T,N$) such that for all  $T \geq s \geq t \geq  0$  
    with $s-t < 1$ ,
    \begin{align}
    \label{eq: first var result addinitional}
          \sum_{i=1}^{N}  \mathbb{E}\Big[|X^{t,x_i,i,N}_{T,x_j}-X^{s,x_i,i,N}_{T,x_j}|^2\Big] 
          &
          \leq  K(s-t)  e^{- 2{ \lambda_2 }(T-s)},
          \\
          \label{eq: first var result 2}
          \sum_{i=1, i\neq j}^{N}  \mathbb{E}\Big[|X^{t,x_i,i,N}_{T,x_j}-X^{s,x_i,i,N}_{T,x_j}|^2\Big] 
          &
          \leq  \frac{K (s-t)}{N } e^{-2\lambda_3 (T-s)}.
    \end{align} 
\end{proposition}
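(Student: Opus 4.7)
The plan is to split the time interval at $s$. On $[t,s]$ the process $X^{s,\cdot}_{\cdot,x_j}$ is not yet started, so the target reduces to bounding how far $X^{t,x_i,i,N}_{s,x_j}$ has drifted from $\delta_{i,j}$ (the initial value of $X^{s,x_i,i,N}_{s,x_j}$). Using the integral form in \eqref{first_var_process noiid} together with Cauchy--Schwarz, the explicit expression for $\partial_{x_l}B_i$ (the Hessian of $U$ on the diagonal and $\frac{1}{N}\nabla^2 V$ off-diagonal), and the $L^2$-bounds of Lemma \ref{lemma: first variation bound noiid}, one obtains $\sum_i \bE[|X^{t,x_i,i,N}_{s,x_j}-\delta_{i,j}|^2]\leq K(s-t)$. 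For the off-diagonal version, the same computation together with the $\cO(1/N)$ decay of $\bE[|X^{t,x_l,l,N}_{u,x_j}|^2]$ for $l\neq j$ (also from Lemma \ref{lemma: first variation bound noiid}) yields $\sum_{i\neq j}\bE[|X^{t,x_i,i,N}_{s,x_j}|^2]\leq K(s-t)/N$.

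For $u\geq s$, set $\Delta_i^u:=X^{t,x_i,i,N}_{u,x_j}-X^{s,x_i,i,N}_{u,x_j}$ and differentiate \eqref{first_var_process noiid} to obtain
\begin{align*}
\frac{\dd}{\dd u}\Delta_i^u &= \sum_l \partial_{x_l}B_i(\bodX^{t,\bodx,N}_u)\,\Delta_l^u + E_i^u, \\
\textrm{where} \quad E_i^u &:= \sum_l\bigl[\partial_{x_l}B_i(\bodX^{t,\bodx,N}_u)-\partial_{x_l}B_i(\bodX^{s,\bodx,N}_u)\bigr]X^{s,x_l,l,N}_{u,x_j}.
\end{align*}
Multiplying by $2\Delta_i^u$, summing over $i$, and applying the symmetrization trick of Lemma \ref{lemma: first variation bound noiid} bounds the ``good'' drift term by $-2\lambda\sum_i|\Delta_i^u|^2$. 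A Young's inequality applied to the error $2\sum_i\Delta_i^u E_i^u$, together with the exponential weight $e^{2\lambda_2(u-s)}$, reduces everything to estimating $\sum_i\bE[|E_i^u|^2]$.

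The control of this error requires the time-regularity of the underlying flow: writing $\delta_i^u:=X^{t,x_i,i,N}_u-X^{s,x_i,i,N}_u$ for $u\geq s$, the Brownian increments on $[s,u]$ cancel, so $\delta_i^u$ obeys an ODE-type relation, and convexity of $U$ together with symmetrization for $V$ delivers $\sum_i\bE[|\delta_i^u|^2]\leq e^{-2\lambda(u-s)}\sum_i\bE[|\delta_i^s|^2]$. The initial quantity $\delta_i^s=\int_t^s B_i(\bodX^{t,\bodx,N}_r)\,\dd r+\sigma(W^i_s-W^i_t)$ is bounded by $\sum_i\bE[|\delta_i^s|^2]\leq KN(s-t)$ using the $L^4$-integrability of $x_i$ and the moment estimates of Proposition \ref{prop:basic_estimates11}. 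Combining this with the Lipschitz-type bounds $|\partial_{x_l}B_i(\bodX^{t,\bodx,N}_u)-\partial_{x_l}B_i(\bodX^{s,\bodx,N}_u)|\leq \frac{K}{N}(|\delta_i^u|+|\delta_l^u|)$ for $l\neq i$ (with an analogous bound for $l=i$), and Lemma \ref{lemma: first variation bound noiid} for $X^{s,x_l,l,N}_{u,x_j}$, a careful bookkeeping produces $\sum_i\bE[|E_i^u|^2]\leq K(s-t)e^{-2\lambda_*(u-s)}$ for some $\lambda_*>\lambda_2$. Gronwall then delivers the first inequality, with the admissible range $\lambda_2<\lambda-2K_V$ arising from absorbing the interaction-kernel contributions into the convexity of $U$.

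For the off-diagonal claim, the same programme is carried out with summation restricted to $i\neq j$. The convexity argument now yields only the weaker decay $\lambda_1$ of Lemma \ref{lemma: first variation bound noiid}, with an extra $\cO(1/N)$ boundary contribution from the $l=j$ term which is absorbed via Young's inequality (forcing the further restriction $\lambda_3<\lambda_2$). Repeating the error analysis with $\sum_{i\neq j}$ and again exploiting the $\frac{1}{N}$ prefactor of $\partial_{x_l}B_i$ for $l\neq i$ yields the off-diagonal bound. \textbf{The main obstacle} is preserving the $1/N$ scaling throughout: the summed flow-difference obeys only $\sum_i\bE[|\delta_i^u|^2]=\cO(N(s-t))$ (that is, $(s-t)$ per particle), while $X^{s,x_l,l,N}_{u,x_j}$ carries an $\cO(1)$ ``spike'' at $l=j$ but only $\cO(1/\sqrt{N})$ elsewhere. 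Preventing the interaction of these with the $\partial_{x_l}B_i$ factors from spoiling the targeted $\cO((s-t)/N)$ rate forces splitting $E_i^u$ into cases according to whether $l=i$, $l=j$, or $l\notin\{i,j\}$, and applying Cauchy--Schwarz tightly, in the same spirit as the off-diagonal analysis in Lemma \ref{lemma: first variation bound noiid}.
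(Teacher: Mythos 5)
Your high-level strategy matches the paper's proof closely: split the analysis at time $s$, bound the initial gap $X^{t,x_i,i,N}_{s,x_j}-\delta_{i,j}$ on $[t,s]$ via the integral form together with Lemma~\ref{lemma: first variation bound noiid}, then on $[s,T]$ differentiate, apply the symmetrization trick to harvest the $-2\lambda$ contraction in the $\nabla^2 U,\nabla^2 V$ terms, absorb the error via Young's inequality, and conclude with Gronwall. This is the paper's argument.

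There is, however, a genuine gap in your treatment of the error term. After Young's inequality you need $\sum_i\bE\big[|E_i^u|^2\big]\lesssim (s-t)\,e^{-c(u-s)}$, where each $E_i^u$ is a sum of products of the form
\[
\bigl(\partial_{x_l}B_i(\bodX^{t,\bodx,N}_u)-\partial_{x_l}B_i(\bodX^{s,\bodx,N}_u)\bigr)\, X^{s,x_l,l,N}_{u,x_j},
\]
where the first factor is controlled by $|\delta_i^u|,|\delta_l^u|$ and the two factors are \emph{not} independent (they are driven by the same Brownian path on $[s,u]$). Squaring and taking expectations therefore produces $\bE\big[|\delta^u|^2\,|X^{s,\cdot}_{u,x_j}|^2\big]$, which can only be separated by Cauchy--Schwarz as $\bE[|\delta^u|^4]^{1/2}\,\bE[|X^{s,\cdot}_{u,x_j}|^4]^{1/2}$. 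You record only the second-moment estimate $\sum_i\bE[|\delta_i^u|^2]\leq KN(s-t)e^{-2\lambda(u-s)}$, which is insufficient. Trying to avoid the fourth moment by bounding $X^{s,x_l,l,N}_{u,x_j}$ almost surely (via the vector estimate $\sum_l|X^{s,x_l,l,N}_{u,x_j}|^2\leq e^{-2\lambda(u-s)}$, say) loses the per-particle $\cO(N^{-2})$ decay of $\bE[|X^{s,x_l,l,N}_{u,x_j}|^2]$ for $l\neq j$, and the resulting bound on $\sum_i\bE[|E_i^u|^2]$ is $\cO(N(s-t))$ rather than $\cO(s-t)$ --- a factor of $N$ too large even for \eqref{eq: first var result addinitional}, and fatally too large for \eqref{eq: first var result 2}.

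The missing ingredient is the per-particle $L^4$ estimate on the flow difference, $\bE\big[|X^{t,x_i,i,N}_{s+u}-X^{s,x_i,i,N}_{s+u}|^4\big]\leq K(s-t)^2 e^{-4\lambda_2 u}$, which is the paper's Lemma~\ref{lemma: 4 moment difference}; this is where the hypothesis $x_i\in L^4$ is actually consumed (the crude bound $\sum_i\bE[|\delta_i^s|^2]\leq KN(s-t)$ only uses $L^2$). With this fourth-moment lemma in hand, Cauchy--Schwarz yields $\sqrt{\bE[|\delta^u|^4]\,\bE[|X^{s,x_i,i,N}_{u,x_j}|^4]}\lesssim (s-t)\,e^{-2\lambda_2 u}\,N^{-2}$ for $i\neq j$ (and $\lesssim (s-t)\,e^{-2\lambda_2 u}$ for $i=j$), and summing recovers the targeted rates $\cO(s-t)$ and $\cO((s-t)/N)$. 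You should establish and invoke this fourth-moment estimate explicitly; with it, the remainder of your argument does close.
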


\begin{proof}
Note that in the following proof, the positive constant $K$ is independent of $s,t,T,N$ and may change line by line.

\textit{Part 1: Preliminary manipulations.}  
Similar to the calculations in the proof of Lemma~\ref{lemma: first variation bound noiid}, we derive that (recalling that \eqref{first_var_process noiid} is an ODE with random coefficients), for all $i,j\in \{ 1,\dots,N\},~T\geq s \geq t \geq 0$, $\lambda_2\in (0,\min\{   \lambda-2K_V,\lambda_1\} )$,
    \begin{align*} e^{2\lambda_2  (T-s)}&
        |X^{t,x_i,i,N}_{T,x_j}-X^{s,x_i,i,N}_{T,x_j}|^2
        =
        |X^{t,x_i,i,N}_{s,x_j}-X^{s,x_i,i,N}_{s,x_j}|^2
        +
        2\lambda_2 
        \int_{0}^{T-s}  e^{2\lambda_2  u}   |X^{t,x_i,i,N}_{s+u,x_j}-X^{s,x_i,i,N}_{s+u,x_j}|^2    \mathrm{d}u
        \\
        & 
        -
         2\int_{0}^{T-s}  e^{2\lambda_2  u}  \Big( X^{t,x_i,i,N}_{s+u,x_j}-X^{s,x_i,i,N}_{s+u,x_j} \Big)	\cdot	\Big( 
         \nabla^{2} U(X^{t,x_i,i,N}_{s+u}) X^{t,x_i,i,N}_{s+u,x_j} 
         -\nabla^{2} U(X^{s,x_i,i,N}_{s+u})X^{s,x_i,i,N}_{s+u,x_j}     \Big)    \mathrm{d}u 
         \\
        & 
        -
         2  \int_{0}^{T-s} e^{2\lambda_2  u} \Big(   X^{t,x_i,i,N}_{s+u,x_j}-X^{s,x_i,i,N}_{s+u,x_j} \Big)	\cdot	\Big( 
         \frac{1}{N} \sum_{l=1}^{N}  
        \nabla^2 V(X^{t,x_i,i,N}_{s+u}-X^{t,x_l,l,N}_{s+u})  ( X^{t,x_i,i,N}_{{s+u},x_j}-X^{t,x_l,l,N}_{{s+u},x_j})
         \\
        &\quad\qquad \qquad 
        - \frac{1}{N} \sum_{l=1}^{N} \nabla^2 V(X^{s,x_i,i,N}_{s+u}-X^{s,x_l,l,N}_{s+u})  ( X^{s,x_i,i,N}_{{s+u},x_j}-X^{s,x_l,l,N}_{{s+u},x_j})
             \Big)   \mathrm{d}u.
   \end{align*}   
Therefore, using \eqref{eq: def of lambda},  we have 
\begin{align}
 &e^{2\lambda_2  (T-s)}
 |X^{t,x_i,i,N}_{T,x_j}-X^{s,x_i,i,N}_{T,x_j}|^2     \leq 
          |X^{t,x_i,i,N}_{s,x_j}-X^{s,x_i,i,N}_{s,x_j}|^2
          +2 (\lambda_2-\lambda)  \int_{0}^{T-s} e^{2\lambda_2  u}|X^{t,x_i,i,N}_{s+u,x_j}-X^{s,x_i,i,N}_{s+u,x_j}|^2 \dd u \nonumber
          \\
          & 
          \label{eq:hessian u term}
          \quad-2 \int_{0}^{T-s}  e^{2\lambda_2  u}\Big(  X^{t,x_i,i,N}_{s+u,x_j}-X^{s,x_i,i,N}_{s+u,x_j}  \Big)	\cdot	\Big(
         \nabla^{2} U(X^{t,x_i,i,N}_{s+u}) X^{s,x_i,i,N}_{s+u,x_j} 
         -\nabla^{2} U(X^{s,x_i,i,N}_{s+u})X^{s,x_i,i,N}_{s+u,x_j}         \Big)\mathrm{d}u 
          \\
        &\nonumber\quad 
        -
         2 \int_{0}^{T-s}  e^{2\lambda_2  u}\Big(   X^{t,x_i,i,N}_{s+u,x_j}-X^{s,x_i,i,N}_{s+u,x_j}  \Big)	\cdot	\bigg( ~\Big( 
         \frac{1}{N} \sum_{l=1}^{N}  
        \nabla^2 V(X^{t,x_i,i,N}_{s+u}-X^{t,x_l,l,N}_{s+u})  ( X^{t,x_i,i,N}_{{s+u},x_j}-X^{t,x_l,l,N}_{{s+u},x_j})
        \\
        &
        \label{eq: hessian V term}
        \qquad  - R^{i,t,s}_{s+u}\Big)
        +\Big( R^{i,t,s}_{s+u}-
         \frac{1}{N} \sum_{l=1}^{N}  
        \nabla^2 V(X^{s,x_i,i,N}_{s+u}-X^{s,x_l,l,N}_{s+u})  ( X^{s,x_i,i,N}_{{s+u},x_j}-X^{s,x_l,l,N}_{{s+u},x_j})  \Big)~\bigg)\mathrm{d}u
        ,
    \end{align}
    where we added and subtract the following auxiliary term:
    \begin{align*}
      R^{i,t,s}_{s+u} \coloneqq  \frac{1}{N} \sum_{l=1}^{N} \left[ \nabla^2 V(X^{t,x_i,i,N}_{s+u}-X^{t,x_l,l,N}_{s+u})  ( X^{s,x_i,i,N}_{{s+u},x_j}-X^{s,x_l,l,N}_{{s+u},x_j}) 
         \right].
    \end{align*} 
The result in \eqref{eq: first var result} and the fact that the starting positions $x_i$ are identically distributed, yield that for all $i,j\in \{ 1,\dots,N\}, i\neq j$, $\lambda_1 \in (0,\lambda)$ 
\begin{align}
\label{eq: aux result for differences 1}
\bE \Big[       \big|    X^{s,x_i,i,N}_{s+u,x_i}     \big|^4 \Big] \leq K e^{-4\lambda_1 u} ,\qquad 
 \bE \Big[       \big|    X^{s,x_i,i,N}_{s+u,x_j}     \big|^4 \Big] \leq \frac{K}{N^4} e^{-4\lambda_1 u}
 . 
\end{align}

\textit{Part 2: Establishing \eqref {eq: first var result addinitional}. }We further estimate the term involving $\nabla^2 U$, \eqref{eq:hessian u term}: under Assumption~\ref{assum:main}, we have
    \begin{align}
    \nonumber
          \sum_{i=1}^{N} \bE \Big[         &    \Big(      X^{t,x_i,i,N}_{s+u,x_j}-X^{s,x_i,i,N}_{s+u,x_j}
          \Big)	\cdot	\Big( 
         \nabla^{2} U(X^{t,x_i,i,N}_{s+u}) X^{s,x_i,i,N}_{s+u,x_j} 
         -\nabla^{2} U(X^{s,x_i,i,N}_{s+u})X^{s,x_i,i,N}_{s+u,x_j}     \Big) \Big]
         \\
         \nonumber
         &\leq \sum_{i=1}^{N} {  \color{black}   K   
         \bE \Big[       \big| X^{t,x_i,i,N}_{s+u,x_j}-X^{s,x_i,i,N}_{s+u,x_j} \big| ~\big| X^{t,x_i,i,N}_{s+u}- X^{s,x_i,i,N}_{s+u}       \big|~\big|      X^{s,x_i,i,N}_{s+u,x_j}  \big| \Big] }
         \\
         \label{eq: first var diff V }
         &\leq \varepsilon  \sum_{i=1}^{N} \bE \Big[       \big| X^{t,x_i,i,N}_{s+u,x_j}-X^{s,x_i,i,N}_{s+u,x_j} \big| ^2 \Big]
         + K  \sum_{i=1}^{N} \sqrt{    \bE \Big[       \big|     X^{t,x_i,i,N}_{s+u}- X^{s,x_i,i,N}_{s+u}   \big|^4 \Big]	\bE \Big[       \big|    X^{s,x_i,i,N}_{s+u,x_j}     \big|^4 \Big],	    }
    \end{align}
      {\color{black}  where we employed Young's inequality (with constants $\varepsilon,K>0$ independent of $s,t$ and $N$) and the Cauchy--Schwarz inequality. We further bound \eqref{eq: first var diff V } 
      by applying Lemma~\ref{lemma: 4 moment difference} 
  \begin{align}
    \nonumber  
 \eqref{eq: first var diff V }
 &\leq \varepsilon  \sum_{i=1}^{N} \bE \Big[       \big| X^{t,x_i,i,N}_{s+u,x_j}-X^{s,x_i,i,N}_{s+u,x_j} \big| ^2 \Big]
 \\ \nonumber
 & \qquad \qquad 
         + K (s-t) e^{- 2\lambda_2 u}  \bigg( \sqrt{    	\bE \Big[       \big|    X^{s,x_j,j,N}_{s+u,x_j}     \big|^4 \Big]	    }
        +     \sum_{i=1,i\neq j}^{N} \sqrt{       	\bE \Big[       \big|    X^{s,x_i,i,N}_{s+u,x_j}     \big|^4 \Big]	    }  \bigg) 
          \\
         \label{eq: bound on fourth moments}  
         &\leq \varepsilon  \sum_{i=1}^{N} \bE \Big[       \big| X^{t,x_i,i,N}_{s+u,x_j}-X^{s,x_i,i,N}_{s+u,x_j} \big| ^2 \Big]
        +   K (s-t) e^{- 2\lambda_2 u}       \Big( e^{- 2\lambda_1 u} +   \frac{1}{N^2}  \sum_{i=1, i\neq j}^{N} e^{- 2\lambda_1 u}  \Big) 	    
          \\
          \label{eq: differences U result}
         &\leq  \varepsilon  \sum_{i=1}^{N} \bE \Big[       \big| X^{t,x_i,i,N}_{s+u,x_j}-X^{s,x_i,i,N}_{s+u,x_j} \big| ^2 \Big]
         + K (s-t) e^{-4\lambda_2 u},
    \end{align}
      for some $\lambda_2\in (0,\min \{   \lambda-2K_V,\lambda_1\} )$, where we injected the estimate \eqref{eq: aux result for differences 1}  and used that the processes {\color{black}$ (     X^{t,x_i,i,N}_s )_{ s \geq t \geq 0}$} for $i \in \lbrace 1, \ldots, N \rbrace$  are identically distributed (due to the assumption in \ref{propsition:  first variation bound prop v2} on the starting positions $x_i$ being identically distributed over $i\in\{1,\ldots,N\}$) in the second inequality.}
      
As for the term involving $\nabla^2 V$, \eqref{eq: hessian V term}, after taking the expectation and summing over $i  \in \lbrace 1, \ldots, N\rbrace$, 
    \begin{align}
        &-\sum_{i=1}^{N}  \bE \Big[ \Big( X^{t,x_i,i,N}_{s+u,x_j}-X^{s,x_i,i,N}_{s+u,x_j}
         \Big)	\cdot	\Big( 
         \frac{1}{N} \sum_{l=1}^{N}  
        \nabla^2 V(X^{t,x_i,i,N}_{s+u}-X^{t,x_l,l,N}_{s+u})  ( X^{t,x_i,i,N}_{{s+u},x_j}-X^{t,x_l,l,N}_{{s+u},x_j}) - R^{i,t,s}_{s+u}  \Big)\Big]\nonumber
        \\
        &= -\frac{1}{2N} \sum_{i=1}^{N}\sum_{l=1}^{N} 
        \bE \Big[  \Big( \big(X^{t,x_i,i,N}_{s+u,x_j}-X^{s,x_i,i,N}_{s+u,x_j}\big) -   \big(X^{t,x_l,l,N}_{s+u,x_j}-X^{s,x_l,l,N}_{s+u,x_j}\big)    \Big)	\cdot 
        \nabla^2 V(X^{t,x_i,i,N}_{s+u}-X^{t,x_l,l,N}_{s+u})  \nonumber
        \\
        &
        \label{eq: symmetrization yields nonpositive}
        \qquad \qquad 	\cdot \Big(  \big( X^{t,x_i,i,N}_{{s+u},x_j}-X^{t,x_l,l,N}_{{s+u},x_j}\big)  -  \big(X^{s,x_i,i,N}_{{s+u},x_j}-
        X^{s,x_l,l,N}_{{s+u},x_j}\big) \Big) \Big]
        \leq 
        0,
    \end{align}
    where we once again use the \textit{symmetrization trick} and that $ \nabla^2 V(x)\geq 0$ for all $x \in \mathbb{R}$. Similar to the analysis  involving $\nabla^{2}U$, we obtain 
     \begin{align}
        &\sum_{i=1}^{N}  \bE \Big[ \Big(  X^{t,x_i,i,N}_{s+u,x_j}-X^{s,x_i,i,N}_{s+u,x_j}
         \Big)	\cdot	\Big( 
        R^{i,t,s}_{s+u} -
         \frac{1}{N} \sum_{l=1}^{N}  
        \nabla^2 V(X^{s,x_i,i,N}_{s+u}-X^{s,x_l,l,N}_{s+u})  ( X^{s,x_i,i,N}_{{s+u},x_j}-X^{s,x_l,l,N}_{{s+u},x_j})    \Big) \Big] \nonumber
        \\
        & \leq  \frac{1}{N}
        \sum_{i=1}^{N}
        \sum_{l=1}^{N}
        K_V \bE \Big[\big|X^{t,x_i,i,N}_{s+u,x_j}-X^{s,x_i,i,N}_{s+u,x_j}
        \big|~\big| 
        (X^{t,x_i,i,N}_{s+u}-X^{t,x_l,l,N}_{s+u})
        -( X^{s,x_i,i,N}_{s+u}-X^{s,x_l,l,N}_{s+u})\big| \nonumber
        \\
        &
        \label{eq:bound on 4.14}
        \quad \cdot
        ~\big|X^{s,x_i,i,N}_{{s+u},x_j}-X^{s,x_l,l,N}_{{s+u},x_j} \big| \Big] 
        \leq 
        \varepsilon \sum_{i=1}^{N} \bE \Big[\big|X^{t,x_i,i,N}_{s+u,x_j}-X^{s,x_i,i,N}_{s+u,x_j}
        \big|^2 \Big]
        +K (s-t) e^{-4\lambda_2 u},
    \end{align}
     where we applied similar calculations as in \eqref{eq: bound on fourth moments} and \eqref{eq: differences U result}. After taking the expectation, summing over $i  \in \lbrace 1, \ldots, N\rbrace$, and injecting our established estimates \eqref{eq: differences U result}, \eqref{eq: symmetrization yields nonpositive} and \eqref{eq:bound on 4.14}, we have for an arbitrary small $\varepsilon > 0$,
\begin{align}
\nonumber
         e^{2\lambda_2  (T-s)}
         & 
         \sum_{i=1}^N
         \bE \Big[ |X^{t,x_i,i,N}_{T,x_j}-X^{s,x_i,i,N}_{T,x_j}|^2 \Big]  
        \\ 
        \nonumber 
        &
       \leq   
        \sum_{i=1}^N
       \bE \Big[|X^{t,x_i,i,N}_{s,x_j}-X^{s,x_i,i,N}_{s,x_j}|^2 \Big]  
        +K (s-t) \int_{0}^{T-s} e^{(2\lambda_2 - 4\lambda_2) u} \dd u 
        \\
        \label{eq: first var diff sum 1 }
        &
        \qquad 
        +2(2\varepsilon+\lambda_2-\lambda )
        \int_{0}^{T-s} e^{2\lambda_2  u} 
        \sum_{i=1}^N\bE \Big[ | X^{t,x_i,i,N}_{s+u,x_j}-X^{s,x_i,i,N}_{s+u,x_j}|^2 \Big] 
        \dd u.
    \end{align}
Further notice that for the first summation term of \eqref{eq: first var diff sum 1 },  we obtain 
\allowdisplaybreaks
\begin{align}
  \nonumber
       \sum_{i=1}^N& \bE  \Big[|X^{t,x_i,i,N}_{s,x_j}-X^{s,x_i,i,N}_{s,x_j}|^2 \Big]
        \\ \nonumber 
       & =
       \sum_{i=1}^N\bE  \bigg[ \bigg|
        \int_0^{s-t}  \Big( \frac{1}{N}
       \sum_{l=1}^N 
       \nabla^2V(X_{t+u}^{t,x_i,i,N}-X_{t+u}^{t,x_l,l,N}) (X^{t,x_i,i,N}_{t+u,x_j}-X^{t,x_l,l,N}_{t+u,x_j})
       \\ \nonumber 
       &\hspace{4cm}
       + 
       \nabla^2U(X_{t+u}^{t,x_i,i,N}) X^{t,x_i,i,N}_{t+u,x_j}  \Big) \dd u 
       \bigg|^2 
       \bigg]
       \\ \nonumber 
       & 
       \leq K(s-t) \int_0^{s-t} \bigg( \frac{1}{N }  \sum_{i=1}^N \sum_{l=1}^N \bE  \Big[ |\nabla^2V(X_{t+u}^{t,x_i,i,N}-X_{t+u}^{t,x_l,l,N})|^2~|X^{t,x_i,i,N}_{t+u,x_j}-X^{t,x_l,l,N}_{t+u,x_j} |^2 \Big] 
        \\ \label{eq: jensen hessian V} 
        &
        \hspace{4cm}
        + \sum_{i=1}^N\bE  \Big[ |\nabla^2U(X_{t+u}^{t,x_i,i,N})|^2~|X^{t,x_i,i,N}_{t+u,x_j} |^2 \Big]  ~\bigg)~
        \dd u
        \\ \label{eq: Kv convexity 3}
        &\leq K\big(  4K_V +\lambda  \big)
        \int_0^{s-t}  \sum_{i=1}^N  
         \bE\Big[ |X^{t,x_i,i,N}_{t+u,x_j}  |^2 \Big]  \dd u 
         \\
         &
         \label{eq: first var diff initial diff}
        \leq 
        K(s-t) \int_0^{s-t}  e^{-2\lambda_1 u} \dd u \leq K(s-t),
  \end{align}
 where we used Jensen's inequality in \eqref{eq: jensen hessian V}, Assumption~\ref{assum:main} in \eqref{eq: Kv convexity 3}  {\color{black}  and  Lemma~\ref{lemma: first variation bound noiid} with $\lambda_1\in(0,\lambda)$} to establish \eqref{eq: first var diff initial diff}.   Consequently, for all $j \in \lbrace 1, \ldots, N \rbrace$, 
        we have (by choosing $\varepsilon $ arbitrarily small)  
\begin{align*}
    e^{2\lambda_2(T-s)}\sum_{i=1}^N 
    & \bE  \Big[ |X^{t,x_i,i,N}_{T,x_j}-X^{s,x_i,i,N}_{T,x_j}|^2 \Big] 
    \\\nonumber 
    &\quad ~ \leq
    K (s-t)
    + 
     K(s-t) \int_0^{T-s}  e^{-2\lambda_2 u} \dd u
     \\
     & 
     \qquad \qquad 
    +
    2(2\varepsilon+\lambda_2-\lambda )
        \int_{0}^{T-s} e^{2\lambda_2 u}
        \sum_{i=1}^N\bE \Big[ | X^{t,x_i,i,N}_{s+u,x_j}-X^{s,x_i,i,N}_{s+u,x_j}|^2 \Big].
\end{align*}
Using $\int_0^{T-s}  e^{-2\lambda_2 u} \dd u \leq {1}/{(2\lambda_2)}$
and $(2\varepsilon+\lambda_2-\lambda )<0$, we deduce that 
\begin{align} \label{eq: first var result 1 additional v2} 
     \sum_{i=1}^N\bE  \Big[ |X^{t,x_i,i,N}_{T,x_j}-X^{s,x_i,i,N}_{T,x_j}|^2 \Big] 
     &\leq
    K (s-t)e^{- {2\lambda_2  }(T-s)}.
\end{align}
This concludes the first part of the statement \eqref{eq: first var result addinitional}.\\

\textit{Part 3: Establishing \eqref{eq: first var result 2}.} Mimicking the estimates in \eqref{eq: symmetrization trick 2}--\eqref{eq: first var separation trick}, we first establish a result to deal with the term involving $\nabla^{2}V$: 
\begin{align}
\nonumber
        &-\sum_{i=1,i\neq j}^{N}  \bE \Big[ \Big( 
        X^{t,x_i,i,N}_{s+u,x_j}-X^{s,x_i,i,N}_{s+u,x_j}
         \Big)	\cdot	\Big( 
         \frac{1}{N} \sum_{l=1}^{N}  
        \nabla^2 V(X^{t,x_i,i,N}_{s+u}-X^{t,x_l,l,N}_{s+u})  ( X^{t,x_i,i,N}_{{s+u},x_j}-X^{t,x_l,l,N}_{{s+u},x_j}) - R^{i,t,s}_{s+u} \Big)\Big]
        \\\nonumber
        &= -\frac{1}{2N} \sum_{i=1,i\neq j}^{N}\sum_{l=1,l\neq j}^{N} 
        \bE \Big[   \Big( \big(X^{t,x_i,i,N}_{s+u,x_j}-X^{s,x_i,i,N}_{s+u,x_j}\big) -   \big(X^{t,x_l,l,N}_{s+u,x_j}-X^{s,x_l,l,N}_{s+u,x_j}\big)  \Big)	\cdot	\Big( 
        \nabla^2 V(X^{t,x_i,i,N}_{s+u}-X^{t,x_l,l,N}_{s+u})  
        \\\nonumber
        & \qquad \qquad \big( X^{t,x_i,i,N}_{{s+u},x_j}-X^{t,x_l,l,N}_{{s+u},x_j} -  X^{s,x_i,i,N}_{{s+u},x_j}+
        X^{s,x_l,l,N}_{{s+u},x_j}\big) \Big)  \Big]
        \\\nonumber
        &\quad 
        - \frac{1}{N} \sum_{i=1,i\neq j}^{N} 
        \bE \Big[ \Big(  X^{t,x_i,i,N}_{s+u,x_j}-X^{s,x_i,i,N}_{s+u,x_j}
         \Big)	\cdot	
        \nabla^2 V(X^{t,x_i,i,N}_{s+u}-X^{t,x_j,l,N}_{s+u}) 
        \\ \label{symmetrization trick 3}
        &\hspace{5cm} \cdot 
        \Big( \big(      X^{t,x_i,i,N}_{{s+u},x_j}-X^{t,x_j,j,N}_{{s+u},x_j} \big) - 
        \big(     X^{s,x_i,i,N}_{{s+u},x_j}
        -
        X^{s,x_j,j,N}_{{s+u},x_j}\big)~\Big)   \Big]
        \\
        &
        \label{convexity Kv 2}
        \leq  
        K_V \sum_{i=1,i\neq j}^{N}
        \bE \Big[ \Big(    \big|  X^{t,x_i,i,N}_{s+u,x_j}-X^{s,x_i,i,N}_{s+u,x_j}\big|\Big)\cdot \Big(    \frac{1}{N} \big| 
        X^{t,x_j,j,N}_{{s+u},x_j}-X^{s,x_j,j,N}_{{s+u},x_j}
        \big|\Big)
        \Big]
        \\
        \label{eq: proof second 22}
         &\leq 
          \varepsilon  \sum_{i=1,i\neq j}^{N} \bE \Big[\big|   X^{t,x_i,i,N}_{s+u,x_j}-X^{s,x_i,i,N}_{s+u,x_j}  \big| ^2	\Big]
         +
         \frac{K}{N^2} \sum_{i=1,i\neq j}^{N} 
           \bE \Big[\big|   X^{t,x_j,j,N}_{s+u,x_j}-X^{s,x_j,j,N}_{s+u,x_j}  \big| ^2\Big],
    \end{align}
where we once again used the \textit{symmetrization trick} in \eqref{symmetrization trick 3}, \ref{assum:main}(2) in \eqref{convexity Kv 2} and Young's inequality with constants $\varepsilon,K$ chosen such that $\varepsilon < \lambda$ in \eqref{eq: proof second 22}. Similarly, we have 
\begin{align}
        \nonumber
        &-\sum_{i=1,i\neq j}^{N}  \bE \Big[ \Big(  X^{t,x_i,i,N}_{s+u,x_j}-X^{s,x_i,i,N}_{s+u,x_j}
         \Big)	\cdot	\Big( 
        R^{i,t,s}_{s+u} -          \frac{1}{N} \sum_{l=1}^{N}  
        \nabla^2 V(X^{s,x_i,i,N}_{s+u}-X^{s,x_l,l,N}_{s+u})  ( X^{s,x_i,i,N}_{{s+u},x_j}-X^{s,x_l,l,N}_{{s+u},x_j})   
        \Big)\Big]
        \\
                \nonumber
        & \leq  \frac{1}{N}
        \sum_{i=1,i\neq j}^{N}
        \sum_{l=1,l\neq j}^{N}
        K_V \bE \Big[\big|X^{t,x_i,i,N}_{s+u,x_j}-X^{s,x_i,i,N}_{s+u,x_j}
        \big|~\big| 
        (X^{t,x_i,i,N}_{s+u}-X^{t,x_l,l,N}_{s+u})
        -( X^{s,x_i,i,N}_{s+u}-X^{s,x_l,l,N}_{s+u})
        \big|
        \\
        \nonumber
        &\quad \hspace{3.5cm} 
        \cdot \big|X^{s,x_i,i,N}_{{s+u},x_j}-X^{s,x_l,l,N}_{{s+u},x_j} \big| \Big]
        \\
        \nonumber
        &\quad +
        \frac{1}{N}
        \sum_{i=1,i\neq j}^{N} 
         K_V \bE \Big[\big|X^{t,x_i,i,N}_{s+u,x_j}-X^{s,x_i,i,N}_{s+u,x_j}
        \big|~\big| 
        (X^{t,x_i,i,N}_{s+u}-X^{t,x_j,j,N}_{s+u})
        -( X^{s,x_i,i,N}_{s+u}-X^{s,x_j,j,N}_{s+u})
        \big|
        \\
        \nonumber
        &\quad \hspace{3cm} 
        \cdot \big|X^{s,x_i,i,N}_{{s+u},x_j}-X^{s,x_j,j,N}_{{s+u},x_j} \big| \Big]
        \\
        \nonumber
        &\leq 
         \frac{1}{N}
        \sum_{i=1,i\neq j}^{N}
        \sum_{l=1,l\neq j}^{N}
        \Big( \varepsilon
        \bE \Big[\big|X^{t,x_i,i,N}_{s+u,x_j}-X^{s,x_i,i,N}_{s+u,x_j}
        \big|^2 \Big] +  \frac{K (s-t)}{N^2} e^{- 4 \lambda_2 u}
        \Big)
        \\
        \label{eq: proof second 2233 injected bounds}
        &\quad +
        \sum_{i=1,i\neq j}^{N}
        \Big( \varepsilon
        \bE \Big[\big|X^{t,x_i,i,N}_{s+u,x_j}-X^{s,x_i,i,N}_{s+u,x_j}
        \big|^2 \Big] + \frac{ K(s-t)}{N^2} e^{- 4 \lambda_2 u}
        \Big)
        \\
        \label{eq: proof second 2233}
        &\leq 
        2\varepsilon \Big(\sum_{i=1,i\neq j}^{N} \bE \Big[\big|X^{t,x_i,i,N}_{s+u,x_j}-X^{s,x_i,i,N}_{s+u,x_j}
        \big|^2 \Big]~ \Big)
        +
        \frac{K (s-t)}{N }    e^{- 4\lambda_2 u},
    \end{align}
        where we used the moment bounds established in \eqref{eq: aux result for differences 1} to get \eqref{eq: proof second 2233 injected bounds} and applied similar calculations as in \eqref{eq: differences U result}.
Taking summation over $i, i\neq j$ and collecting the estimates  in  \eqref{eq: proof second 22} and \eqref{eq: proof second 2233},  
     \begin{align}
        \nonumber
        e^{2\lambda_3  (T-s)}&\sum_{i=1,i\neq j}^N\bE \Big[  |X^{t,x_i,i,N}_{T,x_j}-X^{s,x_i,i,N}_{T,x_j}|^2 \Big] 
        \leq 
        \sum_{i=1,i\neq j}^N\bE \Big[ |X^{t,x_i,i,N}_{s,x_j}-X^{s,x_i,i,N}_{s,x_j}|^2 \Big] 
        \\
        &
        \label{eq: lambda3+2epsilon-lambda term}
        +2(\lambda_3+2\varepsilon-\lambda)
        \int_{0}^{T-s} 
        e^{2\lambda_3  u}
        \sum_{i=1,i\neq j}^N\bE \Big[ | X^{t,x_i,i,N}_{s+u,x_j}-X^{s,x_i,i,N}_{s+u,x_j}|^2 \Big] 
        \dd u
        \\ 
        \nonumber 
         & 
         +
        \frac{2K}{N^2}
         \int_{0}^{T-s} e^{2\lambda_3  u}
        \sum_{i=1,i\neq j}^N\bE \Big[ |  X^{t,x_j,j,N}_{s+u,x_j}-X^{s,x_j,j,N}_{s+u,x_j}|^2 \Big] 
        \dd u 
        \\ 
        \label{eq: 1/N^2 sum term}
        & 
        \hspace{4cm}
         +\frac{K(s-t)}{N} \int_{0}^{T-s} {  \color{black}      e^{(2\lambda_3- 4 \lambda_2) u}  }\dd u .      
    \end{align}
Note that \eqref{eq: lambda3+2epsilon-lambda term}$\leq 0$ since $\lambda_3<\lambda$ and $\varepsilon$ can be chosen to be arbitrarily small, so that this term remains negative. Implementing a crude upper bound \eqref{eq: first var result 1 additional v2} and using that $\lambda_3 <\lambda_2$ (hence the integrals remain bounded as $T$ gets large), we have
\begin{align*}
    \eqref{eq: 1/N^2 sum term}\leq  \frac{K^2(s-t)}{N(\lambda_2-\lambda_3)}+\frac{K(s-t)}{2N(2\lambda_2-\lambda_3)}.
    \end{align*}
    Joining together the terms and estimates terms, we have
    \begin{align}
    \nonumber 
        \label{eq: summation diff first var i neq j}
        e^{2\lambda_3  (T-s)}\sum_{i=1,i\neq j}^N
         \bE \Big[  |X^{t,x_i,i,N}_{T,x_j}&-X^{s,x_i,i,N}_{T,x_j}|^2 \Big]
        \\
        & \leq  \sum_{i=1,i\neq j}^N\bE \Big[ |X^{t,x_i,i,N}_{s,x_j}-X^{s,x_i,i,N}_{s,x_j}|^2 \Big]
         + \frac{K(s-t)}{N}.
    \end{align}
   
    \color{black} 
    
    To analyze the summation term in {\color{black} \eqref{eq: summation diff first var i neq j}, we first provide the following estimate: For all $u\geq 0$ and $i \in \{1,\dots,N\}$: 
    \begin{align}
    \bE \bigg[&
       \bigg|\frac{1}{N }
       \sum_{l=1}^N  \nabla^2V(X_{t+u}^{t,x_i,i,N}-X_{t+u}^{t,x_l,l,N})~(X^{t,x_i,i,N}_{t+u,x_j}-X^{t,x_l,l,N}_{t+u,x_j}) \bigg|^2 \bigg]
        \nonumber\\&\leq  2
       \bE \bigg[ 
       \bigg|\frac{1}{N }
       \sum_{l=1,l\neq j }^N  \nabla^2V(X_{t+u}^{t,x_i,i,N}-X_{t+u}^{t,x_l,l,N})~(X^{t,x_i,i,N}_{t+u,x_j}-X^{t,x_l,l,N}_{t+u,x_j}) \bigg|^2 \bigg] 
       \nonumber\\&
       \nonumber
       \quad + \frac{2}{N^2}
       \bE \bigg[   
        \bigg|
        \nabla^2V(X_{t+u}^{t,x_i,i,N}-X_{t+u}^{t,x_j,j,N})~(X^{t,x_i,i,N}_{t+u,x_j}-X^{t,x_j,j,N}_{t+u,x_j}) \bigg|^2
       \bigg] 
        \\&\nonumber
        \leq  
       \frac{K}{N }
       \sum_{l=1,l\neq j }^N \bE \Big[\Big| 
       \nabla^2V(X_{t+u}^{t,x_i,i,N}-X_{t+u}^{t,x_l,l,N})~
       (X^{t,x_i,i,N}_{t+u,x_j}-X^{t,x_l,l,N}_{t+u,x_j}) \Big|^2 \Big] 
       \\&\nonumber
       \hspace{2cm} 
       +\frac{K}{N^2} 
        \bE \Big[\Big| 
         X^{t,x_i,i,N}_{t+u,x_j}-X^{t,x_j,j,N}_{t+u,x_j} \Big|^2 \Big] 
        \\&\nonumber\leq
       {\color{black}  \frac{K}{N}\sum_{i=1,i\neq j}^{N}
        \mathbb{E} \Big[ |X^{t,x_i,i,N}_{t+u,x_j}|^2  
          \Big]  
          +  \frac{K}{N^2} \bE\Big[ |X^{t,x_j,j,N}_{t+u,x_j}|^2  
          \Big]  
        \leq \frac{K}{N^2} e^{-2\lambda_1u}},
\end{align} 
where we isolated the $l=j$ term, applied $(a+b)^2\leq 2(a^2+b^2)$ for $a,b \in \mathbb{R}$, before applying Jensen's inequality and Assumption~\ref{assum:main}. Therefore, for the summation term in \eqref{eq: summation diff first var i neq j}}:    
    \begin{align}
    \nonumber
         &\sum_{i=1,i\neq j}^N\bE \Big[ |X^{t,x_i,i,N}_{s,x_j}-X^{s,x_i,i,N}_{s,x_j}|^2 \Big]
          \\ \nonumber ~       & 
       \leq K(s-t) \int_0^{s-t} \bigg(   \sum_{i=1,i\neq j}^N\bE  \bigg[
       \bigg|\frac{1}{N }
       \sum_{l=1}^N  \nabla^2V(X_{t+u}^{t,x_i,i,N}-X_{t+u}^{t,x_l,l,N})~(X^{t,x_i,i,N}_{t+u,x_j}-X^{t,x_l,l,N}_{t+u,x_j}) \bigg|^2 \bigg] 
        \\ \nonumber 
        &
        \hspace{5cm}
        + \sum_{i=1,i\neq j}^N\bE  \Big[ |\nabla^2U(X_{t+u}^{t,x_i,i,N})|^2~|X^{t,x_i,i,N}_{t+u,x_j} |^2 \Big]  ~\bigg)~
        \dd u
     \\ \label{eq: fisrt var diff 2 intial} 
        &
        \leq 
        K(s-t) \int_0^{s-t} \bigg( 
          \Big( \sum_{i=1,i\neq j}^N 
          \frac{e^{-2\lambda_1 u }}{N^2}
          \Big) 
          + \frac{e^{-2\lambda_1 u }}{N}
          \bigg)~
          \dd u
          \leq
           \frac{K  (s-t)}{N } \int_0^{s-t} e^{-2\lambda_1 u } \dd u 
          \leq
         \frac{K  (s-t)}{N }  ,
    \end{align}    
    where we used Lemma~\ref{lemma: first variation bound noiid} in the last line. Consequently, substituting \eqref{eq: fisrt var diff 2 intial} into \eqref{eq: summation diff first var i neq j},  we conclude   
\begin{align*}
    \sum_{i=1, i\neq j}^{N}  
    \mathbb{E}\big[\, |X^{t,x_i,i,N}_{T,x_j}-X^{s,x_i,i,N}_{T,x_j}|^2\big] 
    \leq 
    \frac{K  (s-t)}{N } e^{-2\lambda_3  (T-s)}.
\end{align*} 
\end{proof}
\subsection{Second Variation process }
Let $T \geq s\geq t \geq 0$, $N\in \mathbb{N}$. The second variation process of $  ( \boldsymbol{X}^{t,\boldsymbol{x},N}_s )_{s \geq t\geq 0}$ 
is defined, for $i,j,k \in \lbrace 1, \ldots, N \rbrace$, as
\begin{align}
\label{second_var_process noiid}
X^{t,x_i,i,N}_{s,x_j,x_k} 
&=   \int_{t}^{s} \sum_{l=1}^{N} \partial_{x_l}B_i(\bodX_{u}^{t,\bodx,N})  X^{t,x_l,l,N}_{u,x_j,x_k}\mathrm{d}u \notag
\\
         & \qquad \qquad +\int_{t}^{s} \sum_{l=1}^{N} \sum_{l'=1}^{N} \partial^2_{x_l,x_{l'}} B_i(\bodX_{u}^{t,\bodx,N})  X^{t,x_l,l,N}_{u,x_j} X^{t,x_{l'},l',N}_{u,x_k} \mathrm{d}u.
    \end{align}
The following lemma proceeds the results in  Lemma~\ref{lemma: first variation bound noiid} and accounts for the different behaviours of $L^p$-moments for the second order variation processes defined in \eqref{second_var_process noiid}, which is needed in Lemma~\ref{lemma: n-var process result} and contributes to the analysis in Section \ref{section: weak error expansion}. 
\begin{lemma} 
    \label{lemma: second variation bound noiid details} 
    Let Assumption~\ref{assum:main_weak error} hold and let $p \geq 2$.
    Consider the second variation process \eqref{second_var_process noiid} and 
    assume that the starting positions $x_i\in L^2(\Omega, \bR)$ are $\mathcal{F}_t$-measurable  random variables  that are identically distributed over all $ i  \in \lbrace 1, \ldots, N \rbrace$. 
    Then there  exist $\lambda_4 \in (0,\min \{   \lambda-(2+1/N)K_V,\lambda_3\} )$ and  $K > 0$ (both independent of $s,t,T$ and $N$)  such that  for any $T \geq s\geq t \geq 0$ and $i\in \lbrace 1, \ldots, N \rbrace $ 
    \begin{align*}
         \ \mathbb{E}\Big[|X^{t,x_i,i,N}_{s,x_i,x_i}|^p\Big] 
         \leq K e^{-\lambda_4  p (s-t)},
        \qquad \quad 
        \sum_{i,j,k=1,~i\neq j \neq k}^{N} \mathbb{E}\Big[|X^{t,x_i,i,N}_{s,x_j,x_k}|^p\Big] 
        &\leq 
        \frac{K}{N^{2p-3}} e^{-\lambda_4  p(s-t)},
        \\
        \textrm{and}\quad 
        \sum_{i,k=1,~i\neq k}^{N}  
        \Big( 
        \mathbb{E}\Big[|X^{t,x_i,i,N}_{s,x_k,x_k}|^p\Big]
        + 
        \mathbb{E}\Big[|X^{t,x_i,i,N}_{s,x_i,x_k}|^p\Big] 
        + \mathbb{E}\Big[|X^{t,x_i,i,N}_{s,x_k,x_i}|^p\Big] 
        \Big)
        &\leq \frac{K}{N^{p-2}} e^{-\lambda_4  p(s-t)}
        .
    \end{align*} 
\end{lemma}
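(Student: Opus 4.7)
The plan is to mirror the strategy used for the first variation process (Lemma 4.2), but now applied to the inhomogeneous linear ODE satisfied by $X^{t,x_i,i,N}_{s,x_j,x_k}$. Differentiating \eqref{second_var_process noiid} gives
\begin{align*}
\frac{\mathrm{d}}{\mathrm{d}s}X^{t,x_i,i,N}_{s,x_j,x_k}
= \sum_{l=1}^{N}\partial_{x_l}B_i(\boldsymbol{X}^{t,\boldsymbol{x},N}_s)\,X^{t,x_l,l,N}_{s,x_j,x_k}
+ F^{i,j,k}_s,
\qquad F^{i,j,k}_s:=\sum_{l,l'=1}^{N}\partial^{2}_{x_l,x_{l'}}B_i(\boldsymbol{X}^{t,\boldsymbol{x},N}_s)\,X^{t,x_l,l,N}_{s,x_j}X^{t,x_{l'},l',N}_{s,x_k}.
\end{align*}
The linear part is exactly the generator whose $L^p$-dissipation was established in Lemma~\ref{lemma: first variation bound noiid}: after applying the chain rule to $|X^{t,x_i,i,N}_{s,x_j,x_k}|^p$ and summing over $i$, convexity of $U$ provides a $-\lambda p$ contribution, while the interaction term $\frac{1}{N}\sum_{l}\nabla^{2}V$ is handled by the \emph{symmetrization trick} (Remark~\ref{Remark:symmetrization trick}) combined with $\nabla^{2}V\geq 0$. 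For off-diagonal patterns one further isolates the $l=j$ or $l=k$ row and absorbs it by Young's inequality at the cost of a small $\varepsilon>0$ and an extra $\tfrac{1}{N}K_V$; this is where the constraint $\lambda_4<\lambda-(2+\tfrac{1}{N})K_V$ enters (the leading $2K_V$ comes from the two distinct perturbation directions $x_j,x_k$ that must simultaneously be split off).

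The forcing $F^{i,j,k}_s$ is the new element. Writing out $B_i$ shows that $\partial^2_{x_l,x_{l'}}B_i$ is supported on the four patterns $(l,l')\in\{(i,i),(i,m),(m,i),(m,m)\}$ for some $m\neq i$, and every pattern carries a factor $1/N$ from the empirical convolution (except the pure $\nabla^3 U$ contribution at $l=l'=i$). Accordingly, $|F^{i,j,k}_s|$ is bounded schematically by
\begin{align*}
|F^{i,j,k}_s|\;\lesssim\;
|X^{t,x_i,i,N}_{s,x_j}||X^{t,x_i,i,N}_{s,x_k}|
+\frac{1}{N}\sum_{m=1}^N\bigl(|X^{t,x_i,i,N}_{s,x_j}|+|X^{t,x_m,m,N}_{s,x_j}|\bigr)\bigl(|X^{t,x_i,i,N}_{s,x_k}|+|X^{t,x_m,m,N}_{s,x_k}|\bigr).
\end{align*}
Pairing with $p|X^{t,x_i,i,N}_{s,x_j,x_k}|^{p-1}$ via Young's inequality produces a term $\varepsilon |X^{t,x_i,i,N}_{s,x_j,x_k}|^p$ (absorbed into the dissipation) plus $K\,|F^{i,j,k}_s|^p$. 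I would then estimate $\mathbb{E}[|F^{i,j,k}_s|^p]$ by Cauchy--Schwarz and the $L^{2p}$ bounds of Lemma~\ref{lemma: first variation bound noiid}, which are available since the first variation is a random-coefficient ODE with bounded coefficients and thus yields moments of every order.

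The three cases then amount to careful index accounting using Definition~\ref{def: set of sequence pi}. For $i=j=k$ the dominant piece comes from $(l,l')=(i,i)$ and Lemma~\ref{lemma: first variation bound noiid} gives $\mathbb{E}[|X^{t,x_i,i,N}_{s,x_i}|^{2p}]=\mathcal{O}(e^{-2\lambda_1 p(s-t)})$, producing the $\mathcal{O}(1)$ scaling. For $i\neq k$ (two-index patterns), the forcing inevitably contains at least one off-diagonal first variation, which contributes $N^{-(p-1)}$ after summing over $N$ possibilities; pairing the $1/N$ from $\partial^2 B$ with the $N$ summands recovers the claimed $N^{-(p-2)}$ after the outer sum over $N^2$ index pairs. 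For all-distinct $(i,j,k)$ both $X^{t,\cdot}_{s,x_j}$ and $X^{t,\cdot}_{s,x_k}$ are forced into their off-diagonal regime, so each contributes an $N^{-(p-1)}$ factor; combined with the $1/N$ of $\partial^2 B$ and the $N^3$ outer summation one obtains $N^{-(2p-3)}$. After collecting these bounds, multiplying by $e^{\lambda_4 p(s-t)}$, integrating, and noting $\lambda_4<\min\{\lambda-(2+1/N)K_V,\lambda_3\}$ so that the residual linear term remains negative, a Gronwall argument closes all three estimates simultaneously.

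The main obstacle will be the bookkeeping of $N$-powers in the forcing term without losing exponential decay: a naive bound (using the product $e^{-\lambda_1 p(s-t)}\cdot e^{-\lambda_1 p(s-t)}$ after splitting $\mathbb{E}[|F|^p]$) could double the exponential rate but only if one is careful not to waste it through Young's inequality with a non-integrable constant. The tactic is to use the Cauchy--Schwarz splitting at the $L^{2p}$-level (which Lemma~\ref{lemma: first variation bound noiid} supplies for arbitrary $p\geq 2$) and then absorb only the \emph{square root} of the product of exponentials, yielding a decay rate strictly greater than $\lambda_4 p$ as needed to close Gronwall uniformly in $N$ and $T$.
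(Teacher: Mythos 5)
Your plan matches the paper's proof in all essential respects: the chain rule on $|X^{t,x_i,i,N}_{s,x_j,x_k}|^p$ separates a dissipative Jacobian part (handled by convexity of $U$, the symmetrization trick, and Young's inequality, which is exactly where the $(2+1/N)K_V$ and thus the constraint on $\lambda_4$ enter) from the forcing $F^{i,j,k}$, which you bound via Cauchy--Schwarz at $L^{2p}$ using Lemma~\ref{lemma: first variation bound noiid}, and the three $N$-scaling regimes are closed simultaneously by a weighted Gronwall argument. The paper implements the final step by defining an explicit Lyapunov functional $I^{2,p}_{t,s}$ with weights $N^{(m-1)p-m}$ over the patterns $m=\hco((i,j,k))$ and applying Hölder before Young (so the decaying kernel $e^{(\lambda_4-2\lambda_1)(u-t)}$ appears in the Gronwall coefficient rather than a constant $\varepsilon$), but this is a cosmetic reorganization of the same estimates you describe.
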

{\color{black}  	
\begin{remark}
This lemma continues to highlight the main difficulty faced in this manuscript's analysis. The above inequalities suggest that the second-order variation process, where $\hco((i,j,k))=1$, i.e., $i=j=k$, yields the $\cO(1)$ behaviour, while all other  elements (i.e., the cross-derivatives with $\hco((i,j,k))\geq 2$) decay differently with respect to the number of particles.  We refer to Lemma~\ref{lemma: n-var process result} for a general result. 
\end{remark}
}
\begin{proof} 
{\color{black} Let $p\geq 2$ be a given integer. Note that in the following proof, the positive constant $K$ is independent of $s,t,T,N$ and may change line by line}.

\textit{Part 1: Preliminary manipulations.} For $\lambda_4 \in (0,\min \{   \lambda-(2+1/N)K_V,\lambda_3 \} )$, we define $I_{t,s}^{2,p}$ for all $s\geq t \geq 0$ as
\begin{align}
\nonumber
    I_{t,s}^{2,p}
    :=
    e^{p \lambda_4 (s-t)} \bigg( 
     \frac{1}{N} 
     &
     \sum_{i=1}^{N}\mathbb{E}\Big[|X^{t,x_i,i,N}_{s,x_i,x_i}|^p 
         \Big]
    \\
    & \nonumber +
     N^{p-2}\sum_{i,k=1,~i\neq k}^{N}\Big(
      \mathbb{E}\Big[|X^{t,x_i,i,N}_{s,x_k,x_k}|^p\Big]
     +\mathbb{E}\Big[|X^{t,x_i,i,N}_{s,x_i,x_k}|^p\Big]  
     +\mathbb{E}\Big[|X^{t,x_i,i,N}_{s,x_k,x_i}|^p\Big]
     \Big)
     \\
     \label{eq: second var Ipst}
     & \qquad 
    +
        N^{2p-3}\sum_{i,j,k=1,~i\neq j \neq k}^{N}
    \bE \Big[ 
    | X^{t,x_i,i,N}_{s,x_j,x_k}|^{p }    \Big]
     \bigg),
\end{align}
for which we will aim to show that we can upper bound $ I_{t,s}^{2,p} \leq K$. We start by analysing each of the second variation processes:  
For any $i,j, k \in \lbrace 1, \ldots, N \rbrace$, $\lambda_4\in (0,\min \{   \lambda-(2+1/N)K_V,\lambda_3\} )$, $s\geq t \geq 0$,  we have that 
\allowdisplaybreaks
 \begin{align}
 \nonumber
     &e^{p \lambda_4 (s-t)} \mathbb{E}\Big[|X^{t,x_i,i,N}_{s,x_j,x_k}|^p\Big]  
     \\ \nonumber
        & =  - p \int_{t}^{s} e^{p \lambda_4 (u-t)}\mathbb{E} \bigg[  \big(   X^{t,x_i,i,N}_{u,x_j,x_k}
        \big)   \cdot  	\big(   
        \nabla^{2} U(X^{t,x_i,i,N}_u) X^{t,x_i,i,N}_{u,x_j,x_k} \big) 
        |X^{t,x_i,i,N}_{u,x_j,x_k}|^{p-2} \bigg] \mathrm{d}u 
    \\\nonumber
    &  \quad -  p \int_{t}^{s} e^{p \lambda_4 (u-t)} \mathbb{E} \bigg[     \Big( 
    X^{t,x_i,i,N}_{u,x_j,x_k}
   \Big)   \cdot	\Big(  
    \sum_{l=1}^{N} \partial_{x_l}\frac{1}{N}\sum_{q=1}^{N} \nabla V(X^{t,x_i,i,N}_u-X^{t,x_q,q,N}_u)  X^{t,x_l,l,N}_{u,x_j,x_k}  \Big)    |X^{t,x_i,i,N}_{u,x_j,x_k}|^{p-2} \bigg] \mathrm{d}u 
    \\\nonumber
    & \quad + p\int_{t}^{s} \lambda_4  e^{p \lambda_4 (u-t)} \mathbb{E}\Big[|X^{t,x_i,i,N}_{u,x_j,x_k}|^p\Big] \mathrm{d}u 
    \\\nonumber
    & \quad +   p \int_{t}^{s} e^{p \lambda_4 (u-t)}\mathbb{E} \bigg[  \Big(    X^{t,x_i,i,N}_{u,x_j,x_k}
     \Big)   \cdot	\Big(   
    \sum_{l=1}^{N} \sum_{l'=1}^{N}\partial_{x_l,x_{l'}}^2 B_i(\bodX_{u}^{t,\bodx,N}) X^{t,x_l,l,N}_{u,x_j}  X^{t,x_{l'},l',N}_{u,x_k}  \Big) |X^{t,x_i,i,N}_{u,x_j,x_k}|^{p-2} \bigg] \mathrm{d}u
     \\\label{eq: hessian u term bound 2}
        & \leq 
    p\int_{t}^{s} (-\lambda+\lambda_4) e^{p \lambda_4 (u-t)} \mathbb{E}\Big[|X^{t,x_i,i,N}_{u,x_j,x_k}|^p\Big] \mathrm{d}u
     \\\label{eq: o2 convolution term}
    &  \quad
    -  p \int_{t}^{s} e^{p \lambda_4 (u-t)} \mathbb{E} \bigg[   \Big( X^{t,x_i,i,N}_{u,x_j,x_k}
       \Big)   \cdot	\Big(   
    \frac{1}{N}\sum_{l=1}^{N}  {  \color{black}       \nabla^2 V }(X^{t,x_i,i,N}_u-X^{t,x_l,l,N}_u)  (X^{t,x_i,i,N}_{u,x_j,x_k}-X^{t,x_l,l,N}_{u,x_j,x_k})  \Big)
    \\   \nonumber 
    & \hspace{4cm} \cdot
          |X^{t,x_i,i,N}_{u,x_j,x_k}|^{p-2} \bigg] \mathrm{d}u 
    \\ \label{eq: o2 lower order var term}
    & \quad +   p \int_{t}^{s} e^{p \lambda_4 (u-t)} \mathbb{E} \Big[ 
     |X^{t,x_i,i,N}_{u,x_j,x_k}|^{p-1}\sum_{l=1}^{N} \sum_{l'=1}^{N}
     |\partial_{x_l,x_{l'}}^2 
 B_i(\bodX_{u}^{t,\bodx,N})|
     ~| X^{t,x_l,l,N}_{u,x_j}| ~| X^{t,x_{l'},l',N}_{u,x_k}|    \Big] \mathrm{d}u,
 \end{align}
where we used Assumption~\ref{assum:main} on the first term to obtain \eqref{eq: hessian u term bound 2}. In what follows, the last two terms, which we will refer to as the convolution term \eqref{eq: o2 convolution term} and the lower order variation term \eqref{eq: o2 lower order var term}, will be investigated in more detail. \\

\textit{Part 2: Analysis of the convolution term  \eqref{eq: o2 convolution term}.}
We analyze the convolution term \eqref{eq: o2 convolution term} by considering five different cases: $i=j=k,~i\neq j =k,~~i=j \neq k,~~ i = k \neq j$ and $i\neq j \neq k$, where we will see that different methodologies need to be implemented based on the values $\hco ((i,j,k))$ of the second variation processes. {\color{black} Note that  $i \neq j \neq k$ indicates that none of the indices are identical. } 
\\ 
\textbf{Case $i=j=k$:} The convolution term, after summing over all $i \in \lbrace 1, \ldots, N \rbrace$, simplifies to
\begin{align}
    \nonumber 
    &-\sum_{i =1 }^{N} \frac{1}{N}  \sum_{l=1}^{N}
    \bE \left[\Big(    | X^{t,x_i,i,N}_{u,x_i,x_i}|^{p-2}X^{t,x_i,i,N}_{u,x_i,x_i}
      \Big)   \cdot	\Big(  
        \nabla^2 V(X^{t,x_i,i,N}_u-X^{t,x_l,l,N}_u)  ( X^{t,x_i,i,N}_{u,x_i,x_i}-X^{t,x_l,l,N}_{u,x_i,x_i}) 
        \Big)  \right]
    \\\nonumber 
    & = - \frac{1}{N} \sum_{i,l =1,~i\neq l}^{N}  \bE \Big[ 
  \Big( | X^{t,x_i,i,N}_{u,x_i,x_i}|^{p-2}X^{t,x_i,i,N}_{u,x_i,x_i}
      \Big)   \cdot	\Big(   
        \nabla^2 V(X^{t,x_i,i,N}_u-X^{t,x_l,l,N}_u)   X^{t,x_i,i,N}_{u,x_i,x_i} \Big) 
       \Big]
    \\\nonumber 
    &\quad + \frac{1}{N} \sum_{i,l =1,~i\neq l}^{N}  \bE \Big[ 
  \Big(  | X^{t,x_i,i,N}_{u,x_i,x_i}|^{p-2}X^{t,x_i,i,N}_{u,x_i,x_i}
     \Big)   \cdot	\Big(   
        \nabla^2 V(X^{t,x_i,i,N}_u-X^{t,x_l,l,N}_u)   X^{t,x_l,l,N}_{u,x_i,x_i}  \Big)
      \Big]
        \\ \label{eq:kv/N bound}
   & \leq {\color{black} \frac{K_V}{N}
   \sum_{i,l =1,~i\neq l }^{N}
   \bE \Big[
  |X^{t,x_i,i,N}_{u,x_i,x_i}|^{p-1}\         \cdot	    |X^{t,x_l,l,N}_{u,x_i,x_i}|  \Big] }
   \\\label{eq: second var res1}
   & \leq 
  \frac{K_V(p-1)}{p} \sum_{i =1 }^{N} \bE \Big[ 
  |X^{t,x_i,i,N}_{u,x_i,x_i}|^{p}   \Big] 
  +  \frac{K_V }{Np}   \sum_{i,l =1,~i\neq l }^{N}
  \bE \Big[ 
  |X^{t,x_l,l,N}_{u,x_i,x_i}|^{p}   \Big],
\end{align}
where we used $\nabla^2 V(x)\geq 0$ for all $x \in \mathbb{R}$ and Assumption~\ref{assum:main} to derive \eqref{eq:kv/N bound} and \eqref{eq: second var res1} is a consequence of Young's inequality. 
\\ \\
\textbf{Case $i\neq j =  k$:}  In this situation, after summing over all $i, k \in \lbrace 1, \ldots, N \rbrace,~i \neq k$ and splitting the summation over $l$, we derive using the \textit{symmetrization trick} 
that 
\begin{align}
\nonumber 
    &-\sum_{i,k=1,~i\neq k}^{N}\bE \bigg[ \frac{1}{N}  \sum_{l=1}^{N}\Big(  | X^{t,x_i,i,N}_{u,x_k,x_k}|^{p-2}X^{t,x_i,i,N}_{u,x_k,x_k}
       \Big)   \cdot	\Big(   
        \nabla^2 V(X^{t,x_i,i,N}_u-X^{t,x_l,l,N}_u)  ( X^{t,x_i,i,N}_{u,x_k,x_k}-X^{t,x_l,l,N}_{u,x_k,x_k}) 
    \Big)     \bigg]
    \\
    \nonumber 
   & = -\frac{1}{2N}
   \sum_{i,k,l=1,~i,l\neq k}^{N}
   \bE \Big[ 
   \Big(  | X^{t,x_i,i,N}_{u,x_k,x_k}|^{p-2}X^{t,x_i,i,N}_{u,x_k,x_k}-| X^{t,x_l,l,N}_{u,x_k,x_k}|^{p-2}X^{t,x_l,l,N}_{u,x_k,x_k}
       \Big)   
   \\
   \nonumber 
   & \qquad \qquad  \cdot	\Big(   
        \nabla^2 V(X^{t,x_i,i,N}_u-X^{t,x_l,l,N}_u)  ( X^{t,x_i,i,N}_{u,x_k,x_k}-X^{t,x_l,l,N}_{u,x_k,x_k}
        ) \Big)
       \Big] 
    \\
    \nonumber 
    &\quad 
    -\bE \bigg[ \frac{1}{N} \sum_{i,k=1,~i\neq k}^{N} \Big(    | X^{t,x_i,i,N}_{u,x_k,x_k}|^{p-2}X^{t,x_i,i,N}_{u,x_k,x_k}
      \Big)   \cdot	\Big(   
        \nabla^2 V(X^{t,x_i,i,N}_u-X^{t,x_k,k,N}_u)  ( X^{t,x_i,i,N}_{u,x_k,x_k}-X^{t,x_k,k,N}_{u,x_k,x_k}) 
       \Big)  \bigg]
    \\
    \label{eq:kv/N bound22}
    &\leq 
     {\color{black}   K_V 
      \sum_{i,k=1,~i\neq k }^{N}
      \bE \Big[    
  |X^{t,x_i,i,N}_{u,x_k,x_k}|^{p-1}  \cdot    \frac{|X^{t,x_k,k,N}_{u,x_k,x_k}|}{N}  \Big] }
  \\
  \label{eq: second var res2}
    &\leq
     \frac{K_V(p-1)}{p}\sum_{i,k=1,~i\neq k}^{N}
    \bE \Big[ 
    | X^{t,x_i,i,N}_{u,x_k,x_k}|^{p }    \Big]
    + 
    \frac{K_V}{pN^{p-1} } \sum_{k=1 }^{N}
    \bE \Big[  |X^{t,x_k,k,N}_{u,x_k,x_k}|^p    \Big],
\end{align}
 where we used $\nabla^2 V(x)\geq 0$ for all $x \in \mathbb{R}$ and Assumption~\ref{assum:main} to derive \eqref{eq:kv/N bound22}. \eqref{eq: second var res2} is a consequence of Young's inequality. 
\\

\textbf{Cases} $i=j\neq k$ and  $i = k \neq j $: These two cases share similar calculations and we show the first of these. Summing over all $i, k \in \lbrace 1, \ldots, N \rbrace,~i \neq k$,  we have  
\begin{align}
\nonumber 
   & -\sum_{i,k=1,~i\neq k}^{N}  \bE \bigg[ \frac{1}{N} \sum_{l=1}^{N}  \Big(   | X^{t,x_i,i,N}_{u,x_i,x_k}|^{p-2}X^{t,x_i,i,N}_{u,x_i,x_k} 
    \Big)   \cdot	\Big(   
        \nabla^2 V(X^{t,x_i,i,N}_u-X^{t,x_l,l,N}_u)  ( X^{t,x_i,i,N}_{u,x_i,x_k}-X^{t,x_l,l,N}_{u,x_i,x_k}) 
      \Big)  \bigg]
        \\\nonumber 
   & \leq  
   -\frac{1}{N}
   \sum_{i,k,l=1,~i\neq l\neq k}^{N}
   \bE \Big[ \left |
   \Big(  | X^{t,x_i,i,N}_{u,x_i,x_k}|^{p-2}X^{t,x_i,i,N}_{u,x_i,x_k} 
      \Big)   \cdot	\Big(   
        \nabla^2 V(X^{t,x_i,i,N}_u-X^{t,x_l,l,N}_u)   
        X^{t,x_l,l,N}_{u,x_i,x_k}
       \Big) \right |  \Big] 
    \\\nonumber 
    &\qquad +\frac{1}{N} \sum_{i,k=1,~i\neq k}^{N}
    \bE \left[  \left | \Big(   | X^{t,x_i,i,N}_{u,x_i,x_k}|^{p-2}X^{t,x_i,i,N}_{u,x_i,x_k} 
       \Big)   \cdot	\Big(   
        \nabla^2 V(X^{t,x_i,i,N}_u-X^{t,x_k,k,N}_u)   X^{t,x_k,k,N}_{u,x_i,x_k}  
    \Big) \right |   \right]
    \\ \nonumber
    & \leq 
    \frac{K_V}{N}
   \sum_{i,k,l=1,~i\neq l\neq k}^{N} \bigg(   
     \frac{p-1}{p}\bE \Big[
     | X^{t,x_i,i,N}_{u,x_i,x_k}|^{p}  \Big]  
     + \frac{1}{p}
     \bE \Big[| X^{t,x_l,l,N}_{u,x_i,x_k}|^{p} 
     \Big]      \bigg) 
     \\ 
     \label{eq: young and H1 implementation} 
     &\hspace{5cm} 
     +
     \frac{K_V}{N}
   \sum_{i,k=1,~i\neq k}^{N} \bigg(  
     \frac{p-1}{p}\bE \Big[
     | X^{t,x_i,i,N}_{u,x_i,x_k}|^{p}  \Big]  
     + \frac{1}{p}
     \bE \Big[| X^{t,x_k,k,N}_{u,x_i,x_k}|^{p} 
     \Big]      \bigg)  
    \\\nonumber 
    & 
    \leq    \frac{K_V(p-1)}{p}   
    \sum_{i,k=1,~i\neq k}^{N}
     \bE \Big[ 
    | X^{t,x_i,i,N}_{u,x_i,x_k}|^{p} 
      \Big] 
      +
      \frac{K_V}{Np}
      \sum_{i,k,l=1,~i\neq l\neq k}^{N}
      \bE \Big[ 
        |X^{t,x_l,l,N}_{u,x_i,x_k}|^{p} 
      \Big]
      \\ 
      \label{eq: second var res3}
    & \qquad + 
      \frac{K_V}{N}
   \sum_{i,k=1,~i\neq k}^{N} \bigg(  
     \frac{p-1}{p}\bE \Big[
     | X^{t,x_i,i,N}_{u,x_i,x_k}|^{p}  \Big]  
     + \frac{1}{p}
     \bE \Big[| X^{t,x_k,k,N}_{u,x_i,x_k}|^{p} 
     \Big]     \bigg)  ,
 \end{align}
where once again, we split up the summation over $l$ and repeatedly apply Assumption~\ref{assum:main} and Young's inequality to obtain \eqref{eq: young and H1 implementation} and merely rearranging terms yields \eqref{eq: second var res3}. \\ \\
\noindent
\textbf{Case $i\neq j \neq k$:} Summing over all $i, j,k \in \lbrace 1, \ldots, N \rbrace,~i \neq j \neq  k$, we obtain
\begin{align}
\nonumber  
&- \sum_{i,j,k=1,~i\neq j \neq k}^{N}  \bE \bigg[ \frac{1}{N} \sum_{l=1}^{N}  \Big( | X^{t,x_i,i,N}_{u,x_j,x_k}|^{p-2}X^{t,x_i,i,N}_{u,x_j,x_k}
   \Big)   \cdot	\Big(  
        \nabla^2 V(X^{t,x_i,i,N}_u-X^{t,x_l,l,N}_u)  ( X^{t,x_i,i,N}_{u,x_j,x_k}-X^{t,x_l,l,N}_{u,x_j,x_k}) 
       \Big)   \bigg]
        \\ \nonumber 
   & = -\frac{1}{2N}
   \sum_{i,j,k,l=1,~i\neq j \neq k,~l\neq j \neq k}^{N}
   \bE \Big[ 
    \Big(   | X^{t,x_i,i,N}_{u,x_j,x_k}|^{p-2}X^{t,x_i,i,N}_{u,x_j,x_k}-| X^{t,x_l,l,N}_{u,x_j,x_k}|^{p-2}X^{t,x_l,l,N}_{u,x_j,x_k} \Big)  
   \\ \nonumber 
   & \qquad \qquad     \cdot	\Big(   
        \nabla^2 V(X^{t,x_i,i,N}_u-X^{t,x_l,l,N}_u)  ( X^{t,x_i,i,N}_{u,x_j,x_k}-X^{t,x_l,l,N}_{u,x_j,x_k}
        ) 
        \Big) \Big] 
    \\ \nonumber 
    &\quad 
    -\bE \bigg[ \frac{1}{N} \sum_{i,j,k=1,~i\neq j \neq k}^{N}  \Big(  | X^{t,x_i,i,N}_{u,x_j,x_k}|^{p-2}X^{t,x_i,i,N}_{u,x_j,x_k}
       \Big)   \cdot	\Big(   
        \nabla^2 V(X^{t,x_i,i,N}_u-X^{t,x_j,j,N}_u)  ( X^{t,x_i,i,N}_{u,x_j,x_k}-X^{t,x_j,j,N}_{u,x_j,x_k}) 
     \Big)   \bigg]
    \\ \label{eq: summation over l split up terms} 
    &\quad 
    -\bE \bigg[ \frac{1}{N} \sum_{i,j,k=1,~i\neq j \neq k}^{N} \Big(   | X^{t,x_i,i,N}_{u,x_j,x_k}|^{p-2}X^{t,x_i,i,N}_{u,x_j,x_k}
       \Big)   \cdot	\Big(   
        \nabla^2 V(X^{t,x_i,i,N}_u-X^{t,x_k,k,N}_u)  ( X^{t,x_i,i,N}_{u,x_j,x_k}-X^{t,x_k,k,N}_{u,x_j,x_k}) 
     \Big)   \bigg]
    \\ \label{eq: convexity estimate combine terms} 
    &\leq {\color{black} K_V  
    \sum_{i,j,k=1,~i\neq j \neq k}^{N}
    \bE \Big[     
    | X^{t,x_i,i,N}_{u,x_j,x_k}|^{p-1} \cdot \Big(    \frac1N|X^{t,x_j,j,N}_{u,x_j,x_k}| +\frac1N
    |X^{t,x_k,k,N}_{u,x_j,x_k}|
       \Big) \Big]}
    \\ \label{eq: implement youngs inequality on p-1 th moment} 
    &\leq K_V
    \sum_{i,j,k=1,~i\neq j \neq k}^{N}
    \bigg(      \frac{2(p-1)}{p}
    \bE \Big[ 
    | X^{t,x_i,i,N}_{u,x_j,x_k}|^{p }    \Big]
    + \frac{1}{pN^p} \Big(
    \bE \Big[  |X^{t,x_j,j,N}_{u,x_j,x_k}|^p    \Big]
    +
    \bE \Big[  |X^{t,x_k,k,N}_{u,x_j,x_k}|^p    \Big]
    \Big)
     \bigg)
     \\
     \label{eq: second var res4}
    &\leq
    \frac{2K_V(p-1)}{p}
    \sum_{i,j,k=1,~i\neq j \neq k}^{N}
    \bE \Big[ 
    | X^{t,x_i,i,N}_{u,x_j,x_k}|^{p }    \Big]
    + 
    \frac{K_V}{pN^{p-1}} \sum_{j,k=1,j \neq k}^{N}\Big(
    \bE \Big[  |X^{t,x_j,j,N}_{u,x_j,x_k}|^p    \Big]
    +
    \bE \Big[  |X^{t,x_k,k,N}_{u,x_j,x_k}|^p    \Big]
    \Big).
\end{align} 
This is once again established via splitting up the sum over $l \in \lbrace 1,\ldots,N \rbrace$ and using the \textit{symmetrization trick} to obtain \eqref{eq: summation over l split up terms} and Assumption~\ref{assum:main} to obtain \eqref{eq: convexity estimate combine terms}. Young's inequality yields \eqref{eq: implement youngs inequality on p-1 th moment} before a final rearrangement of terms provides the final inequality \eqref{eq: second var res4}.\\

\textit{Part 3: Analysis of the lower order variation term \eqref{eq: o2 lower order var term}.} Lemma~\ref{lemma: first variation bound noiid} with $\lambda_1 \in(\lambda_2,\lambda)$ 
implies that there exists $K>0$ such that for all $i,j\in \lbrace 1, \ldots, N\rbrace,i\neq j,~s\geq t \geq 0 $,  
\begin{align}
    \label{eq: proof of second var iid aux for first}
      \mathbb{E}[|X^{t,x_i,i,N}_{s,x_i}|^p]
     &\leq 
    \sum_{k=1}^{N}  \mathbb{E}[|X^{t,x_k,k,N}_{s,x_i}|^p] \leq  K e^{- \lambda_1 p (s-t)}  ,
    \\
      \quad  
       \mathbb{E}[|X^{t,x_i,i,N}_{s,x_j}|^p]
       &=
       \frac{1}{N-1}
       \sum_{k=1,~k\neq j}^{N}  \mathbb{E}[|X^{t,x_k,k,N}_{s,x_j}|^p] \leq  \frac{K}{N^{p}} e^{-  \lambda_1 p  (s-t)}, \notag
\end{align} 
where for the equality in the second line we used that the starting points $x_i$ are identically distributed.
{\color{black}For the  lower-order variation terms \eqref{eq: o2 lower order var term}, using Hölder's inequality, we have that for all $i,j,k\in \lbrace 1, \ldots, N\rbrace $ and $u\geq t \geq 0 $, 
\begin{align}
\nonumber
 e^{p \lambda_4 (u-t)}
 &\mathbb{E} \Big[ 
 |X^{t,x_i,i,N}_{u,x_j,x_k}|^{p-1}\sum_{l=1}^{N} \sum_{l'=1}^{N}
 |\partial_{x_l,x_{l'}}^2 B_i(\bodX_{u}^{t,\bodx,N})|
 ~| X^{t,x_l,l,N}_{u,x_j}| ~| X^{t,x_{l'},l',N}_{u,x_k}|    \Big]
 \\\nonumber
 &\leq e^{p \lambda_4 (u-t)}
         \sum_{l=1}^{N} \sum_{l'=1}^{N} 
     |\partial_{x_l,x_{l'}}^2 B_i|_\infty    \left(\mathbb{E}\left[|X^{t,x_i,i,N}_{u,x_j,x_k}|^{p} \right]\right)^{(p-1)/p}
    \cdot
     \left(\mathbb{E}\left[| X^{t,x_l,l,N}_{u,x_j}  |^p \ |X^{t,x_{l'},l',N}_{u,x_k}|^p \right] \right)^{1/p}    
     \\
    \label{eq: e4 E I ijk}
 &  =: e^{p \lambda_4 (u-t)}                    \left(\mathbb{E}\left[|X^{t,x_i,i,N}_{u,x_j,x_k}|^{p} \right]\right)^{(p-1)/p} 
      \cdot
      I^{1,i,j,k}_{t,u}.
\end{align}}
For all $i,j,k\in \lbrace 1, \ldots, N\rbrace,~u\geq t \geq 0 $, we defined the following:   
\begin{align}
    \nonumber I^{1,i,j,k}_{t,u}
    &:=\sum_{l=1}^{N} \sum_{l'=1}^{N} 
     |\partial_{x_l,x_{l'}}^2 B_i|_\infty
     \left(\mathbb{E}\left[| X^{t,x_l,l,N}_{u,x_j}  |^p \ |X^{t,x_{l'},l',N}_{u,x_k}|^p \right] \right)^{1/p}
     \\
   \nonumber \leq & 
     \sum_{l=1}^{N} \sum_{l'=1}^{N} 
     |\partial_{x_l,x_{l'}}^2 B_i|_\infty
     \left(\mathbb{E}\left[| X^{t,x_l,l,N}_{u,x_j}  |^{2p} \right]  \ 
     \mathbb{E}\left[|X^{t,x_{l'},l',N}_{u,x_k}|^{2p} \right] \right)^{1/2p}
     \\\nonumber
     =& |\partial_{x_i,x_{i}}^2  B_i|_\infty
     \left(\mathbb{E}\left[| X^{t,x_i,i,N}_{u,x_j}  |^{2p} \right]  \ 
     \mathbb{E}\left[|X^{t,x_{i},i,N}_{u,x_k}|^{2p} \right] \right)^{1/2p}
     \\ \nonumber
     \quad&
     +
     \sum_{l'=1, l'\neq i}^{N}
     |\partial^2_{x_i,x_{l'}}  B_i|_\infty
     \left(\mathbb{E}\left[| X^{t,x_i,N}_{u,x_j}  |^{2p} \right]  \ 
     \mathbb{E}\left[|X^{t,x_{l'},l',N}_{u,x_k}|^{2p} \right] \right)^{1/2p}
     \\ \nonumber
     \quad&
     +
     \sum_{l=1, l\neq i}^{N}
     |\partial^2_{x_l,x_{i}}  B_i|_\infty
     \left(\mathbb{E}\left[| X^{t,x_l,N}_{u,x_j}  |^{2p} \right]  \ 
     \mathbb{E}\left[|X^{t,x_{i},i,N}_{u,x_k}|^{2p} \right] \right)^{1/2p}
     \\\nonumber
     \quad& 
     + 
      \sum_{l=1,~i\neq l}^{N}  
      \sum_{l'=1,~i\neq l'}^{N}
     |\partial^2_{x_l,x_{l'}}  B_i|_\infty
     \left(\mathbb{E}\left[| X^{t,x_l,l,N}_{u,x_j}  |^{2p} \right]  \ 
     \mathbb{E}\left[|X^{t,x_{l'},l',N}_{u,x_k}|^{2p} \right] \right)^{1/2p},
\end{align}
where we implemented the Cauchy--Schwarz inequality in the first inequality. Now using Assumption~\ref{assum:main_weak error},
\begin{align*}
    |  \partial^2_{x_l,x_{l'}} B_i|_\infty = 
    \begin{cases}
         \mathcal{O}(1), \quad &i = l = l', \\
         \mathcal{O}(N^{-1}), \quad &i = l  \neq l' ~\textit{or} ~ i = l'  \neq l~\textit{or} ~ i \neq l  = l',\\
         0,  
         \quad &i \neq l \neq l',
    \end{cases}
\end{align*}
and the results in \eqref{eq: proof of second var iid aux for first} show that there exists $K>0$ such that 
\begin{align}
 \label{eq: second var res5}
    I^{1,i,j,k}_{t,u} \leq 
    K e^{- 2 \lambda_1 (u-t)} \cdot
    \begin{cases}
         1, \quad &i = j = k,\\
          N^{-1} , \quad &i = j  \neq k ~\textit{or} ~ i = k \neq j~\textit{or} ~ i \neq j  = k,\\
          N^{-2} , \quad &i \neq j \neq k.
    \end{cases}
\end{align}
Having established this general estimate for $ I^{1,i,j,k}_{t,u}$ in \eqref{eq: e4 E I ijk},  we distinguish the following scenarios: \\ \\
\noindent 
\medskip
\textit{Case 1: $i=j=k$.} We have 
\begin{align}
     \sum_{i=1}^{N} e^{p \lambda_4 (u-t)}         
     &    
     \left(\mathbb{E} \left[|X^{t,x_i,i,N}_{u,x_i,x_i}|^{p} \right]\right)^{(p-1)/p} 
      \cdot
      I^{1,i,i,i}_{t,u}
    \nonumber
     \\
     \nonumber
     &\leq 
     K e^{p \lambda_4 (u-t)} \sum_{i=1}^{N} 
     \left(\mathbb{E}\left[|X^{t,x_i,i,N}_{u,x_i,x_i}|^{p} \right]\right)^{(p-1)/p}   \cdot  e^{-  2\lambda_1  (u-t)} 
     \\
     \nonumber
     &= 
     K e^{ (\lambda_4- 2\lambda_1 ) (u-t)}  \cdot \sum_{i=1}^{N} 
\left(\mathbb{E}\left[|X^{t,x_i,i,N}_{u,x_i,x_i}|^{p} \right]\right)^{(p-1)/p} e^{(p-1) \lambda_4 (u-t)}     
      \\
      \label{eq: second var res6}
     &\leq
     K e^{ (\lambda_4- 2\lambda_1 ) (u-t)}  \cdot
     \Big( N +  e^{p \lambda_4 (u-t)}\sum_{i=1}^{N}
     \mathbb{E}\left[|X^{t,x_i,i,N}_{u,x_i,x_i}|^{p} \right] 
     \Big),
\end{align}
where we deploy \eqref{eq: second var res5} to obtain the first inequality. 
Using Young's inequality $ab\leq a^{q_1}/q_1 + b^{q_2}/q_2 $ with $a =  \left(\mathbb{E}\left[|X^{t,x_i,i,N}_{u,x_i,x_i}|^{p} \right]\right)^{(p-1)/p} e^{(p-1) \lambda_4 (u-t)}$, $b = 1$, $q_1=p/(p-1)$, $q_2 = p$ yields \eqref{eq: second var res6}.
\\ \\
\textit{Case 2: $i\neq j =  k$.} We have 
\begin{align}
    \sum_{i,k=1,~i\neq k}^{N}     e^{p \lambda_4 (u-t)}            &\left(\mathbb{E}\left[|X^{t,x_i,i,N}_{u,x_k,x_k}|^{p} \right]\right)^{(p-1)/p} 
      \cdot
      I^{1,i,k,k}_{t,u}
\nonumber
     \\
     \nonumber
     &\leq 
     e^{p \lambda_4 (u-t)} \sum_{i,k=1,~i\neq k}^{N}
     \left(\mathbb{E}\left[|X^{t,x_i,i,N}_{u,x_k,x_k}|^{p} \right]\right)^{(p-1)/p}   \cdot \frac{K}{N} e^{- 2\lambda_1 (u-t)} 
      \\
      \label{eq: second var res7}
     &\leq
     K e^{ (\lambda_4- 2\lambda_1) (u-t)}  \cdot
     \Big( \frac{1}{N^{p-2}} +  e^{p \lambda_4 (u-t)}\sum_{i,k=1,~i\neq k}^{N}
     \mathbb{E}\left[|X^{t,x_i,i,N}_{u,x_k,x_k}|^{p} \right]
     \Big),
\end{align}
where we used Young's inequality $ab\leq a^{q_1}/q_1 + b^{q_2}/q_2 $ with $a =  \left(\mathbb{E}\left[|X^{t,x_i,i,N}_{u,x_k,x_k}|^{p} \right]\right)^{(p-1)/p} e^{(p-1) \lambda_1 (u-t)}$, $b = 1/N$, $q_1=p/(p-1)$, $q_2 = p$ to derive \eqref{eq: second var res7}.

Similar calculations apply to the cases \textbf{$i = j \neq  k$} and { \textbf{$i= k \neq  j$}}: 
\begin{align*}
    \sum_{i,k=1,~i\neq k}^{N}     e^{p \lambda_4 (u-t)}           
    &
    \left(\mathbb{E}\left[|X^{t,x_i,i,N}_{u,x_i,x_k}|^{p} \right]\right)^{(p-1)/p} 
      \cdot
      I^{1,i,i,k}_{t,u}
      \\
      \nonumber
     &\leq
     K e^{ (\lambda_4- 2\lambda_1) (u-t)}  \cdot
     \Big( \frac{1}{N^{p-2}} +  e^{p \lambda_4 (u-t)}\sum_{i,k=1,~i\neq k}^{N}
     \mathbb{E}\left[|X^{t,x_i,i,N}_{u,x_i,x_k}|^{p} \right] 
     \Big),
     \\
     \sum_{i,j=1,~i\neq j}^{N}  e^{p \lambda_4 (u-t)}            
     &
     \left(\mathbb{E}\left[|X^{t,x_i,i,N}_{u,x_j,x_j}|^{p} \right]\right)^{(p-1)/p} 
      \cdot
      I^{1,i,j,i}_{t,u}
      \\
     \nonumber
     &\leq
     K e^{ (\lambda_4- 2\lambda_1) (u-t)}  \cdot
     \Big( \frac{1}{N^{p-2}} +  e^{p \lambda_4 (u-t)}\sum_{i,k=j,~i\neq j}^{N}
     \mathbb{E}\left[|X^{t,x_i,i,N}_{u,x_j,x_i}|^{p} \right] 
     \Big).
\end{align*}
\textit{Case 3: $i\neq j \neq k  $.} We have 
\begin{align}
     \sum_{i,j,k=1,~i\neq j \neq k}^{N} e^{p \lambda_4 (u-t)}            
     &
     \left(\mathbb{E}\left[|X^{t,x_i,i,N}_{u,x_j,x_k}|^{p} \right]\right)^{(p-1)/p} 
      \cdot
      I^{1,i,j,k}_{t,u}
\nonumber
     \\\nonumber
     &\leq 
     e^{p \lambda_4 (u-t)} \sum_{i,j,k=1,~i\neq j \neq k}^{N}
     \left(\mathbb{E}\left[|X^{t,x_i,i,N}_{u,x_j,x_k}|^{p} \right]\right)^{(p-1)/p}   \cdot  \frac{K}{N^2} e^{- 2\lambda_1 (u-t)}  
      \\
      \label{eq: second var res10}
     &\leq
     K e^{ (\lambda_4- 2\lambda_1) (u-t)}  \cdot
     \Big( \frac{1}{N^{2p-3}} +  e^{p \lambda_4 (u-t)}\sum_{i,j,k=1,~i\neq j \neq k}^{N}
     \mathbb{E}\left[|X^{t,x_i,i,N}_{u,x_j,x_k}|^{p} \right] 
     \Big),
\end{align}
where we used Young's inequality $ab\leq a^{q_1}/q_1 + b^{q_2}/q_2 $ with $a =   (\mathbb{E} [|X^{t,x_i,i,N}_{u,x_j,x_k}|^{p}  ] )^{(p-1)/p} e^{(p-1) \lambda_4 (u-t)}$, $b = 1/N^2 $, $q_1=p/(p-1)$, $q_2 = p$ to derive \eqref{eq: second var res10}.\\

\textit{Part 4: Collecting estimates and conclusion.} {\color{black}Gathering up all the results in \eqref{eq: second var res1}--\eqref{eq: second var res10} and recalling the definition of $ I_{t,s}^{2,p}$ in \eqref{eq: second var Ipst}, we have for all $\lambda_4 \in (0,\min \{  \lambda-(2+1/N)K_V,\lambda_3 \})$,  
\begin{align*}
     I_{t,s}^{2,p}
    &\leq 
     p 
     \int_{t}^{s} \Big(-\lambda+\lambda_4
     +
     \frac{2K_V(p-1)}{p}
     + K e^{ (\lambda_4- 2\lambda_1) (u-t)}
     \Big) 
   I_{t,u}^p 
    \dd u
     +
      pK  
      \int_{t}^{s}   e^{ (\lambda_4- 2\lambda_1) (u-t)}
    \dd u
    \\
    &\quad 
    +
      p  \int_{t}^{s} \frac{K_V}{N^2p}  
    \sum_{i,k=1,~i\neq k}^{N}
      \bE \Big[ 
   |X^{t,x_i,i,N}_{u,x_k,x_k}|^{p}  
      \Big] \dd u
    +
      p  \int_{t}^{s} \frac{K_V}{N  p}   
    \sum_{i=1}^{N}
      \bE \Big[ 
   |X^{t,x_i,i,N}_{u,x_i,x_i}|^{p}  
      \Big] \dd u
      \\
    &\quad 
    +
      p  \int_{t}^{s} \bigg( \frac{2K_VN^{ p-3}}{p}  
    \sum_{i,j,k=1,~i\neq j \neq  k}^{N}
      \bE \Big[ 
   |X^{t,x_i,i,N}_{u,x_j,x_k}|^{p}  
      \Big] 
   \\  
   & \hspace{3cm}
      +
      K_V N^{ p-3}  
    \sum_{i,k=1,~i \neq  k}^{N} \Big( 
      \bE \Big[ 
   |X^{t,x_i,i,N}_{u,x_i,x_k}|^{p}  
      \Big] 
      + \bE \Big[    |X^{t,x_i,i,N}_{u,x_k,x_i}|^{p}        \Big]
      \Big)~\bigg)~\dd u
       \\
    &\quad 
    +
    p  \int_{t}^{s} \frac{K_V}{p} N^{p-2}
    \sum_{i,k=1,~i \neq k}^{N}\Big(
    \bE \Big[  |X^{t,x_i,i,N}_{u,x_i,x_k}|^p    \Big]
    +
    \bE \Big[  |X^{t,x_k,k,N}_{u,x_i,x_k}|^p    \Big]
    \Big) \dd u.
    \end{align*}
Combining the terms above, we further have 
    \begin{align*}
     I_{t,s}^{2,p} &\leq 
     p 
     \int_{t}^{s} \Big(-\lambda+\lambda_4
     +
     \frac{2K_V(p-1)}{p}
     + K e^{ (\lambda_4- 2\lambda_1) (u-t)}
     \Big) 
   I_{t,u}^p 
    \dd u
     +
      pK  
      \int_{t}^{s}   e^{ (\lambda_4- 2\lambda_1) (u-t)}
    \dd u
     \\
    & \quad + p  \int_{t}^{s} \frac{K_V}{  p}   
    \bigg( \frac{1}{N}\sum_{i=1}^{N}
      \bE \Big[ 
   |X^{t,x_i,i,N}_{u,x_i,x_i}|^{p}  
      \Big] 
      +
       N^{p-2}
    \sum_{i,k=1,~i \neq k}^{N}\Big(
    \bE \Big[  |X^{t,x_i,i,N}_{u,x_i,x_k}|^p    \Big]
    +
    \bE \Big[  |X^{t,x_k,k,N}_{u,x_i,x_k}|^p    \Big]
    \Big)
      \bigg) \dd u
    \\
    & \quad+ p  \int_{t}^{s} \frac{K_V}{  N }
     \bigg(
     N^{p-2}
     \sum_{i,k=1,~i \neq  k}^{N} \Big( 
      \bE \Big[ 
   |X^{t,x_i,i,N}_{u,x_k,x_k}|^{p}  
      \Big] 
      +
      \bE \Big[ 
   |X^{t,x_i,i,N}_{u,x_i,x_k}|^{p}  
      \Big] + \bE \Big[    |X^{t,x_i,i,N}_{u,x_k,x_i}|^{p}        \Big]
      \Big)
       \\
    & \quad \qquad  \qquad \qquad \qquad \qquad + \frac{2}{pN^{p-1}}N^{2p-3}
       \sum_{i,j,k=1,~i\neq j \neq  k}^{N}
      \bE \Big[ 
   |X^{t,x_i,i,N}_{u,x_j,x_k}|^{p}  
      \Big] 
     \bigg) \dd u.
    \end{align*}
Hence, using $I^{2,p}_{t,s}$ in \eqref{eq: second var Ipst} to dominate the sums of expectations (in each integral) we have 
\begin{align}    
    \label{eq: second-var I2pst result}
     I_{t,s}^{2,p}&\leq 
     p 
     \int_{t}^{s} \Big(-\lambda+\lambda_4
     + 2K_V
     +\frac{K_V}{N } 
     + K e^{ (\lambda_4- 2\lambda_1) (u-t)}
     \Big) 
   I_{t,u}^{2,p} 
    \dd u
     +
      pK  
      \int_{t}^{s}   e^{ (\lambda_4- 2\lambda_1) (u-t)}
    \dd u. 
\end{align}

Recalling that is $\lambda_4$ chosen such that $\lambda_4 \in (0,\min \{   \lambda-(2+1/N)K_V,\lambda_3\} )$  and $0<\lambda_3<\lambda_1 <\lambda$, which in turn implies  $\lambda_4- 2\lambda_1<0$,  
we have 
\begin{align*}
    I_{t,s}^{2,p}& \leq pK\int_{t}^{s}  
       e^{ (\lambda_4- 2\lambda_1) (u-t)}
   I_{t,u}^{2,p} 
    \dd u
     +
      pK  
      \int_{t}^{s}   e^{ (\lambda_4- 2\lambda_1) (u-t)}
    \dd u \leq K,
\end{align*}
where for the last inequality, we used Gronwall's inequality (see Lemma~\ref{lemma: appendix: gronwall}) with 
\begin{align*}
    \alpha(s) = pK e^{(\lambda_4- 2\lambda_1)(s-t)}
    , \quad
   \beta(s) = pK  \int_{t}^{s} e^{(\lambda_4- 2\lambda_1)(u-t)}    \mathrm{d}u \leq  { pK } / ({2\lambda_1- \lambda_4})
    \quad \textrm{and} 
\end{align*}
$u(s) = I_{t,s}^{2,p}$ in combination with the estimate $\int_t^u \alpha(s) \mathrm{d}s \leq   { pK }/ ({2\lambda_1- \lambda_4})$. 
}

\end{proof}

\allowdisplaybreaks
\subsection[The higher-order variation process]{The n-Variation process}
Let $T \geq s\geq t\geq 0$, $N\in \mathbb{N}$ and $1<n \leq 6$ be an integer.
Recalling the quantity $\Pi_n^N$ in Definition \ref{def: set of sequence pi},  the $n$-variation process of $  ( \boldsymbol{X}^{t,\boldsymbol{x},N}_s )_{ s \geq t \geq 0}$ is given for $  i\in \lbrace 1, \ldots, N \rbrace$, $s \geq t$, $\gamma\in \Pi_n^N$, by
\begin{align}
\label{n_var_process noiid}
X^{t,x_i,i,N}_{s,x_{\gamma_1},\dots,x_{\gamma_n}} 
& 
= 
\int_{t}^{s}  \Big( \sum_{l=1}^{N} \partial_{x_l}B_i(\bodX_{u}^{t,\bodx,N})  X^{t,x_l,l,N}_{u,x_{\gamma_1}} \Big)_{ x_{\gamma_2},\dots,x_{\gamma_n}   }
\mathrm{d}u
\\
\nonumber 
&
= \int_{t}^{s} \sum_{l=1}^{N} \partial_{x_l}B_{i}(\bodX_{u}^{t,\bodx,N})  X^{t,x_l,l,N}_{u,x_{\gamma_1},\dots,x_{\gamma_{n}}}\mathrm{d}u
\\
\nonumber 
&
\qquad   +  
    \sum_{  
    \substack{  \alpha,\beta \in \bigcup_{k=0}^{n-1} \Pi_k^N
,\\  |\alpha|>0,~
\gamma \setminus (\gamma_1) \in \alpha\shuffle\beta 
}
     }
    \int_{t}^{s}  
    \sum_{l=1}^{N} \Big( \partial_{x_l}B_{ i}(\bodX_{u}^{t,\bodx,N})  
    \Big)_{x_{\alpha_1},\dots, x_{\alpha_{|\alpha|}}   }
    \Big( 
    X^{t,x_l,l,N}_{u,x_{\gamma_1}} \Big)_{ x_{\beta_1},\dots, x_{\beta_{|\beta|}} }
    \mathrm{d}u, 
\end{align} 
where $  \alpha = (\alpha_1,\dots, \alpha_{|\alpha|}), ~ \beta = (\beta_1,\dots, \beta_{|\beta|})$ with $  \alpha_i \in \{1,\dots,N\}$, and $ \beta_i \in \{1,\dots,N\}$ for $i\in \{1,\dots, |\beta|\}$. {\color{black}To clarify the notation, we present the following examples for $n\in \{2,3\}$: 
\begin{align*}
     \Big \{& \alpha,\beta \in \bigcup_{k=0}^{1} \Pi_k^N
 :  |\alpha|>0,~
 \gamma \setminus (\gamma_1) \in \alpha\shuffle\beta  
     \Big \}
     = 
     \Big \{    \big\{ \alpha=(\gamma_2), \beta = \emptyset\big\}     \Big \},
    \end{align*}
     when $\gamma = (\gamma_1,\gamma_2)$; for the case $n=3$ and setting $\gamma = (\gamma_1,\gamma_2,\gamma_3)$
     \begin{align*}
     \Big \{& \alpha,\beta \in \bigcup_{k=0}^{2} \Pi_k^N
 :  |\alpha|>0,~
 \gamma \setminus (\gamma_1) \in \alpha\shuffle\beta  
     \Big \}
     \\
     &\qquad = 
     \Big \{    \big\{ \alpha=(\gamma_2,\gamma_3), \beta = \emptyset\big\},   
     \big\{ \alpha=(\gamma_2), \beta = (\gamma_3)\big\},
     \big\{ \alpha=(\gamma_3), \beta = (\gamma_2)\big\}
     \Big \} .
\end{align*}

}	 

The following lemma is a generalization of Lemma~\ref{lemma: first variation bound noiid} and Lemma~\ref{lemma: second variation bound noiid details}, which describes the behaviour of the higher-order variation processes and is needed in the proofs in Section \ref{section: analysis of the dus} and Section \ref{section: weak error expansion}. 
\begin{lemma} 
    \label{lemma: n-var process result}
    Let the assumptions of Lemma~\ref{lemma: second variation bound noiid details} hold and let $p \geq 2$ be given.
    Consider the $n$-variation process with components
    $( X^{t,x_{\gamma_1},\gamma_1,N}_{s,x_{\gamma_2},\dots,x_{\gamma_{n+1}}}  )_{ s \geq t \geq 0}$  
    defined by \eqref{n_var_process noiid} for $T \geq s \geq t\geq 0$, $n,N\in \mathbb{N}$, $\gamma  \in \Pi_{n+1}^N$, $1 \leq n \leq 6$. Then for each $1 \leq n \leq 6$, there exist constants $\lambda_0^{(n)} \in (0,\lambda)$ and 
     {\color{black}    $K_1,K_2 > 0$			 } (all independent of $s,t,T$ and $N$) 
    such that for any $m \in \{1,\ldots, n+1\}$, we have 
    \begin{align*}
         \sum_{\gamma\in \Pi_{n+1}^N,~\hco(\gamma) = m}
    \mathbb{E}\Big[|X^{t,x_{\gamma_1},\gamma_1,N}_{s,x_{\gamma_2},\dots,x_{\gamma_{n+1}}}|^p \Big]
        \leq 
        \frac{K_1}{N^{p(m-1)-m}} e^{-\lambda_0^{(n)} p(s-t)}.
    \end{align*}
      In particular, this implies that, for all $\gamma\in \Pi_{n+1}^N$, such that $\hco(\gamma) = m,~m\in\{1,\dots,n+1\}$, we have 
     \begin{align*}
    \mathbb{E}\Big[|X^{t,x_{\gamma_1},\gamma_1,N}_{s,x_{\gamma_2},\dots,x_{\gamma_{n+1}}}|^p \Big]
        \leq 
        \frac{K_2}{N^{p(m-1)}} e^{-\lambda_0^{(n)} p(s-t)}.
    \end{align*}
\end{lemma}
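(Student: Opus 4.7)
The plan is to proceed by induction on $n \in \{1,\ldots,6\}$, with the base case $n=1$ being exactly Lemma~\ref{lemma: first variation bound noiid} after a single sum over $\gamma_1$ (so that the $m=1$ diagonal sum gets an extra factor of $N$ and the $m=2$ off-diagonal sum gets an extra $N$ over the bound in Lemma~\ref{lemma: first variation bound noiid}); the case $n=2$ is Lemma~\ref{lemma: second variation bound noiid details} repackaged. For the inductive step, suppose the stated bound holds for all orders $n'\leq n-1$, and fix $\lambda_0^{(n)}\in \bigl(0,\min\{\lambda-(n+1+1/N)K_V,\lambda_0^{(n-1)}\}\bigr)$, which is positive by Assumption~\ref{assum:main_weak error}(2).

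\textbf{Step 1: Aggregate quantity.} Mirroring \eqref{eq: second var Ipst}, I will introduce the single aggregate
\begin{align*}
I^{n,p}_{t,s} := e^{p\lambda_0^{(n)}(s-t)} \sum_{m=1}^{n+1} N^{p(m-1)-m} \sum_{\substack{\gamma\in \Pi_{n+1}^N \\ \hco(\gamma)=m}} \mathbb{E}\bigl[|X^{t,x_{\gamma_1},\gamma_1,N}_{s,x_{\gamma_2},\dots,x_{\gamma_{n+1}}}|^p\bigr],
\end{align*}
so that the target bound is equivalent to $I^{n,p}_{t,s}\leq K$ uniformly in $s,t,T,N$. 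The weights $N^{p(m-1)-m}$ are chosen so that each of the $n+1$ inner sums contributes the same order in $N$ to $I^{n,p}_{t,s}$; this is the right normalization because in each inner sum the cardinality of indices with $\hco(\gamma)=m$ grows like $N^m$ while the pointwise $L^p$-moment should decay like $N^{-p(m-1)}$.

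\textbf{Step 2: Itô expansion.} Applying Itô to $|X^{t,x_{\gamma_1},\gamma_1,N}_{s,x_{\gamma_2},\dots,x_{\gamma_{n+1}}}|^p$ using the dynamics \eqref{n_var_process noiid} produces three kinds of contributions, exactly as in the $n=2$ analysis: a confining term $-p|X|^{p-2} X\cdot \nabla^2 U(X^{t,x_{\gamma_1},\gamma_1,N}_u) X^{t,x_{\gamma_1},\gamma_1,N}_{u,x_{\gamma_2},\dots,x_{\gamma_{n+1}}}$ (which after summing and using $\nabla^2 U\geq \lambda$ gives dissipation $-\lambda$ in the exponent), a convolution term of the form $\frac1N\sum_l \nabla^2 V(X^{t,x_{\gamma_1},\gamma_1,N}_u-X^{t,x_l,l,N}_u)(X^{t,x_{\gamma_1},\gamma_1,N}_{u,\ldots}-X^{t,x_l,l,N}_{u,\ldots})$ (to be treated by the symmetrization trick of Remark~\ref{Remark:symmetrization trick}), and \emph{source terms} coming from the shuffle sum in \eqref{n_var_process noiid}, each of which factors as a higher derivative $\partial^{|\alpha|+1}_{x_l,x_{\alpha_1},\dots,x_{\alpha_{|\alpha|}}}B_i$ multiplied by products of strictly lower-order variation processes indexed by sub-multi-indices of $(\gamma_2,\dots,\gamma_{n+1})$.

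\textbf{Step 3: Dissipation from the linear part.} As in the proof of Lemma~\ref{lemma: second variation bound noiid details}, summing over all $\gamma$ with $\hco(\gamma)=m$ and applying the symmetrization trick to the $\nabla^2 V$-contribution (using $\nabla^2 V\geq 0$, $\nabla^2 V$ even, and the monotonicity inequality $(|x|^{p-2}x-|y|^{p-2}y)(x-y)\geq 0$), the negative part of the convolution term is discarded, and the residual ``diagonal'' pieces (indexed by $l$ equal to one of the $m$ distinct components of $\gamma$) contribute at most $(n+1)K_V/N$ per term via Young's inequality. After regrouping, the combined linear-plus-convolution contribution to $\tfrac{d}{ds}I^{n,p}_{t,s}$ is bounded above by $p(-\lambda+\lambda_0^{(n)}+(n+1)K_V+K_V/N)I^{n,p}_{t,s}$, which by our choice of $\lambda_0^{(n)}$ is strictly negative and can be absorbed into the exponent.

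\textbf{Step 4: Source-term analysis and induction.} Each source term is bounded via Hölder's inequality by $|X|^{p-1}\cdot |\partial^{|\alpha|+1}_{\cdot}B_i|_\infty \cdot \prod_j |X^{t,\cdot}_{u,\cdot}|$, and the crucial fact (identical to the reasoning after \eqref{eq: second var res5}) is the structural bound $|\partial^{k}_{x_{l_1},\ldots,x_{l_{k}}}B_i|_\infty = \mathcal{O}(N^{-r})$ where $r$ is the number of indices in $(l_1,\ldots,l_k)$ distinct from $i$; this is the analogue of the $\nabla^2 B_i$ estimate in Lemma~\ref{lemma: second variation bound noiid details}. Summing over the free indices $l_1,\dots,l_k$ and over $\gamma$ with fixed $\hco(\gamma)=m$, and then invoking the inductive estimate on each lower-order variation factor (with Cauchy--Schwarz to separate the products into moments of individual processes), one checks by a careful combinatorial accounting that the resulting bound on the $m$-th aggregated piece is $K N^{-(p(m-1)-m)}\exp(-c p(s-t))$ for some $c>\lambda_0^{(n)}$. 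Multiplying by the weight $N^{p(m-1)-m}$ brings every source contribution to $O(e^{-(c-\lambda_0^{(n)})p(u-t)})$ uniformly in $N$.

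\textbf{Step 5: Close via Gronwall.} Combining Steps~3 and~4, $I^{n,p}_{t,s}$ satisfies $I^{n,p}_{t,s}\leq pK\int_t^s e^{(\lambda_0^{(n)}-2\lambda_0^{(n-1)})(u-t)} I^{n,p}_{t,u}\,du + pK\int_t^s e^{(\lambda_0^{(n)}-2\lambda_0^{(n-1)})(u-t)}\,du$. Since $\lambda_0^{(n)}<\lambda_0^{(n-1)}$, both integrals are bounded independently of $s,t,N$; Gronwall's inequality (Lemma~\ref{lemma: appendix: gronwall}) then yields $I^{n,p}_{t,s}\leq K$, which is the claimed aggregated bound. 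The pointwise bound with constant $K_2$ follows by dropping all but one summand and absorbing the counting $|\{\gamma:\hco(\gamma)=m\}|\sim N^m$ into the exponent of $N$.

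\textbf{Main obstacle.} The routine parts are Itô and the confining dissipation. The genuinely delicate step is \textbf{Step~4}: tracking, for each shuffle decomposition $\gamma\setminus(\gamma_1)\in\alpha\shuffle\beta$, how the $N$-scaling of $|\partial^{|\alpha|+1}B_i|_\infty$ (which depends on how many of the indices in $(l,\alpha_1,\ldots,\alpha_{|\alpha|})$ coincide with $i$) combines with the inductive $N$-scaling of the lower-order variation factor indexed by $\beta$, in such a way that the product, once summed over $\gamma$ with $\hco(\gamma)=m$ and all free interior indices, yields exactly the $N^{-(p(m-1)-m)}$ prefactor needed to close the induction. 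This requires a case split on $m$ and on how the ``distinct'' indices of $\gamma$ redistribute between $\alpha$ and $\beta$, analogous to the five-case split performed for $n=2$ but now generated automatically by the shuffle structure.
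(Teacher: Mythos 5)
Your proposal follows the same structure as the paper's proof: induction on $n$, the weighted aggregate $I^{n,p}_{t,s}$ (the paper's $I^{n_1+1,p}_{t,s}$ in \eqref{eq: def of itsnpp}), an It\^o expansion splitting into confinement / convolution / source terms, symmetrization plus Young for the convolution, the structural $N$-scaling of $\partial^k B_i$ combined with the induction hypothesis for the shuffle-generated source terms, and Gronwall to close. You also correctly identify the combinatorial bookkeeping in Step~4 as the genuine difficulty.

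One quantitative point is off, and it matters for whether the induction closes under the paper's Assumption~\ref{assum:main_weak error}(2). You estimate the residual diagonal contribution after symmetrization as $(n+1)K_V$ per aggregated term, which for $n=6$ gives $7K_V$, and accordingly you choose $\lambda_0^{(n)}<\lambda-(n+1+1/N)K_V$. But under $\lambda\geq 7K_V$ the quantity $\lambda-(7+1/N)K_V$ can be negative, so for $n=6$ there would be no admissible $\lambda_0^{(6)}>0$. The paper's count is sharper: after the symmetrization trick discards the nonpositive part, the surviving diagonal pieces run over $l$ ranging in the set of \emph{distinct} values of $\gamma\setminus(\gamma_1)$, of which there are at most $m-1\leq n_1+1\leq 6$ (cf.\ the discussion around \eqref{eq: n-var convolution diff2}--\eqref{eq: n-var convolution r2} and the combination in \eqref{eq: n-var convolution rrr}--\eqref{final estimate KV(6+1/Np)}, yielding coefficient $K_V\bigl(6+\tfrac1{Np}\bigr)$). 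Since $\lambda\geq 7K_V>K_V(6+\tfrac1{Np})$ for every $N,p\geq 1$, the dissipation then dominates. So you are over-counting by one: the correct bound comes from the number of distinct components of $\gamma\setminus(\gamma_1)$, not the number of components $n+1$. With that correction your argument aligns with the paper's.
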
 

\begin{remark}
    We take $ \gamma  \in \Pi_{n+1}^N$ here in Lemma~\ref{lemma: n-var process result}: this still corresponds to the highest order of the derivatives remaining as 6. We add the extra index since $\gamma_1$ corresponds to the index of the starting position, not an index which corresponds to any derivatives being taken.
\end{remark}

\begin{remark}
    As in Lemma~\ref{lemma: first variation bound noiid}, where the constant appearing in the bound of the second variation process $\lambda_4$ was strictly less than $\lambda_1$ (the constant which arose bounding the first variation process), we choose $\lambda_0^{(n)}$ such that the sequence $\lambda_0^{(i)}$, $i \in \lbrace 1, \ldots, 6 \rbrace$ is a strictly decreasing sequence. Note that here, our $\lambda_0^{(1)}$ and $\lambda_0^{(2)}$ subsumes our $\lambda_1$ and $\lambda_4$ in the previous proofs respectively.
\end{remark}

\color{black}

\begin{remark}
    \label{remark: gamma seq ineq implies eq}
     For any  $\gamma\in \Pi_{n+1}^N$ with $n\geq 1, ~\hco(\gamma) = m,~m\in\{1,\dots,n+1\}$, there are $\cO(N^m)$ sequences of indices that have the same pattern as $\gamma$. This means, for the processes $( X^{t,x_{\gamma_1},\gamma_1,N}_{s,x_{\gamma_2},\dots,x_{\gamma_{n+1}}}  ) $ with given $s,t$ in Lemma \ref{lemma: n-var process result}, 
     there are  $\cO(N^m)$ identical processes in the summation of the first result in Lemma \ref{lemma: n-var process result}, which implies the second inequality. For example, consider $X^{t,x_{1},1,N}_{s,x_{2},x_{3}}$: There are $N(N-1)(N-2)$ random variables that are distributed identically to $X^{t,x_{1},1,N}_{s,x_{2},x_{3}}$ such as $X^{t,x_{1},1,N}_{s,x_{3},x_{4}},\dots$. Thus, we derive the second inequality by dividing $N^3$ and adjust the constant $K$.  
\end{remark}

\color{black}

\begin{proof} 
    Note that in the following proof, the positive constant $K$ is independent of $s,t,T,N$ and may change line by line.

    \textit{Part 1: Preliminary manipulations.} 
    We shall prove this by induction.  
    The result follows for $n =1$ and $n = 2$, from Lemma~\ref{lemma: first variation bound noiid} and Lemma~\ref{lemma: second variation bound noiid details} respectively.    

    Now, suppose that the claim holds for some $n_1\in \mathbb{N},~n_1 < 6$, i.e., there exists a sufficiently small constant  $\lambda_0^{(n_1)} \in (0,\lambda)$, such that for $m \in\{1,\ldots,n_1+1\}$,
    \begin{align}
    \label{eq: aux induction hypothesis}
         \sum_{\gamma\in \Pi_{n_1+1}^N,~\hco(\gamma) = m}
    \mathbb{E}\Big[|X^{t,x_{\gamma_1},\gamma_1,N}_{s,x_{\gamma_2},\dots,x_{\gamma_{n_1+1}}}|^p \Big]
        \leq 
        \frac{K}{N^{p(m-1)-m}} e^{-\lambda_0^{(n_1)} p(s-t)}.
    \end{align}
    We  need to prove the statement for $ m \in \{1,\ldots,n_1+2\}$, i.e., there exists some constant $\lambda_0^{(n_1+1)} \in (0,\lambda^{(n_1)}) $ such that
    \begin{align*}
         \sum_{\gamma\in \Pi_{n_1+2}^N,~\hco(\gamma) = m}
    \mathbb{E}\Big[|X^{t,x_{\gamma_1},\gamma_1,N}_{s,x_{\gamma_2},\dots,x_{\gamma_{n_1+2}}}|^p \Big]
        \leq 
        \frac{K}{N^{p(m-1)-m}} e^{-\lambda_0^{(n_1+1)} p(s-t)}.
    \end{align*}
    \color{black}
    We shall use the induction hypothesis \eqref{eq: aux induction hypothesis} in \textit{Part 2.2}. 
    
    From \eqref{n_var_process noiid}, we have for all $ \gamma\in \Pi_{n_1+2}^N$
    \begin{align*} 
   & X^{t,x_{\gamma_1},\gamma_1,N}_{s,x_{\gamma_2},\dots,x_{\gamma_{n_1+2}}} 
   \\ 
    & \qquad = 
    \int_{t}^{s}  \Big( \sum_{l=1}^{N} \partial_{x_l}B_{\gamma_1}(\bodX_{u}^{t,\bodx,N})  X^{t,x_l,l,N}_{u,x_{\gamma_2}} \Big)_{ x_{\gamma_3},\dots,x_{\gamma_{n_1+2}}   }
    \mathrm{d}u
    \\
    & \qquad =
    \int_{t}^{s} \sum_{l=1}^{N} \partial_{x_l}B_{\gamma_1}(\bodX_{u}^{t,\bodx,N})  X^{t,x_l,l,N}_{u,x_{\gamma_2},\dots,x_{\gamma_{n_1+2}}}\mathrm{d}u
    \\
    & \qquad \qquad +  
    \sum_{  \substack{ \alpha,\beta \in 
    \bigcup_{k=0}^{n_1} \Pi_k^N,
    \\ |\alpha|>0 ,~
\gamma \setminus (\gamma_1,\gamma_2) \in \alpha\shuffle\beta
    } }
    \int_{t}^{s}  
    \sum_{l=1}^{N} \Big( \partial_{x_l}B_{\gamma_1}(\bodX_{u}^{t,\bodx,N})  
    \Big)_{x_{\alpha_1},\dots , x_{\alpha_{|\alpha|}} 		 }
    \Big( 
    X^{t,x_l,l,N}_{u,x_{\gamma_2}} \Big)_{ x_{\beta_1},\dots , x_{\beta_{|\beta|}} }
    \mathrm{d}u.
    \end{align*}
An application of It\^{o}'s formula yields for all $s \geq t$, $\lambda_0^{(n_1+1)} >0$, 
\allowdisplaybreaks
    \begin{align}
    \nonumber
    e&^{p \lambda_0^{(n_1+1)} (s-t)} \mathbb{E}\Big[   |X^{t,x_{\gamma_1},{\gamma_1},N}_{s,x_{\gamma_2},\dots,x_{\gamma_{n_1+2}}} |^p\Big]
    \\ \nonumber
    & =p \int_{t}^{s} e^{p \lambda_0^{(n_1+1)}  (u-t)} \mathbb{E}
\bigg[|X^{t,x_{\gamma_1},{\gamma_1},N}_{u,x_{\gamma_2},\dots,x_{\gamma_{n_1+2}}}|^{p-2} 
\Big(    X^{t,x_{\gamma_1},{\gamma_1},N}_{u,x_{\gamma_2},\dots,x_{\gamma_{n_1+2}}}
 \Big)   \cdot	\Big(  
    \sum_{l=1}^{N} \partial_{x_l}B_{\gamma_1}(\bodX_{u}^{t,\bodx,N})  X^{t,x_l,l,N}_{u,x_{\gamma_2},\dots,x_{\gamma_{n_1+2}}}  
    \\\nonumber
    & \qquad +  
    \sum_{  \substack{ \alpha,\beta \in
    \bigcup_{k=0}^{n_1} \Pi_k^N,
    \\ |\alpha|>0,~
\gamma \setminus (\gamma_1,\gamma_2) \in \alpha\shuffle\beta  }}
    \sum_{l=1}^{N} 
    \Big( \partial_{x_l}B_{\gamma_1}(\bodX_{u}^{t,\bodx,N})  
    \Big)_{x_{\alpha_1},\dots , x_{\alpha_{|\alpha|}} 		 }
    \Big( 
    X^{t,x_l,l,N}_{u,x_{\gamma_2}} \Big)_{ x_{\beta_1},\dots, x_{\beta_{|\beta|}}  }
     \Big)
    \bigg]
    \mathrm{d}u
    \\\nonumber
    & \qquad + p\lambda_0^{(n_1+1)} \int_{t}^{s}   e^{p \lambda_0^{(n_1+1)}  (u-t)} \mathbb{E}\Big[   | X^{t,x_{\gamma_1},{\gamma_1},N}_{u,x_{\gamma_2},\dots,x_{\gamma_{n_1+2}}} |^p\Big] \mathrm{d}u
    \\ \label{eq: aux first integral term lambda0-lambda}
    &\leq 
    p\int_{t}^{s} (\lambda_0^{(n_1+1)} -\lambda)  e^{p \lambda_0^{(n_1+1)}  (u-t)} \mathbb{E}\Big[   | X^{t,x_{\gamma_1},{\gamma_1},N}_{u,x_{\gamma_2},\dots,x_{\gamma_{n_1+2}}} |^p\Big] \mathrm{d}u
    \\ \nonumber 
    & \qquad 
    -
    p \int_{t}^{s} e^{p \lambda_0^{(n_1+1)}  (u-t)} \mathbb{E}
\bigg[|X^{t,x_{\gamma_1},{\gamma_1},N}_{u,x_{\gamma_2},\dots,x_{\gamma_{n_1+2}}}|^{p-2} 
 \Big( X^{t,x_{\gamma_1},{\gamma_1},N}_{u,x_{\gamma_2},\dots,x_{\gamma_{n_1+2}}}     \Big)   
    \\ 
    \label{eq: n-var - convolution }
    &\qquad \qquad   \cdot	\bigg(   
    \frac{1}{N}\sum_{l=1}^{N} \nabla^{2} V(X^{t,x_{\gamma_1},{\gamma_1},N}_u-X^{t,x_l,l,N}_u)
    \
    \big( X^{t,x_{\gamma_1},{\gamma_1},N}_{u,x_{\gamma_2},\dots,x_{\gamma_{n_1+2}}}
    - X^{t,x_l,l,N}_{u,x_{\gamma_2},\dots,x_{\gamma_{n_1+2}}}
    \big)\bigg)\bigg] \dd u
\\ \notag
&
\qquad-
    p \int_{t}^{s} e^{p \lambda_0^{(n_1+1)}  (u-t)} \mathbb{E}
\bigg[|X^{t,x_{\gamma_1},{\gamma_1},N}_{u,x_{\gamma_2},\dots,x_{\gamma_{n_1+2}}}|^{p-2} 
 \Big( X^{t,x_{\gamma_1},{\gamma_1},N}_{u,x_{\gamma_2},\dots,x_{\gamma_{n_1+2}}}     \Big)    
    \\ \label{eq: n-var - lower var }
    &\qquad \qquad
   \cdot \bigg(\sum_{  \substack{ \alpha,\beta \in 
    \bigcup_{k=0}^{n_1} \Pi_k^N,
    \\ |\alpha|>0,~
\gamma \setminus (\gamma_1,\gamma_2) \in \alpha\shuffle\beta  }}
    \sum_{l=1}^{N} 
    \Big( \partial_{x_l}B_{\gamma_1}(\bodX_{u}^{t,\bodx,N})  
    \Big)_{x_{\alpha_1},\dots  , x_{\alpha_{|\alpha|}} 		}
    \Big( 
    X^{t,x_l,l,N}_{u,x_{\gamma_2}} \Big)_{ x_{\beta_1},\dots , x_{\beta_{|\beta|}} }
    \bigg)
    \bigg]
    \mathrm{d}u.
    \end{align}
The analysis of the convolution term \eqref{eq: n-var - convolution }  and the lower order variation term \eqref{eq: n-var - lower var },  mimic that of the proof 
of Lemma~\ref{lemma: second variation bound noiid details} - all arguments are complete generalizations. We present these for clarity.

 For a given $p \geq 2$, we define the process $I_{t,s}^{n_1+1,p}$ (similar to the term $I_{t,s}^{2,p}$ from \eqref{eq: second var Ipst}): 
\begin{align}
    \label{eq: def of itsnpp}
    I_{t,s}^{n_1+1,p}
    := &  
    e^{p\lambda_0^{(n_1+1)} (s-t)} \sum_{m=1}^{n_1+2} \bigg( 
    N^{(m-1)p-m}  \sum_{\gamma\in \Pi_{n_1+2}^N,~\hco(\gamma) = m}
    \mathbb{E}\Big[|X^{t,x_{\gamma_1},\gamma_1,N}_{s,x_{\gamma_2},\dots,x_{\gamma_{n_1+2}}}|^p \Big]
    \bigg)
    \\ \notag
    &
    \leq \sum_{m=1}^{n_1+2} \bigg( 
    N^{(m-1)p-m}  \sum_{\gamma\in \Pi_{n_1+2}^N,~\hco(\gamma) = m} \eqref{eq: aux first integral term lambda0-lambda}+\eqref{eq: n-var - convolution }+\eqref{eq: n-var - lower var }\bigg),
\end{align}
which we now work towards bounding $I_{t,s}^{n_1+1,p} \leq K$, by separating our analysis into that of the convolution term \eqref{eq: n-var - convolution }  and the lower order variation term \eqref{eq: n-var - lower var }.
\\


\textit{Part 2.1: Analysis of the convolution term \eqref{eq: n-var - convolution }.}  
We highlight the main steps that deal specifically with the convolution term \eqref{eq: n-var - convolution } (similar to \eqref{eq: second var res1}--\eqref{eq: second var res4} in the proof steps for $n=2$). Summing over     $\gamma\in \Pi_{n_1+2}^N,~\hco(\gamma) = m$, gives 
\begin{align}
    \nonumber
    -&\sum_{ \gamma\in \Pi_{n_1+2}^N,~\hco(\gamma) = m}   \bE 
    \bigg[ \frac{1}{N} \sum_{l=1}^{N}  \Big(   | X^{t,x_{\gamma_1},\gamma_1,N}_{s,x_{\gamma_2},\dots,x_{\gamma_{n_1+2}}}|^{p-2}X^{t,x_{\gamma_1},\gamma_1,N}_{s,x_{\gamma_2},\dots,x_{\gamma_{n_1+2}}} 
    \Big)   	
    \\
    \nonumber
    &\qquad \qquad  \cdot \Big(   
        \nabla^2 V(X^{t,x_{\gamma_1},\gamma_1,N}_s-X^{t,x_l,l,N}_s)  ( X^{t,x_{\gamma_1},\gamma_1,N}_{s,x_{\gamma_2},\dots,x_{\gamma_{n_1+2}}}- X^{t,x_l,l,N}_{s,x_{\gamma_2},\dots,x_{\gamma_{n_1+2}}})
      \Big)   \bigg]
      \\
          \nonumber
      =-& \bigg( \sum_{ \substack{\gamma\in \Pi_{n_1+2}^N,~\hco(\gamma) = m 
        \\
        \hco(\gamma\setminus  (\gamma_1)) = m  
     }}
        + \sum_{ \substack{\gamma\in \Pi_{n_1+2}^N,~\hco(\gamma) = m 
        \\
        \hco(\gamma\setminus  (\gamma_1)) = m-1  
     }}
     \bigg)
     \bE 
    \bigg[ \frac{1}{N} \sum_{l=1}^{N}  \Big(   | X^{t,x_{\gamma_1},\gamma_1,N}_{s,x_{\gamma_2},\dots,x_{\gamma_{n_1+2}}}|^{p-2}X^{t,x_{\gamma_1},\gamma_1,N}_{s,x_{\gamma_2},\dots,x_{\gamma_{n_1+2}}} 
    \Big)   	
    \\
     \label{eq: n-var convolution 1}
    &\qquad \qquad  \cdot \Big(   
        \nabla^2 V(X^{t,x_{\gamma_1},\gamma_1,N}_s-X^{t,x_l,l,N}_s)  ( X^{t,x_{\gamma_1},\gamma_1,N}_{s,x_{\gamma_2},\dots,x_{\gamma_{n_1+2}}}- X^{t,x_l,l,N}_{s,x_{\gamma_2},\dots,x_{\gamma_{n_1+2}}})
      \Big)   \bigg].
\end{align}
To continue from \eqref{eq: n-var convolution 1}, we need to consider 
whether or not $\gamma_1$ is  a unique index in $\gamma$, i.e., the two sums appearing in \eqref{eq: n-var convolution 1}. 
 \smallskip

\textit{Case 1: $\gamma_1$ is not a unique index in $\gamma$ (i.e., $\hco(\gamma) =\hco(\gamma\setminus  (\gamma_1)) = m $)}. Note that this case is only meaningful for $m \in \lbrace 1, \ldots, n_1 +1 \rbrace$, as $\gamma$ has at most $n_1 + 1$ different elements (note that $\gamma_1$ appears at least twice). Further, we remark that  $\hco(\gamma)=m$ but $ \hco(\gamma \setminus (\gamma_1) \bigcup (l) )\in\{m,m+1\}$, and therefore we need to consider $X^{t,x_{\gamma_1},\gamma_1,N}_{\cdot}$ 
and $X^{t,x_{l},l,N}_{\cdot}$  separately
(in a similar fashion to \eqref{eq: second var res1} and \eqref{eq: second var res3}). Hence, we write  
\begin{align}
\nonumber
     -&\sum_{ \substack{\gamma\in \Pi_{n_1+2}^N,~\hco(\gamma) = m 
        \\
        \hco(\gamma\setminus  (\gamma_1)) = m  
     }}  \bE 
    \bigg[ \frac{1}{N} \sum_{l=1}^{N}  \Big(   | X^{t,x_{\gamma_1},\gamma_1,N}_{s,x_{\gamma_2},\dots,x_{\gamma_{n_1+2}}}|^{p-2}X^{t,x_{\gamma_1},\gamma_1,N}_{s,x_{\gamma_2},\dots,x_{\gamma_{n_1+2}}} 
    \Big)   	
    \\ \nonumber
    &\qquad \qquad  \cdot \Big(   
        \nabla^2 V(X^{t,x_{\gamma_1},\gamma_1,N}_s-X^{t,x_l,l,N}_s)  ( X^{t,x_{\gamma_1},\gamma_1,N}_{s,x_{\gamma_2},\dots,x_{\gamma_{n_1+2}}}- X^{t,x_l,l,N}_{s,x_{\gamma_2},\dots,x_{\gamma_{n_1+2}}})
      \Big)   \bigg] 
      \\ \nonumber
      & =
      -\frac1N \sum_{ \substack{\gamma\in \Pi_{n_1+2}^N,~\hco(\gamma) = m  
        \\ \nonumber
        \hco(\gamma\setminus  (\gamma_1)) = m  
     }}  
     \bigg( 
     \sum_{\substack{     l=1, \\    \hco(\gamma \setminus (\gamma_1) \bigcup (l) )  = m } }^{N}
     +
     \sum_{\substack{     l=1, \\    \hco(\gamma \setminus (\gamma_1) \bigcup (l) )  = m+1} }^{N}
     \bigg)
     \bE 
    \bigg[   \Big(   | X^{t,x_{\gamma_1},\gamma_1,N}_{s,x_{\gamma_2},\dots,x_{\gamma_{n_1+2}}}|^{p-2}X^{t,x_{\gamma_1},\gamma_1,N}_{s,x_{\gamma_2},\dots,x_{\gamma_{n_1+2}}} 
    \Big)   	
    \\ \label{eq: gamma1 not unique hessV term}
    &\qquad \qquad  \cdot \Big(   
        \nabla^2 V(X^{t,x_{\gamma_1},\gamma_1,N}_s-X^{t,x_l,l,N}_s)  ( X^{t,x_{\gamma_1},\gamma_1,N}_{s,x_{\gamma_2},\dots,x_{\gamma_{ n_1+2}}}- X^{t,x_l,l,N}_{s,x_{\gamma_2},\dots,x_{\gamma_{n_1+2}}})
      \Big)   \bigg].
\end{align}

We address each of the inner sums over $l$ in  \eqref{eq: gamma1 not unique hessV term} separately. Concretely: for the first inner sum, where the summation is over $\hco(\gamma \setminus (\gamma_1) \bigcup (l) )  = m$ with $l$ taking values in $\gamma\setminus  (\gamma_1)$ (as per the outer summation), we use the \textit{symmetrization trick} to get \eqref{eq: n-var convolution syme1} and we substitute the upper indices  $\gamma_1 \mapsto l_1$ and $l \mapsto l_2$ for added clarity. For the second inner sum, where the summation is over $\hco(\gamma \setminus (\gamma_1) \bigcup (l) )  = m+1$ with $l\notin \gamma$, we apply Young's inequality to get \eqref{eq: n-var convolution diff1}.   
We point out that similar arguments were employed in the estimation of the second variation process -- refer to  \eqref{eq: second var res1} and \eqref{eq: second var res3} for example. We then have, 
\begin{align}
    \nonumber 
      & \eqref{eq: gamma1 not unique hessV term}
      \\\nonumber 
      &~
      \leq 
       -\frac{1}{2N}\sum_{ \substack{\gamma \in \Pi_{n_1+1}^N,~\hco(\gamma) =m  
        \\
        l_1,l_2 \in \gamma   
     }}   
      \bE 
    \bigg[   \Big(   | X^{t,x_{l_1},l_1,N}_{s,x_{\gamma_1},\dots,x_{\gamma_{n_1+1}}}|^{p-2}X^{t,x_{l_1},l_1,N}_{s,x_{\gamma_1},\dots,x_{\gamma_{n_1+1}}} 
    - 
     | X^{t,x_{l_2},l_2,N}_{s,x_{\gamma_1},\dots,x_{\gamma_{n_1+1}}}|^{p-2}X^{t,x_{l_2},l_2,N}_{s,x_{\gamma_1},\dots,x_{\gamma_{n_1+1}}}
    \Big) 
    \\    \label{eq: n-var convolution syme1}
    &\qquad \qquad  \cdot \Big(   
        \nabla^2 V(X^{t,x_{l_1},l_1,N}_s-X^{t,x_{l_2},l_2,N}_s)  ( X^{t,x_{l_1},l_1,N}_{s,x_{\gamma_1},\dots,x_{\gamma_{n_1+1}}}- X^{t,x_{l_2},l_2,N}_{s,x_{\gamma_1},\dots,x_{\gamma_{n_1+1}}})
      \Big)   \bigg] 
     \\    \label{eq: n-var convolution diff1} 
     &\quad + 
     \frac{K_V }{N} \sum_{ \substack{\gamma\in \Pi_{n_1+2}^N,~\hco(\gamma) = m 
        \\
        \hco(\gamma\setminus  (\gamma_1)) = m  
     }} 
     \sum_{\substack{     l=1, \\    \hco(\gamma \setminus (\gamma_1) \bigcup (l) )  = m+1 } }^{N}
     \Big( 
     \frac{p-1}{p}
     \bE \Big[ \Big|    X^{t,x_{\gamma_1},\gamma_1,N}_{s,x_{\gamma_2},\dots,x_{\gamma_{n_1+2}}}\Big|^p 	\Big] 
     + \frac{1}{p}
     \bE \Big[ \Big|    X^{t,x_l,l,N}_{s,x_{\gamma_2},\dots,x_{\gamma_{n_1+2}}}\Big|^p 	\Big]
     \Big) 
     \\
     \label{eq: n-var convolution r1}
     &~\leq 
     \frac{ K_V(p-1)}{p}\sum_{ \substack{\gamma\in \Pi_{n_1+2}^N,~\hco(\gamma) = m 
        \\
        \hco(\gamma\setminus  (\gamma_1)) = m  
     }}
     \bE \Big[ \Big|    X^{t,x_{\gamma_1},\gamma_1,N}_{s,x_{\gamma_2},\dots,x_{\gamma_{n_1+2}}}\Big|^p 	\Big]
     +\frac{K_V }{Np}
     \sum_{ \substack{\gamma\in \Pi_{n_1+2}^N,~\hco(\gamma) = m+1 
        \\
        \hco(\gamma\setminus  (\gamma_1)) = m  
     }} 
     \bE \Big[ \Big|    X^{t,x_{\gamma_1},\gamma_1,N}_{s,x_{\gamma_2},\dots,x_{\gamma_{n_1+2}}}\Big|^p 	\Big],
\end{align}
where \eqref{eq: n-var convolution r1} follows from the fact $\nabla^2 V(x)\geq 0$ for all $x \in \mathbb{R}$ (implying that \eqref{eq: n-var convolution syme1} is bounded above by zero), and an application of Jensen's inequality is used to get \eqref{eq: n-var convolution diff1}. 
This will provide acceptable weights in \eqref{eq: n-var convolution rrr} below. 

\medskip
\color{black}
\textit{Case 2: 
$\gamma_1$ is a unique index in $\gamma$ (i.e., $\hco(\gamma) =m,~ \hco(\gamma\setminus  (\gamma_1)) = m-1$)}. 
Note that this case is only meaningful for $m \in \lbrace 2, \ldots, n_1 +2 \rbrace$, as $\gamma$ has at least $2$ distinct elements (at least one of $\gamma_2, \ldots, \gamma_{n_1+2}$ has to be different to $\gamma_1$). Further, we remark that  $\hco(\gamma)=m$ but $ \hco(\gamma \setminus (\gamma_1) \bigcup (l) )\in\{m,m-1\}$, and therefore we need to consider $X^{t,x_{\gamma_1},\gamma_1,N}_{\cdot}$ 
and $X^{t,x_{l},l,N}_{\cdot}$  separately
(in a similar fashion to \eqref{eq: second var res2} and \eqref{eq: second var res4}).  Hence, we write  
\begin{align}
    \nonumber
     -&\sum_{ \substack{\gamma\in \Pi_{n_1+2}^N,~\hco(\gamma) = m 
        \\
        \hco(\gamma\setminus  (\gamma_1)) = m-1  
     }}  \bE 
    \bigg[ \frac{1}{N} \sum_{l=1}^{N}  \Big(   | X^{t,x_{\gamma_1},\gamma_1,N}_{s,x_{\gamma_2},\dots,x_{\gamma_{n_1+2}}}|^{p-2}X^{t,x_{\gamma_1},\gamma_1,N}_{s,x_{\gamma_2},\dots,x_{\gamma_{n_1+2}}} 
    \Big)   	
    \\
        \nonumber
    &\qquad \qquad  \cdot \Big(   
        \nabla^2 V(X^{t,x_{\gamma_1},\gamma_1,N}_s-X^{t,x_l,l,N}_s)  ( X^{t,x_{\gamma_1},\gamma_1,N}_{s,x_{\gamma_2},\dots,x_{\gamma_{n_1+2}}}- X^{t,x_l,l,N}_{s,x_{\gamma_2},\dots,x_{\gamma_{n_1+2}}})
      \Big)   \bigg] 
      \\
          \nonumber
      & =
      -\frac1N \sum_{ \substack{\gamma\in \Pi_{n_1+2}^N,~\hco(\gamma) = m  
        \\
        \hco(\gamma\setminus  (\gamma_1)) = m-1  
     }}  
     \bigg( 
     \sum_{\substack{     l=1, \\    \hco(\gamma \setminus (\gamma_1) \bigcup (l) )  = m } }^{N}
     +
     \sum_{\substack{     l=1, \\    \hco(\gamma \setminus (\gamma_1) \bigcup (l) )  = m-1} }^{N}
     \bigg)
     \bE 
    \bigg[   \Big(   | X^{t,x_{\gamma_1},\gamma_1,N}_{s,x_{\gamma_2},\dots,x_{\gamma_{n_1+2}}}|^{p-2}  	
    \\
    \label{eq: n-var convolution syme2-old}
    &\qquad \qquad  \cdot X^{t,x_{\gamma_1},\gamma_1,N}_{s,x_{\gamma_2},\dots,x_{\gamma_{n_1+2}}} 
    \Big) \cdot \Big(   
        \nabla^2 V(X^{t,x_{\gamma_1},\gamma_1,N}_s-X^{t,x_l,l,N}_s)  ( X^{t,x_{\gamma_1},\gamma_1,N}_{s,x_{\gamma_2},\dots,x_{\gamma_{n_1+2}}}- X^{t,x_l,l,N}_{s,x_{\gamma_2},\dots,x_{\gamma_{n_1+2}}})
      \Big)   \bigg].    
\end{align}
We exploit the \textit{symmetrization trick} for the summation over $\hco(\gamma \setminus (\gamma_1) \bigcup (l) )  = m$  where $l$ takes values out of  $\gamma\setminus  (\gamma_1)$,   and   Young's inequality for the summation over $\hco(\gamma \setminus (\gamma_1) \bigcup (l) )  = m-1$ where $l \in (\gamma\setminus (\gamma_1))$ (check  \eqref{eq: second var res2} and \eqref{eq: second var res4} for example). As before,  substitute the new upper indices  \eqref{eq: n-var convolution syme2-old}: $\gamma_1 \mapsto l_1$ and $l \mapsto l_2$ to obtain 
\begin{align}
 \nonumber 
      & \eqref{eq: n-var convolution syme2-old}\leq 
       -\frac{1}{2N}\sum_{ \substack{\gamma \in \Pi_{n_1+1}^N,~\hco(\gamma) =m-1  
        \\
        l_1,l_2 \notin \gamma   
     }}   
      \bE 
    \bigg[   \Big(   | X^{t,x_{l_1},l_1,N}_{s,x_{\gamma_1},\dots,x_{\gamma_{n_1+1}}}|^{p-2}X^{t,x_{l_1},l_1,N}_{s,x_{\gamma_1},\dots,x_{\gamma_{n_1+1}}} 
    - 
     | X^{t,x_{l_2},l_2,N}_{s,x_{\gamma_1},\dots,x_{\gamma_{n_1+1}}}|^{p-2}  
    \\    \label{eq: n-var convolution syme2}
    &\qquad \qquad  \cdot X^{t,x_{l_2},l_2,N}_{s,x_{\gamma_1},\dots,x_{\gamma_{n_1+1}}}
    \Big) \cdot \Big(   
        \nabla^2 V(X^{t,x_{l_1},l_1,N}_s-X^{t,x_{l_2},l_2,N}_s)  ( X^{t,x_{l_1},l_1,N}_{s,x_{\gamma_1},\dots,x_{\gamma_{n_1+1}}}- X^{t,x_{l_2},l_2,N}_{s,x_{\gamma_1},\dots,x_{\gamma_{n_1+1}}})
      \Big)   \bigg] 
     \\
     \label{eq: n-var convolution diff2}
     &\quad + 
     K_V  \sum_{ \substack{\gamma\in \Pi_{n_1+2}^N,~\hco(\gamma) = m 
        \\
        \hco(\gamma\setminus  (\gamma_1)) = m-1  
     }}  
     \sum_{\substack{     l=1, \\    \hco(\gamma \setminus (\gamma_1) \bigcup (l) )  = m-1 } }^{N}
     \Big( 
     \frac{p-1}{p}
     \bE \Big[ \Big|    X^{t,x_{\gamma_1},\gamma_1,N}_{s,x_{\gamma_2},\dots,x_{\gamma_{n_1+2}}}\Big|^p 	\Big] 
     + \frac{1}{pN^p}
     \bE \Big[ \Big|    X^{t,x_l,l,N}_{s,x_{\gamma_2},\dots,x_{\gamma_{n_1+2}}}\Big|^p 	\Big]
     \Big) 
     \\
     \label{eq: n-var convolution r2}
    & \leq 
      \frac{6K_V(p-1)}{p}\sum_{ \substack{\gamma\in \Pi_{n_1+2}^N,~\hco(\gamma) = m 
        \\
        \hco(\gamma\setminus  (\gamma_1)) = m-1  
     }}  
     \bE \Big[ \Big|    X^{t,x_{\gamma_1},\gamma_1,N}_{s,x_{\gamma_2},\dots,x_{\gamma_{n_1+2}}}\Big|^p 	\Big]
     + \frac{6K_V}{pN^p}
    \sum_{ \substack{\gamma\in \Pi_{n_1+2}^N,~\hco(\gamma) = m-1
        \\
        \hco(\gamma\setminus  (\gamma_1)) = m-1  
     }}  
     \bE \Big[ \Big|    X^{t,x_{\gamma_1},\gamma_1,N}_{s,x_{\gamma_2},\dots,x_{\gamma_{n_1+2}}}\Big|^p 	\Big],
\end{align}
where we  used $\nabla^2 V(x)\geq 0$ to bound \eqref{eq: n-var convolution syme2} above by 0, then apply Jensen's inequality to \eqref{eq: n-var convolution diff2} to match the acceptable weights on $N$ in \eqref{eq: n-var convolution rrr}. The constant 6 arises from the fact that there are at most 6 different values for $l \in \gamma\setminus (\gamma_1), m\leq n_1+2 \leq 7$ such that $ \hco(\gamma \setminus (\gamma_1) \bigcup (l) )  =\hco( \gamma \setminus (\gamma_1))= m-1$. 

\medskip
\textit{Combining case 1 and case 2:}
We inject the established estimates in \eqref{eq: n-var convolution r1} and \eqref{eq: n-var convolution r2} into \eqref{eq: n-var convolution 1}  and consider the summation \eqref{eq: def of itsnpp}, (noting the inclusion of the  $N^{(m-1)p-m}$ term in the summand) to obtain 
\begin{align}
    \nonumber
    &\sum_{m=1}^{n_1+2} \Bigg( 
    N^{(m-1)p-m}  \sum_{ \gamma\in \Pi_{n_1+2}^N,~\hco(\gamma) = m}   \bE 
    \bigg[ \frac{1}{N} \sum_{l=1}^{N}  \Big(   | X^{t,x_{\gamma_1},\gamma_1,N}_{s,x_{\gamma_2},\dots,x_{\gamma_{n_1+2}}}|^{p-2}X^{t,x_{\gamma_1},\gamma_1,N}_{s,x_{\gamma_2},\dots,x_{\gamma_{n_1+2}}} 
    \Big)   	
    \\
    \nonumber
    &\qquad \qquad  \cdot \Big(   
        \nabla^2 V(X^{t,x_{\gamma_1},\gamma_1,N}_s-X^{t,x_l,l,N}_s)  ( X^{t,x_{\gamma_1},\gamma_1,N}_{s,x_{\gamma_2},\dots,x_{\gamma_{n_1+2}}}- X^{t,x_l,l,N}_{s,x_{\gamma_2},\dots,x_{\gamma_{n_1+2}}})
      \Big)   \bigg] \quad 
    \Bigg) 
    \\\nonumber
    &
    \leq   
    \Big( \frac{6K_V(p-1)}{p} + \frac{K_V}{Np}+\frac{6K_V}{Np}\Big)
    \sum_{m=1}^{n_1+2} \bigg( 
    N^{(m-1)p-m}  \sum_{ \gamma\in \Pi_{n_1+2}^N,~\hco(\gamma) = m}
    \bE \Big[ \Big|    X^{t,x_{\gamma_1},\gamma_1,N}_{s,x_{\gamma_2},\dots,x_{\gamma_{n_1+2}}}\Big|^p 	\Big]
    \bigg) 
    \\
    \label{eq: n-var convolution rrr}
    &\leq 
    K_V\Big(6+\frac{1}{Np}\Big) 
    \sum_{m=1}^{n_1+2} 
    \bigg( 
    N^{(m-1)p-m}  \sum_{ \gamma\in \Pi_{n_1+2}^N,~\hco(\gamma) = m}
    \bE \Big[ \Big|    X^{t,x_{\gamma_1},\gamma_1,N}_{s,x_{\gamma_2},\dots,x_{\gamma_{n_1+2}}}\Big|^p 	\Big]
    \bigg)
    \\ \label{final estimate KV(6+1/Np)}
    &
    =K_V
    \Big(6 +\frac{1}{Np}\Big) 
    e^{-\lambda_0^{(n_1+1)} (s-t)}I_{s,t}^{n_1+1,p}  .
\end{align}\\ 
\textit{Part 2.2: Analysis of the lower order variation term \eqref{eq: n-var - lower var }.}
We provide the following details on the derivatives of the function $B_{\gamma_1}$:  We notice that, from Assumption~\ref{assum:main_weak error}, for any $\bodx \in \mathbb{R}^N$,  
    \begin{align*}
         \partial_{x_l}B_{\gamma_1} 
    = \begin{cases}
         -\nabla^{2} U(x_{\gamma_1})  - \frac{1}{N}\partial_{x_l}\sum_{j=1}^N \nabla V(x_{\gamma_1}-x_j)   
        = 
         -\nabla^{2} U(x_{\gamma_1})
         - \frac{1}{N}\sum_{j=1}^N \nabla^{2} V(x_{\gamma_1}-x_j),
          & {\gamma_1}=l,
         \\
          - \frac{1}{N}\partial_{x_l}\sum_{j=1}^N \nabla V(x_{\gamma_1}-x_j)  
          =   \frac{1}{N}\nabla^{2} V(x_{\gamma_1}-x_l),  
          & {\gamma_1}\neq l.
    \end{cases}
    \end{align*}
 Thus, we have     
    \begin{align}
    \label{eq: prop of deriv of B}
    &|\partial_{x_l}B_{\gamma_1}|_\infty
    \leq \begin{cases}
         |\nabla^{2} U|_\infty + \frac{1}{N}\sum_{j=1}^N |\nabla^{2} V  |_\infty =  \cO(1),    & {\gamma_1}=l,
         \\
          \frac{1}{N} |\nabla^{2} V  |_\infty
         = \cO( N^{-1}),  & {\gamma_1}\neq l,
    \end{cases}
    \end{align}
    and  
    \begin{align} 
    \nonumber
    & |\partial^2_{x_{l},x_{l'}} B_{\gamma_1}|_\infty
    \leq  \begin{cases}
   |\nabla^{3} U|_\infty + \frac{1}{N}\sum_{j=1}^N |\nabla^{3} V  |_\infty =  \cO(1),  \quad & {\gamma_1}=l=l',
        \\
         \frac{1}{N}  |\nabla^{3} V  |_\infty=  \cO( N^{-1}), \quad & {\gamma_1}=l\neq l',
        \\
          \frac{1}{N}  |\nabla^{3} V  |_\infty =   \cO( N^{-1}), \quad & {\gamma_1}=l'\neq l,
         \\
         \frac{1}{N}  |\nabla^{3} V  |_\infty=   \cO( N^{-1}), \quad &{\gamma_1}\neq l=l',
         \\
         0, \quad & {\gamma_1}\neq l\neq l'.
    \end{cases}
    \end{align} 
   Using this methodology, one can easily establish that for $n\geq 1,~\eta \in \Pi_{n}^N$
    \begin{align}
    \label{eq: bounds of the summation of derivatives of B noiid}
        |\partial^n_{x_{\eta_{1}},\dots,x_{\eta_{n}} }  B_{\gamma_1}|_\infty &= \begin{cases}
            \cO( N^{1-\hco( \eta \bigcup  (\gamma_1 ))} ),
            \qquad &\hco( \eta \bigcup  (\gamma_1 ) ) \in \{1,2\},
            \\
            0, \qquad &\hco( \eta \bigcup  (\gamma_1 )) \geq 3,
        \end{cases}
    \end{align}
\color{black}
and further using that only tuples of the form $(  \gamma_1,\eta_1, \ldots, \eta_{n} )$, where elements take at most two different values, yield the non-zero contribution in \eqref{eq: bounds of the summation of derivatives of B noiid}.  
\color{black}
Proceeding exactly as in the manner of \eqref{eq: second var res5}--\eqref{eq: second var res10}, applying our induction hypothesis \eqref{eq: aux induction hypothesis}, we can establish the existence of positive constants $K$ and $\alpha$ such that 
\begin{align}
\label{eq:aux sum 4.69}
\sum_{m=1}^{n_1+2} \bigg( 
    N^{(m-1)p-m}  \sum_{\gamma\in \Pi_{n_1+2}^N,~\hco(\gamma) = m} \eqref{eq: n-var - lower var }\bigg) \leq K \int_t^s 
     e^{-\alpha (u-t)}  
     \dd u+ K \int_t^s 
     e^{-\alpha (u-t)}  
    I_{t,u}^{n_1+1,p}\dd u .
    \end{align}

\textit{Part 3: Collecting terms and conclusion}.
By collecting \eqref{eq: aux first integral term lambda0-lambda}, \eqref{final estimate KV(6+1/Np)} and \eqref{eq:aux sum 4.69}, we have that 
\begin{align*}
    I_{t,s}^{n_1+1,p} 
    \leq 
    & K \int_t^s 
     e^{-\alpha (u-t)} \dd u 
     \\
     & + p\Big(\lambda_0^{(n_1+1)} -\lambda + K_V(6+\frac{1}{Np})\Big) \int_t^s I_{t,u}^{n_1+1,p}\dd u + K \int_t^s e^{-\alpha (u-t)}I_{t,u}^{n_1+1,p}\dd u.
\end{align*}
Using the convexity assumption $\lambda \geq7K_V$, we can conclude the existence of $\lambda_0^{(n_1+1)} \in (0,\lambda)$ such that the term in front of the second integral remains negative. The result follows from Gronwall's inequality (as was done in the case $n=2$). 
\end{proof}

\section{Decay estimates for the Kolmogorov backward equation}
\label{section: analysis of the dus}
We establish the following estimates for the derivatives of the solution to the  Kolmogorov backward equation in terms of moments of the variation processes. This will further help us to apply the results developed in Section \ref{section: analysis of var process total} to the weak error analysis in Section \ref{section: weak error expansion}. 
\begin{lemma}
\label{lemma: first derivative of u}
Let $u$ be the solution to the Kolmogorov backward equation \eqref{PDE:Kolmogorov} with $g$ as in Assumption~\ref{assum:main_weak error} and let $T\geq t \geq 0, N\in \mathbb{N}$. Then there exists a constant $K>0$ (independent of $t,T,N$), such that for any $j \in \lbrace 1, \ldots, N \rbrace$ and $\bodx \in \mathbb{R}^N$
\begin{align*}
|\partial_{x_j} u(t, \bodx)|^2 
\leq     
\frac{K}{N^2}  \bE \Big[ | X_{T,x_j}^{t,x_j,j,N} | ^2  \Big]
+
 \frac{K}{N} \sum_{i=1,~i\neq j}^{N}  
  \mathbb{E}\left[| X_{T,x_j}^{t,x_i,i,N} |^2 \right]
.
\end{align*}   
\end{lemma}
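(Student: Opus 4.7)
The plan is to differentiate the Feynman--Kac representation \eqref{eq: def of u in Ex form} with respect to $x_j$, and then exploit the decay of $|\partial_{x_i}g|_\infty$ provided by Assumption~\ref{assum:main_weak error}(3) together with elementary inequalities, splitting the sum over particle indices into the diagonal ($i=j$) and off-diagonal ($i\neq j$) contributions.

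Concretely, an application of the chain rule to \eqref{eq: def of u in Ex form} (justified by the regularity of $g$ and the smoothness of the flow, together with the moment bounds on the first variation process in Lemma~\ref{lemma: first variation bound noiid}) yields
\begin{align*}
\partial_{x_j} u(t,\boldsymbol{x})
= \mathbb{E}\Big[\sum_{i=1}^{N} \partial_{x_i} g\big(\boldsymbol{X}^{t,\boldsymbol{x},N}_{T}\big)\, X^{t,x_i,i,N}_{T,x_j} \Big].
\end{align*}
Invoking Assumption~\ref{assum:main_weak error}(3) with $n=1$ (so $\gamma=(i)\in \Pi_1^N$ and $\hat{\mathcal{O}}(\gamma)=1$) provides the uniform-in-$N$ scaling $|\partial_{x_i} g|_\infty \leq K/N$ for every $i$. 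Hence
\begin{align*}
|\partial_{x_j} u(t,\boldsymbol{x})|
\leq \frac{K}{N}\, \mathbb{E}\Big[ |X^{t,x_j,j,N}_{T,x_j}| + \sum_{i=1,i\neq j}^{N} |X^{t,x_i,i,N}_{T,x_j}| \Big].
\end{align*}

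Squaring and applying Jensen's inequality (to pull the square inside the expectation), combined with the elementary estimate $(a+b)^2 \leq 2a^2+2b^2$, gives
\begin{align*}
|\partial_{x_j} u(t,\boldsymbol{x})|^2
\leq \frac{2K^2}{N^2}\, \mathbb{E}\big[|X^{t,x_j,j,N}_{T,x_j}|^2\big]
+ \frac{2K^2}{N^2}\, \mathbb{E}\Big[\Big(\sum_{i=1,i\neq j}^{N} |X^{t,x_i,i,N}_{T,x_j}|\Big)^2\Big].
\end{align*}
Finally, the Cauchy--Schwarz inequality applied to the sum of $N-1$ terms bounds the last square by $(N-1)\sum_{i\neq j} |X^{t,x_i,i,N}_{T,x_j}|^2$, which upon substitution and absorbing the factor $(N-1)/N^2 \leq 1/N$ into the constant, yields the claim. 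There is no real obstacle here: the only subtle point is to ensure the constants do not depend on $N$, which is secured by Assumption~\ref{assum:main_weak error}(3); all moment estimates themselves on the first variation process are deferred to the combined use of \eqref{eq: first var result} later on, while here we only produce the variational representation.
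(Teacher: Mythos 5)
Your proposal is correct and follows essentially the same route as the paper: differentiate the Feynman--Kac representation, invoke the $\cO(N^{-1})$ bound on $\partial_{x_i}g$ from Assumption~\ref{assum:main_weak error}(3), split the sum into the diagonal term $i=j$ and the off-diagonal terms $i\neq j$, and combine $(a+b)^2\leq 2a^2+2b^2$ with Jensen and Cauchy--Schwarz (with the $(N-1)/N^2\leq 1/N$ absorption) to arrive at the stated bound. The only difference is cosmetic: you factor out the $K/N$ bound before splitting the sum, whereas the paper splits first and applies the derivative bound to each block separately; the final estimates and the tools used are identical.
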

\begin{proof} 
From the definition of $u$, using  \cite[Theorem 5.5 in Chapter 5]{F1971booksdeaaa} as in \cite[Proposition B.3]{haji2021simple}, we deduce 
\begin{align*}
   |\partial_{x_j}  & u(t, \bodx)|^2 
 \\
    &=
   \Big| \mathbb{E}\Big[ ~\sum_{i=1}^{N} \big(   \partial_{x_i} g(\bodX_T^{t,\boldsymbol{x},N}) 
     \big)   \cdot  	\big(   
   X_{T,x_j}^{t,x_i,i,N} \big)   \Big] \Big|^2 
    \\
    & \leq 
    2 \Big|    \bE \Big[ | \partial_{x_j} g(\bodX_T^{t,\boldsymbol{x},N})| \ | X_{T,x_j}^{t,x_j,j,N} |  \Big] 
    \Big|^2
    + 
    2\Big| \mathbb{E}\Big[ ~\sum_{i=1,~i\neq j}^{N}  \big(   \partial_{x_i} g(\bodX_T^{t,\boldsymbol{x},N}) 
     \big)   \cdot  	\big(   
    X_{T,x_j}^{t,x_i,i,N} \big)  \Big] \Big|^2
    \\
    & \leq 
    \frac{K}{N^2}  \bE \Big[ | X_{T,x_j}^{t,x_j,j,N} | ^2  \Big]
     +  K N 
     \sum_{i=1,~i\neq j}^{N} \mathbb{E}\left[  \Big|| \partial_{x_i} g(\bodX_T^{t,\boldsymbol{x},N})| \ | X_{T,x_j}^{t,x_i,i,N} | \Big|^2 \right] 
    \\
    & \leq  \frac{K}{N^2}  \bE \Big[ | X_{T,x_j}^{t,x_j,j,N} | ^2  \Big] 
    +
    \frac{K}{N} \sum_{i=1,~i\neq j}^{N}  
  \mathbb{E}\left[| X_{T,x_j}^{t,x_i,i,N} |^2 \right]    , 
\end{align*}
where we used Jensen's inequality and the growth of derivatives of $g$ in  Assumption~\ref{assum:main_weak error}. 
 \end{proof}

The next result, Lemma~\ref{lemma: higher-order derivative of u without exp}, generalizes Lemma~\ref{lemma: first derivative of u} to derivatives of order $ 1 < n \leq 6$, whose proof is involved and relies on carefully applying Jensen's inequality. 
The next calculations aim to clarify that partitioning summations in particular ways before the application of Jensen's inequality can yield sharper upper bounds.

For each $\gamma =(\gamma_1,\ldots,\gamma_n )  \in \Pi_n^{N}$, let $x_{\gamma_1,\ldots,\gamma_n}$ be a real number. Then applying Jensen's inequality directly we have 
\begin{align}
     \nonumber
    \Big| \sum_{\gamma \in \Pi_1^N} x_{\gamma_1} \Big|^2
    &=\Big| \sum_{i=1}^N x_i \Big|^2= N^2  \Big| \frac{1}{N }\sum_{i=1}^N x_i \Big|^2
    \leq N   \sum_{i=1}^N  |x_i|^2,
    \\
    \label{eq: jensen's eg1}
    \Big| \sum_{\gamma \in \Pi_n^N}  x_{\gamma_1,\dots,\gamma_{n}} \Big|^2
    &
    = 
    N^{2n} \Big| \frac{1}{N^n} \sum_{\gamma \in \Pi_n^N}  x_{\gamma_1,\dots,\gamma_{n}} \Big|^2
    \leq
      \frac{N^{2n}}{N^n} \sum_{\gamma \in \Pi_n^N}  |x_{\gamma_1,\dots,\gamma_{n}}  |^2
    =
    N^n  \sum_{\gamma \in \Pi_n^N}  |x_{\gamma_1,\dots,\gamma_{n}}  |^2.
\end{align} 
Consider now the specific two-dimensional example  of $x_{\gamma_1,\gamma_{2}}=N^{1-\hco(\gamma)}$ {\color{black}(corresponding to an $N\times N$ matrix with diagonal entries $1$ and otherwise $1/N$) }. Using  \eqref{eq: jensen's eg1} we have that
\begin{align*} 
    \Big| \sum_{\gamma \in \Pi_2^N} x_{ \gamma_1,\gamma_2} \Big|^2
    &
    \leq N^2   \sum_{i,j=1}^N  |x_{i,j}|^2
    = N^2  \sum_{i=1}^N |x_{i,i}|^2 + N^2  \sum_{i,j=1,i \neq j  }^N  |x_{i,j}|^2 = N^3+ N^2 \leq 2N^3. 
\end{align*}
This estimate is too naive and can be improved, as we can instead consider 
\begin{align*}
    \Big| \sum_{\gamma \in \Pi_2^N} x_{\gamma_1,\gamma_2} \Big|^2
    &
    \leq 2\Big|\sum_{i=1}^N x_{i,i}\Big|^2 +2 \Big| \sum_{i,j=1,i \neq j }^N   x_{i,j}  \Big|^2
    \leq  
    2N \sum_{i=1}^N |x_{i,i}|^2+ 2N^2  \sum_{i,j=1,i \neq j }^N |x_{i,j}|^2
    \\
    & 
    =2N^2 + \frac{2N^3(N-1)}{N^2} \leq 4N^2   
    ,
\end{align*}
which is indeed a sharper upper bound. 

This argument can be extended to the general $n$-dimensional case; that is, take $x_{\gamma_1,\ldots,\gamma_{n}}=N^{1-\hco(\gamma)}$ as the entries of a $\bigotimes_{i=1}^n\bR^N$-valued $n$-tensor. Observe that 
\begin{align}
\label{eq-aux:countingNumbers}
|\{\gamma \in \Pi_n^N: \hco(\gamma)=m\}|=\cO(N^m)  ,  
\end{align}
a result which follows from a simple combinatorial argument: regardless of the length $n$ of $\gamma$, we have $m$ distinct values chosen out of the range $\{1,\ldots,N\}$; that is $ \cO(N^m)$ 
possibilities. We have
\begin{align*}
    \nonumber 
     \Big| \sum_{\gamma \in \Pi_n^N}  x_{\gamma_1,\dots,\gamma_{n}} \Big|^2
    &
    = 
     \Big| \sum_{ m=1}^{n} \sum_{ \substack{\gamma \in \Pi_n^N,
        \\     
        \nonumber 
     \hco(\gamma)=m
     }}   x_{\gamma_1,\dots,\gamma_{n}} \Big|^2
     \leq 
     K\sum_{ m=1}^{n}  \Big|\sum_{ \substack{\gamma \in \Pi_n^N,
        \\ 
        \nonumber 
     \hco(\gamma )=m
     }}   x_{ \gamma_1,\dots,\gamma_{n}} \Big|^2
     \\ 
     &\leq 
     K  \sum_{ m=1}^{n} N^m 
      \sum_{ \substack{\gamma \in \Pi_n^N,
        \\ 
     \hco(\gamma )=m
     }}   |x_{ \gamma_1,\dots,\gamma_{n}}  |^2
     \leq 
      K  \sum_{ m=1}^{n} N^{2m} N^{2-2m}
      \leq KN^2,
\end{align*}
with $K$ a constant, changing across the inequalities, independent of $N$ but dependent on $n$ where $n\leq 6$. 
This is also a sharper upper bound than applying Jensen's inequality directly. We now state the generalization of Lemma~\ref{lemma: first derivative of u} for $ 1 \leq n \leq 6$. 
\begin{lemma}
\label{lemma: higher-order derivative of u without exp}
Let $u$ satisfy the Kolmogorov backward equation \eqref{PDE:Kolmogorov} with $g$ as in Assumption~\ref{assum:main_weak error} and let $T\geq t \geq 0, N\in \mathbb{N}$. Then there exists a constant $K>0$ (independent of $t,T,N$),  such that for any $n\in \mathbb{N}, 1 \leq n \leq 6$, $\gamma\in \Pi_n^N $,  and  
 $ \boldsymbol{x} \in \mathbb{R}^N$ 
\begin{align}
\nonumber
 |\partial&_{x_{\gamma_1},\dots,x_{\gamma_n}}^n u(t, \bodx)|^2
   \\ 
   \label{eq: bounds for high-deri on u}
    &\leq 
      K \sum_{m=0}^n ~
        ~	
    \sum_{ \substack{    \ell \in  
    \bigcup_{k=1}^{n} \Pi_k^N,
    \\   \hco(\ell \bigcup \gamma) = \hco(\gamma)+m   }   }  
    ~N^{m-2\hco(\ell) }~ {\color{black} \sum_{
    \substack{     
    \alpha_1,\dots,\alpha_{|\ell|} \in  
    \bigcup_{k=1}^{n} \Pi_k^N, \\
    \bigcup_{i=1}^{|\ell|} \alpha_i \simeq \gamma          }} 
    ~\bE \Big[ ~ 
     \prod_{i=1}^{|\ell|}
    \Big|
    {\color{black}
    X_{T,
    x_{\alpha_{i,1}} ,\dots ,x_{\alpha_{i,|\alpha_i |}}  }^{t,x_{\ell_i},\ell_i,N}
    }
    \Big|^2 
    ~ \Big]},
\end{align}
where {\color{black}$\alpha_i = (  
\alpha_{i,1},\dots , \alpha_{i,|\alpha_i |})$
and  $ \alpha_{i,j} \in \{1,\dots,N\}$ for $j\in \{1,\dots, |\alpha_i|\}$.
} Further, we have 
\begin{align}
\nonumber
 |\partial&_{x_{\gamma_1},\dots,x_{\gamma_n}}^n u(t, \bodx)|^4
   \\ 
    \label{eq: bounds for high-deri on u order 4}
    &\leq 
      K \sum_{m=0}^n ~
        ~	
    \sum_{\substack{     \ell \in  
    \bigcup_{k=1}^{n} \Pi_k^N,\\
    \hco(\ell \bigcup \gamma) = \hco(\gamma)+m    }  }  
    ~N^{3m-4\hco(\ell) }~ 
    \sum_{ \substack{   \alpha_1,\dots,\alpha_{| \ell|} \in  
     \bigcup_{k=1}^{n} \Pi_k^N, 
    \\   
    \bigcup_{i=1}^{| \ell|} \alpha_i \simeq \gamma}
    } 
    ~\bE \Big[ ~ 
     \prod_{i=1}^{| \ell|}
    \Big|
    {\color{black}
    X_{T,
    x_{\alpha_{i,1}} ,\dots ,x_{\alpha_{i,|\alpha_i |}}  }^{t,x_{\ell_i},\ell_i,N} 
    }
    \Big|^4 
    ~ \Big].
\end{align}
\end{lemma}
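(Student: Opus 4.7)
The plan is to prove both estimates by combining a chain-rule expansion of $\partial^{n}u$ with a carefully partitioned application of Cauchy--Schwarz/Jensen's inequalities, exactly in the spirit of the sharpened estimates explained just before the lemma statement.

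\textbf{Step 1 (chain-rule expansion).}
Starting from the Feynman--Kac representation $u(t,\bfx)=\mathbb{E}[g(\bodX_T^{t,\bfx,N})]$ and iteratively differentiating under the expectation (which is justified by the classical results of \cite[Theorem 5.5 in Chapter 5]{F1971booksdeaaa} already invoked in Lemma~\ref{lemma: first derivative of u}), I would establish by induction on $n\in\{1,\dots,6\}$ a formula of the shape
\begin{align*}
\partial^{n}_{x_{\gamma_1},\dots,x_{\gamma_n}}u(t,\bfx)
=\sum_{\ell\in\bigcup_{k=1}^{n}\Pi_{k}^{N}}
\sum_{\substack{\alpha_1,\dots,\alpha_{|\ell|}\in\bigcup_{k=1}^{n}\Pi_k^N\\ \bigcup_{i=1}^{|\ell|}\alpha_i\simeq\gamma}}
c_{\ell,\alpha}\,
\mathbb{E}\Big[\partial^{|\ell|}_{x_{\ell_1},\dots,x_{\ell_{|\ell|}}}g(\bodX_T^{t,\bfx,N})\prod_{i=1}^{|\ell|}X^{t,x_{\ell_i},\ell_i,N}_{T,x_{\alpha_{i,1}},\dots,x_{\alpha_{i,|\alpha_i|}}}\Big],
\end{align*}
where $c_{\ell,\alpha}$ are combinatorial constants bounded uniformly in $N$ (they depend only on $n$). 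Here $\ell$ tracks which derivatives fall on $g$ and the partition $(\alpha_i)_i$ of $\gamma$ tracks how the remaining $n$ differentiations distribute amongst the flow's variation processes introduced in Section \ref{section: analysis of var process total}.

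\textbf{Step 2 (inserting the growth assumption on $g$).}
Taking modulus squared and invoking Assumption~\ref{assum:main_weak error}(3), $|\partial^{|\ell|}_{x_{\ell_1},\dots,x_{\ell_{|\ell|}}}g|_{\infty}\leq K N^{-\hco(\ell)}$, yields
\begin{align*}
|\partial^{n}_{x_{\gamma_1},\dots,x_{\gamma_n}}u(t,\bfx)|^{2}\leq K\,\Big|\sum_{\ell,\alpha}N^{-\hco(\ell)}\,\mathbb{E}\Big[\prod_{i=1}^{|\ell|}\big|X^{t,x_{\ell_i},\ell_i,N}_{T,x_{\alpha_{i,1}},\dots,x_{\alpha_{i,|\alpha_i|}}}\big|\Big]\Big|^{2}.
\end{align*}

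\textbf{Step 3 (partitioned Jensen).}
The key step is to \emph{not} apply Jensen crudely to the whole sum. Instead, following the sharp counting argument exemplified in \eqref{eq: jensen's eg1} and the surrounding discussion, I would partition the outer sum according to $m:=\hco(\ell\cup\gamma)-\hco(\gamma)\in\{0,1,\dots,n\}$. For fixed shape of the partition $\alpha$ and fixed $|\ell|$, the cardinality of $\{\ell:\hco(\ell\cup\gamma)=\hco(\gamma)+m\}$ is $\mathcal{O}(N^{m})$ by the combinatorial identity \eqref{eq-aux:countingNumbers}. Applying Cauchy--Schwarz inside each class then contributes exactly the factor $N^{m}$, which combined with the $N^{-2\hco(\ell)}$ extracted from squaring the derivative bound on $g$ gives the weight $N^{m-2\hco(\ell)}$ appearing in \eqref{eq: bounds for high-deri on u}. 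Finally, a further Cauchy--Schwarz (or Jensen) on the inner expectation converts $\bigl(\mathbb{E}[\prod|X|]\bigr)^{2}\leq\mathbb{E}[\prod|X|^{2}]$ into the second-moment form stated.

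\textbf{Step 4 (fourth-power version).}
For \eqref{eq: bounds for high-deri on u order 4} I would repeat the same partition-and-Jensen strategy on $|\partial^{n}u|^{4}$. Raising a sum of $\mathcal{O}(N^{m})$ terms to the fourth power via Jensen costs $N^{3m}$, whereas $|\partial^{|\ell|}g|_{\infty}^{4}$ contributes $N^{-4\hco(\ell)}$; this produces the weight $N^{3m-4\hco(\ell)}$ and the fourth moment of the product of variation processes.

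\textbf{Main obstacle.}
The delicate point is not the chain-rule expansion itself (which is standard once smoothness of the flow is known) but rather matching the combinatorial bookkeeping to the index set displayed in \eqref{eq: bounds for high-deri on u}. One must verify that partitioning by $m=\hco(\ell\cup\gamma)-\hco(\gamma)$ is compatible with the prescribed shape of $(\alpha_i)_i$ so that the cardinality count $\mathcal{O}(N^{m})$ is genuinely sharp and not over-counting; otherwise the naive Jensen bound would cost an extra power of $N$ and destroy the uniform-in-$N$ character of the final estimates (and hence of the weak-error bound in Theorem~\ref{theorem: main weak convergence}). This is precisely the sharpened estimate illustrated in the two-dimensional example $x_{\gamma_1,\gamma_2}=N^{1-\hco(\gamma)}$ preceding the lemma, and extending that reasoning correctly to general $n\leq 6$ and arbitrary partitions $\alpha$ is the technical heart of the proof.
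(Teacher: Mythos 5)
Your proposal is correct and follows essentially the same strategy as the paper's proof: expand $\partial^n u$ via the higher-order chain rule (using the variation processes of the flow), regroup the $\ell$-sum according to $m=\hco(\ell\cup\gamma)-\hco(\gamma)$ so that each class has $\mathcal{O}(N^m)$ terms by \eqref{eq-aux:countingNumbers}, apply Jensen/Cauchy--Schwarz within each class to pick up the factor $N^{m}$ (resp.\ $N^{3m}$ for the fourth-power version), and then insert the pointwise bound $|\partial^{|\ell|}g|_\infty\leq KN^{-\hco(\ell)}$ from Assumption~\ref{assum:main_weak error}(3). The only cosmetic difference is the order in which the bound on $g$ and the partitioned Jensen are applied (the paper applies Jensen first, you apply the $g$-bound first), which has no effect on the result.
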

\begin{proof}
Note that in the following proof, the positive constant $K$ is independent of $t,T,N$ and may change line by line. 
Before establishing the main results (\textit{Part 2} and \textit{Part 3}), we present (\textit{Part 1}) a study for the second derivatives of $u$ (i.e., for the case $n=2$) to demonstrate the approach used and its nuances in regards to estimate \eqref{eq: bounds for high-deri on u} of the lemma. In the following text, we use $i\neq j \neq k$ to denote all three indices are distinct from one another. 

\textit{Part 1: Comparing the direct calculation with \eqref{eq: bounds for high-deri on u}  
for the example case $n=2$}. For all $j,k\in \{ 1,\dots,N\}$, $j\neq k $, using  \cite[Theorem 5.5 in Chapter 5]{F1971booksdeaaa}, we have
\allowdisplaybreaks
\begin{align}
\nonumber
    & |\partial_{x_j, x_k}^2 u(t, \bodx)|^2 
    \\ \nonumber
    &=  \bigg|  \mathbb{E}\Big[ \sum_{i=1}^{N} \partial_{x_i} g  (\bodX_T^{t,\boldsymbol{x},N}) X^{t,x_i,i,N}_{T,x_j,x_k}  \Big]  
    + 
        \mathbb{E}\Big[ \sum_{i=1}^{N} \sum_{i'=1}^{N}  \partial^2_{x_i, x_{i'}} g(\bodX_T^{t,\boldsymbol{x},N}) X^{t,x_i,i,N}_{T,x_j}  X^{t,x_{i'},{i'},N}_{T,x_k}  \Big] \bigg| ^2 
        \\ 
    &\leq K \Bigg(  \bigg| \label{eq: lemma5.2 line 3}
    \bE \Big[  \partial_{x_j  } g (\bodX_T^{t,\boldsymbol{x},N}) X^{t,x_j,j,N}_{T,x_j,x_k}
    +
    \partial_{ x_k} g (\bodX_T^{t,\boldsymbol{x},N}) X^{t,x_k,k,N}_{T,x_j,x_k}\Big]  \bigg|^2 
    \\ \label{eq: lemma5.2 line 4}
    &\quad 
    +
   \bigg| \mathbb{E}\Big[ \sum_{i=1,~i\neq j \neq k}^{N} \partial_{x_i}g (\bodX_T^{t,\boldsymbol{x},N}) X^{t,x_i,i,N}_{T,x_j,x_k}  ~\Big]  \bigg|^2
    \\   \label{eq: lemma5.2 line 5}
    &\quad + \bigg|\bE \Big[  \partial^2_{x_j, x_j}g (\bodX_T^{t,\boldsymbol{x},N}) X^{t,x_j,j,N}_{T,x_j}  X^{t,x_{j},{j},N}_{T,x_k}
    +  \partial^2_{x_k, x_k}
    g (\bodX_T^{t,\boldsymbol{x},N}) X^{t,x_k,k,N}_{T,x_j}  X^{t,x_{k},{k},N}_{T,x_k}   
          \Big] \bigg|^2
     \\ \label{eq: lemma5.2 line 7}
    &\quad + \bigg|\bE \Big[  \partial^2_{x_j, x_k}g (\bodX_T^{t,\boldsymbol{x},N}) X^{t,x_j,j,N}_{T,x_j}  X^{t,x_{k},{k},N}_{T,x_k}
    +  \partial^2_{x_k, x_j}g (\bodX_T^{t,\boldsymbol{x},N}) X^{t,x_k,k,N}_{T,x_j}  X^{t,x_{j},{j},N}_{T,x_k}
          \Big] \bigg|^2
    \\ \label{eq: lemma5.2 line 8}
    &\quad +\bigg|\bE \Big[ 
          \sum_{i=1,~i\neq j \neq k}^{N}   \partial^2_{x_i, x_i}g (\bodX_T^{t,\boldsymbol{x},N}) X^{t,x_i,i,N}_{T,x_j}  X^{t,x_{i},{i},N}_{T,x_k}
          \Big]\bigg|^2
    \\ 
     \nonumber
   &\quad  +  
    \bigg|\bE \Big[  
      \sum_{i=1,~i\neq j \neq k}^{N}    
      \Big( 
    \partial^2_{x_j, x_i}g (\bodX_T^{t,\boldsymbol{x},N}) X^{t,x_j,j,N}_{T,x_j}  X^{t,x_{i},{i},N}_{T,x_k} 
    +
    \partial^2_{x_i, x_j}g (\bodX_T^{t,\boldsymbol{x},N}) X^{t,x_i,i,N}_{T,x_j}  X^{t,x_{j},{j},N}_{T,x_k}
    \\ 
     \label{eq: lemma5.2 line 10}
   &\hspace{1cm}   +
      \partial^2_{x_k, x_i}
      g (\bodX_T^{t,\boldsymbol{x},N}) X^{t,x_k,k,N}_{T,x_j}  X^{t,x_{i},{i},N}_{T,x_k} 
    +
      \partial^2_{x_i, x_k}
      g (\bodX_T^{t,\boldsymbol{x},N}) X^{t,x_i,i,N}_{T,x_j}  X^{t,x_{k},{k},N}_{T,x_k} 
      \Big)
      \Big]\bigg|^2
     \\
     \label{eq: lemma5.2 line 11}
     &\quad  + 
     \bigg|\bE \Big[
     \sum_{i,i'=1,~j\neq i\neq i '\neq k}^{N}  \partial^2_{{x_i, x_{i'}}} g(\bodX_T^{t,\boldsymbol{x},N}) X^{t,x_i,i,N}_{T,x_j}  X^{t,x_{i'},{i'},N}_{T,x_k}
    \Big] 
    \bigg|^2\bigg),
    \end{align}
where the inequality follows simply by separating the summation terms by number of distinct derivatives of $g$ matching also to the order of decay in $N$, and then applying Jensen's inequality (but still leaving the square outside the expectations).

Under Assumption~\ref{assum:main_weak error} on the derivatives of $g$ and observing that \eqref{eq: lemma5.2 line 4}, \eqref{eq: lemma5.2 line 8} and \eqref{eq: lemma5.2 line 10} have  $\cO(N)$ terms and that \eqref{eq: lemma5.2 line 11} has  $\cO(N^2)$ terms in the summand, we apply Jensen's inequality once more to get
    \begin{align}
    \eqref{eq: lemma5.2 line 3}+\eqref{eq: lemma5.2 line 5}+\eqref{eq: lemma5.2 line 11}
    & 
    \leq 
    \frac{K}{N^2}\bE \Big[   |X^{t,x_j,j,N}_{T,x_j,x_k} |^2 + |X^{t,x_k,k,N}_{T,x_j,x_k}|^2   
    +
    |X^{t,x_j,j,N}_{T,x_j}  X^{t,x_{j},{j},N}_{T,x_k}|^2
    + |X^{t,x_k,k,N}_{T,x_j}  X^{t,x_{k},{k},N}_{T,x_k}|^2 \Big]\nonumber
    \\
    & \nonumber
    \hspace{3.15cm}+\sum_{i,i'=1,~j\neq i\neq i '\neq k}^{N} \Big[ | X^{t,x_i,i,N}_{T,x_j}  X^{t,x_{i'},{i'},N}_{T,x_k} |^2\Big]
    \\
    \nonumber
    \eqref{eq: lemma5.2 line 4}+\eqref{eq: lemma5.2 line 8}
    & 
    \leq\frac{K}{N}  
    \sum_{i=1,~i\neq j \neq k}^{N}
     \bE\Big[   | X^{t,x_i,i,N}_{T,x_j,x_k}     |^2 
    +
       | X^{t,x_i,i,N}_{T,x_j}  X^{t,x_{i},{i},N}_{T,x_k}  |^2 \Big] 
    \\\nonumber
    \eqref{eq: lemma5.2 line 7}
    & \leq
    \frac{K}{N^4}\bE \Big[   
    |X^{t,x_j,j,N}_{T,x_j}  X^{t,x_{k},{k},N}_{T,x_k}|^2
    + |X^{t,x_k,k,N}_{T,x_j}  X^{t,x_{j},{j},N}_{T,x_k}|^2
    \Big]
     \\
    \nonumber
    \eqref{eq: lemma5.2 line 10}
    &\leq  
    \frac{K}{N^3} \sum_{i=1,~i\neq j \neq k}^{N}
    \bE \Big[   |X^{t,x_j,j,N}_{T,x_j}  X^{t,x_{i},{i},N}_{T,x_k}  |^2   +|X^{t,x_i,i,N}_{T,x_j}  X^{t,x_{j},{j},N}_{T,x_k}  |^2
    \\ \nonumber 
    & \hspace{3.15cm}
    + 
    |X^{t,x_k,k,N}_{T,x_j}  X^{t,x_{i},i,N}_{T,x_k}  |^2
    +
    |X^{t,x_i,i,N}_{T,x_j}  X^{t,x_{k},{k},N}_{T,x_k}  |^2  \Big].
\end{align}
We now demonstrate how these estimates \eqref{eq: lemma5.2 line 3}--\eqref{eq: lemma5.2 line 11}, when combined together, can be upper bounded by the form given in \eqref{eq: bounds for high-deri on u} for $n=2$,  $\gamma=(j,k), ~j\neq k$:
\begin{align}
    \nonumber
    |\partial&_{x_{j},x_{k}}^2 u(t, \bodx)|^2
   \\\nonumber
    &\leq 
      K \sum_{m=0}^2 ~
        ~	
    \sum_{ \substack{    \ell \in  
    \bigcup_{k=1}^{2} \Pi_k^N,
    \\   \hco(\ell \bigcup \gamma) = \hco(\gamma)+m   }   }  
    ~N^{m-2\hco(\ell) }~ {\color{black} \sum_{
    \substack{     
    \alpha_1,\dots,\alpha_{|\ell|} \in  
    \bigcup_{k=1}^{2} \Pi_k^N, \\
    \bigcup_{i=1}^{|\ell|} \alpha_i \simeq \gamma          }} 
    ~\bE \Big[ ~ 
     \prod_{i=1}^{|\ell|}
    \Big|X_{T,
    x_{\alpha_{i,1}} ,\dots ,x_{\alpha_{i,|\alpha_i |}}  }^{t,x_{\ell_i},\ell_i,N} \Big|^2 
    ~ \Big]}
    \\\nonumber
    & = 
    K
    \bigg( \frac{1}{N^2} \bE \Big[   |X^{t,x_j,j,N}_{T,x_j,x_k} |^2 + |X^{t,x_k,k,N}_{T,x_j,x_k}|^2   
    \Big] 
    + 
    \frac{1}{N^2} \bE \Big[   |X^{t,x_j,j,N}_{T,x_j}  |^2~|X^{t,x_j,j,N}_{T,x_k}|^2 
    +  
    |X^{t,x_j,j,N}_{T,x_k} |^2~| X^{t,x_j,j,N}_{T,x_j}|^2 
    \\
    \nonumber 
    &\quad 
    +|X^{t,x_k,k,N}_{T,x_j} |^2~|X^{t,x_k,k,N}_{T,x_k}|^2   
    +|X^{t,x_k,k,N}_{T,x_k} |^2~|X^{t,x_k,k,N}_{T,x_j}|^2 
    \Big]    
     + 
     \frac{1}{N^4} 
     \bE \Big[   |X^{t,x_j,j,N}_{T,x_j} |^2~|X^{t,x_k,k,N}_{T,x_k}|^2 
      \\\nonumber
    &\quad 
     +|X^{t,x_j,j,N}_{T,x_k} |^2~|X^{t,x_k,k,N}_{T,x_j}|^2   
     +
      |X^{t,x_k,k,N}_{T,x_j} |^2~|X^{t,x_j,k,N}_{T,x_k}|^2
      +
      |X^{t,x_k,k,N}_{T,x_k} |^2~|X^{t,x_j,j,N}_{T,x_j}|^2 
    \Big]    
    \\
    \nonumber
    &\quad  + \frac{1}{N} \sum_{i=1,i\neq j \neq k}^N \bE \Big[   |X^{t,x_i,i,N}_{T,x_j,x_k} |^2  
    + |X^{t,x_i,i,N}_{T,x_k,x_j} |^2
    \Big]
    + 
    \frac{1}{N^3} \sum_{i=1,i\neq j \neq k}^N \bE \Big[   |X^{t,x_j,j,N}_{T,x_j} |^2~|X^{t,x_i,i,N}_{T,x_k}|^2
    \\
    \nonumber
    &\quad  
    +|X^{t,x_j,j,N}_{T,x_k} |^2~|X^{t,x_i,i,N}_{T,x_j}|^2 
    +|X^{t,x_i,i,N}_{T,x_j} |^2~|X^{t,x_j,j,N}_{T,x_k}|^2
    +|X^{t,x_i,i,N}_{T,x_k} |^2~|X^{t,x_j,j,N}_{T,x_j}|^2 
    + |X^{t,x_k,k,N}_{T,x_j} |^2~|X^{t,x_i,i,N}_{T,x_k}|^2
    \\
    \nonumber
    &\quad 
    +|X^{t,x_k,k,N}_{T,x_k} |^2~|X^{t,x_i,i,N}_{T,x_j}|^2
     + |X^{t,x_i,i,N}_{T,x_j} |^2~|X^{t,x_k,k,N}_{T,x_k}|^2
    +|X^{t,x_i,i,N}_{T,x_k} |^2~|X^{t,x_k,k,N}_{T,x_j}|^2
    \Big] 
    \\
    \nonumber
    &\quad 
    +  \frac{1}{N} \sum_{i=1,i\neq j \neq k}^N
    \bE \Big[   |X^{t,x_i,i,N}_{T,x_j} |^2~|X^{t,x_i,i,N}_{T,x_k}|^2
    +  |X^{t,x_i,i,N}_{T,x_k} |^2~|X^{t,x_i,i,N}_{T,x_j}|^2
    \Big]
     \\
     \label{eq: second deri of u example 222}
    &\quad  {  \color{black}      
    +  \frac{1}{N^2} \sum_{ 
    \substack{i,i'=1, i \neq j \neq k \\ i\neq i', i' \neq j \neq k  }
     }^{N}   } \bE \Big[ | X^{t,x_i,i,N}_{T,x_j}  |^2~|X^{t,x_{i'},{i'},N}_{T,x_k} |^2
    + 
    | X^{t,x_{i'},{i'},N}_{T,x_j}  |^2~|X^{t,x_i,i,N}_{T,x_k} |^2
    \Big]    
     \bigg). 
\end{align}
Comparing to the results of \eqref{eq: lemma5.2 line 3}--\eqref{eq: lemma5.2 line 11}, we can see that \eqref{eq: second deri of u example 222} contains more terms. For example, consider the term 
\begin{align*}
    \frac{1}{N} \sum_{i=1,i\neq j \neq k}^N \bE \Big[   |X^{t,x_i,i,N}_{T,x_j,x_k} |^2  
    + |X^{t,x_i,i,N}_{T,x_k,x_j} |^2
    \Big].
\end{align*}
For the case $m=1,\ell \in \Pi_1^N$, we not only take summation over $|X^{t,x_i,i,N}_{T,x_j,x_k}|^2$, but also consider $|X^{t,x_i,i,N}_{T,x_k,x_j}|^2$, so that the sum of the terms \eqref{eq: lemma5.2 line 3}--\eqref{eq: lemma5.2 line 11} is bounded above by \eqref{eq: second deri of u example 222}, verifying our result in the case $n=2$, $\gamma=(j,k), ~j\neq k$. 
\newline

\textit{Part 2. The bound \eqref{eq: bounds for high-deri on u}.} 
From the above estimates, one can see that the idea is to essentially partition the sums based on the relationship between indices $i, i',j,k$, to keep consistent orders of $N$. Having this separation trick in mind, for any $\gamma\in \Pi_n^N, ~ 1 \leq n \leq 6  $, we consider (recall the notation for $\hco(\cdot)$ and $|\cdot|$ in Definition \ref{def: set of sequence pi}):
\begin{align}
        & |\partial^n_{x_{\gamma_1},\dots,x_{\gamma_n}} u(t, \bodx)|^2 \notag
    \\ \notag
    &=   \Bigg| \mathbb{E}\Bigg[ ~ 
    \sum_{  \substack{     \alpha,\beta \in 
    \bigcup_{k=0}^{n-1} \Pi_k^N, \\
\gamma \setminus (\gamma_1) \in \alpha\shuffle\beta }  }
     \sum_{i=1}^{N}  \Big( \partial_{x_i}g(\bodX_T^{t,\boldsymbol{x},N}) \Big)_{x_{\alpha_1},\dots , x_{\alpha_{|\alpha|}} 	  }
     \big(
     X^{t,x_i,i,N}_{T,x_{\gamma_1}}   
     \big)_{x_{\beta_1},\dots, x_{\beta_{|\beta|}}   }
     \Bigg] \Bigg|^2
    \\ \notag
    &\leq 
    \Bigg|
     \bE \Bigg[ ~ 
    \sum_{m=1}^n 
    \sum_{\ell \in  \Pi_m^N}  
     \bigg(  \Big| \partial^m_{{x_{\ell_1},\dots,x_{\ell_m}} }g (\bodX_T^{t,\boldsymbol{x},N})  \Big|
    \sum_{
    \substack{    \alpha_1,\dots,\alpha_m \in 
    \bigcup_{k=1}^{n} \Pi_k^N, \\
    \bigcup_{i=1}^m \alpha_i \simeq \gamma }
       } 
     \prod_{i=1}^m
    \Big|X_{T,x_{\alpha_{i,1}},\dots,x_{\alpha_{i,|\alpha_i |}} }^{t,x_{\ell_i},\ell_i,N} \Big| 
    ~
    \bigg)
     \Bigg] \Bigg|^2
      \\ \label{eq: n variation line 4}
    &=
    \Bigg|
     \bE \Bigg[ ~
    \sum_{m=1}^n ~
    \sum_{
    \substack{    \ell \in  \bigcup_{k=1}^{n} \Pi_k^N,  \\   \hco(\ell) = m   }
       }  ~
     \bigg(  \Big| \partial^{|\ell|}_{{x_{\ell_1},\dots,x_{\ell_{|\ell|}} }}g
     (\bodX_T^{t,\boldsymbol{x},N})  \Big|
    \sum_{
    \substack{   \alpha_1,\dots,\alpha_{|\ell|} \in  \bigcup_{k=1}^{n} \Pi_k^N,   \\   \bigcup_{i=1}^{|\ell|} \alpha_i \simeq \gamma    }
      } 
    ~ \prod_{i=1}^{|\ell|}
    \Big|X_{T,x_{\alpha_{i,1}},\dots ,x_{\alpha_{i,|\alpha_i |}} }^{t,x_{\ell_i},\ell_i,N} \Big| 
    ~
    \bigg)
     \Bigg]\Bigg|^2
  \\ \label{eq: n variation line 5}
    &=
    \Bigg|
     \bE \Bigg[ ~ 
    \sum_{m=0}^n ~
    \sum_{\substack{     \ell \in  \bigcup_{k=1}^{n} \Pi_k^N,  \\    \hco(\ell \bigcup \gamma) = \hco(\gamma)+m }   }  
    ~
     \bigg(  \Big| \partial^{|\ell|}_{{x_{\ell_1},\dots,x_{\ell_{|\ell|}} }}g(\bodX_T^{t,\boldsymbol{x},N})  \Big|
    \sum_{\substack{   \alpha_1,\dots,\alpha_{|\ell|} \in  \bigcup_{k=1}^{n} \Pi_k^N,   \\   \bigcup_{i=1}^{|\ell|} \alpha_i \simeq \gamma   }  } 
    ~ \prod_{i=1}^{|\ell|}
    \Big|X_{T,x_{\alpha_{i,1}},\dots ,x_{\alpha_{i,|\alpha_i |}} }^{t,x_{\ell_i},\ell_i,N} \Big| 
    ~
    \bigg)
     \Bigg]\Bigg|^2
      \\ \notag
    &\leq 
    K \sum_{m=0}^n  N^{2m}~
     \bE \Bigg[ \Bigg|\frac{1}{N^{m}}
    \sum_{\substack{     \ell \in  \bigcup_{k=1}^{n} \Pi_k^N,  \\    \hco(\ell \bigcup \gamma) = \hco(\gamma)+m }   }  
     \bigg(  \Big| \partial^{|\ell|}_{{x_{\ell_1},\dots,x_{\ell_{|\ell|}} }}g(\bodX_T^{t,\boldsymbol{x},N})  \Big|
    \\ \notag
    & \quad \hspace{6cm} \cdot 
    \sum_{\substack{   \alpha_1,\dots,\alpha_{|\ell|} \in  \bigcup_{k=1}^{n} \Pi_k^N,   \\   \bigcup_{i=1}^{|\ell|} \alpha_i \simeq \gamma   }  } 
    ~ \prod_{i=1}^{|\ell|}
    \Big|X_{T,x_{\alpha_{i,1}},\dots ,x_{\alpha_{i,|\alpha_i |}} }^{t,x_{\ell_i},\ell_i,N} \Big| 
    ~
    \bigg)\Bigg|^2~
     \Bigg]
     \\ \label{eq: n variation jensen application}
    &\leq 
    K \sum_{m=0}^n ~
        ~	
    \sum_{\substack{     \ell \in  \bigcup_{k=1}^{n} \Pi_k^N,  \\    \hco(\ell \bigcup \gamma) = \hco(\gamma)+m  }   }  
    ~N^{m }~
    \bE \Bigg[ \Big| \partial^{|\ell|}_{{x_{\ell_1},\dots,x_{\ell_{|\ell|}} }}g(\bodX_T^{t,\boldsymbol{x},N})  \Big|^2
      \sum_{\substack{   \alpha_1,\dots,\alpha_{|\ell|} \in  \bigcup_{k=1}^{n} \Pi_k^N,   \\   \bigcup_{i=1}^{|\ell|} \alpha_i \simeq \gamma    }  } 
    ~ \prod_{i=1}^{|\ell|}
    \Big|X_{T,x_{\alpha_{i,1}},\dots ,x_{\alpha_{i,|\alpha_i |}} }^{t,x_{\ell_i},\ell_i,N} \Big|^2 
    ~ \Bigg]
    \\ 
    &\leq 
    K \sum_{m=0}^n ~
        ~	
    \sum_{\substack{     \ell \in  \bigcup_{k=1}^{n} \Pi_k^N ,  \\    \hco(\ell \bigcup \gamma) = \hco(\gamma)+m  }   }  
    ~N^{m-2\hco(\ell) }~
    \bE \Bigg[ 
      \sum_{\substack{   \alpha_1,\dots,\alpha_{|\ell|} \in  \bigcup_{k=1}^{n} \Pi_k^N,   \\   \bigcup_{i=1}^{|\ell|} \alpha_i \simeq \gamma     }  } 
    ~ \prod_{i=1}^{|\ell|}
    \Big|X_{T,x_{\alpha_{i,1}},\dots ,x_{\alpha_{i,|\alpha_i |}} }^{t,x_{\ell_i},\ell_i,N} \Big|^2 
    ~ \Bigg], \notag
\end{align}
{\color{black}where in \eqref{eq: n variation line 4} and \eqref{eq: n variation line 5}, we regroup the summation over all $\ell \in \bigcup_{k=1}^{n} \Pi_k^N $ based on the magnitude of $\hco(\ell) $ and $\hco(\ell \bigcup \gamma)$}.  
In \eqref{eq: n variation jensen application}, we apply Jensen's inequality to the second summation where the set $\{ \ell \in  \bigcup_{k=1}^{n} \Pi_k^N:  \hco(\ell \bigcup \gamma) = \hco(\gamma)+m\}$  has $\cO(N^m)$ terms ($\ell$ has $m$ degrees of freedom), thus we end up with a factor of $N^m$ after calculation. For the last line, we used Assumption~\ref{assum:main_weak error}: $\big|\partial^{|\ell|}_{x_{\ell_1},\dots,x_{\ell_{|\ell|}}} g \big|_{\infty} = \cO(N^{-\hco(\ell)})$ . 
\newline
 
 \textit{Part 3: The bound \eqref{eq: bounds for high-deri on u order 4}.}  
One proves \eqref{eq: bounds for high-deri on u order 4} using the same arguments as in \textit{Part 2}. We have 
\begin{align*}
        & |\partial^n_{x_{\gamma_1},\dots,x_{\gamma_n}} u(t, \bodx)|^4 
    \\
    &\leq 
    K \sum_{m=0}^n ~
        ~	
    \sum_{\substack{      \ell \in  \bigcup_{k=1}^{n} \Pi_k^N , 
    \\   
    \hco(\ell \bigcup \gamma) = \hco(\gamma)+m  }  }   
    ~N^{3m }~
    \bE \Bigg[ \Big| \partial^{|\ell|}_{{x_{\ell_1},\dots,x_{\ell_{|\ell|}} }}g(\bodX_T^{t,\boldsymbol{x},N})  \Big|^4
      \sum_{\substack{     \alpha_1,\dots,\alpha_{|\ell|} \in  \bigcup_{k=1}^{n} \Pi_k^N,  \\      \bigcup_{i=1}^{|\ell|} \alpha_i \simeq \gamma  } } 
    ~ \prod_{i=1}^{|\ell|}
    \Big|X_{T,x_{\alpha_{i,1}},\dots ,x_{\alpha_{i,|\alpha_i |}} }^{t,x_{\ell_i},\ell_i,N} \Big|^4 
    ~ \Bigg]
    \\
    &{\color{black}\leq 
    K \sum_{m=0}^n ~
        ~	
    \sum_{\substack{      \ell \in  \bigcup_{k=1}^{n} \Pi_k^N ,
    \\      \hco(\ell \bigcup \gamma) = \hco(\gamma)+m }  }  
    ~N^{3m-4\hco(\ell) }~
    \bE \Bigg[ 
      \sum_{\substack{      \alpha_1,\dots,\alpha_{|\ell|} \in  \bigcup_{k=1}^{n} \Pi_k^N,
      \\    \bigcup_{i=1}^{|\ell|} \alpha_i \simeq \gamma  }   } 
    ~ \prod_{i=1}^{|\ell|}
    \Big|X_{T,x_{\alpha_{i,1}},\dots ,x_{\alpha_{i,|\alpha_i |}} }^{t,x_{\ell_i},\ell_i,N} \Big|^4 
    ~ \Bigg].}
\end{align*}
 \end{proof}

\begin{lemma}
\label{lemma: higher-order derivative of u}
Let $u$ satisfy the Kolmogorov backward equation \eqref{PDE:Kolmogorov} with $g$ as in Assumption~\ref{assum:main_weak error},  
 let $T\geq t \geq 0, N\in \mathbb{N}$ and assume that the starting points $x_i$ are $\mathcal{F}_t$-measurable random variables in $L^{2}(\Omega,\mathbb{R})$ sampled from the same distribution for all $ i  \in \lbrace 1, \ldots, N \rbrace$.
Then {\color{black}there exist positive constants $K$, $\lambda_0\in(0,\lambda)$ (both are independent of $t,T,N$)} such that for any $1 \leq n \leq 6$, $\gamma\in \Pi_n^N $
\begin{align}
\label{eq:  high-deri on u order 2 bounded by K}
  \bE \Big[   \big|\partial^n_{x_{\gamma_1},\dots,x_{\gamma_n}} u(t, \bodx)\big|^2\Big]
    & 
    \leq  K e^{- \lambda_0 (T-t)} N^{-2  \hco(\gamma)}, 
\\
\label{eq:  high-deri on u order 4 bounded by K}
  \bE \Big[   \big|\partial^n_{x_{\gamma_1},\dots,x_{\gamma_n}} u(t, \bodx)\big|^4\Big]
    &\leq 
    K e^{- \lambda_0 (T-t)} N^{-4  \hco(\gamma)}.
\end{align}

\end{lemma}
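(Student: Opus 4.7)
The strategy is to combine the pointwise bound of Lemma~\ref{lemma: higher-order derivative of u without exp} with the moment estimates of Lemma~\ref{lemma: n-var process result} on the $n$-variation processes, tracking powers of $N$ and the exponential decay through the combinatorial bookkeeping of the multi-indices in $\Pi_n^N$.

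First, I would take the expectation (over the starting data $\boldsymbol{x}$, which is now random and identically distributed in each component, so that Lemma~\ref{lemma: n-var process result} applies) of both sides of inequalities \eqref{eq: bounds for high-deri on u} and \eqref{eq: bounds for high-deri on u order 4}. This yields, for the second-moment bound, an estimate of the form
\begin{align*}
\bE\big[|\partial^{n}_{x_{\gamma_1},\ldots,x_{\gamma_n}}u(t,\boldsymbol{x})|^2\big]
\leq K \sum_{m=0}^n \sum_{\substack{\ell\in\bigcup_{k=1}^n\Pi_k^N \\ \hco(\ell\cup\gamma)=\hco(\gamma)+m}} N^{m-2\hco(\ell)}
\sum_{\substack{\alpha_1,\ldots,\alpha_{|\ell|} \\ \bigcup_i\alpha_i\simeq\gamma}}
\bE\Big[\prod_{i=1}^{|\ell|} |X^{t,x_{\ell_i},\ell_i,N}_{T,x_{\alpha_{i,1}},\ldots,x_{\alpha_{i,|\alpha_i|}}}|^2\Big],
\end{align*}
and an analogous one with $|\cdot|^4$ and coefficient $N^{3m-4\hco(\ell)}$ for the fourth-moment bound.

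Next, I would apply the generalized H\"older inequality $\bE[\prod_{i=1}^{|\ell|}|Y_i|^q]\leq \prod_{i=1}^{|\ell|}\bE[|Y_i|^{q|\ell|}]^{1/|\ell|}$ (with $q\in\{2,4\}$) to decouple the product of variation processes into individual $q|\ell|$-th moments. Each such moment, of a variation process indexed by $(\ell_i,\alpha_{i,1},\ldots,\alpha_{i,|\alpha_i|})$ with $\hco$ value $m_i$, is then controlled by Lemma~\ref{lemma: n-var process result} as
\begin{align*}
\bE\big[|X^{t,x_{\ell_i},\ell_i,N}_{T,x_{\alpha_{i,1}},\ldots}|^{q|\ell|}\big]\leq K\, N^{-q|\ell|(m_i-1)} e^{-\lambda_0^{(n)} q|\ell|(T-t)},
\end{align*}
so taking the $(1/|\ell|)$-th power and forming the product yields, for each admissible $(\ell,\alpha_1,\ldots,\alpha_{|\ell|})$, an estimate of the shape $K\,N^{-q\sum_i(m_i-1)}\,e^{-\lambda_0^{(n)} q(T-t)}$. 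Choosing $\lambda_0:=\lambda_0^{(6)}\in(0,\lambda)$ (the smallest among all $\lambda_0^{(n)}$, $n\leq 6$) will then give a single uniform exponential decay rate in the final statement.

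The remaining, and genuinely delicate, step is the combinatorial verification that, after summing over $\ell$ and over all admissible index partitions $\alpha_1,\ldots,\alpha_{|\ell|}$ with $\bigcup_i\alpha_i\simeq\gamma$, the cumulative exponent of $N$ collapses to exactly $-q\hco(\gamma)$. The key identity to be checked is that, for any fixed $\ell$ satisfying $\hco(\ell\cup\gamma)=\hco(\gamma)+m$ and any decomposition with $\bigcup_i\alpha_i\simeq\gamma$, one has
\begin{align*}
\sum_{i=1}^{|\ell|}(m_i-1) \;=\; \hco(\gamma)+\hco(\ell)-m-1,
\end{align*}
up to a combinatorial count of admissible $\alpha$'s which is $\cO(N^{\text{something absorbed}})$; combining this with the prefactor $N^{m-q\hco(\ell)}$ (for $q=2$) or $N^{3m-4\hco(\ell)}$ (for $q=4$) should telescope to $N^{-q\hco(\gamma)}$ after using \eqref{eq-aux:countingNumbers}. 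This combinatorial bookkeeping is the main obstacle: the inductive structure mirrors the one used in the proof of Lemma~\ref{lemma: n-var process result}, and the bound $|\ell|\leq n\leq 6$ keeps the number of cases finite but nontrivial. I would verify it explicitly for the low-order cases ($n=1,2$), using Lemma~\ref{lemma: first derivative of u} and the example computation already written in Part~1 of the proof of Lemma~\ref{lemma: higher-order derivative of u without exp} as templates, and then argue by induction on $n$ that the identity persists for $n\leq 6$, producing the claimed bounds \eqref{eq:  high-deri on u order 2 bounded by K}--\eqref{eq:  high-deri on u order 4 bounded by K}.
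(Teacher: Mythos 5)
Your high-level strategy matches the paper's: take expectations of the pointwise bounds in Lemma~\ref{lemma: higher-order derivative of u without exp}, use the generalized H\"older inequality to decouple the product of variation processes, invoke Lemma~\ref{lemma: n-var process result} with exponent $q|\ell|$ for each factor, choose $\lambda_0 = \lambda_0^{(6)}$, and then perform combinatorial bookkeeping on powers of $N$. All of that is right and is exactly what the paper does.

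The gap is in the combinatorial step, which is stated incorrectly and for which the proposed route (proving an identity, by induction on $n$) would not succeed. You claim
\[
\sum_{i=1}^{|\ell|}(m_i-1) \;=\; \hco(\gamma)+\hco(\ell)-m-1,
\]
where $m_i=\hco((\ell_i)\cup\alpha_i)$. This is false: take $n=1$, $\gamma=(j)$, $\ell=(j)$, so $m=0$, $\alpha_1=(j)$, $m_1=1$, giving LHS $=0$ but your RHS $=1$; or $n=2$, $\gamma=\ell=(j,k)$ with $j\neq k$, $m=0$, $\alpha_1=(j)$, $\alpha_2=(k)$, LHS $=0$, RHS $=3$. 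Both the sign of the $m$ and $\hco(\ell)$ terms and the extra $-1$ are wrong, and inserting your formula into the $N$-power bookkeeping would not collapse to $N^{-q\hco(\gamma)}$ (e.g.\ for $m=1$, $\hco(\ell)=1$ your formula would produce $N^{2-2\hco(\gamma)}$, which grows). What the paper actually establishes is the \emph{inequality}
\[
\sum_{i=1}^{|\ell|}\big(\hco((\ell_i)\cup\alpha_i)-1\big) \;\geq\; \hco(\gamma)+m-\hco(\ell),
\]
derived by a short direct set-theoretic manipulation using $\bigcup_i\alpha_i\simeq\gamma$ and $\hco(\ell\cup\gamma)=\hco(\gamma)+m$ (no induction needed), and an inequality suffices because it only needs to provide an upper bound on $N^{-q\sum_i(m_i-1)}$. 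Finally, your remark that the sum over the $\alpha_i$'s contributes ``$\cO(N^{\text{something absorbed}})$'' is misleading: since the $\alpha_i$ partition the fixed multi-index $\gamma$ (of length $n\leq 6$), that inner sum has $\cO(1)$ terms; the $N^m$ factor that combines with the prefactor $N^{m-q\hco(\ell)}$ to give $N^{2m-q\hco(\ell)}$ comes from the cardinality of $\{\ell:\hco(\ell\cup\gamma)=\hco(\gamma)+m\}$, cf.\ \eqref{eq-aux:countingNumbers}.
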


\begin{proof}
    Note that in the following proof, the positive constants $K,\lambda_0$ are independent of $t,T,N$ and may change line by line. 
    Recall the results and notations in Lemma~\ref{lemma: higher-order derivative of u without exp}, after taking the expectation, we have 
    \begin{align*}
         \bE \Big[  \big|\partial&_{x_{\gamma_1},\dots,x_{\gamma_n}}^n u(t, \bodx) \big|^2 \Big] 
          \\\nonumber
    &\leq 
      K \sum_{m=0}^n ~
        ~	
    \sum_{ \substack{    \ell \in  
    \bigcup_{k=1}^{n} \Pi_k^N,  
    \\   \hco(\ell \bigcup \gamma) = \hco(\gamma)+m   }   }  
    ~N^{m-2\hco(\ell) }~ \sum_{
    \substack{     
    \alpha_1,\dots,\alpha_{|\ell|} \in  
    \bigcup_{k=1}^{n} \Pi_k^N, \\
    \bigcup_{i=1}^{|\ell|} \alpha_i \simeq \gamma }} 
    ~\bE \Big[ ~ 
     \prod_{i=1}^{|\ell|}
    \Big|X_{T,
    x_{\alpha_{i,1}} ,\dots ,x_{\alpha_{i,|\alpha_i |}}  }^{t,x_{\ell_i},\ell_i,N} \Big|^2 
    ~ \Big]
   \\\nonumber
    &\leq 
      K \sum_{m=0}^n ~
        ~	
    \sum_{\substack{     \ell \in  \bigcup_{k=1}^{n} \Pi_k^N,   \\      \hco(\ell \bigcup \gamma) = \hco(\gamma)+m}  }  
    ~N^{m-2\hco(\ell) }~ \sum_{\substack{    \alpha_1,\dots,\alpha_{|\ell|} \in  \bigcup_{k=1}^{n} \Pi_k^N,  
    \\      \bigcup_{i=1}^{|\ell|} \alpha_i \simeq \gamma  }  } 
    \prod_{i=1}^{|\ell|}
    \Big( \bE \Big[ ~  
    \Big|X_{T,x_{\alpha_{i,1}},\dots ,x_{\alpha_{i,|\alpha_i |}} }^{t,x_{\ell_i},\ell_i,N} \Big|^{2|\ell|} 
    ~ \Big] \Big)  ^{1/|\ell|},
    \end{align*}
  where we employed H\"older's inequality. We have the following estimate for the variation processes in the above product:
    \begin{align*}
    \Big( \bE \Big[ ~  
    \Big|X_{T,x_{\alpha_{i,1}},\dots ,x_{\alpha_{i,|\alpha_i |}} }^{t,x_{\ell_i},\ell_i,N} \Big|^{2|\ell|} 
    ~ \Big] \Big)  ^{1/|\ell|}
    &
    \leq 
    \bigg( 
    \frac{K}{N^{\hco ( (\ell_i) \bigcup \alpha_i  )}} \sum_{\substack{      \beta \in \Pi_{   |\alpha_i|+1  }^N,
    \\      ~\hco(\beta) = \hco (  (\ell_i) \bigcup \alpha_i  )}}
    \mathbb{E}\Big[~    \Big|X^{t,x_{\beta_1},\beta_1,N}_{T,x_{\beta_2},\dots,x_{\beta_{|\beta |}}}
    \Big|^{2|\ell|} \Big]
    \bigg)  ^{1/|\ell|}
    \\
    &\leq 
        \Big( \frac{K}{N^{2|\ell|(\hco ( (\ell_i) \bigcup \alpha_i  )-1)}} e^{-\lambda_0 2|\ell|(T-t)}\Big)  ^{1/|\ell|}
        \\
        &
        \leq K e^{-2\lambda_0 (T-t)} N^{-2(\hco (  (\ell_i) \bigcup \alpha_i  )-1) },
    \end{align*}
    where we used the second and first part of Lemma~\ref{lemma: n-var process result} with $p=2|\ell|$, $m=\hco (  (\ell_i) \bigcup \alpha_i  )$ to obtain the first two inequalities. {\color{black}The summation in the first inequality  contains $\cO(N^{\hco ( (\ell_i) \bigcup \alpha_i  )})$ terms that are identically distributed to the left-hand side process, and it also contains more terms.}
    \footnote{\label{footnote4} { \color{black}For example, the process $X_{T,x_{\alpha_{i,1}},\dots ,x_{\alpha_{i,|\alpha_i |}} }^{t,x_{\ell_i},\ell_i,N}$  
    with $\ell_i =1 ,\alpha_i=(1,2,3)$ is identically distributed to  the process with $\ell_i = 2, \alpha_i=(2,3,4)$ but not necessarily identically distributed to the process with $\ell_i = 3, \alpha_i=(4,2,2)$, though they both satisfy $\hco(\ell_i\cup \alpha_i) =3$ due to the different derivative patterns. }}

    We now relate the orders $(\hco ( (\ell_i) \bigcup \alpha_i  )-1)$ from the previous estimate and $\hco(\gamma)$ appearing in  
    \eqref{eq:  high-deri on u order 2 bounded by K}, by first showing that 
    $ \hco( \gamma \setminus \ell   ) \leq \sum_{i=1}^{|\ell|} \Big(\hco (  (\ell_i) \bigcup \alpha_i  ) -1\Big)$. Concretely, the constraint in the second summation, $  \bigcup_{i=1}^{|\ell|} \alpha_i \simeq \gamma $, implies 
 \begin{align*}
        \sum_{i=1}^{|\ell|} \Big(\hco ( (\ell_i) \bigcup \alpha_i  ) -1\Big)
        &=
        \sum_{i=1}^{|\ell|} \bigg(\hco \Big( (\ell_i) \bigcup
        \Big(   \alpha_i \bigcap (\gamma \setminus \ell) \Big)
         \bigcup
        \Big(   \alpha_i \setminus (\gamma \setminus \ell) \Big)
        \Big) 
        -1 
        \bigg)
        \\
        &
        =
        \sum_{i=1}^{|\ell|} \bigg(\hco \Big(
        \alpha_i \bigcap (\gamma \setminus \ell) 
        \Big)
        +
        \hco \Big( (\ell_i)
         \bigcup
        \Big(   \alpha_i \setminus (\gamma \setminus \ell) \Big)
        \Big) 
        -1
        \bigg)
        \\
        &\geq 
        \sum_{i=1}^{|\ell|}  \hco \Big(   \alpha_i \bigcap (\gamma \setminus \ell) \Big)  
         \geq  
        \hco\Big(   ( \bigcup_{i=1}^{|\ell|}\alpha_i) \bigcap (\gamma \setminus \ell) \Big)
        = 
        \hco( \gamma \setminus \ell   ).
    \end{align*}  
    Then using the constraint $ \hco(\ell \bigcup \gamma) = \hco(\gamma)+m$ of the first summation we infer 
\begin{align*}
    \sum_{i=1}^{|\ell|} \Big(\hco ( (\ell_i)  \bigcup \alpha_i  ) -1\Big)
        \geq  
        {  \color{black}      
        \hco( \gamma \setminus \ell   )
        \geq 
        \hco(\ell \bigcup \gamma)- \hco (\ell)  }
        =
        \hco(\gamma)+m - \hco (\ell).
\end{align*}
Hence,
\allowdisplaybreaks
    \begin{align*}
         \bE \Big[  |\partial&_{x_{\gamma_1},\dots,x_{\gamma_n}}^n u(t, \bodx)|^2 \Big] 
   \\ 
    &\leq 
      K e^{- 2\lambda_0 (T-t)} \sum_{m=0}^n ~
        ~	
    \sum_{\substack{     \ell \in  \bigcup_{k=1}^{n} \Pi_k^N,  \\ \hco(\ell \bigcup \gamma) = \hco(\gamma)+m     }   }  
    ~N^{m-2\hco(\ell) }~  
    N^{-2 \sum_{i=1}^{|\ell|} \big(\hco ( (\ell_i) \bigcup \alpha_i)  -1\big)}
     \\ 
    &
    \leq  K e^{- 2\lambda_0 (T-t)}
    \sum_{m=0}^n
    N^{2m-2\hco(\ell)-2 \big(  \hco(\gamma) - \hco (\ell)+m \big) }
    \leq  K e^{- 2\lambda_0 (T-t)} N^{-2  \hco(\gamma)},
    \end{align*}
which yields \eqref{eq:  high-deri on u order 2 bounded by K}, as sought. Similar calculations deliver \eqref{eq:  high-deri on u order 4 bounded by K}. That is, we have 
\allowdisplaybreaks
     \begin{align*}
         \bE \Big[  |\partial&_{x_{\gamma_1},\dots,x_{\gamma_n}}^n u(t, \bodx)|^4 \Big] 
   \\\nonumber
    &\leq 
      K \sum_{m=0}^n ~
        ~	
    \sum_{\substack{      \ell \in  \bigcup_{k=1}^{n} \Pi_k^N, 
    \\     \hco(\ell \bigcup \gamma) = \hco(\gamma)+m  }  }  
    ~N^{3m - 4\hco(\ell) }~ \sum_{\substack{      \alpha_1,\dots,\alpha_{|\ell|} \in  \bigcup_{k=1}^{n} \Pi_k^N,\\     \bigcup_{i=1}^{|\ell|} \alpha_i \simeq \gamma }  } 
    \prod_{i=1}^{|\ell|}
    \Big( \bE \Big[ ~  
    \Big|X_{T,x_{\alpha_{i,1}},\dots,x_{\alpha_{i,|\alpha_i |}} }^{t,x_{\ell_i},\ell_i,N} \Big|^{4|\ell|} 
    ~ \Big] \Big)  ^{1/|\ell|}
    \\
    &\leq 
      K e^{- 2\lambda_0 (T-t)} \sum_{m=0}^n ~
        ~	
    \sum_{\substack{    \ell \in \bigcup_{k=1}^{n} \Pi_k^N,    \\      \hco(\ell \bigcup \gamma) = \hco(\gamma)+m}  }  
    ~N^{3m-4\hco(\ell) }~  
    N^{-4 \sum_{i=1}^{|\ell|} \big(\hco ( (\ell_i) \bigcup \alpha_i)  -1\big) }
     \\ 
    &
    \leq  K e^{- 2\lambda_0 (T-t)} 
    \sum_{m=0}^n
    N^{4m-4\hco(\ell)-4 \big(  \hco(\gamma) - \hco (\ell)+m \big) }
    \leq  
    K e^{- 2\lambda_0 (T-t)} N^{-4  \hco(\gamma)}.
    \end{align*}
\end{proof}

\section{Weak error expansion and its analysis}
\label{section: weak error expansion}

For the convenience of the reader, we recall Lemma~\ref{lemma:Weak Expansion Leimkuhler} which provides an expansion for the global weak error \eqref{eq:Weak Error} under Assumption~\ref{assum:main_weak error}, for the processes defined in \eqref{ModelIPS} and \eqref{eq: def : non-Markov Euler scheme} as follows:
\begin{align}
    \label{eq: weak error expansion-ori1}
    \mathbb{E} \Big[g(\boldsymbol{X}^{N}_{T}) \Big] - \mathbb{E} \Big[g(\boldsymbol{X}^{N,h}_{T}) \Big] &= h^2 \mathbb{E} \left[\sum_{m=0}^{M-1} L(t_m,\boldsymbol{X}^{N,h}_{t_m}) \right]  +   \mathbb{E} \left[ \sum_{m=0}^{M-1} R(t_m,\boldsymbol{X}^{N,h}_{t_m}) \right],
\end{align}
where  the map $L:\bR_{+}\times \bR^N \to \bR$ is defined via the maps $u$ and $( B_i )_{i \in \lbrace 1,\ldots,N \rbrace}$: 
\allowdisplaybreaks
\begin{align} 
    \nonumber
    L(t,\boldsymbol{x}) &= \frac{1}{2} \Big[ \sum_{i,j=1}^{N}B_{j}(\boldsymbol{x}) \partial_{x_j} B_{i}(\boldsymbol{x}) \partial_{x_i}u(t,\boldsymbol{x}) 
      +\frac{\sigma^2}{2} \sum_{i,j=1}^{N} \partial_{x_j}B_i(\boldsymbol{x})  \partial_{x_i, x_j}^2 u(t,\boldsymbol{x}) 
    \\
    \label{eq: B0 formula}
    & \quad \quad \quad + \frac{\sigma^2}{2} \sum_{i,j=1}^{N} \partial^2_{x_j, x_j}B_i(\boldsymbol{x})  \partial_{x_i} u(t,\boldsymbol{x}) \Big].
\end{align}
The remainder term $R(\cdot,\cdot)$ will later be written as a linear combination of 8 remainder terms, which we will analyze in Section \ref{sec:Remainder terms R}.

\subsection[Estimates for the leading error term L]{Estimates for the leading error term {$L$}}
\label{section: proof of B0 terms}

We consider the first term in \eqref{eq: weak error expansion-ori1} expressed as a telescoping sum:  
\begin{align}
    \label{eq: C0 term}
     h^2\bE \Big[  \sum_{m=0}^{M-1} L(t_m,\bodX_{t_m}^{N,h})  \Big]
     & =
      h\bE \Big[   \int_{0}^T  L(s,\bodX_{s}^{N}) \dd s \Big]
      \\  \label{eq: rbh term}
      &\qquad+ h \sum_{m=0}^{M-1}
      \bE \Big[   \int_{t_m}^{t_{m+1}}
        \Big(  L(t_m,\bodX_{t_m}^{N,h})
        -  L(s,\bodX_{s}^{N})
        \Big)
      \dd s \Big], 
\end{align}
and derive the following result. 
\begin{lemma}
    \label{lemma: analysis of the b0 term summation integration}
    Let  Assumption~\ref{assum:main_weak error} hold and let $\xi \in L^{4}(\Omega,\mathbb{R})$.
    Let $L$ be defined in \eqref{eq: B0 formula}. Then there exists a  positive constant $\lambda_0\in (0,\lambda)$ such that
    \begin{align}
    \label{eq: B0 terms results}
     h^2\bE \Big[  &\sum_{m=0}^{M-1} L(t_m,\bodX_{t_m}^{N,h})  \Big]
    \leq  
     Kh^{3/2}+Khe^{-\lambda_0 T}.
\end{align}
\end{lemma}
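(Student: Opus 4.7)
The plan is to bound the two contributions in the decomposition \eqref{eq: C0 term}--\eqref{eq: rbh term} separately. The first contribution $h\,C_0(T) := h\,\mathbb{E}\big[\int_0^T L(s,\boldsymbol{X}_s^N)\,\mathrm{d}s\big]$ should produce the exponentially decaying piece $K h e^{-\lambda_0 T}$, while the second contribution (a telescoping discretization-error term) should yield $K h^{3/2}$.

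\textbf{Part 1: the term $h C_0(T)$.} I would analyze each of the three types of summands in $L(s,\boldsymbol{x})$ from \eqref{eq: B0 formula}. Combining (i) the derivative bounds $|\partial_{x_j} B_i|_\infty = \mathcal{O}(1)$ for $i=j$ and $\mathcal{O}(N^{-1})$ otherwise, and analogously $|\partial^2_{x_j,x_j}B_i|_\infty$ from Assumption~\ref{assum:main_weak error} and \eqref{eq: prop of deriv of B}, (ii) the $L^p$-decay estimates for derivatives of $u$ provided by Lemma~\ref{lemma: higher-order derivative of u} (giving $\mathbb{E}[|\partial^n_{x_{\gamma_1},\dots,x_{\gamma_n}} u(s,\boldsymbol{X}_s^N)|^2]^{1/2}\leq K e^{-\lambda_0(T-s)/2} N^{-\hat{\mathcal{O}}(\gamma)}$), and (iii) uniform-in-time moment bounds for $\boldsymbol{X}_s^N$ from Proposition~\ref{prop:basic_estimates11}, a Cauchy--Schwarz argument coupled to the index-counting from Definition~\ref{def: set of sequence pi} would give $|\mathbb{E}[L(s,\boldsymbol{X}_s^N)]| \leq K e^{-\lambda_0(T-s)/2}$ with an $N$-independent constant.

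A direct integration of this bound over $[0,T]$ only produces $|C_0(T)| \leq K$, i.e.\ $h\,C_0(T) \leq K h$, which falls short of the claim. To recover the extra exponential factor in $T$, I would exploit the cancellation property characteristic of the Leimkuhler--Matthews scheme, namely $\int_{\mathbb{R}^N} L(s,\boldsymbol{x})\,\mu^{N,*}(\mathrm{d}\boldsymbol{x}) = 0$, which follows from integrating the PDE against the explicit Gibbs density $\mu^{N,*}\propto\exp(-\tfrac{2}{\sigma^2}\Phi)$ (with $B_i = -\partial_{x_i}\Phi$) and using the self-adjointness of $\mathcal{L}_N$ in $L^2(\mu^{N,*})$. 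Writing $\mathbb{E}[L(s,\boldsymbol{X}_s^N)]=\mathbb{E}[L(s,\boldsymbol{X}_s^N)] - \int L(s,\cdot)\,\mathrm{d}\mu^{N,*}$ and combining the Lipschitz decay of $L(s,\cdot)$ in $\boldsymbol{x}$ with the exponential Wasserstein convergence $W^{(2)}(\mu_s^N,\mu^{N,*}) \leq K e^{-\lambda_0 s}$ from Proposition~\ref{prop:basic_estimates11}(3), I would obtain $|\mathbb{E}[L(s,\boldsymbol{X}_s^N)]| \leq K e^{-\lambda_0 s}\,e^{-\lambda_0(T-s)/2}$; integrating the product yields $|C_0(T)| \leq K(1+T) e^{-\lambda_0 T/2}$, which after absorbing the polynomial factor into a slightly smaller decay rate gives the desired $|C_0(T)| \leq K e^{-\lambda_0 T}$.

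\textbf{Part 2: the discretization error \eqref{eq: rbh term}.} I would decompose the integrand by adding and subtracting $L(t_m,\boldsymbol{X}_{t_m}^N)$:
\begin{equation*}
L(t_m,\boldsymbol{X}_{t_m}^{N,h}) - L(s,\boldsymbol{X}_s^N) = \big[L(t_m,\boldsymbol{X}_{t_m}^{N,h}) - L(t_m,\boldsymbol{X}_{t_m}^N)\big] + \big[L(t_m,\boldsymbol{X}_{t_m}^N) - L(s,\boldsymbol{X}_s^N)\big].
\end{equation*}
For the spatial perturbation, I would combine the Lipschitz continuity of $L(t_m,\cdot)$ in $\boldsymbol{x}$ (which holds with a constant decaying like $e^{-\lambda_0(T-t_m)/2}$ thanks to Lemma~\ref{lemma: higher-order derivative of u}) with the $L^2$-strong convergence rate $\mathbb{E}[|\boldsymbol{X}_{t_m}^{N,h}-\boldsymbol{X}_{t_m}^N|^2]^{1/2} \leq K h^{1/2}$ from Proposition~\ref{prop:basic_estimates22}(2). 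For the time-space regularity, I would apply Itô's formula to $s\mapsto L(s,\boldsymbol{X}_s^N)$, invoking Proposition~\ref{propsition:  first variation bound prop v2} to control differences of the variation processes and the time-regularity of $u$ through the Kolmogorov PDE; both steps give, on each subinterval, a per-point bound of order $h^{1/2}e^{-\lambda_0(T-s)/2}$. Multiplying by the subinterval length $h$, summing over $m$, and multiplying by the outer factor $h$ produces
\begin{equation*}
\leq K h^{5/2}\sum_{m=0}^{M-1} e^{-\lambda_0(T-t_m)/2} \leq K h^{5/2}\cdot\frac{K}{h} = K h^{3/2},
\end{equation*}
where the geometric-sum estimate $\sum_m e^{-\lambda_0(T-t_m)/2} = \mathcal{O}(1/h)$ is crucial for uniformity in $T$.

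\textbf{Main obstacle.} The most delicate step is the exponential-in-$T$ decay of $|C_0(T)|$ in Part 1. The naive bound $|\mathbb{E}[L(s,\boldsymbol{X}_s^N)]| \leq K e^{-\lambda_0(T-s)/2}$ only gives $|C_0(T)|=\mathcal{O}(1)$ and therefore $hC_0(T)=\mathcal{O}(h)$, losing the $e^{-\lambda_0 T}$ improvement. Retrieving it requires rigorously establishing the cancellation $\int L(s,\cdot)\,\mathrm{d}\mu^{N,*}=0$ uniformly in $N$ (via the Gibbs structure of $\mu^{N,*}$ from \eqref{eq.INTRO.ergodicDistro} and integration by parts against $\mathcal{L}_N$) and combining it with the exponential ergodicity of the IPS; this is the same mechanism that underlies the second-order behaviour of the Leimkuhler--Matthews scheme in the SDE setting treated in \cite{leimkuhler2014long,vilmart2015postprocessed}, here adapted uniformly in the particle number.
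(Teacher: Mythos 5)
Your overall strategy matches the paper's exactly: split the left-hand side via \eqref{eq: C0 term}--\eqref{eq: rbh term}, show $\int_0^T \bE[L(s,\bodX_s^N)]\,\dd s = \mathcal O(e^{-\lambda_0 T/2})$ using the cancellation $\int L(s,\cdot)\,\dd\mu^{N,*}=0$ together with exponential ergodicity, and show the discretization remainder is $\mathcal O(h^{1/2})$ so that the outer factor $h$ yields $h^{3/2}$. You correctly identify the cancellation against the Gibbs measure as the key mechanism, and your Part 2 decomposition (adding/subtracting $L(t_m,\bodX_{t_m}^N)$ and then combining time-regularity with the $\bodX^N$-increment estimate) differs only cosmetically from the paper's three-term split; both routes are viable.

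However, Part 1 as you state it has a genuine gap. You propose to combine "the Lipschitz decay of $L(s,\cdot)$ in $\boldsymbol{x}$" with the Wasserstein distance $W^{(2)}(\mu_s^N,\mu^{N,*})$. This presupposes a pointwise, $N$-uniform bound on $\partial_{\bodx} L(s,\cdot)$, i.e.\ on quantities like $\sup_{\bodx}|\partial^3_{x_i,x_j,x_k}u(s,\bodx)|$ — but the entire point of Section~\ref{section: analysis of the dus} (cf.\ Remark~\ref{rem:H2-diff} and the introduction's item (i)) is that such sup-norm bounds with $N$-independent constants are \emph{not} available; Lemma~\ref{lemma: higher-order derivative of u} only yields $L^p$-estimates along the law of a process with identically distributed components. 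The paper sidesteps this by a synchronous coupling: start $\olbodX^N_0\sim\mu^{N,*}$ with the same Brownian motions as $\bodX^N$, apply Lemma~\ref{lemma: 2 moment difference} to get $\bE[|X_s^{k,N}-\overline X_s^{k,N}|^2]\leq K e^{-2\lambda s}$ pathwise, write $L(s,\bodX_s^N)-L(s,\olbodX_s^N)=\int_0^1\langle\partial_{\bodx}L(s,\bodX_{s,\rho}^N),\bodX_s^N-\olbodX_s^N\rangle\,\dd\rho$, and then apply Cauchy--Schwarz so that only $\bE[|\partial_{\bodx}L(s,\bodX_{s,\rho}^N)|^2]$ is needed — which is covered because $\bodX_{s,\rho}^N=\rho\bodX_s^N+(1-\rho)\olbodX_s^N$ still has identically distributed components, satisfying the hypothesis of Lemma~\ref{lemma: higher-order derivative of u}. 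An optimal $W^{(2)}$-coupling, by contrast, is not obviously exchangeable, so the interpolation $\rho\bodX+(1-\rho)\bodY$ need not meet that hypothesis; you would have to argue separately for an exchangeable near-optimal coupling, which your sketch does not do. If you replace "Lipschitz $\times$ Wasserstein" with "synchronous coupling $+$ mean-value theorem $+$ Cauchy--Schwarz against $L^2$ derivative bounds," your Part 1 becomes the paper's argument.
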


\begin{proof}
Note that in the following proof, the positive constants $K,\lambda_0$ are independent of $t,T,N$ and may change line by line. The proof is carried out by analysing \eqref{eq: C0 term} and \eqref{eq: rbh term} separately to reach \eqref{eq: B0 terms results}. \\

\textit{Part 1: Estimating \eqref{eq: C0 term}.}  
Let $(\overline{ X}_{0}^{1,N}, \ldots,\overline{ X}_{0}^{N,N}) = \olbodX_0^{N}\sim \mu^{N,*}$, where $\mu^{N,*}$ is the stationary distribution of the IPS (viewed as a $\mathbb{R}^N$-valued SDE). 
\color{black}
The next result is shown in Lemma \eqref{lemma:ZeroMeanResult} for the benefit of the reader and it follows using integration by parts -- the argument can be found in \cite[Equation~(3.29)]{leimkuhler2014long} and leverages the known closed form (up to a scaling constant) of the invariant density map $\mu^{N,*}$ \eqref{eq:NparticleGibbsDistro} and the definition of the operator $L$ in \eqref{eq: B0 formula}. 
\color{black}
For any $t\geq 0$,  we then have  
\begin{align*}
      \bE \Big[ L(t,\olbodX_t^{N})  \Big] = \bE \Big[ L(0,\olbodX_0^{N})  \Big] = \int_{\bR^N} L(0,\boldsymbol{x}) \mu^{N,*}(\dd \boldsymbol{x}) =0,  
\end{align*}
where we recall the density $\mu^{N,*}$ for the particle system (that corresponds to \eqref{eq.INTRO.ergodicDistro}) 
\begin{align}
\label{eq:NparticleGibbsDistro}
        \mu^{N,*}(  \boldsymbol{x})
        \propto
        \exp \Big(  -\frac{2}{\sigma^2}  \sum_{i=1}^{N} U(x_i) - \frac{1}{\sigma^2N} \sum_{i=1}^{N}\sum_{j=1}^{N}  V(x_i - x_j) \Big).  
\end{align}

We can now start the proof of our result. We may then write
\begin{align*}
    \bE \Big[   \int_{0}^T  L(s,\bodX_{s}^{N}) \dd s \Big] 
    &= 
       \int_{0}^T  \bE \Big[ L(s,\bodX_{s}^{N})  - 
        L(s,\olbodX_s^{N})
       \Big] \dd s
    \\
    & = 
    \int_{0}^T 
    \int_{0}^1  
    {  \color{black}       
    \bE \Big[
         \big\langle   \partial_{\bodx} L\big(s,
         \rho\bodX_{s}^{N}+(1-\rho)\olbodX_s^N\big)
          ,
            ~
         \bodX_s^{N} -\olbodX_s^{N}   \big\rangle      
    \Big]  }\dd \rho ~\dd s.
\end{align*}
Let $\bodX_{s,\rho}^{N} \coloneqq \rho\bodX_{s}^{N}+(1-\rho)\olbodX_s^{N}$. Using the chain rule, we deduce the following: 
\allowdisplaybreaks
\begin{align}
\nonumber
    \bE \Big[   & \Big\langle
            \partial_{\bodx} L(s,\bodX_{s,\rho}^{N})   ,   
            ~
         \bodX_s^{N} -\olbodX_s^{N}  \Big\rangle     
    \Big]
    =
     \frac{1}{2}\bE\bigg[  
     \sum_{i,j,k=1}^{N}
       \Big(  
     \partial_{x_k}B_{j}(\bodX_{s,\rho}^{N}) \partial_{x_j} B_{i}(\bodX_{s,\rho}^{N}) \partial_{x_i}u(s,\bodX_{s,\rho}^{N})
    \\\nonumber& \quad\quad 
     +
     B_{j}(\bodX_{s,\rho}^{N}) \partial^2_{x_j,x_k} B_{i}(\bodX_{s,\rho}^{N}) \partial_{x_i}u(s,\bodX_{s,\rho}^{N}) 
     +
     B_{j}(\bodX_{s,\rho}^{N}) \partial_{x_j} B_{i}(\bodX_{s,\rho}^{N}) \partial_{x_i,x_k}^2u(s,\bodX_{s,\rho}^{N}) 
       \Big)   \cdot	\Big(   
       X_{s}^{k,N} - \overline{ X}_s^{k,N}
     \Big)
     \\\nonumber
     & \quad\quad 
     +\frac{\sigma^2}{2} \sum_{i,j,k=1}^{N} \Big(      \partial^2_{x_j,x_k}B_i(\bodX_{s,\rho}^{N})  \partial^2_{x_i, x_j} u({s},\bodX_{s,\rho}^{N})
     +
     \partial_{x_j}B_i(\bodX_{s,\rho}^{N})  \partial^3_{x_i, x_j,x_k} u({s},\bodX_{s,\rho}^{N})
       \Big)   \cdot	\Big(  
       X_{s}^{k,N} - \overline{ X}_s^{k,N}
      \Big)
     \\\nonumber
     & \quad\quad
     + \frac{\sigma^2}{2} \sum_{i,j,k =1}^{N}  \Big(  \partial^3_{x_j,x_j,x_k}B_i(\bodX_{s,\rho}^{N})  \partial_{x_i} u({s},\bodX_{s,\rho}^{N})
     +
     \partial^2_{x_j,x_j}B_i(\bodX_{s,\rho}^{N})  \partial^2_{x_i,x_k} u({s},\bodX_{s,\rho}^{N})
       \Big)   \cdot	\Big(   
       X_{s}^{k,N} - \overline{ X}_s^{k,N}   
      \Big)
     \bigg]  
     \\\nonumber
     &\leq 
     K  \sum_{i,j,k =1}^{N}    
       \sqrt{\bE \Big[       \big|  X_{s}^{k,N} - \overline{ X}_s^{k,N}  \big|^2 \Big]  }
    \cdot  \bigg\{ 
     \sqrt{\bE \Big[       \big|     \partial_{x_k}B_{j}(\bodX_{s,\rho}^{N}) \partial_{x_j} B_{i}(\bodX_{s,\rho}^{N}) \partial_{x_i}u(s,\bodX_{s,\rho}^{N})   \big|^2 \Big]  }
    \\\nonumber
    & \quad\quad +
    \sqrt{\bE \Big[       \big|        B_{j}(\bodX_{s,\rho}^{N}) \partial^2_{x_j,x_k} B_{i}(\bodX_{s,\rho}^{N}) \partial_{x_i}u(s,\bodX_{s,\rho}^{N}) 
       \big|^2 \Big]  }
    +       \sqrt{\bE \Big[       \big|    B_{j}(\bodX_{s,\rho}^{N}) \partial_{x_j} B_{i}(\bodX_{s,\rho}^{N}) \partial^2_{x_i,x_k}u(s,\bodX_{s,\rho}^{N})    \big|^2 \Big]  }
    \\\nonumber
    & \quad\quad +
    \sqrt{\bE \Big[       \big|     \partial^2_{x_j,x_k}B_i(\bodX_{s,\rho}^{N})  \partial_{x_i, x_j}^2 u({s},\bodX_{s,\rho}^{N})  \big|^2 \Big]  }
    +
    \sqrt{\bE \Big[       \big|     \partial_{x_j}B_i(\bodX_{s,\rho}^{N})  \partial_{x_i,x_j,x_k}^3 u({s},\bodX_{s,\rho}^{N})
   \big|^2 \Big]  }
    \\
    \label{eq-aux:7-root-Expectations}
    & \quad\quad +
    \sqrt{\bE \Big[       \big|     \partial^3_{x_j,x_j,x_k}B_i(\bodX_{s,\rho}^{N})  \partial_{x_i} u({s},\bodX_{s,\rho}^{N})   \big|^2 \Big]  }
    +
    \sqrt{\bE \Big[       \big|     \partial^2_{x_j,x_j}B_i(\bodX_{s,\rho}^{N})  \partial_{x_i,x_k}^2 u({s},\bodX_{s,\rho}^{N})   \big|^2 \Big]  }~\bigg\}, 
\end{align}
where we used  the Cauchy--Schwarz inequality.  
We now work through \eqref{eq-aux:7-root-Expectations}. 
As for the $L^2$-distance to the invariant distribution (the first expectation in the sum), note that \eqref{eq:  2 moment difference } in Lemma~\ref{lemma: 2 moment difference} implies that for any $s \geq 0$ we have $\bE [ |  X_{s}^{k,N} - \overline{ X}_s^{k,N}   |^2] \leq K e^{-2\lambda s} $. 
The approach to deal with the remaining seven terms is more or less identical. We inject the estimate \eqref{eq:  high-deri on u order 2 bounded by K} of Lemma~\ref{lemma: higher-order derivative of u} for the derivatives of $u$ and the bounds for the derivatives of $B$ established in \eqref{eq: bounds of the summation of derivatives of B noiid}. The inequality below preserves the exact ordering of \eqref{eq-aux:7-root-Expectations}, and we highlight that obtaining \eqref{eq-aux:Extra Argument for linear growth of Bj} requires the additional use of the linear growth of $B_j$, the Cauchy--Schwarz inequality, the $L^4$-estimates of $\bodX^N$ (in Proposition~\ref{prop:basic_estimates11}) and the $L^4$-estimate \eqref{eq:  high-deri on u order 4 bounded by K} for the derivatives of $u$ (recalling that $\xi \in L^{4}(\Omega,\bR)$); for some positive constant $\lambda_0 \in (0,\lambda)$ chosen small enough we have 
\begin{align}
\nonumber
\bE \Big[   & \Big\langle
            \partial_{\bodx} L(s,\bodX_{s,\rho}^{N})   ,   
            ~
         \bodX_s^{N} -\olbodX_s^{N}  \Big\rangle     
    \Big] 
   \\ \nonumber 
   & \leq 
    K \sqrt{
   e^{-2\lambda s}  } \cdot  \sqrt{ e^{-\lambda_0 (T-s)}
    } \sum_{\gamma \in \Pi^N_3} 
    \Big( 
    \frac{1}{N^{  (\hco((\gamma_2,\gamma_3))-1)
                 +(\hco((\gamma_1,\gamma_2))-1) 
                 +1 }  
            }
     \\
    \label{eq-aux:Extra Argument for linear growth of Bj}
    & 
    \hspace{5cm}
    + \frac{1}{N^{  ( \hco( \gamma)-1)+1 } }
   + \frac{1}{N^{  (\hco((\gamma_1,\gamma_2))-1) +  \hco((\gamma_1,\gamma_3))}  }
    \\
    \nonumber 
    & 
    \hspace{5cm}
    + \frac{1}{N^{  (\hco( \gamma)-1) +  \hco((\gamma_1,\gamma_2))}  }
    + \frac{1}{N^{    (\hco((\gamma_1,\gamma_2))-1)+ \hco( \gamma)}  }
        \\
    \nonumber 
    & 
    \hspace{5cm}
    + \frac{1}{N^{  (\hco( \gamma)-1) +1} } 
    + \frac{1}{N^{    (\hco((\gamma_1,\gamma_2))-1)
        +  \hco((\gamma_1,\gamma_3))  }} \Big) 
    \\  
    \label{eq: rb: result of partial x B0 part1} 
    & 
    \leq  
    K \sqrt{
   e^{-2\lambda s}     e^{-\lambda_0 (T-s)}
    } 
    \sum_{\gamma \in \Pi^N_3} \frac{1}{N^{  \hco( \gamma)  } }
    \\
    \label{eq: rb: result of partial x B0}
    & 
    \leq K \sqrt{
   e^{-2\lambda s}     e^{-\lambda_0 (T-s)}
    }
    ,
\end{align}
where the inequality in \eqref{eq: rb: result of partial x B0 part1} follows from the fact that $\hco((\gamma_1,\gamma_2))+  \hco((\gamma_1,\gamma_3))-1\geq \hco(\gamma)$ for any $\gamma \in \Pi_3^N$ (seen by checking the cases). 
The final result \eqref{eq: rb: result of partial x B0} follows from recalling \eqref{eq-aux:countingNumbers}, in turn implying that the summation term in \eqref{eq: rb: result of partial x B0 part1} is indeed $\cO(1)$.

\color{black}

To conclude this first part of the proof, we gather our estimates and obtain  
\begin{align*}
    \bE \Big[   \int_{0}^T  L(s,\bodX_{s}^{N}) \dd s \Big] 
   &
   \leq
    K
    \int_{0}^T
    \int_{0}^1
    \sqrt{
    e^{-\lambda_0 (T-s)}   e^{-2\lambda s} 
    }~
    \dd \rho 
    \mathrm{d}s 
    \\
    & 
    =  
    K
    e^{-\frac{\lambda_0}{2} T}  
    \int_{0}^T
    e^{\frac{\lambda_0-2\lambda}{2} s} \mathrm{d}s 
    \leq 
    K e^{-\frac{\lambda_0}{2} T}.
\end{align*}

\textit{Part 2: Estimating \eqref{eq: rbh term}.} 
For the term \eqref{eq: rbh term}, we have 
\begin{align}
     \nonumber
    \sum_{m=0}^{M-1} 
      \bE \Big[   \int_{t_m}^{t_{m+1}}
        \Big(  L(t_m,\bodX_{t_m}^{N,h})
&
        -  L(s,\bodX_{s}^{N})
        \Big)
      \dd s \Big]
      \\ 
         \label{eq:  rb sum term1}
      &= \sum_{m=0}^{M-1}  
         \int_{t_m}^{t_{m+1}}\bE \Big[ 
           L(t_m,\bodX_{t_m}^{N,h})
        -  L(t_m,\bodX_{t_m}^{N})
         \Big]
      \dd s 
      \\    \label{eq:  rb sum term2}
      &\quad +
      \sum_{m=0}^{M-1}  
          \int_{t_m}^{t_{m+1}}\bE \Big[
            L(t_m,\bodX_{t_m}^{N})
        -   L(s,\bodX_{t_m}^{N})
         \Big] 
      \dd s
      \\    \label{eq:  rb sum term3}
      &\quad+
      \sum_{m=0}^{M-1}  
        \int_{t_m}^{t_{m+1}}\bE \Big[  
            L(s,\bodX_{t_m}^{N})
        -   L(s,\bodX_{s}^{N})
           \Big] 
      \dd s. 
\end{align} 

\textit{Part 2.1: Estimating \eqref{eq:  rb sum term1} and \eqref{eq:  rb sum term3}.}  
For \eqref{eq:  rb sum term1}, similar to the calculations \eqref{eq-aux:7-root-Expectations}--\eqref{eq: rb: result of partial x B0} (in the previous part of the proof), we derive
\begin{align}
    \nonumber
    \bE \Big[ &
          L(t_m,\bodX_{t_m}^{N,h})
        -  L(t_m,\bodX_{t_m}^{N})
         \Big]
         \\
         \nonumber
        &= \int_{0}^1     
        \bE \Big[  
            \Big\langle  \partial_{\bodx} L\big(t_m,\rho\bodX_{t_m}^{N,h}+(1-\rho)\bodX_{t_m}^{N}\big)
          ,  
            ~
         \bodX_{t_m}^{N,h} -\bodX_{t_m}^{N}  \Big\rangle   
        \Big] 
        \dd \rho 
        \\
        \label{eq: b0 term 1 result}
        &\leq 
        \frac{K}{N} \sum_{i=1}^N
        ~
        \sqrt{  e^{-\lambda_0 (T-t_m)}   \bE \Big[  
        \big|X_{t_m}^{i,N,h}-X_{t_m}^{i,N}   \big|^2 
        \Big]  ~} 
        \leq K  h^{1/2} e^{-\lambda_0 (T-t_{m+1})/2},
\end{align}
where we used Proposition~\ref{prop:basic_estimates22} for the strong error rate.{ Similarly, for \eqref{eq:  rb sum term3}, we have for $s \in [t_m,t_{m+1}]$},
\begin{align}
    \nonumber
    \bE \Big[  
          L(s,\bodX_{t_m}^{N})
        -  L(s,\bodX_{s}^{N})
         \Big]         
         &
        = \int_{0}^1 
        \bE \Big[   \Big\langle  
        \partial_{\bodx} L\big(s, \rho\bodX_{t_m}^{N}
        + (1- \rho) \bodX_{s}^{N}
        \big)
         ,~  
         \bodX_{s}^{N} -\bodX_{t_m}^{N}   \Big\rangle  
        \Big]
        \dd \rho 
        \\
        \nonumber 
        \label{eq: b0 term 2 result 3}
        &\leq 
        \frac{K}{N} \sum_{i=1}^N
        ~ 
        \sqrt{  e^{-\lambda_0 (T-s)}   \bE \Big[  
        \big|X_{s}^{i,N}-X_{t_m}^{i,N}   \big|^2 
        \Big]  ~} 
        \\
        & 
        \leq 
        K  \sqrt{s-t_m} e^{-\lambda_0 (T-s)/2}
        \leq K  h^{1/2} e^{-\lambda_0 (T-t_{m+1})/2},
\end{align}
where we used Proposition~\ref{prop:basic_estimates11xx} and that $h\geq s-t_m$. 
\medskip

\textit{Part 2.2: Estimating \eqref{eq:  rb sum term2}.} 
For the term \eqref{eq:  rb sum term2}, we get for all $m$ and $s\in[t_m,t_{m+1}]$
\begin{align}
\nonumber 
    \bE \Big[  &
            L(t_m, \bodX_{t_m}^{N})
        -   L(s,\bodX_{t_m}^{N})
           \Big] 
 \\ 
 \nonumber
 & =
    \frac{1}{2} \bE\bigg[ ~\sum_{i,j=1}^{N}
    \bigg( 
    B_{j}(\bodX_{t_m}^{N}) \partial_{x_j} B_{i}(\bodX_{t_m}^{N}) \Big(  \partial_{x_i}u(t_m,\bodX_{t_m}^{N})- \partial_{x_i}u(s,\bodX_{t_m}^{N})\Big)
    \\
    \nonumber
    & 
    \qquad 
    +\frac{\sigma^2}{2}    \partial_{x_j}B_i(\bodX_{t_m}^{N})  \Big(  \partial_{x_i, x_j}^2u(t_m,\bodX_{t_m}^{N})- \partial^2_{x_i ,x_j}u(s,\bodX_{t_m}^{N})\Big)
 \\
 \nonumber 
 &
 \qquad 
 +\frac{\sigma^2}{2} 
    \partial^2_{x_j,x_j}B_i(\bodX_{t_m}^{N})  \Big(  \partial_{x_i }u(t_m,\bodX_{t_m}^{N})- \partial_{x_i }u(s,\bodX_{t_m}^{N})\Big)   \bigg]
    \\\label{eq: B0 extended 1} 
    &\leq 
    K \sum_{i,j=1}^{N}
    \sqrt{  \bE \Big[ \Big|  B_{j}(\bodX_{t_m}^{N}) \partial_{x_j} B_{i}(\bodX_{t_m}^{N})  \Big|^2 \Big] 
    \bE \Big[ \Big| \partial_{x_i}u(t_m,\bodX_{t_m}^{N})- \partial_{x_i}u(s,\bodX_{t_m}^{N}) \Big|^2 \Big]}
     \\
    \label{eq: B0 extended 2} 
     &
    \qquad  +K \sum_{i,j=1}^{N}
    \sqrt{  \bE \Big[ \Big| \partial_{x_j}B_i(\bodX_{t_m}^{N})   \Big|^2 \Big] 
    \bE \Big[ \Big| \partial_{x_i, x_j}^2u(t_m,\bodX_{t_m}^{N})- \partial^2_{x_i ,x_j}u(s,\bodX_{t_m}^{N}) \Big|^2 \Big]}
    \\
    \label{eq: B0 extended 3}
     &
     \qquad +K \sum_{i,j=1}^{N}
    \sqrt{  \bE \Big[ \Big|  \partial^2_{x_j,x_j}B_i(\bodX_{t_m}^{N})    \Big|^2 \Big] 
    \bE \Big[ \Big| \partial_{x_i }u(t_m,\bodX_{t_m}^{N})- \partial_{x_i }u(s,\bodX_{t_m}^{N}) \Big|^2 \Big]}
    .
\end{align} 

We first study the differences of first-order derivatives of $u$ in \eqref{eq: B0 extended 1} and \eqref{eq: B0 extended 3}, and then study the difference of second-order derivatives of $u$ \eqref{eq: B0 extended 2}. Collecting these estimates and using the bounds on the derivatives of $B$ yields the upper bound on \eqref{eq:  rb sum term2}. Then we will be in a position to conclude the main result.\\

\textit{Part 2.2.1: The first order derivative terms.} We first derive the following estimate: 
\begin{align}
\nonumber 
    \bE \Big[  &    \big|   \partial_{x_i}u(t_m,\bodX_{t_m}^{N})- \partial_{x_i}u(s,\bodX_{t_m}^{N})     \big|^2 \Big]
     \\ \nonumber 
     &   = 
    \bE \bigg[\Big| ~
    ~\sum_{j=1}^{N}\bE \Big[    
    \partial_{x_j} g(\bodX_T^{t_m,\bodx ,N}) 
          \cdot  	 
    X_{T,x_i}^{t_m,x_j,j,N}     
    -
        \partial_{x_j} g(\bodX_T^{s,\bodx ,N}) 
          \cdot  	  
    X_{T,x_i}^{s,x_j,j,N}  
    \Big]
      ~\bigg|_{\substack{\bodx = \bodX_{t_m}^{N}
              } } ~ \Big|^2\bigg]    
    \\ \nonumber 
    & \leq K 
    \bE \bigg[~\Big| ~
    \bE \Big[ ~\sum_{j=1, j\neq i}^{N} \Big(  
    \partial_{x_j} g(\bodX_T^{t_m,\bodx ,N}) 
       \cdot  	 
    X_{T,x_i}^{t_m,x_j,j,N}    
    -
      \partial_{x_j} g(\bodX_T^{s,\bodx ,N}) 
        \cdot    
    X_{T,x_i}^{s,x_j,j,N} 
    \Big)     
    \Big]  ~\bigg|_{\substack{\bodx = \bodX_{t_m}^{N}
              } }~
     \Big|^2  \bigg] 
     \\ \nonumber 
   & \qquad\qquad +
   K 
    \bE \bigg[~\Big| ~
    \bE \Big[  
    \partial_{x_i} g(\bodX_T^{t_m,\bodx,N}) 
        \cdot    
    X_{T,x_i}^{t_m,x_i,i,N}   
    -
    \partial_{x_i} g(\bodX_T^{s,\bodx,N}) 
         \cdot  	   
    X_{T,x_i}^{s,x_i,i,N}  
    \Big]  ~\bigg|_{\substack{\bodx = \bodX_{t_m}^{N}
              } } ~
     \Big|^2 \bigg]
     \\ \nonumber 
    & \leq K 
    \bE \bigg[~\Big| ~
     \sum_{j=1, j\neq i}^{N} \Big(  
    \partial_{x_j} g(\bodX_T^{t_m,\bodX_{t_m}^{N} ,N}) 
      \cdot   
    X_{T,x_i}^{t_m,X_{t_m}^{j,N},j,N}    
    -
      \partial_{x_j} g(\bodX_T^{s,\bodX_{t_m}^{N} ,N}) 
         \cdot     
    X_{T,x_i}^{s,X_{t_m}^{j,N},j,N} 
    \Big)   ~   
     \Big|^2 ~\bigg] 
     \\ \label{eq: removing iterated expectations} 
   & \qquad\qquad +
   K 
    \bE \bigg[~\Big| ~ 
    \partial_{x_i} g(\bodX_T^{t_m,\bodX_{t_m}^{N},N}) 
         \cdot     
    X_{T,x_i}^{t_m,X_{t_m}^{i,N},i,N}    
    -
       \partial_{x_i} g(\bodX_T^{s,\bodX_{t_m}^{N},N}) 
       \cdot   
    X_{T,x_i}^{s,X_{t_m}^{i,N},i,N} 
     ~
     \Big|^2 ~\bigg] 
     \\ \nonumber 
    &  
    \leq 
    KN\sum_{j=1, j\neq i}^{N} \bE \bigg[ ~ 
    \Big|~
    \Big(   \partial_{x_j} g(\bodX_T^{t_m,\bodX_{t_m}^{N},N}) -\partial_{x_j} g(\bodX_T^{s,\bodX_{t_m}^{N},N})
       \Big)   \cdot	   
    X_{T,x_i}^{t_m,X_{t_m}^{j,N},j,N}   
    \Big|^2 \bigg]
    \\ \nonumber 
   & \qquad\qquad +
    KN\sum_{j=1, j\neq i}^{N} \bE \bigg[ ~ 
        \Big|   \partial_{x_j} g(\bodX_T^{s,\bodX_{t_m}^{N},N}) 
             \cdot	\Big(   
        X_{T,x_i}^{t_m,X_{t_m}^{j,N},j,N}  
    -
    X_{T,x_i}^{s,X_{t_m}^{j,N},j,N} \Big)  ~\Big|^2 ~
   \bigg]
   \\ \nonumber 
    & \qquad\qquad +
    K \bE \bigg[  ~
    \Big|~
     \Big(  \partial_{x_i} g(\bodX_T^{t_m,\bodX_{t_m}^{N},N}) -\partial_{x_i} g(\bodX_T^{s,\bodX_{t_m}^{N},N}) 
       \Big)   \cdot  	   
     X_{T,x_i}^{t_m,X_{t_m}^{i,N},i,N}  ~  
    \Big|^2 ~
   \bigg]
   \\ \label{eq-aux-diff of 1st order derives time}
    & \qquad\qquad +
    K \bE \bigg[  ~
        \Big|~    \partial_{x_i} g(\bodX_T^{s,\bodX_{t_m}^{N},N}) 
           \cdot  	\Big(  
        X_{T,x_i}^{t_m,X_{t_m}^{i,N},i,N}  
    -
    X_{T,x_i}^{s,X_{t_m}^{i,N},i,N}  \Big) 
    ~\Big|^2 ~
   \bigg],
\end{align}
where we employed the Cauchy--Schwarz and Jensen inequalities, as well as the tower property for conditional expectations to obtain \eqref{eq: removing iterated expectations}. Inequality \eqref{eq-aux-diff of 1st order derives time} follows from a standard re-arrangement of \eqref{eq: removing iterated expectations}.

An application of H\"older's inequality and Assumption~\ref{assum:main_weak error} on the function $g$  further yields
\allowdisplaybreaks
\begin{align}
\nonumber
    \eqref{eq-aux-diff of 1st order derives time}
    & 
    \leq  
     \frac{ K }{N}\sum_{j=1, j\neq i}^N 
     \sqrt{ 
      \bE \Big[
    \big|     
    X_{T}^{t_m,X_{t_m}^{j,N},j,N}  
    - X_{T}^{s,X_{t_m}^{j,N},j,N}  
    \big|^4 \Big]
    ~
     \bE \Big[
    \big|     
    X_{T,x_i}^{t_m,X_{t_m}^{j,N},j,N}  
    \big|^4 \Big] }
     \\
     \nonumber
    &\qquad\qquad +
    \frac{ K }{N}\sum_{j=1, j\neq i}^N \bE \Big[
      \big|  
    X_{T,x_i}^{t_m,X_{t_m}^{j,N},j,N} - X_{T,x_i}^{s,X_{t_m}^{j,N},j,N}
    \big|^2 \Big]    
    \\
    \nonumber
    &\qquad\qquad +
    \frac{ K }{N^2}\sqrt{ 
      \bE \Big[
    \big|     
    X_{T}^{t_m,X_{t_m}^{i,N},i,N}  
    - X_{T}^{s,X_{t_m}^{i,N},i,N}  
    \big|^4 \Big]
    ~
     \bE \Big[
    \big|     
    X_{T,x_i}^{t_m,X_{t_m}^{i,N},i,N}  
    \big|^4 \Big] }
    \\ \nonumber
    &\qquad\qquad
    +
    \frac{ K }{N^2}
    \bE \Big[
      \big|  
    X_{T,x_i}^{t_m,X_{t_m}^{i,N},i,N} - X_{T,x_i}^{s,X_{t_m}^{i,N},i,N}
    \big|^2 \Big]
    \\
    \label{eq: partial u res 1}
    &\leq   
    \frac{K}{N^2} (s-t_m) e^{-2 
    \lambda_2 (T-s)},
\end{align}
\color{black}
 where we  used $|\partial^{|\gamma|}_{{x_{\gamma_1},\dots,x_{\gamma_{|\gamma|}}}} g|_{\infty} = \cO(N^{-\hco(\gamma)})$ (a consequence of Assumption~\ref{assum:main_weak error}), 
 \color{black}
 and we used Lemma~\ref{lemma: n-var process result}  with $n=1$ (note that $\lambda_0$ can be replaced by $\lambda_1$ in this case), Lemma~\ref{lemma: 4 moment difference}  and Proposition~\ref{propsition:  first variation bound prop v2} with $\lambda_2 \in (0,\min\{   \lambda-2K_V,\lambda_1\} )$ for the final inequality.  

\textit{Part 2.2.2: The second order derivative terms.}
 Similarly,  for the differences of the second order derivatives of $u$ in \eqref{eq: B0 extended 2}, applying the tower property of the conditional expectation once more we have 
\begin{align}
    \nonumber
    \bE &\Big[       \big|   \partial_{x_i,x_j}^2u(t_m,\bodX_{t_m}^{N})- \partial_{x_i,x_j}^2u(s,\bodX_{t_m}^{N})     \big|^2 \Big]
     \\ 
     \nonumber 
     &   = 
    \bE \bigg[~\bigg| ~
    ~\sum_{k=1}^{N}\bE \Big[    
    \partial_{x_k} g(\bodX_T^{t_m,\bodx ,N}) 
          \cdot  	    
    X_{T,x_i,x_j}^{t_m,x_k,k,N}    
    -
       \partial_{x_k} g(\bodX_T^{s,\bodx ,N}) 
          \cdot  	   
    X_{T,x_i,x_j}^{s,x_k,k,N}  
    \Big]
    \\ \nonumber
    &\qquad   +   
     \sum_{k, k'=1}^N   \bE \Big[  \partial_{x_k,x_{k'}} g( \bodX_T^{ t_m,\bodx,N}  ) 
     \cdot 
    X_{T,x_i}^{t_m,x_k,k,N}  X_{T,x_j}^{t_m,x_{k'},k',N} 
    \\ \nonumber
    &\hspace{4cm}
    - 
    \partial_{x_k,x_{k'}} g( \bodX_T^{ s,\bodx,N}  )
    \cdot 
    X_{T,x_i}^{s,x_{k},k,N}  X_{T,x_j}^{s,x_{k'},k',N}
     \Big] 
     ~\bigg|_{\substack{\bodx = \bodX_{t_m}^{N}
              } }
              ~  \bigg|^2
              \bigg]
    \\ \label{eq: uxx term1}
    &\leq  K 
     \bE \Big[\Big| ~~
     \sum_{k=1}^N   \Big( \partial_{x_k} g( \bodX_T^{ t_m,\bodX_{t_m}^{N},N}  ) - 
    \partial_{x_k} g( \bodX_T^{ s,\bodX_{t_m}^{N},N}  ) \Big)\cdot 
    X_{T,x_i,x_j}^{t_m,X_{t_m}^{k,N},k,N} \Big|^2 \Big]
    \\ \label{eq: uxx term2}
    & \qquad+K 
     \bE \Big[\Big|    \sum_{k=1}^N   \Big(
       X_{T,x_i,x_j}^{t_m,X_{t_m}^{k,N},k,N}
    - X_{T,x_i,x_j}^{s,X_{t_m}^{k,N},k,N}
    \Big) \cdot \partial_{x_k} g( \bodX_T^{ s,\bodX_{t_m}^{N},N}  )
    \Big|^2 \Big]
    \\ \label{eq: uxx term3}
    &\qquad +K 
     \bE \Big[\Big| ~~
     \sum_{k,k'=1}^N \Big( \partial_{x_k,x_{k'}} g( \bodX_T^{ t_m,\bodX_{t_m}^{N},N}  ) 
    - 
    \partial_{x_k,x_{k'}} g( \bodX_T^{ s,\bodX_{t_m}^{N},N}  )\Big)
    \cdot 
    X_{T,x_i}^{t_m,X_{t_m}^{k,N},k,N}  X_{T,x_j}^{t_m,X_{t_m}^{k',N},k',N}
    \Big|^2 \Big]
    \\ \label{eq: uxx term4}
    &\qquad +K 
     \bE \Big[\Big| ~~
     \sum_{k,k'=1}^N \Big( 
    X_{T,x_i}^{t_m,X_{t_m}^{k,N},k,N}   -  
    X_{T,x_i}^{s,X_{t_m}^{k,N},k,N}  
    \Big) \cdot \partial_{x_k,x_{k'}} g( \bodX_T^{ s,\bodX_{t_m}^{N},N}  )\cdot X_{T,x_j}^{t_m,X_{t_m}^{k',N},k',N}
    \Big|^2 \Big]
    \\ \label{eq: uxx term5}
    &\qquad +K  
     \bE \Big[\Big| ~~
     \sum_{k,k'=1}^N \Big(  X_{T,x_j}^{t_m,X_{t_m}^{k',N},k',N}
    -  X_{T,x_j}^{s,X_{t_m}^{k',N},k',N}
    \Big) \cdot \partial_{x_k,x_{k'}} g( \bodX_T^{ s,\bodX_{t_m}^{N},N}  ) \cdot
    X_{T,x_i}^{s,X_{t_m}^{k,N},k,N} 
    \Big|^2 \Big].
    \end{align} 
    We are required to analyze each of the terms \eqref{eq: uxx term1}--\eqref{eq: uxx term5} separately. By further applying Jensen's inequality and H\"older's inequality, we derive the following estimates for the first two terms \eqref{eq: uxx term1} and \eqref{eq: uxx term2} based on the values of $\hco((i,j,k))$  
    \begin{align}
    \nonumber
   &      \eqref{eq: uxx term1}
   \\ \nonumber 
   &\leq 
    K \sqrt{   
     \bE \Big[\Big| ~ 
         \partial_{x_i} g( \bodX_T^{ t_m,\bodX_{t_m}^{N},N}  ) - 
    \partial_{x_i} g( \bodX_T^{ s,\bodX_{t_m}^{N},N}  )  
      \Big|^4  \Big] 
    \bE \Big[\Big|  
    X_{T,x_i,x_j}^{t_m,X_{t_m}^{i,N},i,N} \Big|^4 
    ~ \Big] }
    \\\nonumber
    &\qquad 
    +K 
      \sqrt{   
     \bE \Big[\Big| ~ 
         \partial_{x_j} g( \bodX_T^{ t_m,\bodX_{t_m}^{N},N}  ) - 
    \partial_{x_j} g( \bodX_T^{ s,\bodX_{t_m}^{N},N}  )  
      \Big|^4  \Big] 
    \bE \Big[\Big|  
    X_{T,x_i,x_j}^{t_m,X_{t_m}^{j,N},j,N} \Big|^4 
    ~ \Big] }
    \\ \label{eq-aux:wrap-up to eq: uxx term1}
    &\qquad +
    N K \sum_{k=1,k\notin\{i,j\}}^N
    \sqrt{   
     \bE \Big[\Big| ~ 
         \partial_{x_k} g( \bodX_T^{ t_m,\bodX_{t_m}^{N},N}  ) - 
    \partial_{x_k} g( \bodX_T^{ s,\bodX_{t_m}^{N},N}  )  
      \Big|^4  \Big] 
    \bE \Big[\Big|  
    X_{T,x_i,x_j}^{t_m,X_{t_m}^{k,N},k,N} \Big|^4 
    ~ \Big] }.
    \end{align}
    Now, similar to \eqref{eq: partial u res 1}, an application of Assumption~\ref{assum:main_weak error} for the function $g$ and then injecting the bounds from Lemma~\ref{lemma: n-var process result} with $n=2$ (note that $\lambda_0$ can be replaced by $\lambda_4$ in this case) and Lemma~\ref{lemma: 4 moment difference} yield  
    \begin{align}
        \nonumber
        \eqref{eq-aux:wrap-up to eq: uxx term1} &\leq
            \frac{K}{N^2} \sqrt{ 
          \bE \Big[
        \big|     
        X_{T}^{t_m,X_{t_m}^{i,N},i,N}  
        - X_{T}^{s,X_{t_m}^{i,N},i,N}  
        \big|^4 \Big]
        ~
         \bE \Big[
        \big|     
        X_{T,x_i,x_j}^{t_m,X_{t_m}^{i,N},i,N}  
        \big|^4 \Big] }
        \\
        \nonumber
        &\qquad + 
        \frac{K}{N^2}   \sqrt{ 
          \bE \Big[
        \big|     
        X_{T}^{t_m,X_{t_m}^{j,N},j,N}  
        - X_{T}^{s,X_{t_m}^{j,N},j,N}  
        \big|^4 \Big]
        ~
         \bE \Big[
        \big|     
        X_{T,x_i,x_j}^{t_m,X_{t_m}^{j,N},j,N}  
        \big|^4 \Big] }
        \\
        \nonumber
        &\qquad  + \frac{K}{N} \sum_{k=1,k\notin\{i,j\}}^N
           \sqrt{ 
          \bE \Big[
        \big|     
        X_{T}^{t_m,X_{t_m}^{k,N},k,N}  
        - X_{T}^{s,X_{t_m}^{k,N},k,N}  
        \big|^4 \Big]
        ~
         \bE \Big[
        \big|     
        X_{T,x_i,x_j}^{t_m,X_{t_m}^{k,N},k,N}  
        \big|^4 \Big] }
        \\ \label{eq: partial2 u results 11}
        &
        \leq 
          K(s-t_m)e^{-2\lambda_2 (T-s)}e^{-2\lambda_4(T-t_m)} \bigg( 
        \frac{1}{N^2 \cdot  N^{2 \hco((i,j))-2}} 
        +
        \frac{1}{N} \sum_{k=1,k\notin\{i,j\}}^N        \frac{ 1}{N^{ 2\hco((i,j,k))-2}}  \bigg). 
    \end{align}
Similarly, for \eqref{eq: uxx term2}, using Proposition~\ref{prop: second var extension}, we have    
    \begin{align}
    \nonumber 
    \eqref{eq: uxx term2}&\leq 
    K  \sqrt{   
     \bE \Big[\Big|       \Big(
       X_{T,x_i,x_j}^{t_m,X_{t_m}^{i,N},i,N}
    - X_{T,x_i,x_j}^{s,X_{t_m}^{i,N},i,N}
    \Big) \Big|^4  \Big] 
    \bE \Big[\Big|   
    \partial_{x_i} g( \bodX_T^{ s,\bodX_{t_m}^{N},N}  )
    \Big|^4 \Big]  }
    \\ \nonumber
    & \quad+K  \sqrt{   
     \bE \Big[\Big|       \Big(
       X_{T,x_i,x_j}^{t_m,X_{t_m}^{j,N},j,N}
    - X_{T,x_i,x_j}^{s,X_{t_m}^{j,N},j,N}
    \Big) \Big|^4  \Big] 
    \bE \Big[\Big|   
    \partial_{x_j} g( \bodX_T^{ s,\bodX_{t_m}^{N},N}  )
    \Big|^4 \Big]  }
    \\ \nonumber
    & \quad + NK \sum_{k=1,k\notin\{i,j\}}^N
     \sqrt{   
     \bE \Big[\Big|       \Big(
       X_{T,x_i,x_j}^{t_m,X_{t_m}^{k,N},k,N}
    - X_{T,x_i,x_j}^{s,X_{t_m}^{k,N},k,N}
    \Big) \Big|^4  \Big] 
    \bE \Big[\Big|   
    \partial_{x_k} g( \bodX_T^{ s,\bodX_{t_m}^{N},N}  )
    \Big|^4 \Big]  }   
    \\
    \label{eq: partial2 u results 22}
    &\leq 
    K(s-t_m) e^{-2\lambda_4(T-t_m)}\bigg( 
     \frac{1}{N^2 \cdot  N^{2 \hco((i,j))-2}} 
     +
     \frac{1}{N} \sum_{k=1,k\notin\{i,j\}}^N        \frac{  1}{N^{ 2\hco((i,j,k))-2}} \bigg).
    \end{align} 
{

For \eqref{eq: uxx term3}--\eqref{eq: uxx term5}, we will make use of the following bound, which follows from exploiting the Lipschitz property of $\partial_{x_k,x_k'}g$ from Assumption~\ref{assum:main_weak error} and applying Lemma~\ref{lemma: 4 moment difference}  
\begin{align*}
    \bE\Big[  \Big|   \partial_{x_k,x_{k'}} g( \bodX_T^{ t_m,\bodX_{t_m}^{N},N}  ) 
    - 
    \partial_{x_k,x_{k'}} g( \bodX_T^{ s,\bodX_{t_m}^{N},N}  )   \Big| ^4 \Big]
     \leq \frac{K}{N^{4\hco( (k,k'))}} e^{-4\lambda_2(T-s)}.
\end{align*}    
Hence, similarly to \eqref{eq: partial2 u results 11}--\eqref{eq: partial2 u results 22}, partitioning the sum based on values taken by $\hco( (i,j,k,k') )$  
\begin{align}
    \nonumber 
    \eqref{eq: uxx term3}
    &\leq 
    K  \bigg( 
      \sum_{    \substack{ k,k'\in\{1,\dots,N\}, \\
      \hco( (i,j,k,k') )= \hco((i,j))  }} 
      + N  \sum_{ \substack{ k,k'\in\{1,\dots,N\}, \\
      \hco( (i,j,k,k') )= \hco((i,j))+1 }}  
       + 
         N^2 \sum_{ \substack{ k,k'\in\{1,\dots,N\}, \\ \hco( (i,j,k,k') )= \hco((i,j))+2 }}  
       \bigg)
    \\
    \nonumber 
    &\quad \cdot 
    \sqrt{  
     \bE \Big[\Big|
      \partial_{x_k,x_{k'}} g( \bodX_T^{ t_m,\bodX_{t_m}^{N},N}  ) 
    - 
    \partial_{x_k,x_{k'}} g( \bodX_T^{ s,\bodX_{t_m}^{N},N}  ) 
     \Big|^4\Big]   
    \bE \Big[ \Big|
    X_{T,x_i}^{t_m,X_{t_m}^{k,N},k,N}  X_{T,x_j}^{t_m,X_{t_m}^{k',N},k',N}
    \Big|^4 \Big] }
    \\
    \nonumber 
    &\leq 
     K(s-t_m) e^{-2\lambda_2 (T-s)}  e^{-4\lambda_1 (T-t_m) }  
     \\ \nonumber
     &
     \qquad\qquad
     \cdot  
     \Bigg( 
      \sum_{    \substack{ k,k'\in\{1,\dots,N\}, \\
      \hco( (i,j,k,k') )= \hco((i,j))  }}\frac{1}{N^{ 2 \hco( (k,k' ))  }}
      \frac{   1 }{      N^{   2 (\hco( (i,k))+\hco( (j,k')) -2 ) )   }  } 
      \\ \nonumber
      &
      \qquad\qquad \qquad\qquad
      + N  \sum_{ \substack{ k,k'\in\{1,\dots,N\}, \\
      \hco( (i,j,k,k') )= \hco((i,j))+1 }} \frac{1}{N^{ 2 \hco( (k,k' ))  }}
      \frac{   1 }{      N^{   2 (\hco( (i,k))+\hco( (j,k')) -2 ) )   }  }
      \\ \label{eq: huge cauchy schwarz expansion}
      &   
         \qquad\qquad\qquad\qquad
       +  N^2 \sum_{ \substack{ k,k'\in\{1,\dots,N\}, \\ \hco( (i,j,k,k') )= \hco((i,j))+2 }}         
      \frac{1}{N^{ 2 \hco( (k,k' ))  }}
      \frac{   1 }{      N^{   2 (\hco( (i,k))+\hco( (j,k')) -2 ) )   }  }\Bigg)
        \\ 
        \label{eq: summation term 6.1}
    &\leq 
         \frac{K(s-t_m) e^{-2\lambda_2 (T-s)}  e^{-4\lambda_1 (T-t_m) }}{N^{2 \hco( (i,j))}}  \cdot
        \sum_{m=0}^2 N^{m}  \sum_{ \substack{ k,k'\in\{1,\dots,N\}, \\ \hco( (i,j,k,k') )= \hco((i,j))+m }}  \frac{1}{N^{2m}}
     \\
    \label{eq: partial2 u results 33}
    &\leq 
         \frac{K(s-t_m) e^{-2\lambda_2 (T-s)}  e^{-4\lambda_1 (T-t_m) }}{N^{2 \hco( (i,j))}}.
\end{align}
The term \eqref{eq: huge cauchy schwarz expansion} is derived by applying the Cauchy--Schwarz inequality, which then requires the $L^8$-moments of $ X_{T,x_i}^{t_m,X_{t_m}^{k,N},k,N}$ and $  X_{T,x_j}^{t_m,X_{t_m}^{k',N},k',N} $   
obtained from Lemma~\ref{lemma: first variation bound noiid} (that holds without integrability requirements on $X_{t_m}^{k,N},X_{t_m}^{k',N}$ since \eqref{first_var_process noiid} is a linear ODE with bounded coefficients starting from either $1$ or $0$), and noting we can unify the bounds in Lemma~\ref{lemma: first variation bound noiid} as (note that $\lambda_1 < \lambda$ and $ \hco((i,k))$ is either 0 or 1)
\begin{align*}
    \bE\big[\,|X_{T,x_i}^{t_m,X_{t_m}^{k,N},k,N}|^8\big] 
    \leq
    \frac{K}{N^{8( \hco((i,k))-1)}}e^{-8\lambda_1(T-t_m)}.
\end{align*}
To obtain \eqref{eq: summation term 6.1}, we used the following bounds, which can be confirmed by checking cases 
\begin{align*}
    2\hco\big( (k,k')\big)
    &+ 2 \big(\hco( (i,k))+\hco( (j,k')) -2 \big)
    \\
    &
    \geq \begin{cases}
        2+2(1+ \hco( (i,j)) -2) = 2 \hco( (i,j)),
        & \hco( (i,j,k,k') )= \hco((i,j)) ,
        \\
        4+2(1+2-2)=6 \geq 2+2 \hco( (i,j)),
        & \hco( (i,j,k,k') )= \hco((i,j))+1,~k\neq k' ,
        \\
        2+2(2+2-2)=6 \geq 2+2 \hco( (i,j)),
        & \hco( (i,j,k,k') )= \hco((i,j))+1,~k = k',
        \\
        4+ 2(2+2 -2)=8\geq 4+2 \hco( (i,j)),
        & \hco( (i,j,k,k') )= \hco((i,j))+2.
    \end{cases}
\end{align*}
In \eqref{eq: partial2 u results 33}, we used \eqref{eq-aux:countingNumbers} to ensure the summation  term in \eqref{eq: summation term 6.1} is $\cO(1)$. We establish bounds for \eqref{eq: uxx term4} and \eqref{eq: uxx term5} in a similar fashion and obtain 
\begin{align}
       \label{eq: partial2 u results 44} 
       \eqref{eq: uxx term4} + \eqref{eq: uxx term5}
       &\leq
       \frac{K(s-t_m) e^{-2\lambda_4 (T-s)}   }{N^{2 \hco( (i,j))}} .        
\end{align}
Substituting \eqref{eq: partial2 u results 11}--\eqref{eq: partial2 u results 44} into \eqref{eq: uxx term1}--\eqref{eq: uxx term5} yields
    \begin{align}
    &\bE \Big[       \big|   \partial_{x_i,x_j}^2u(t_m,\bodX_{t_m}^{N})- \partial_{x_i,x_j}^2u(s,\bodX_{t_m}^{N})     \big|^2 \Big]
    \label{eq: partial u res 2}
    \leq ~ 
    \frac{K(s-t_m)e^{-2\lambda_4 (T-s)}}{N^{2 \hco((i,j))}} .
\end{align}
}

 \textit{Part 2.3: Collecting the estimates for \eqref{eq: B0 extended 1}--\eqref{eq: B0 extended 3}.}
 Consequently, substituting \eqref{eq: partial u res 1} and \eqref{eq: partial u res 2} into \eqref{eq: B0 extended 1}--\eqref{eq: B0 extended 3} and using the bounds for the derivatives of the function $B$ in \eqref{eq: prop of deriv of B}, we conclude that  
there exists some constant $\lambda_0\in(0,\lambda)$ such that for all $m$ and $s\in[t_m,t_{m+1}]$,
 \begin{align}
   \nonumber 
     \bE \Big[  &
            L(t_m,\bodX_{t_m}^{N})
        -   L(s,\bodX_{t_m}^{N})
           \Big]
           \\ \nonumber
           &
            \leq 
            K \sum_{i,j=1}^{N} 
            \Bigg(  
            \frac{\sqrt{s-t_m} }{N^{  \hco(  (i,j))-1 }\cdot N} e^{-\lambda_2(T-s)}
            +
            \frac{\sqrt{s-t_m} }{N^{  \hco(  (i,j))-1} N^{ \hco((i,j)) } } e^{-\lambda_4(T-s)}
            \\ \nonumber 
            &
            \hspace{5cm }
           +
             \frac{\sqrt{s-t_m} }{N^{  \hco(  (i,j))-1} \cdot N} e^{-\lambda_2(T-s)}
            \Bigg)
            \\
            &
         \label{eq: b0 term 3 result}
            \leq 
            K h^{1/2} e^{-\lambda_0(T-s)}\sum_{i,j=1}^{N}  \frac{1}{N^{  \hco(  (i,j)) }} 
           \leq 
           K h^{1/2} e^{-\lambda_0 (T-t_{m+1})},
\end{align}
where once again the final inequality arises from recalling \eqref{eq-aux:countingNumbers}.\\

 \textit{Part 3: Concluding the proof.}
Substituting the results of  \eqref{eq: b0 term 1 result}, \eqref{eq: b0 term 2 result 3}, \eqref{eq: b0 term 3 result} to \eqref{eq:  rb sum term1}--\eqref{eq:  rb sum term3}, we conclude that 
\begin{align}
    \nonumber
    \sum_{m=0}^{M-1} &
      \bE \Big[   \int_{t_m}^{t_{m+1}}
        \Big(  L(t_m,\bodX_{t_m}^{N,h})
        -  L(s,\bodX_{s}^{N})
        \Big)
      \dd s \Big]
       \leq  Kh^{3/2}\sum_{m=0}^{M-1} e^{-\lambda_0 (T-t_{m+1})}  \leq Kh^{1/2} .
\end{align} 
Therefore, for the left-hand side of \eqref{eq: B0 terms results}, there exists some positive constants $\lambda_0 \in (0,\lambda),K$ such that  
\begin{align*}
     h^2\bE \Big[  &\sum_{m=0}^{M-1} L(t_m,\bodX_{t_m}^{N,h})  \Big]
    \leq  
     Kh^{3/2}+Khe^{-\lambda_0 T}.
\end{align*}
\end{proof}

\begin{remark}[On losing $1/2$ in the rate of convergence]
\label{rem:Losing1/2 conv rate}
Letting $T\rightarrow\infty$ in \eqref{eq: B0 terms results},  and temporarily ignoring higher-order remainder terms, we have that the weak error is of order $3/2$. In \cite{leimkuhler2014long} this term satisfies the bound (with $K$ depending on the dimension $N$)
\begin{align*}
    h^2\bE \Big[  \sum_{m=0}^{M-1} L(t_m,\bodX_{t_m}^{N,h})  \Big]
    \leq  
     Kh^{2}+Khe^{-\lambda_0 T},
\end{align*}
and thus a weak order of $2$ is attained.

The loss of $1/2$ in the convergence rate in our estimates occurs in the final step of \eqref{eq: b0 term 1 result}. In \cite{leimkuhler2014long} this term is estimated with the weak error (starting on \eqref{eq:  rb sum term1}), whilst here, we are not able to do so. In fact, to recover the missing $1/2$-rate by following using the arguments \cite{leimkuhler2014long} one  would need to show that $L$ in \eqref{eq:def of B0} satisfies condition (3) of Assumption~\ref{assum:main_weak error} -- see additionally our Remark \ref{rem:H2-diff}. At present, this is an open question.
\end{remark}

\subsection{Analysis of residual terms}
\label{sec:Remainder terms R} 

A close inspection of the proof of the main result in \cite{leimkuhler2014long} shows that there are 8 remainder terms which need to be analyzed and we do so in the next lemma. 
We will make use of the following helpful abbreviation: for $m \in \lbrace 0, \ldots, M-1 \rbrace$, we define using \eqref{eq:def of func B} and \eqref{eq: def : non-Markov Euler scheme}
\begin{align}
\nonumber 
 \Delta X_{t_m}^{i,N,h} 
 := 
 &
 X^{i,N,h}_{t_{m+1}} - X^{i,N,h}_{t_{m}}
 \\
 \label{eq-aux:DeltaXfrom nME Scheme}
 =
 & 
 - \Big(  \nabla U(X^{i,N,h}_{t_{m}}) + \frac{1}{N}\sum_{j=1}^{N} \nabla V(X_{t_{m}}^{i,N,h} - X_{t_m}^{j,N,h}) \Big)  h   + \frac\sigma2
    (\Delta W_{m}^i + \Delta W_{m+1}^i)   
\\ \nonumber
=
& 
B_i(\boldsymbol{X}^{N,h}_{t_m})h + \frac \sigma 2 
(\Delta W_{m}^i + \Delta W_{m+1}^i). 
\nonumber
\end{align}
Further, we define the continuous extension of $\boldsymbol{X}^{N,h}_{t_m}$: for all $  s\in [0,h]$  
\begin{align}
 \label{eq-aux: continuous  extension Scheme}
 X_{t_m+s}^{i,N,h} 
 := 
 &
 X^{i,N,h}_{t_{m}}
+
 \Big(  \nabla U(X^{i,N,h}_{t_{m}}) + \frac{1}{N}\sum_{j=1}^{N} \nabla V(X_{t_{m}}^{i,N,h} - X_{t_m}^{j,N,h}) 
 +\frac\sigma{2h} \Delta W_{m}^i
 \Big)  s   + \frac \sigma 2 
\Delta W_{m+1,s}^i   
\\ 
=
& X_{t_m}^{i,N,h} + 
\color{black}
\Big( 
B_i(\boldsymbol{X}^{N,h}_{t_m})+ \frac{\sigma \Delta W_m^i}{2h} \Big) s + \frac \sigma 2 
\Delta W_{m+1,s}^i, \nonumber \\
\Delta W^{i}_{m,s} & = {W}^{i}_{t_{m-1}+s} - {W}^{i}_{t_{m-1}} ,
    \text{ for } m>0. \label{eq-aux: def of dWms notion}
\end{align}
Equivalently, we could write
\begin{align*}
    \Delta \boldsymbol{X}_{t_m}^{N,h} 
 &:= 
 \boldsymbol{X}^{N,h}_{t_{m+1}} - \boldsymbol{X}^{N,h}_{t_{m}}
 =
B(\boldsymbol{X}^{N,h}_{t_m})h + \frac \sigma 2 
( \Delta \boldsymbol{W}_m +  \Delta \boldsymbol{W}_{m+1}),
\\
\boldsymbol{X}_{t_m+s}^{N,h}
&:= 
\boldsymbol{X}^{N,h}_{t_{m}}
+
\Big( 
B(\boldsymbol{X}^{N,h}_{t_m}) + \frac{\sigma \Delta \boldsymbol{W}_{m,h}}{2 h}  \Big)s
+
\frac{\sigma}{2 } \Delta \boldsymbol{W}_{m+1,s},
\end{align*}
where $\Delta \boldsymbol{X}_{t_m}^{N,h} \coloneqq ( \Delta X_{t_m}^{1,N,h}, \ldots,  \Delta X_{t_m}^{N,N,h})$, $ \Delta \boldsymbol{W}_m \coloneqq(\Delta W^1_m,\ldots,\Delta W^N_m)$ and $ \Delta \boldsymbol{W}_{m,s} \coloneqq(\Delta W^1_{m,s},\ldots,\Delta W^N_{m,s})$ (for $m >0$). \color{black}
Instead of dealing with the expression above, we rewrite the scheme in a different way (see \cite[p7]{leimkuhler2014long}). For  $m \in \lbrace 0, \ldots, M-1 \rbrace$ and $i\in \{ 1,\dots,N\}$, with $\hat{X}^{i,N,h}_{t_0}=X^{i,N}_{t_0}$, define  
\begin{align}
\label{eq: def : aux Euler scheme 0}  
\hx^{i,N,h}_{t_{m+1}} 
    & 
    = \hx^{i,N,h}_{t_{m}} +  \sigma 
    \Delta W_{m}^i 
    \\\nonumber
    & - \Big( \nabla U( \hx^{i,N,h}_{t_{m}}+  \frac{\sigma}{2}   
    \Delta W_{m}^i) 
    + \frac{1}{N}\sum_{j=1}^{N} \nabla V\big( \hx_{t_{m}}^{i,N,h}+  \frac{\sigma}{2}  
    \Delta W_{m}^i -  \hx_{t_m}^{j,N,h}-  \frac{\sigma}{2}   
    \Delta W_{m}^j\big)  \Big)  h,
    \\   \nonumber
    \hat{\boldsymbol{X}}^{N,h}_{t_{m+1}} 
    & \color{black}
    = \hat{\boldsymbol{X}}^{N,h}_{t_{m}} + \sigma  \Delta \boldsymbol{W}_m
    + B\big( \hat{\boldsymbol{X}}^{N,h}_{t_{m}} + \frac\sigma2  \Delta \boldsymbol{W}_m \big)h,
\end{align}
so that $X^{i,N,h}_{t_{m}}=\hx^{i,N,h}_{t_{m}}+ \sigma \Delta W_{m}^i/2$, for all $i \in \lbrace 1, \ldots, N \rbrace $ 
 (or $\boldsymbol{X}^{N,h}_{t_{m}}=\hat{\boldsymbol{X}}^{N,h}_{t_{m}} + \sigma  \Delta \boldsymbol{W}_m/2$).

 \color{black} 
Now, for $s\in[0,h)$  and $m \in \lbrace 1, \ldots, M \rbrace$, we define the following auxiliary process  
\begin{align} \label{process:taylor}
\overline{X}^{i,N,h}_{t_{m-1}+s} & = \hx^{i,N,h}_{t_m} + \frac\sigma 2 \Delta W_{m,s}^i 
    ,
    \quad 
     \overline{X}^{i,N,h}_{t_{m-1}}
     = \hx^{i,N,h}_{t_m},
     \quad 
    \\
    \overline{\bodX}_{t_{m-1}+s}^{N,h}&= \hat{\bodX}_{ {t_m}}^{N,h} +
     \frac\sigma 2 \Delta \boldsymbol{W}_{m,s}, 
     \quad 
      \overline{\bodX}_{t_{m-1}}= 
      \hat{\bodX}_{ {t_m}}^{N,h}, \nonumber 
\end{align} 
where we remark that this form is used in the proof of Lemma~\ref{lemma: analysis of the residual term} (e.g., in \eqref{eq: r4 term 1.2}). The moment stability of these auxiliary schemes is discussed in the appendix, see Lemma~\ref{prop: moment bound extension}. 
We refer the reader to \cite{vilmart2015postprocessed} for different versions of such schemes, coined there 
 `postprocessed schemes', achieving higher-order weak convergence in the ergodic setting.
\begin{lemma}
    \label{lemma: analysis of the residual term}
    Let   Assumption~\ref{assum:main_weak error} hold and let $\xi \in L^{10}(\Omega,\mathbb{R})$. Then for the remainder term $R$ in \eqref{eq: weak error expansion-ori1}, there exists a positive constant $K$ independent of $h,T,M$ and $N$, such that   
     \begin{align*}
          \mathbb{E} \left[ \sum_{m=0}^{M-1} R(t_m,\boldsymbol{X}^{N,h}_{t_m}) \right]
          \leq 
          Kh^{3/2}. 
     \end{align*}
\end{lemma}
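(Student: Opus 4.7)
The plan is to follow the structural template laid out in \cite[Section 3]{leimkuhler2014long}, where the remainder $R(t_m,\cdot)$ is written as $h^3 r(t_m,\cdot)$ with $r$ a linear combination of eight explicit terms $r_1,\ldots,r_8$ obtained from a Taylor expansion of $u(t_{m+1},\boldsymbol{X}^{N,h}_{t_{m+1}})$ around $(t_m,\boldsymbol{X}^{N,h}_{t_m})$ (equivalently, around the auxiliary processes \eqref{eq: def : aux Euler scheme 0} and \eqref{process:taylor}). Each $r_j$ has the generic form of a finite sum, over multi-indices $\gamma\in\bigcup_{k=1}^{6}\Pi_k^N$, of products of three ingredients: (i) a high-order partial derivative $\partial^{|\gamma|}_{x_{\gamma_1},\ldots,x_{\gamma_{|\gamma|}}} u(\eta,\boldsymbol{x})$ at some intermediate time $\eta\in[t_m,t_{m+1}]$ and intermediate spatial configuration, (ii) a product of drift components $B_{i_k}$ (or of derivatives of $B_{i_k}$) evaluated at the scheme, and (iii) products of Brownian increments drawn from $\{\Delta W_m^i,\Delta W_{m+1}^i\}$.

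My strategy for a generic remainder term is: first, condition on $\mathcal{F}_{t_m}$ and freeze the drift arguments; second, apply H\"older's inequality to separate the $u$-derivative from the ``$B$'' factors and the ``$\Delta W$'' factors; third, bound the $L^p$-norm of the $u$-derivative via Lemma~\ref{lemma: higher-order derivative of u}, which gives a factor $N^{-\hco(\gamma)}\exp(-\lambda_0(T-t_m)/2)$; fourth, use the uniform-in-$N$, uniform-in-time moment bounds for $\boldsymbol{X}^{N,h}$ (Proposition~\ref{prop:basic_estimates22}) together with their extensions to $\hat{\boldsymbol{X}}^{N,h}$ and $\overline{\boldsymbol{X}}^{N,h}$ (Lemma~\ref{prop: moment bound extension}); fifth, use the Gaussian moments $\mathbb{E}[|\Delta W^i|^{2q}]=\cO(h^q)$ of the Brownian increments; and finally, perform the counting argument $\sum_{\gamma} N^{-\hco(\gamma)}=\cO(1)$ based on \eqref{eq-aux:countingNumbers} and the order of derivatives of $B$ in \eqref{eq: bounds of the summation of derivatives of B noiid}. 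Combining these, each term $\mathbb{E}[h^{3}r_j(t_m,\boldsymbol{X}^{N,h}_{t_m})]$ will be bounded by $Kh^{5/2}\exp(-\lambda_0(T-t_m)/2)$ (or directly by $Kh^{3}\exp(-\lambda_0(T-t_m)/2)$ for the ``clean'' terms), and summation over $m\in\{0,\ldots,M-1\}$ using $\sum_m\exp(-\lambda_0(T-t_m)/2)\leq K/h$ yields the claimed $Kh^{3/2}$ bound, uniformly in both $T$ and $N$.

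The main obstacle, and the reason for the rate $h^{3/2}$ rather than the $h^{2}$ achieved in \cite{leimkuhler2014long}, is the treatment of those remainder terms in which the non-Markovian coupling $\Delta W_m^i+\Delta W_{m+1}^i$ produces a contribution involving derivatives of $B$ of order $2$ paired with a first-order derivative of $u$. In the classical SDE setting of \cite{leimkuhler2014long} these are handled by invoking a preliminary first-order weak convergence result applied to the test function $L$ of \eqref{eq:def of B0}, exploiting that $L$ plays the role of a ``corrected'' observable. As explained in Remark~\ref{rem:H2-diff}, $L$ does not satisfy item~(3) of Assumption~\ref{assum:main_weak error} in our IPS setting (the required $N^{-\hco(\gamma)}$ growth on its derivatives is unavailable for $L$), so we cannot apply the weak convergence argument recursively. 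Instead we substitute the uniform-in-$N$ strong $L^2$-rate $\cO(h^{1/2})$ from Proposition~\ref{prop:basic_estimates22}, at the cost of a factor of $h^{1/2}$. The same mechanism was already visible in \eqref{eq: b0 term 1 result}--\eqref{eq: b0 term 2 result 3} of the $L$-term analysis and is the source of the limitation flagged in Remark~\ref{rem:Losing1/2 conv rate}.

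Because $r_1,\ldots,r_8$ individually require a tedious term-by-term bookkeeping of multi-indices, derivative orders, and paired Brownian increments — essentially a mechanical elaboration of the template above applied to each of \cite[Eqs.~(3.8),(3.10)--(3.16)]{leimkuhler2014long} recast in the IPS notation — we relegate the complete calculations to Appendix~\ref{appendix_aux_RemainderSection6.2}, retaining in the main text only the master estimate per term and the concluding summation. The highest regularity we need on $\xi$, namely $\xi\in L^{10}(\Omega,\bR)$, arises from the combined use of Cauchy--Schwarz (giving $L^{2p}$ moments of the variation processes of order up to $5$) and of the moment bounds for $\boldsymbol{X}^{N,h}$ in the most demanding remainder term, which pairs an $L^4$-moment of $u$-derivatives with an $L^4$-moment of drift products.
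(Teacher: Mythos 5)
Your overall template matches the paper's proof closely: both of you decompose $R$ into the eight Leimkuhler--Matthews remainder terms $R^1_{t_m},\ldots,R^8_{t_m}$, then on each term apply H\"older's inequality to separate the high-order $u$-derivative factor from the drift/Brownian-increment factors, bound the former in $L^2$ (or $L^4$) via Lemma~\ref{lemma: higher-order derivative of u} to extract the $N^{-\hco(\gamma)}\exp(-\lambda_0(T-t_{m+1}))$ decay, bound the latter via the uniform-in-time moment estimates of Proposition~\ref{prop:basic_estimates22} and Lemma~\ref{prop: moment bound extension}, exploit the Gaussian moment scaling of the increments, perform the $\sum_\gamma N^{-\hco(\gamma)}=\cO(1)$ counting, and finally collapse the time sum via the geometric decay. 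The identification of $\xi\in L^{10}$ as arising from the H\"older split on products of up to five increment (or drift) factors is also essentially right.

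Where you deviate from the paper is in your explanation of the source of the $h^{1/2}$ loss inside \emph{this} lemma. You attribute it to the failure of $L$ to satisfy item~(3) of Assumption~\ref{assum:main_weak error}, which forces a fallback from a recursive weak-error bound to the strong $L^2$ error $\cO(h^{1/2})$ of Proposition~\ref{prop:basic_estimates22}; you then claim ``the same mechanism'' governs the remainder terms. That conflates two distinct mechanisms. The $L$-recursion obstruction you describe is indeed the reason Lemma~\ref{lemma: analysis of the b0 term summation integration} (the $C_0(T)$ term) caps out at $Kh^{3/2}$: it enters through \eqref{eq: b0 term 1 result} and is the subject of Remark~\ref{rem:Losing1/2 conv rate}. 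But the strong convergence rate is never invoked in the estimation of $R^1_{t_m},\ldots,R^8_{t_m}$. There, the paper obtains $Kh^{5/2}$ per term (rather than the $Kh^{3}$ of \cite{leimkuhler2014long}) for a different reason: Lemma~\ref{lemma: higher-order derivative of u} only supplies $L^p$ bounds on $\partial^{(5)}u$, not a sup-norm bound, so the H\"older split $\mathbb{E}[|\Delta X^{\gamma_1}\cdots\Delta X^{\gamma_5}|^2]^{1/2}\,\mathbb{E}[|\partial^5 u|^2]^{1/2}$ destroys the cancellation of odd Brownian moments that a pointwise bound on $\partial^5 u$ would preserve; this is precisely what the paper flags when it says it is ``forced to use H\"older's inequality'' because it can only bound the fifth derivative ``in expectation.'' Your proof sketch would still compile to the right answer because you asserted the per-term bound $Kh^{5/2}e^{-\lambda_0(T-t_{m+1})}$ directly, but anyone implementing it from your description would not know where that bound actually comes from, and would find that the strong error $\cO(h^{1/2})$ never appears in the $R$-term calculation.
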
 
\begin{proof}
Recall that the remainder term $r(t_m,\cdot)$ in \cite[Equation~(3.17)]{leimkuhler2014long} (and corresponding to our $R(t_m,\cdot)$ term in \eqref{eq: weak error expansion-ori1}) is given as a linear combination of remainders denoted as $h^3 r_i(t_m,\cdot)$ for $i \in \lbrace 1,\ldots,8 \rbrace$ and appearing in Equations (3.8), (3.10)--(3.16) in \cite[p7-9]{leimkuhler2014long}, { \color{black}   where they correspond to the remainder terms of the It\^o-Taylor expansions in \eqref{eq:def of B0} -- we refer to the terms $\{R^i_{t_m}\}_{i=1}^8$  in this proof  and Appendix~\ref{appendix_aux_RemainderSection6.2} for similar detailed forms.}  
In our case, we derive bounds for   $\mathbb{E} [ R(t_m,\boldsymbol{X}^{N,h}_{t_m}) ]\leq \sum_{i=1}^8 |\mathbb{E}[ R^i_{t_m} ]|$, where each $R^i_{t_m}$ term corresponds to the $h^3r_i(t_m, \cdot)$ terms  ($i \in \lbrace 1,\ldots,8 \rbrace$) of \cite[p7-9]{leimkuhler2014long}.  

It may not be immediately transparent how the $h^3 r_i(t_m,\cdot)$ terms correspond to our $R_{t_m}^i$ terms. We explicitly present the derivations of $\bE [R_{t_m}^1 ]$, $\bE [R_{t_m}^4 ]$ and $\bE [R_{t_m}^6 ]$, which we feel encapsulate the techniques and proof methodology. The derivation of the residual terms $R_{t_m}^2,\,R_{t_m}^3,\,R_{t_m}^5,\,R_{t_m}^7,\,R_{t_m}^8$, can be found in Appendix \ref{appendix_aux_RemainderSection6.2}. In this proof, let $K$ be a constant independent of $m,M,N,T,h$ which may change from line to line.\\

\textit{Part 1:}
Recall the $\Delta X$ notation introduced in \eqref{eq-aux:DeltaXfrom nME Scheme}. The remainder term $h^{3}r_1$ in Equation~(3.8) of \cite[p7]{leimkuhler2014long} is established from the Taylor expansion with Lagrange’s form of the remainder term. That is, for all $m$, there exists some $\rho_m \in (0,1)$ such that 
\begin{align*}
    \bE \big[          R^1_{t_m}      \big]
     &=  
     K\bE \bigg[              
     \sum_{\gamma \in \Pi_{5}^N} 
     \Delta X^{\gamma_1,N,h}_{t_m} \dots \Delta X^{\gamma_5,N,h}_{t_m}~
     \partial^{5}_{ \gamma_1,\dots,\gamma_5      }  
     u(t_{m+1},\bodX_{m,{\rho_m }}^{N,h})  \bigg] 
\end{align*} 
where $\bodX_{m,{\rho_m }}^{N,h}:= {\rho_m } \bodX_{t_m}^{N,h} + (1-{\rho_m }) \bodX_{t_{m+1}}^{N,h}$.\\

We deal with $|\bE[R^1_{t_m}]|$ via H\"older's inequality to isolate the $\Delta X$ terms from the derivatives of $u$ term, 
\begin{align}
\nonumber
    \big|  
    \mathbb{E} \big[         R^1_{t_m}   \big]    \big|    
    &\leq 
    K  \sum_{\gamma \in \Pi_{5}^N}   
    \sqrt{  
    \bE\Big[       \big| 
     \Delta X^{\gamma_1,N,h}_{t_m} \dots \Delta X^{\gamma_5,N,h}_{t_m}
       \big|^2  \Big]  
      \bE\Big[   ~    \Big| 
      \partial^{5}_{ x_{ \gamma_1 },\dots,x_{  \gamma_5}      }~
     u(t_{m+1},\bodX_{m,{\rho_m }}^{N,h}) 
      \Big|^2  \Big]  } 
      \\
      &\leq 
       K h^{5/2} \sum_{\gamma \in \Pi_{5}^N} 
       e^{ -\lambda_0 (T-t_{m+1})}N^{-\hco(\gamma)} 
       \leq K h^{5/2} e^{ -\lambda_0 (T-t_{m+1})}, 
       \label{eq-aux:our R1 vs Leimkuhler's r1 new}
\end{align}
which holds for some $\lambda_0 \in (0,\lambda)$. We derive \eqref{eq-aux:our R1 vs Leimkuhler's r1 new} using Lemma~\ref{lemma: higher-order derivative of u} to treat the derivatives of $u$, while for the $\Delta X$ terms, we use their explicit forms \eqref{eq-aux:DeltaXfrom nME Scheme} and the fact that $\nabla U,\nabla V$ are of linear growth. In particular, the assumption $\xi \in L^{10}(\Omega,\bR)$, allows to use Proposition~\ref{prop:basic_estimates11}  and Proposition~\ref{prop:basic_estimates22} to control the $L^{10}$-moments of the processes.{ The moment properties of the increments extract the leading order $h$ terms}; providing the $h^{5/2}$ leading order. Note that for $R^1_{t_m}$ we incur a loss of $h^{1/2}$ in the leading term while in \cite{leimkuhler2014long} there is no such loss -- this issue has already appeared in the proof of Lemma~\ref{lemma: analysis of the b0 term summation integration} and is discussed in Remark \ref{rem:Losing1/2 conv rate} (and Remark \ref{rem:H2-diff}). Critically, \cite{leimkuhler2014long} has a $|\cdot|_\infty$- norm bound for the 5-th derivative of $u$ while here we are only able to bound it in expectation. We are forced to use H\"older's inequality.
 \\
 
 Similar to \cite[Equation~(3.12)]{leimkuhler2014long}, we show how the $R_{t_m}^4$ (corresponding to $r_4(t_m,\cdot)h^3$ in  \cite[Equation~(3.12)]{leimkuhler2014long}) is generated and give the exact form of $\bE[ R_{t_m}^4]$. Expanding  out the increments $\Delta X^{\gamma_i,N,h}_{t_m}$, $i \in \{1,2,3\}$, and recall the definition of $\Delta W_{m,2h}$ in \eqref{eq-aux: def of dWms notion}, we have
\begin{align}
    \nonumber 
     \sum_{ \gamma \in \Pi_3^N  }&
     \bE\Big[ \Delta X^{\gamma_1,N,h}_{t_m} \Delta X^{\gamma_2,N,h}_{t_m} \Delta X^{\gamma_3,N,h}_{t_m}~
     \partial^{3}_{ x_{  \gamma_1},x_{  \gamma_2}, x_{  \gamma_3}     }  
     u(t_{m+1},\bodX_{t_m}^{N,h})  \Big] 
     \\
     \label{eq: r4 term1}
     = &
     K 
     \sum_{ \gamma \in \Pi_3^N  }
     \bE\Big[ \Delta W_{m,2h}^{\gamma_1} \Delta W_{m,2h}^{\gamma_2}\Delta W_{m,2h}^{\gamma_3}~
     \partial^{3}_{ x_{  \gamma_1},x_{  \gamma_2}, x_{  \gamma_3}     }  
     u(t_{m+1},\bodX_{t_m}^{N,h})  \Big]
     \\
     \label{eq: r4 term2}
     &+ K
     h
     \sum_{ \gamma \in \Pi_3^N  }
     \bE\Big[ B_{\gamma_1}( {\bodX}_{t_m}^{N,h})   \Delta W_{m,2h}^{\gamma_2}\Delta W_{m,2h}^{\gamma_3}~
     \partial^{3}_{ x_{  \gamma_1},x_{  \gamma_2}, x_{  \gamma_3}     }  
     u(t_{m+1},\bodX_{t_m}^{N,h})  \Big]
     \\
     \label{eq: r4 term3}
     &+
      K h^2
     \sum_{ \gamma \in \Pi_3^N  }
     \bE\Big[ B_{\gamma_1}( {\bodX}_{t_m}^{N,h})  B_{\gamma_2}( {\bodX}_{t_m}^{N,h})\Delta W_{m,2h}^{\gamma_3}~
     \partial^{3}_{ x_{  \gamma_1},x_{  \gamma_2}, x_{  \gamma_3}     }  
     u(t_{m+1},\bodX_{t_m}^{N,h})  \Big]
     \\
     \label{eq: r4 term4}
     &+
     K h^3
     \sum_{ \gamma \in \Pi_3^N  }
     \bE\Big[  B_{\gamma_1}( {\bodX}_{t_m}^{N,h})  
     B_{\gamma_2}( {\bodX}_{t_m}^{N,h})
     B_{\gamma_3}( {\bodX}_{t_m}^{N,h})
     ~
     \partial^{3}_{ x_{  \gamma_1},x_{  \gamma_2}, x_{  \gamma_3}     }  
     u(t_{m+1},\bodX_{t_m}^{N,h})  \Big].
\end{align}
 We first deal with the term \eqref{eq: r4 term1}, by applying an It{\^o}-Taylor expansion (see \cite[Section 5.1, p.163--164]{KloedenPlaten1992SDENumericsbook}) around $\hat{\bodX}_{{t_m}}^{N,h}$ to obtain
\begin{align}
\label{eq: r4 term 1.1}
&    \eqref{eq: r4 term1} \nonumber 
\\ 
   & = 
    K \sum_{ \gamma \in \Pi_3^N  }
     \bE\Big[ \Delta W_{m,2h}^{\gamma_1} \Delta W_{m,2h}^{\gamma_2}\Delta W_{m,2h}^{\gamma_3}~
     \partial^{3}_{ x_{  \gamma_1},x_{  \gamma_2}, x_{  \gamma_3}     }  
     u(t_{m+1},\hat{\bodX}_{{t_m}}^{N,h})  \Big]
     \\
 \label{eq: r4 term 1.2}
     &+ K  \sum_{ \gamma \in \Pi_4^N  } 
     \bE \bigg[\int_{t_m}^{t_m+h} 
     \partial_{x_{  \gamma_1}}\bigg(\Delta W_{m,2h}^{\gamma_2} \Delta W_{m,2h}^{\gamma_3}\Delta W_{m,2h}^{\gamma_4}~
     \partial^{3}_{ x_{  \gamma_2},x_{  \gamma_3}, x_{  \gamma_4}     }  
     u(t_{m+1},\overline{\bodX}_{q_1}^{N,h})    \bigg) 
     \dd  W^{\gamma_1}_{q_1} \bigg]
     \\
      \label{eq: r4 term 1.3}
     &+ K  \sum_{ \gamma \in \Pi_4^N  } 
     \bE \bigg[\int_{t_m}^{t_m+h} 
     \partial^2_{x_{  \gamma_1},x_{  \gamma_1}}\bigg(\Delta W_{m,2h}^{\gamma_2} \Delta W_{m,2h}^{\gamma_3}\Delta W_{m,2h}^{\gamma_4}~
     \partial^{3}_{ x_{  \gamma_2},x_{  \gamma_3}, x_{  \gamma_4}     }  
     u(t_{m+1},\overline{\bodX}_{q_1}^{N,h})    \bigg) 
     \dd  {q_1} \bigg].
\end{align}
One observes that $\eqref{eq: r4 term 1.1}=0$ since for $\gamma \in \Pi_3^N$, regardless of the value of $\hat{\cO}(\gamma)$, there will always be an odd power of a Brownian increment $\Delta W_{m,2h}$ presented in \eqref{eq-aux: def of dWms notion}, independent of the Brownian increment contained in the $\hat{\bodX}_{{t_m}}^{N,h}$ term. The term \eqref{eq: r4 term 1.3} however, does not vanish since the $\Delta W_{m,2h}$ term is not independent of $\overline{\bodX}_{q_1}^{N,h}$, because the latter contains the Brownian increment $W_{q_1}-W_{t_m}$.

Applying a further It{\^o}-Taylor expansion around $\hat{\bodX}_{t_m}^{N,h}$, and splitting the zeroth order term into the cases $\hat{\cO}(\gamma) \in\{1,2\}$ yields  
     \begin{align}
     \label{eq: removed from remainder 0 }
     & \eqref{eq: r4 term 1.2}= \nonumber 
     \\
     &
     K h^2  \sum_{ i=1   } ^N
     \bE \bigg[  
     \partial^{4}_{ x_{  i},x_{ i},x_{  i}, x_{  i}    }  
     u(t_{m+1},\hat{\bodX}_{t_m }^{N,h})    
      \bigg]
      +
      K h^2  \sum_{ i,j=1,~i\neq j   }^N
     \bE \bigg[  
     \partial^{4}_{ x_{  i},x_{ i},x_{  j}, x_{  j}    }  
     u(t_{m+1},\hat{\bodX}_{t_m}^{N,h})    
      \bigg]
     \\\label{eq: R4 comb1}
     &+ K \sum_{ \gamma \in \Pi_5^N  } 
     \bE \bigg[\int_{t_m}^{t_m+h} \int_{t_m}^{q_1} 
     \partial_{x_{  \gamma_1},x_{  \gamma_2}} \bigg(\Delta W_{m,2h}^{\gamma_3} \Delta W_{m,2h}^{\gamma_4}\Delta W_{m,2h}^{\gamma_5}~
     \partial^{3}_{ x_{  \gamma_4},x_{  \gamma_4}, x_{  \gamma_5}     }  
     u(t_{m+1},\overline{\bodX}_{q_2}^{N,h})    \bigg) 
      \dd  W^{\gamma_1}_{q_2}
     \
     \dd  W^{\gamma_2}_{q_1} \bigg]
     \\
     \label{eq: R4 comb2}
     &+K \sum_{ \gamma \in \Pi_5^N  } 
     \bE \bigg[\int_{t_m}^{t_m+h} \int_{t_m}^{q_1} 
     \partial^{3}_{x_{  \gamma_1},x_{  \gamma_1},x_{  \gamma_2}} \bigg(\Delta W_{m,2h}^{\gamma_3} \Delta W_{m,2h}^{\gamma_4}\Delta W_{m,2h}^{\gamma_5}~
     \partial^{3}_{ x_{  \gamma_4},x_{  \gamma_4}, x_{  \gamma_5}     }  
     u(t_{m+1},\overline{\bodX}_{q_2}^{N,h})    \bigg) 
      \dd   {q_2}
     \
     \dd  W^{\gamma_2}_{q_1} \bigg],
\end{align}
where the two summation terms in \eqref{eq: removed from remainder 0 }  correspond to the second and third summation in \cite[(3.12)]{leimkuhler2014long}  and they do not contribute to the remainder term $ R^4_{t_m}$. 
\color{black}
Similarly for \eqref{eq: r4 term2}, we have  
\begin{align}\label{eq: removed from remainder 1 }
   & \eqref{eq: r4 term2}
    =  K h 
     \sum_{ i,j=1  }^N
     \bE\Big[ B_{i}( \hat{\bodX}_{{t_m}}^{N,h})   
     \partial^{3}_{ x_{  i},x_{ j}, x_{  j}     }  
     u(t_{m+1},\hat{\bodX}_{{t_m}}^{N,h})  \Big]
     \\
     \label{eq: R4 comb4}
     &+ Kh
      \sum_{ \gamma \in \Pi_4^N  } 
     \bE \bigg[\int_{t_m}^{t_m+h}
     \partial_{x_{  \gamma_1}} \bigg(
     B_{\gamma_2}( \overline{\bodX}_{q_1}^{N,h})
      \Delta W_{m,2h}^{\gamma_3}\Delta W_{m,2h}^{\gamma_4}~
     \partial^{3}_{ x_{  \gamma_2},x_{ \gamma_3}, x_{  \gamma_4}     }  
     u(t_{m+1},\overline{\bodX}_{q_1}^{N,h})  \bigg) 
     \dd   W^{\gamma_1}_{q_1} \bigg]
     \\  \label{eq: R4 comb5}
     &+ K h
     \sum_{ \gamma \in \Pi_4^N  } 
     \bE \bigg[\int_{t_m}^{t_m+h}
     \partial^2_{x_{  \gamma_1},x_{  \gamma_1}} \bigg(
     B_{\gamma_2}( \overline{\bodX}_{q_1}^{N,h})
      \Delta W_{m,2h}^{\gamma_3}\Delta W_{m,2h}^{\gamma_4}~
     \partial^{3}_{ x_{  \gamma_2},x_{ \gamma_3}, x_{  \gamma_4}     }  
     u(t_{m+1},\overline{\bodX}_{q_1}^{N,h})  \bigg) 
     \dd  {q_1} \bigg],
\end{align}
where   \eqref{eq: removed from remainder 1 }  corresponds to the first summation in \cite[(3.12)]{leimkuhler2014long}, thus also does not contribute to $R_{t_m}^4$. Leimkuhler et al. isolates and separates the terms \eqref{eq: removed from remainder 0 } and \eqref{eq: removed from remainder 1 } from $h^3 r_4$, since these terms cleverly cancel with other corresponding lower order terms in the expansion of other $r_i$'s when everything is summed over.
\color{black}

As for \eqref{eq: r4 term3}, we have  
\begin{align}
    \label{eq: 6.2 zero term} 
   & \eqref{eq: r4 term3}
    =  K h^2  
     \sum_{ \gamma \in \Pi_3^N  }
     \bE\Big[ B_{\gamma_1}( \hat{\bodX}_{{t_m}}^{N,h})  B_{\gamma_2}( \hat{\bodX}_{{t_m}}^{N,h})\Delta W_{m,2h}^{\gamma_3}~
     \partial^{3}_{ x_{  \gamma_1},x_{  \gamma_2}, x_{  \gamma_3}     }  
     u(t_{m+1},\hat{\bodX}_{{t_m}}^{N,h})  \Big]
    \\ 
    \label{eq: R4 comb6}
     &+   K h^2
      \sum_{ \gamma \in \Pi_4^N  } 
     \bE \bigg[\int_{t_m}^{t_m+h}
      \partial_{x_{  \gamma_1}}\bigg(
     B_{\gamma_2}( \overline{\bodX}_{q_1}^{N,h})
      B_{\gamma_3}( \overline{\bodX}_{q_1}^{N,h})\Delta W_{m,2h}^{\gamma_4}~
     \partial^{3}_{ x_{  \gamma_2},x_{ \gamma_3}, x_{  \gamma_4}     }  
     u(t_{m+1},\overline{\bodX}_{q_1}^{N,h})  \bigg) 
     \dd   W^{\gamma_1}_{q_1} \bigg]
     \\
     \label{eq: R4 comb7}
     &+  K h^2 
      \sum_{ \gamma \in \Pi_4^N  } 
     \bE \bigg[\int_{t_m}^{t_m+h}
     \partial^2_{x_{  \gamma_1},x_{  \gamma_1}} \bigg(
     B_{\gamma_2}( \overline{\bodX}_{q_1}^{N,h})
       B_{\gamma_3}( \overline{\bodX}_{q_1}^{N,h})
       \Delta W_{m,2h}^{\gamma_4}~
     \partial^{3}_{ x_{  \gamma_2},x_{ \gamma_3}, x_{  \gamma_4}     }  
     u(t_{m+1},\overline{\bodX}_{q_1}^{N,h})  \bigg) 
     \dd  {q_1} \bigg],
\end{align} 
where the first term in this expansion \eqref{eq: 6.2 zero term} is zero, following the same reasoning as for \eqref{eq: r4 term 1.1}.

For the residual term $R^4_{t_m}$, we have that via direct application of Hölder's inequality, It\^{o}'s isometry and Fubini's Theorem, we have
\begin{align*}
     \mathbb{E}& \big[         R^4_{t_m}       \big]
      =
     \eqref{eq: r4 term4}+
     \eqref{eq: r4 term 1.3}+
     \eqref{eq: R4 comb1} + 
     \eqref{eq: R4 comb2}+
     \eqref{eq: R4 comb4}+
     \eqref{eq: R4 comb5}+
     \eqref{eq: R4 comb6}+
     \eqref{eq: R4 comb7}
     \\
     &\leq 
     Kh^3
     \sum_{ \gamma \in \Pi_3^N  }
     \bE\Big[  B_{\gamma_1}( {\bodX}_{t_m}^{N,h})  
     B_{\gamma_2}( {\bodX}_{t_m}^{N,h})
     B_{\gamma_3}( {\bodX}_{t_m}^{N,h})
     ~
     \partial^{3}_{ x_{  \gamma_1},x_{  \gamma_2}, x_{  \gamma_3}     }  
     u(t_{m+1},\bodX_{t_m}^{N,h})  \Big]
     \\
     & \quad + Kh^{3/2}
       ~\int_{t_m}^{t_m+h} 
     ~ \bigg(
    \bE \bigg[ ~\bigg| 
    \sum_{ \gamma \in \Pi_4^N  } \partial^5_{x_{  \gamma_1},x_{  \gamma_1},x_{  \gamma_2},x_{  \gamma_3},x_{ \gamma_4 }} u(t_{m+1},\color{black}
\overline{\bodX}_{q_1}^{N,h}
\color{black} )
    ~\bigg|^2 ~~\bigg]   \bigg)^{1/2} ~ \dd q_1  
    \\
    &\quad+ Kh^{2}\bigg(   \int_{t_m}^{t_m+h}
    \bE \bigg[ ~\bigg| 
    \sum_{ \gamma \in \Pi_5^N  } \partial^5_{x_{  \gamma_1},\dots,x_{ \gamma_5 }} u(t_{m+1},\color{black}
\overline{\bodX}_{q_1}^{N,h}
\color{black}  )
    ~\bigg|^2 ~~\bigg]    ~ 
    \dd q_1  \bigg)^{1/2}
    \\
    &\quad+  Kh^{2}\bigg( \int_{t_m}^{t_m+h}
    \bE \bigg[ ~\bigg| 
    \sum_{ \gamma \in \Pi_5^N  } \partial^6_{x_{  \gamma_1},x_{  \gamma_1},x_{  \gamma_2},x_{\gamma_4},x_{\gamma_4},x_{ \gamma_5 }} u(t_{m+1},\color{black}
\overline{\bodX}_{q_1}^{N,h}
\color{black} )
    ~\bigg|^2 ~~\bigg]      ~ 
    ~ \dd q_1  \bigg)^{1/2}
    \\
    &\quad+  Kh^{2}\bigg( \int_{t_m}^{t_m+h}
    \bE \bigg[ ~\bigg| 
    \sum_{ \gamma \in \Pi_4^N  } \partial_{x_{  \gamma_1}} \bigg(
     B_{\gamma_2}(\color{black}
\overline{\bodX}_{q_1}^{N,h}
\color{black})
     \partial^{3}_{ x_{  \gamma_2},x_{ \gamma_3}, x_{  \gamma_4}     }  
     u(t_{m+1},\color{black}
\overline{\bodX}_{q_1}^{N,h}
\color{black} )  \bigg)
    ~\bigg|^2 ~~\bigg]      ~ \dd q_1  \bigg)^{1/2}
     \\
    &\quad+  Kh^{3/2} \int_{t_m}^{t_m+h}
    \bigg(\bE \bigg[ ~\bigg| 
    \sum_{ \gamma \in \Pi_4^N  } 
    \partial^2_{x_{  \gamma_1},x_{  \gamma_1}} \bigg(
     B_{\gamma_2}(\color{black}
\overline{\bodX}_{q_1}^{N,h}
\color{black})
     \partial^{3}_{ x_{  \gamma_2},x_{ \gamma_3}, x_{  \gamma_4}     }    
     u(t_{m+1},\color{black}
\overline{\bodX}_{q_1}^{N,h}
\color{black} )  \bigg)
    ~\bigg|^2 ~~\bigg]   \bigg)^{1/2}   ~ \dd q_1  
     \\
    &\quad+   Kh^{5/2}\bigg( \int_{t_m}^{t_m+h}
    \bE \bigg[ ~\bigg| 
    \sum_{ \gamma \in \Pi_4^N  } \partial_{x_{  \gamma_1}} \bigg(
     B_{\gamma_2}( \color{black}
\overline{\bodX}_{q_1}^{N,h}
\color{black})
     B_{\gamma_3}( \color{black}
\overline{\bodX}_{q_1}^{N,h}
\color{black})
     \partial^{3}_{ x_{  \gamma_2},x_{ \gamma_3}, x_{  \gamma_4}     }  
     u(t_{m+1},\color{black}
\overline{\bodX}_{q_1}^{N,h}
\color{black} )  \bigg)
    ~\bigg|^2 ~~\bigg]      ~ \dd q_1  \bigg)^{1/2} 
     \\
    &\quad+  Kh^{5/2} \int_{t_m}^{t_m+h}
    \bigg(\bE \bigg[ ~\bigg| 
    \sum_{ \gamma \in \Pi_4^N  } 
    \partial^2_{x_{  \gamma_1},x_{  \gamma_1}} \bigg(
     B_{\gamma_2}(\color{black}
\overline{\bodX}_{q_1}^{N,h}
\color{black})
     B_{\gamma_3}( \color{black}
\overline{\bodX}_{q_1}^{N,h}
\color{black})
     \partial^{3}_{ x_{  \gamma_2},x_{ \gamma_3}, x_{  \gamma_4}     }    
     u(t_{m+1},\color{black}
\overline{\bodX}_{q_1}^{N,h}
\color{black}  )  \bigg)
    ~\bigg|^2 ~~\bigg]~\bigg)^{1/2}    \dd q_1 . 
\end{align*}
As for $R_{t_m}^6$, recalling the definition of the operator $\mathcal{L}_N$ 
\begin{align*}
    \mathcal{L}_N u 
    &= \sum_{i=1}^{N} 
    \Big( 
    B_i \partial_{x_i} u 
    + \frac{1}{2} \sigma^2 \partial^2_{x_i, x_i} u 
    \Big) 
    ,
\end{align*}
the remainder term $r_6(t_m,\cdot)h^3$ in \cite[Equation~(3.14)]{leimkuhler2014long} (apply $\mathcal{L}_N$ three times) corresponds to  
\begin{align*}
   \mathbb{E}\big[           R^6_{t_m}       \big]   
    &=  \bE \bigg[ \int_{t_m}^{t_m+h}
    \int_{t_m}^{q_1}  \int_{t_m}^{q_2}
     (\mathcal{L}_N)^3 u ( t_{m+1},{\bodX}_{q_3}^{N,h} ) 
     \dd q_3 ~ \dd q_2 ~ \dd q_1 ~\bigg]
     \\
    &= \bE \bigg[
     \int_{t_m}^{t_m+h}
    \int_{t_m}^{q_1}  \int_{t_m}^{q_2}
    \Bigg( K   
    \sum_{ \gamma \in \Pi_3^N } 
     B_{\gamma_1}(  {\bodX}_{q_3}^{N,h})
    B_{\gamma_2}(  {\bodX}_{q_3}^{N,h})
    B_{\gamma_3}(  {\bodX}_{q_3}^{N,h})
    \partial^3_{x_{  \gamma_1},x_{  \gamma_2},x_{ \gamma_3 } }
    u(t_{m+1},{\bodX}_{q_3}^{N,h}  )
    \\
    &\qquad \qquad + K
    \sum_{ \gamma \in \Pi_3^N }
     \partial^2_{x_{  \gamma_3},x_{  \gamma_3}}
     \bigg( 
     B_{\gamma_1}(  {\bodX}_{q_3}^{N,h})
    B_{\gamma_2}(  {\bodX}_{q_3}^{N,h})
    \partial^2_{x_{  \gamma_1},x_{  \gamma_2} }
    u(t_{m+1},{\bodX}_{q_3}^{N,h}  )
     \bigg)
     \\
    &\qquad \qquad + K
    \sum_{ \gamma \in \Pi_3^N }
     \partial^4_{x_{  \gamma_2},x_{  \gamma_2},x_{  \gamma_3},x_{  \gamma_3}}
     \bigg( 
     B_{\gamma_1}(  {\bodX}_{q_3}^{N,h})
    \partial_{x_{  \gamma_1} }
    u(t_{m+1},{\bodX}_{q_3}^{N,h}  )
     \bigg)
     \\
    &\qquad \qquad + K
     \sum_{ \gamma \in \Pi_6^N, \hco(\gamma)\leq 3 } \partial^6_{x_{  \gamma_1},\dots,x_{\gamma_6  }} u(t_{m+1},{\bodX}_{q_3}^{N,h}  ) \Bigg) 
     \dd q_3 ~ \dd q_2 ~ \dd q_1 ~\bigg].
    \end{align*}
Once again, a direct application of Hölder's inequality and Fubini's Theorem
    \begin{align*}
\big| \mathbb{E} \big[           R^6_{t_m}       \big]\big| 
    &\leq 
    Kh^2 \int_{t_m}^{t_m+h}
        \bigg(\bE\bigg[  ~  \bigg|~
    \sum_{ \gamma \in \Pi_3^N } 
     B_{\gamma_1}(  \color{black} {\bodX}_{q_1}^{N,h} \color{black})
    B_{\gamma_2}(  \color{black} {\bodX}_{q_1}^{N,h} \color{black})
    B_{\gamma_3}(  \color{black} {\bodX}_{q_1}^{N,h} \color{black})
    \partial^3_{x_{  \gamma_1},x_{  \gamma_2},x_{ \gamma_3 } }
    u(t_{m+1}, \color{black} {\bodX}_{q_1}^{N,h} \color{black}  )
    \\
    &\qquad \qquad +
    \sum_{ \gamma \in \Pi_3^N }
     \partial^2_{x_{  \gamma_3},x_{  \gamma_3}}
     \bigg( 
     B_{\gamma_1}(  \color{black} {\bodX}_{q_1}^{N,h} \color{black})
    B_{\gamma_2}(  \color{black} {\bodX}_{q_1}^{N,h} \color{black})
    \partial^2_{x_{  \gamma_1},x_{  \gamma_2} }
    u(t_{m+1}, \color{black} {\bodX}_{q_1}^{N,h} \color{black}  )
     \bigg)
     \\
    &\qquad \qquad +
    \sum_{ \gamma \in \Pi_3^N }
     \partial^4_{x_{  \gamma_2},x_{  \gamma_2},x_{  \gamma_3},x_{  \gamma_3}}
     \bigg( 
     B_{\gamma_1}(  \color{black} {\bodX}_{q_1}^{N,h} \color{black})
    \partial_{x_{  \gamma_1} }
    u(t_{m+1}, \color{black} {\bodX}_{q_1}^{N,h} \color{black}  )
     \bigg)
     \\
    &\qquad \qquad +
     \sum_{ \gamma \in \Pi_6^N, \hco(\gamma)\leq 3 } \partial^6_{x_{  \gamma_1},\dots,x_{\gamma_6  }} u(t_{m+1},\color{black}
{\bodX}_{q_1}^{N,h}
\color{black} ) 
     ~\bigg|^2
     \bigg]  \bigg)^{1/2} ~ \dd q_1 .
\end{align*}

\textit{Part 2: Other remainder terms.}  
We now present the dominations of the other residual terms, whose explicit expressions can be found in Appendix \ref{appendix_aux_RemainderSection6.2}. These, once again, are a result of applying Hölder's inequality, It\^{o}'s isometry and Fubini's Theorem.
We have
\begin{align*} 
\big|    
    \mathbb{E} \big[          R^2_{t_m}       \big]   \big| 
    &\leq 
    K h^{2}  \bigg( ~ \int_{t_m}^{t_m+h} 
     \bE  \bigg[  ~  \bigg|
    \sum_{ \gamma \in \Pi_5^N } \partial^5_{x_{  \gamma_1},\dots,x_{\gamma_5  }} u(t_{m+1},\color{black} \overline{\bodX}_{q_1}^{N,h} \color{black}  ) ~ \bigg|^2~  \bigg]~ \dd q_1~     \bigg)^{1/2}
    \\
    & \quad +
    K h^{5/2} \bigg( ~ \int_{t_m}^{t_m+h}   
     \bE  \bigg[  ~  \bigg|
    \sum_{ \gamma \in \Pi_6^N  } \partial^6_{x_{  \gamma_1},\dots,x_{\gamma_6  }} u(t_{m+1},\color{black} \overline{\bodX}_{q_1}^{N,h} \color{black}  ) ~ \bigg|^2~  \bigg]~ \dd q_1~     \bigg)^{1/2},
    \\ 
      \big|    
    \mathbb{E}  \big[          R^3_{t_m}        \big]    \big| 
    &\leq  
     K h^{2}  \bigg( ~\int_{t_m}^{t_m+h}   
     \bE  \bigg[  ~  \bigg|
    \sum_{ \gamma \in \Pi_3^N } \partial_{x_{  \gamma_1}}\bigg(
      \partial_{x_{  \gamma_3}} B_{\gamma_2 }   ( \color{black} \overline{\bodX}_{q_1}^{N,h} \color{black} )
     \partial^2_{ x_{  \gamma_2},x_{\gamma_3  } } u(t_{m+1},\color{black} \overline{\bodX}_{q_1}^{N,h} \color{black})
     \bigg) ~ \bigg|^2~  \bigg]~ \dd q_1~     \bigg)^{1/2}
     \\
    &\quad +  
     K h^{2}  \bigg( ~\int_{t_m}^{t_m+h}   
     \bE  \bigg[  ~  \bigg|
    \sum_{ \gamma \in \Pi_3^N } \partial^2_{x_{  \gamma_1},x_{  \gamma_1}}\bigg(
      \partial_{x_{  \gamma_3}} B_{\gamma_2 }   ( \color{black} \overline{\bodX}_{q_1}^{N,h} \color{black} )
     \partial^2_{ x_{  \gamma_2},x_{\gamma_3  } } u(t_{m+1},\color{black} \overline{\bodX}_{q_1}^{N,h} \color{black})
     \bigg) ~ \bigg|^2~  \bigg]~ \dd q_1~     \bigg)^{1/2}
     \\
    &\quad +  
     K h^{2}  \bigg( ~\int_{t_m}^{t_m+h}   
     \bE  \bigg[  ~  \bigg|
    \sum_{ \gamma \in \Pi_3^N } \partial_{x_{  \gamma_1}}\bigg(
        B_{\gamma_2 }   ( \color{black} \overline{\bodX}_{q_1}^{N,h} \color{black} )
     \partial^2_{ x_{  \gamma_2},x_{\gamma_3  },x_{\gamma_3  } } u(t_{m+1},\color{black} \overline{\bodX}_{q_1}^{N,h} \color{black})
     \bigg) ~ \bigg|^2~  \bigg]~ \dd q_1~     \bigg)^{1/2}
     \\
    &\quad +  
     K h^{2}  \bigg( ~\int_{t_m}^{t_m+h}   
     \bE  \bigg[  ~  \bigg|
    \sum_{ \gamma \in \Pi_3^N } \partial^2_{x_{  \gamma_1},x_{  \gamma_1}}\bigg(
      B_{\gamma_2 }   ( \color{black} \overline{\bodX}_{q_1}^{N,h} \color{black} )
     \partial^2_{ x_{  \gamma_2},x_{\gamma_3  } ,x_{\gamma_3  }} u(t_{m+1},\color{black} \overline{\bodX}_{q_1}^{N,h} \color{black})
     \bigg) ~ \bigg|^2~  \bigg]~ \dd q_1~     \bigg)^{1/2}
     \\
      &\quad + 
    K h^{2}  \bigg( ~\int_{t_m}^{t_m+h}   
     \bE  \bigg[  ~  \bigg|
    \sum_{ \gamma \in \Pi_5^N } \partial^5_{x_{  \gamma_1},\dots,x_{\gamma_5  }} u(t_{m+1},\color{black} \overline{\bodX}_{q_1}^{N,h} \color{black}  ) ~ \bigg|^2~  \bigg]~ \dd q_1~     \bigg)^{1/2}
    \\
    &\quad +
    K h^{5/2} \bigg( ~  \int_{t_m}^{t_m+h}  
     \bE  \bigg[  ~  \bigg|
    \sum_{ \gamma \in \Pi_6^N  } \partial^6_{x_{  \gamma_1},\dots,x_{\gamma_6  }} u(t_{m+1},\color{black} \overline{\bodX}_{q_1}^{N,h} \color{black}  ) ~ \bigg|^2~  \bigg]~ \dd q_1~     \bigg)^{1/2} 
    ,
    \\
      \big|\mathbb{E}  \big[         R^5_{t_m}        \big] \big|&\leq 
     Kh^{5/2}
      \sum_{ \gamma \in \Pi_4^N  }
      \bigg(
       \bE \bigg[ ~\bigg|
         \partial^{4}_{ x_{  \gamma_1},x_{  \gamma_2}, x_{  \gamma_3}    , x_{  \gamma_4}   }  
     u(t_{m+1},\bodX_{t_m}^{N,h})
       ~\bigg|^2 ~\bigg]
       ~ \bigg)^{1/2}
     \\
     &\quad 
     + Kh^{2}
      \bigg( ~\int_{t_m}^{t_m+h}  
    \bE \bigg[ ~\bigg| 
    \sum_{ \gamma \in \Pi_5^N  } \partial^5_{x_{  \gamma_1},\dots,x_{ \gamma_5 }} u(t_{m+1},\color{black} \overline{\bodX}_{q_1}^{N,h} \color{black}  )
    ~\bigg|^2 ~\bigg]   ~ \dd q_1 ~ \bigg)^{1/2}
    \\
    &\quad  + Kh^{2}
     \bigg( \int_{t_m}^{t_m+h}  ~
    \bE \bigg[ ~\bigg| 
    \sum_{ \gamma \in \Pi_5^N  } \partial^6_{x_{  \gamma_1},x_{  \gamma_1},x_{  \gamma_2},x_{  \gamma_3},x_{  \gamma_4},x_{ \gamma_5 }} u(t_{m+1},\color{black} \overline{\bodX}_{q_1}^{N,h} \color{black}  )
    ~\bigg|^2 ~\bigg]     ~ \dd q_1~\bigg)^{1/2},
    \\
      \big|\mathbb{E} \big[          R^7_{t_m}        \big] \big|
     &\leq  K h^{2} 
    \bigg( ~ \int_{t_m}^{t_m+h}
    \bE \bigg[  ~ \bigg|  ~ 
     \sum_{ \gamma \in \Pi_6^N, \hco(\gamma)\in\{4,5\} } \partial^6_{x_{  \gamma_1},\dots,x_{\gamma_6  }} u(t_{m+1},\color{black} \overline{\bodX}_{q_1}^{N,h} \color{black}  )
    ~ \bigg|^2~\bigg]
     ~ \dd q_1~ \bigg)^{1/2}
     \\
     & 
     +      
     K h^{2} 
    \bigg( ~ \int_{t_m}^{t_m+h}
    \bE \bigg[  ~ \bigg|  ~ 
     \sum_{ \gamma \in \Pi_5^N, \hco(\gamma)=4  } \partial^6_{x_{  \gamma_1},\dots,x_{\gamma_6  }} u(t_{m+1},\color{black} \overline{\bodX}_{q_1}^{N,h} \color{black}  )
    ~ \bigg|^2~\bigg]
     ~ \dd q_1~ \bigg)^{1/2}
    ,
    \\
      \big|  
    \mathbb{E}  \big[         R^8_{t_m}     \big]     \big|  
    &
    \leq  K h^{2}
    \bigg( ~ \int_{t_m}^{t_m+h} 
    \bE \bigg[ ~ \bigg|  ~
    \sum_{ \gamma \in \Pi_5^N,
    \hco(\gamma)\leq 3 } \partial^5_{x_{  \gamma_1},\dots,x_{ \gamma_5 }} u(t_{m+1},\color{black} \overline{\bodX}_{q_1}^{N,h} \color{black}  )
     ~ \bigg|^2~ \bigg]
    \dd q_1~ \bigg)^{1/2} 
    \\
    &+
    K h^{2}
     \int_{t_m}^{t_m+h} 
     ~\bigg(\bE \bigg[ ~ \bigg|  ~
    \sum_{ \gamma \in \Pi_6^N,
    \hco(\gamma)\leq 3 } \partial^6_{x_{  \gamma_1},\dots,x_{ \gamma_6 }} u(t_{m+1},\color{black} \overline{\bodX}_{q_1}^{N,h} \color{black}  )
     ~ \bigg|^2~ \bigg]\bigg)^{1/2} 
    \dd q_1~ 
    \\
    \quad 
    &
    +
    K h^{2}\bigg( ~ \int_{t_m}^{t_m+h} 
    \\
    &  ~ 
    \bE \bigg[ ~ \bigg|  ~ 
    \sum_{ \substack{    \alpha,\beta \in \bigcup_{k=0}^{5} \Pi_k^N, ~ \hco(\alpha \bigcup \beta )=3 
     \\      
     |\alpha|+|\beta|=5,~|\beta|\geq 2 
     } 
     }
     \partial^{ |\alpha| }_{x_{  \alpha_1},\dots,x_{ \alpha_{|\alpha|} }}B_{\beta_1}(   \color{black} \overline{\bodX}_{q_1}^{N,h} \color{black}     )
     ~ \partial^{|\beta| }_{x_{  \beta_1},\dots,x_{ \beta_{|\beta| } }} u(t_{m+1},\color{black} \overline{\bodX}_{q_1}^{N,h} \color{black}  )
    \bigg) ~ 
    ~ \bigg|^2~ \bigg]
    \dd q_1~ \bigg)^{1/2}.
\end{align*} 
Similar to \eqref{eq-aux:our R1 vs Leimkuhler's r1 new} and the proof of Lemma~\ref{lemma: analysis of the b0 term summation integration}, we can use the linear growth of $B_j$ in combination with the assumption $\xi \in L^{10}(\Omega,\bR)$ and the Cauchy--Schwarz inequality to establish $L^{10}$-estimates of the $B_j$ (see, Proposition~\ref{prop:basic_estimates11} and Lemma~\ref{prop: moment bound extension}) which, for instance, are employed in the estimation of $R^3_{t_m},R^5_{t_m},R^8_{t_m}$. An application of Lemma~\ref{lemma: higher-order derivative of u} allows to control the moments of derivatives of the solution, $u$, to the Kolmogorov backward equation. Hence, all the above expectations are well-defined and bounded by $K (h^{5/2}+h^3) e^{ -\lambda_0 (T-t_{m+1})}$.

\medskip

\textit{Part 3: Collecting the estimates.} For the eight residual terms $R^1_{t_m},\ldots,R^8_{t_m}$, we have
\begin{align*}
    \sum_{i=1}^8  \big|    
    \mathbb{E}  \big[          R^i_{t_m}       \big]    \big| 
    \leq 
    K  (h^{5/2} +h^3)  e^{-\lambda_0 (T-t_{m+1})}.
\end{align*}  
Combining all of the above estimates, we conclude
\begin{align*}
    \sum_{m=0}^{M-1}
    \mathbb{E} \big[ R(t_m,\boldsymbol{X}^{N,h}_{t_m})     \big]  
    \leq 
    \sum_{m=0}^{M-1}
    \sum_{i=1}^8
     \big|    
    \mathbb{E}  \big[         R^i_{t_m}      \big]   \big|  
    \leq 
    \sum_{m=0}^{M-1}
    K (h^{5/2} +h^3) e^{-\lambda_0 (T-t_{m+1})}
    \leq Kh^{3/2}. 
\end{align*}
\end{proof}


\section{Numerical illustration}
\label{sec:Numericalexample}
 We illustrate the performance of the non-Markovian Euler scheme \eqref{eq: def : non-Markov Euler scheme} with a simple linear MV-SDE example. Consider the following mean-field equation:
 \begin{align}
    \label{eq: example: linear 1}
    {  \color{black}      
     \dd X_t = \Big(  -\alpha \big(X_t - \bE [ X_t ] \big) -X_t\Big) \dd t +
     \sigma \dd W_t } , 
     \quad 
     X_0 \in L^{10} (\Omega, \bR),
 \end{align}
where $\alpha,\sigma>0$. Explicit calculations yield $\bE[X_t]=\bE[X_0]e^{-t}$ and thus this process admits the following stationary distribution 
\begin{align}
    \label{eq: example: linear 1 density}
   \mu^{*}(  x) =  Z \exp\Big(-\frac{\alpha +1}{\sigma^2}  x^2   \Big),
\end{align}  
where $Z$ is the renormalization constant such that $\int_{\bR} \mu^{*}(x) \mathrm{d}x=1$. 
We use the following recipe to compute errors in the numerical experiments.  
\begin{enumerate}
    \item Choose a large enough domain $[a,b]$ such that $\big(F_{\textrm{cdf}}(b)-F_{\textrm{cdf}}(a)\big)>1-10^{-6}$, where $F_{\textrm{cdf}}$ denotes the cumulative density function (CDF) of the invariant distribution $\mu$ of  \eqref{eq: example: linear 1 density}.  
    \item Split the domain $[a,b]$ into $N_{\textrm{bins}}$ equally spaced bins and compute the true density in each bin. These values are denoted $ (\mu_{i}^{\textrm{true}})_{i\in \{1,\dots, N_{\textrm{bins}}\}}$ and obtained using numerical integration. 
    Additionally, the first bin and the value $\mu_{1}^{\textrm{true}}$ takes the interval $(-\infty,a]$ into account while the last bin (and the value  $\mu_{N_{\textrm{bins}}}^{\textrm{true}}$) takes the interval $[b,\infty)$. In this way, one has 
    $\sum_{i=1}^{ N_{\textrm{bins}}  } \mu_{i}^{\textrm{true}} = 1$. 
    \item {\color{black}We simulate $N$ particles up to time $T$ using different time-stepping schemes 
 and compute an approximation of the density (using a histogram approach) denoted by $ (\mu_{i}^{\textrm{proxy}})_{i\in \{1,\dots, N_{\textrm{bins}}\}}$ based on the simulated paths.}
 {\color{black}     
    We choose $f(x)=x\cdot \1_{x\geq 0}$ for  $x\in \bR$ as the test function in the weak error test in  Figure \ref{fig: eg2 }. }
 
    \item As in \cite{leimkuhler2014long}, we compute the relative entropy error and the $L_2$-Error as  
        \begin{align*}
        \textrm{ Relative Entropy Error } 
        & =  
        \sum_{i=1}^{ N_{\textrm{bins}}  } 
        \mu_{i}^{\textrm{true}} 
        \ln \Big( \frac{\mu_{i}^{\textrm{true}}}{\mu_{i}^{\textrm{proxy}}}
            \Big)
            \\
        \textrm{ $L_2$-Error } 
        & = \sqrt{  \sum_{i=1}^{ N_{\textrm{bins}}  }      | \mu_{i}^{\textrm{true}}- \mu_{i}^{\textrm{proxy}} |^2 }.
        \end{align*} 
        
     \item We also show  a  PoC  result by  computing the $L_2$-Error for different sizes $N$ of particles  with fixed  time $T$ and timestep $h$ (chosen independently of  $T,M,N$).    
\end{enumerate}
The goal of this simple example is to simulate the IPS associated with \eqref{eq: example: linear 1} up to $T=9$ using the classical Euler method and the non-Markovian Euler method, respectively, and compare the results to \eqref{eq: example: linear 1 density}. Following the recipe above, we set the domain as $[a,b]=[-1.8,1.8]$ and split it into $72$ bins.  
\begin{figure}[h!bt]
\centering 
    \begin{subfigure}{.30\textwidth}
			\centering
 			\includegraphics[scale=0.25]{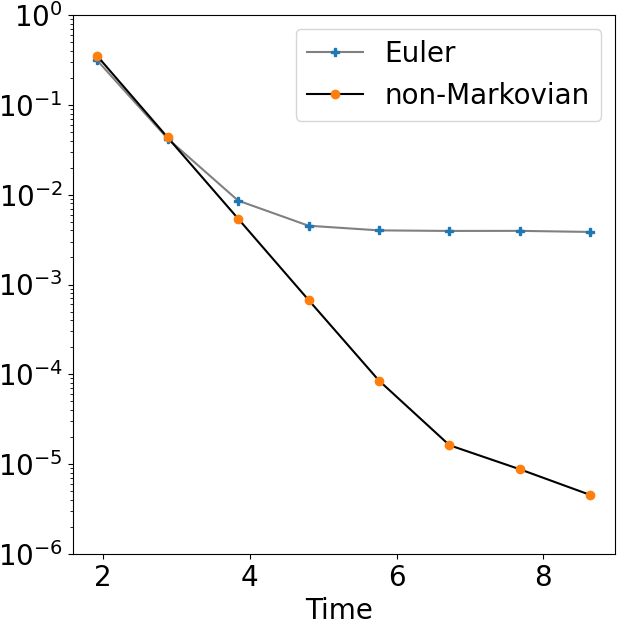}
			\caption{Rel.~Entropy Error}
		\end{subfigure}%
  \ \ 
		\begin{subfigure}{.30\textwidth}          
			\centering
 			\includegraphics[scale=0.25]{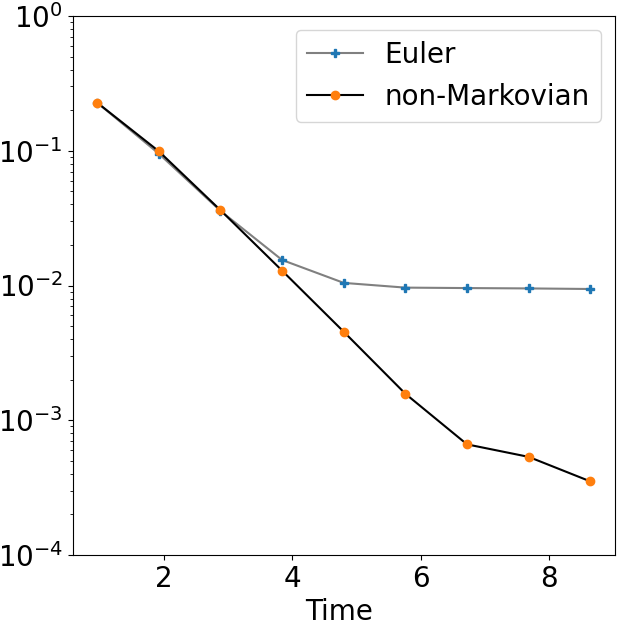}
			\caption{$L_2$-Error}
		\end{subfigure}	
  \ \ 
           \begin{subfigure}{.31\textwidth}          
			\centering
 			\includegraphics[scale=0.25]{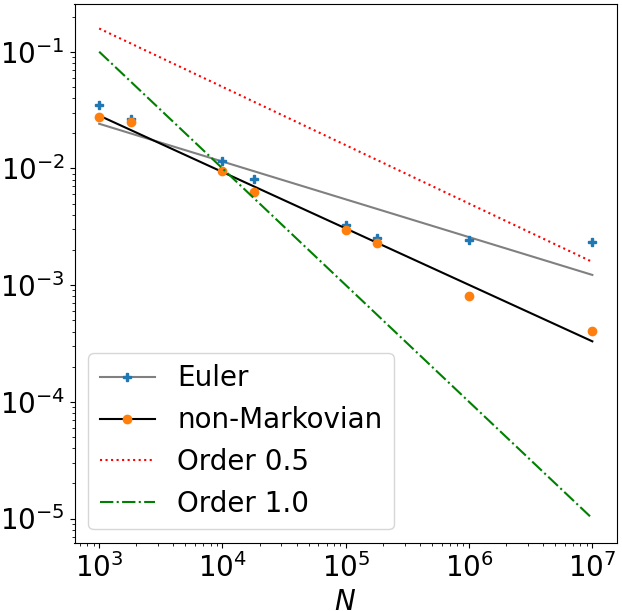}
			\caption{PoC (log-)$L_2$-Error, $T=9$}
		\end{subfigure}	
    \caption{Simulation of the linear MV-SDE \eqref{eq: example: linear 1} with $\alpha =0.5, \sigma = 0.8$, $N=10^7, h = 0.16$, and $X_0 \sim \mathcal{N}(\pi,1)$ (a normal distribution with mean value $\pi$ and variance $1$). Both schemes run on the exact same samples of the initial condition and Brownian increments.   
     (a) Entropy Error of the Euler method and non-Markovian method in log-scale over time.  
     (b) $L_2$-Error of the Euler method and non-Markovian method in log-scale over time.
     (c) $L_2$-Error in particle size $N$ of the Euler method and non-Markovian method in log-scale with respect to different number of particles $N$ at $T=9$. 
     }
    \label{fig: eg1 }
\end{figure}

Figure \ref{fig: eg1 } (a) and (b) show that the non-Markovian method (uniformly) outperforms the Euler method in the approximation of the stationary distribution $\mu$ in both error metrics and the difference in performance becomes more evident as time increases. This is expected from the result in Theorem \ref{theorem: main weak convergence}, since the first order term decays exponentially as $T$ increases. As in \cite[Fig.2]{leimkuhler2014long} the numerical errors of the methods plateau as $T$ increases with the non-Markovian method performing several orders of magnitude better.

Figure \ref{fig: eg1 } (c) shows (for a fixed timestep $h=0.04$) that the PoC $L_2$-Error of the non-Markovian method decays consistently as $N$ increases with a rate of approximately $0.5$ (the expected strong PoC $\cO(1/\sqrt{N})$ rate). The error of the Euler method plateaus for $N>10^5$ as the error from the time-discretization dominates the particle error; see also Table \ref{tab:my-table 22} below for more information.
  
\color{black}
\begin{figure}[h!bt]
\centering 
    \begin{subfigure}{.30\textwidth}
			\centering
 			\includegraphics[scale=0.25]{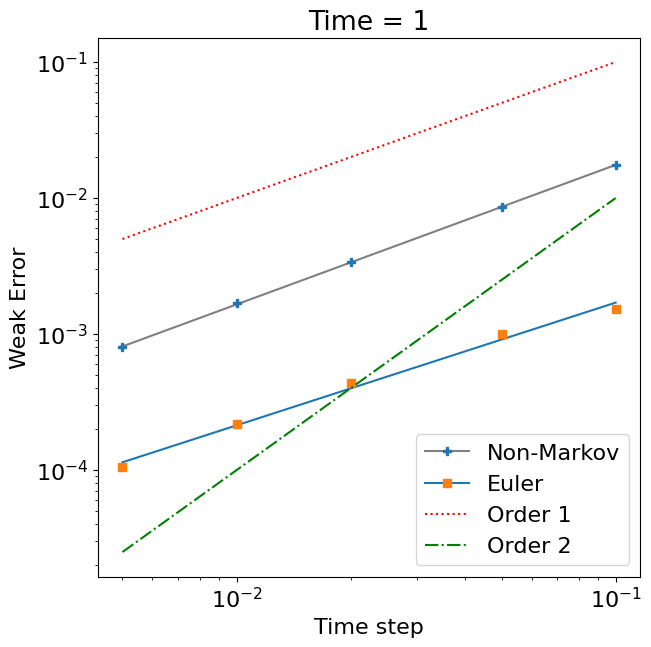}
			\caption{  $T=1$, Slope = $1.02$}
		\end{subfigure}%
  \ \ 
		\begin{subfigure}{.30\textwidth}          
			\centering
 			\includegraphics[scale=0.25]{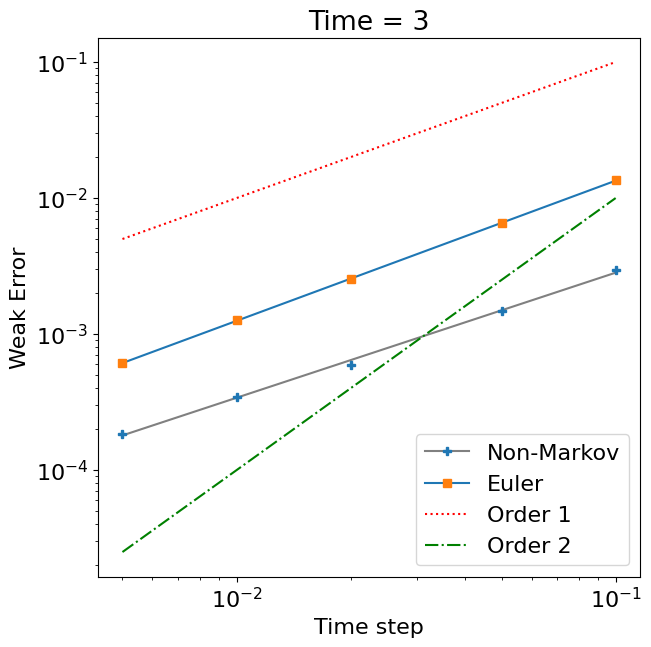}
			\caption{ $T=3$, Slope = $0.92$}
		\end{subfigure}	
  \ \ 
           \begin{subfigure}{.31\textwidth}          
			\centering
 			\includegraphics[scale=0.25]{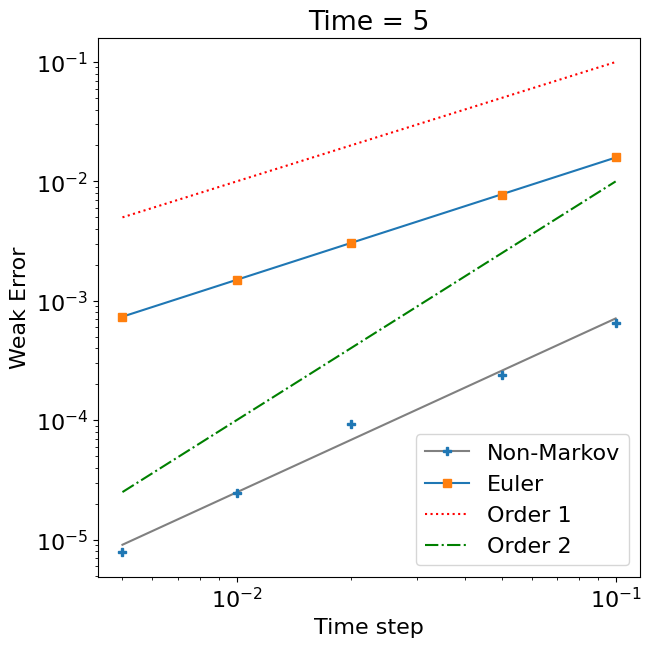}
			\caption{  $T=5$, Slope = $1.45$}
		\end{subfigure}	
    \caption{
    \color{black}
    Simulation of the linear MV-SDE \eqref{eq: example: linear 1} with $\alpha =0.5, \sigma = 0.8$, $N=10^7$, and $X_0 \sim \mathcal{N}(1,1)$ (a normal distribution with mean value $1$ and variance $1$) with different choices of time step $h\in \{0.005,0.01,0.02,0.05,0.1\}$ at different time $T\in\{1,3,5\}$; the simulation at $T=7$ or $T=9$ is not different from that in (c) at $T=5$, we thus do not present them. 
    Both schemes run on the exact same samples of the initial condition and Brownian increments.   
    We show the slope of the regression line of the non-Markovian method.  
     }
    \label{fig: eg2 }
\end{figure}

For Figure \ref{fig: eg2 }, we take for the weak error the test functional $\bE[X_T \cdot \mathbbm{1}_{X_T>0} ]$ -- this is a non-symmetric map as $\bE[X_T]=0$ since $X_T$ follows a symmetric distribution with zero mean.  
As the time increases, we see the weak error rate also increase from around $1.0$ to $1.5$. An improvement in the weak error rate is observed when increasing the number of particles from $10^5$ to $10^7$, as shown in later results. Nevertheless, the non-Markovian method outperforms the Euler method in terms of weak convergence.  
\color{black}


\begin{table}[h!bt]
\centering 
\begin{tabular}{ccccccllll}
\hline
\multirow{2}{*}{$\alpha$} & \multirow{2}{*}{$\sigma$} & \multirow{2}{*}{$a$}    & \multirow{2}{*}{$b$}   & \multirow{2}{*}{$N_{\textrm{bins}}$} & \multirow{2}{*}{h} & \multicolumn{2}{c}{Entropy Error}                      & \multicolumn{2}{c}{$L_2$-error}                           \\ \cline{7-10} 
                       &                        &                       &                      &                        &                           & \multicolumn{1}{c}{Euler}    & \multicolumn{1}{c}{NM} & \multicolumn{1}{c}{Euler}    & \multicolumn{1}{c}{NM} \\ \hline
\multirow{4}{*}{$0.5$}   & \multirow{4}{*}{$0.8$}   & \multirow{4}{*}{$-1.8$} & \multirow{4}{*}{$1.8$} & \multirow{4}{*}{$72$}    & $0.04$                      & \multicolumn{1}{l}{2.33E-04} & 4.71E-06                & \multicolumn{1}{l}{2.37E-03} & 3.56E-04                \\ \cline{6-10} 
                       &                        &                       &                      &                        & 0.16                      & \multicolumn{1}{l}{3.84E-03} & 4.33E-06                & \multicolumn{1}{l}{9.47E-03} & 3.37E-04                \\ \cline{6-10} 
                       &                        &                       &                      &                        & 0.24                      & \multicolumn{1}{l}{9.26E-03} & 4.40E-06                & \multicolumn{1}{l}{1.47E-02} & 3.25E-04                \\ \cline{6-10} 
                       &                        &                       &                      &                        & 0.48                      & \multicolumn{1}{l}{4.31E-02} & 3.25E-06                & \multicolumn{1}{l}{3.18E-02} & 2.92E-04                \\ \hline
\multirow{4}{*}{$0.3$}   & \multirow{4}{*}{$1.5$}   & \multirow{4}{*}{$-3.0$}   & \multirow{4}{*}{$3.0$}   & \multirow{4}{*}{$120$}   & $0.04$                      & \multicolumn{1}{l}{1.84E-04} & 7.85E-06                & \multicolumn{1}{l}{1.44E-03} & 3.81E-04                \\ \cline{6-10} 
                       &                        &                       &                      &                        & 0.16                      & \multicolumn{1}{l}{2.98E-03} & 5.94E-06                & \multicolumn{1}{l}{5.88E-03} & 2.82E-04                \\ \cline{6-10} 
                       &                        &                       &                      &                        & 0.24                      & \multicolumn{1}{l}{6.84E-03} & 6.07E-06                & \multicolumn{1}{l}{9.00E-03} & 3.19E-04                \\ \cline{6-10} 
                       &                        &                       &                      &                        & 0.48                      & \multicolumn{1}{l}{3.08E-02} & 5.72E-06                & \multicolumn{1}{l}{1.95E-02} & 3.09E-04                \\ \hline
\end{tabular}
\caption{Simulation results for MV-SDE \eqref{eq: example: linear 1} with $N=10^7,T=8.64$ and different choices of parameters using the non-Markovian (NM) Euler and standard Euler method, respectively. 
(As for Fig.~\ref{fig: eg1 }: $X_0 \sim \mathcal{N}(\pi,1)$ and both schemes run on the exact same samples of the initial condition and Brownian increments.)
}
\label{tab:my-table 11}
\end{table}

Table \ref{tab:my-table 11} shows that the non-Markovian method has a significantly better approximation accuracy compared to the Euler method under different choices of timesteps and model parameters. The Euler method produces larger errors as the timestep increases and the non-Markovian method yields stable results across all choices for the timestep. 

The results in Table \ref{tab:my-table 22} show (at fixed timestep $h=0.04$) the entropy error and the $L_2$-Error of the non-Markovian method decaying as the number of particles $N$ increases. However, the error of the Euler method remains stable for $N>10^5$ (i.e., there is a plateau). Due to computational limitations, we are not able to show results beyond $N>10^7$. The terminal time $T=8.64$ is chosen for convenience only (due to the smaller timestep $h=0.04$).

\begin{table}[h!bt]
\centering 
\begin{tabular}{ccccccclll}
\hline
\multirow{2}{*}{$\alpha$} & \multirow{2}{*}{$\sigma$} & \multirow{2}{*}{$a$}    & \multirow{2}{*}{$b$}   & \multirow{2}{*}{$N_{\textrm{bins}}$} & \multirow{2}{*}{$N$} & \multicolumn{2}{c}{Entropy Error}                      & \multicolumn{2}{c}{$L_2$-Error}                           \\ \cline{7-10} 
                       &                        &                       &                      &                        &                    & \multicolumn{1}{c}{Euler}    & \multicolumn{1}{c}{NM} & \multicolumn{1}{c}{Euler}    & \multicolumn{1}{c}{NM} \\ \hline
\multirow{5}{*}{$0.5$}   & \multirow{5}{*}{$0.8$}   & \multirow{5}{*}{$-1.8$} & \multirow{5}{*}{$1.8$} & \multirow{5}{*}{$72$}    & $10^{3}$                & \multicolumn{1}{c}{-}        & \multicolumn{1}{c}{-}  & \multicolumn{1}{l}{2.89E-02} & 3.28E-02                \\ \cline{6-10} 
                       &                        &                       &                      &                        & $10^4$                 & \multicolumn{1}{c}{-}        & \multicolumn{1}{c}{-}  & \multicolumn{1}{l}{1.01E-02} & 1.04E-02                \\ \cline{6-10} 
                       &                        &                       &                      &                        & $10^5$                 & \multicolumn{1}{l}{8.21E-04} & 4.83E-04                & \multicolumn{1}{l}{4.29E-03} & 3.10E-03                \\ \cline{6-10} 
                       &                        &                       &                      &                        & $10^6$                 & \multicolumn{1}{l}{2.74E-04} & 4.66E-05                & \multicolumn{1}{l}{2.31E-03} & 1.26E-03               \\ \cline{6-10} 
                       &                        &                       &                      &                        & $10^7$                 & \multicolumn{1}{l}{2.33E-04} & 4.71E-06                & \multicolumn{1}{l}{2.37E-03} & 3.56E-04                \\ \hline
\end{tabular}
\caption{ 
Simulation results for MV-SDE \eqref{eq: example: linear 1} with $h=0.04$ and $T=8.64$ for increasing numbers of particles $N$. (As for Fig.~\ref{fig: eg1 }: $X_0 \sim \mathcal{N}(\pi,1)$ and both schemes run on the exact same samples of the initial condition and Brownian increments.) 
}
\label{tab:my-table 22}
\end{table}


\appendix
\section{Appendix }

\subsection{Proof of Proposition~\ref{prop:basic_estimates11}}
\label{appendix_proof_basic11}

\begin{proof}[Proof of Proposition~\ref{prop:basic_estimates11}]

Assumption~\ref{assum:main} is sufficient to guarantee the existence of the unique stationary distribution; see  \cite{cattiaux2008probabilistic} (under their Assumption~(A')). The uniform PoC result follows from \cite[Theorem 1.2]{Malrieu2001}. This addresses the proposition's last two statements.

The system's wellposedness (as an SDE in $\bR^N$) follows \cite{Malrieu2001}. For completeness, we present a short proof for the moment stability of the IPS, highlighting that the constant $K$ appearing in the RHS of the inequality is independent of $t$ and $N$.

Let $p \geq 2$. Performing similar calculations as in \cite[Appendix A]{chen2022SuperMeasureIMA}  and applying Gronwall's inequality, we deduce that there exists some positive constant $K $ independent of $N,t \geq 0$, such that for any  $i \neq j$ 
\begin{align}
    \label{eq: contraction particle system}
    \mathbb{E}[\,|X^{i,N}_t-X^{j,N}_t|^{p}] \leq K    < \infty.
\end{align} 
Employing It\^{o}'s formula, we deduce for any $t \geq 0$ , $p\geq 2$ and $\kappa \in [0,\lambda)$
\begin{align*}
    & e^{p\kappa t}|X^{i,N}_t|^{p} - |\xi^{i}|^{p} 
    \\
    &\leq \int_{0}^{t} p \bigg( \kappa |X_s^{i,N}|^2 + \big(    X_s^{i,N} 
      \big)   \cdot	\Big(  
    -\nabla U(X_s^{i,N}) - \frac{1}{N}\sum_{j=1}^N \nabla V(X_s^{i,N}-X_s^{j,N})  \Big)  \bigg) e^{p \kappa s} |X^{i,N}_s|^{p-2}  \mathrm{d}s 
    \\
    & \quad + \sigma^2\frac{p(p-1)}{2}\int_{0}^{t} e^{p \kappa s} |X_s^{i,N}|^{p-2} \mathrm{d}s + \sigma p\int_{0}^{t}  e^{p \kappa s} |X_s^{i,N}|^{p-2}  X_s^{i,N}  \mathrm{d}W_s^i . 
\end{align*}
Taking expectations on both sides, using the inequality: for $q \in \lbrace 1, 2 \rbrace$
\begin{align*}
    a^{p-q}b^q \leq \frac{p-q}{p} \varepsilon a^{p} + \frac{q}{p \varepsilon^{(p-q)/q}} b^{p},
    \qquad \textrm{for any $\varepsilon >0$ and $a,b >0$}, 
\end{align*}
assumption \ref{assum:main} and \eqref{eq: contraction particle system} yields
\begin{align*}
    \mathbb{E}&\left[  e^{p \kappa  t}|X^{i,N}_t|^{p} \right] - \mathbb{E}\left[|\xi^{i}|^{p} \right]  
    \\
    & \leq  p \int_{0}^{t} ( \kappa-\lambda)  \mathbb{E}\left[ e^{p \kappa 
 s}|X^{i,N}_s|^{p}  \right] \mathrm{d}s 
 \\
 & \qquad \qquad 
    + 
      \int_{0}^{t} e^{p \kappa  s }
    \Big( \frac{K}{N}\sum_{j=1}^N\bE \Big[  |X^{i,N}_s|^{p-1}~|X^{i,N}_s-X^{j,N}_s|  \Big] \Big) 
    \mathrm{d}s
    + 
    K \int_{0}^{t} e^{p \kappa  s } \mathbb{E} \left[  |X_s^{i,N}|^{p-2} \right] \mathrm{d}s 
    \\
    & \leq  
    p \int_{0}^{t} ( \kappa-\lambda+\varepsilon)  \mathbb{E}\left[ e^{\kappa p s}|X^{i,N}_s|^{p}  \right] \mathrm{d}s 
    + 
    K \int_{0}^{t}   e^{p \kappa s }   \dd s 
    \leq K   e^{p \kappa t } ,
\end{align*}
for some positive constant $K$ and $\varepsilon>0$ arbitrarily small. 
\end{proof}
We provide the following auxiliary results:
\begin{proposition}
\label{prop:basic_estimates11xx} 
 Let the assumptions and set up of Proposition~\ref{prop:basic_estimates11} hold and let the IPS be given as in \eqref{ModelIPS}. Then there exists $K \geq 0$, independent of $t,s$ and $N$, such that for any   $s\geq t \geq 0,~s-t< 1$

\begin{align*}
   \max_{i \in \lbrace 1, \ldots, N \rbrace } 
    \bE \Big[  
        \big|X_{s}^{i,N}-X_{t}^{i,N}   \big|^2 
        \Big] \leq K(s-t).
\end{align*}
\end{proposition}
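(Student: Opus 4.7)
The plan is to start from the integral form of the IPS \eqref{ModelIPS}, namely
\[
X_s^{i,N} - X_t^{i,N} = \int_t^s B_i(\boldsymbol{X}_u^N)\,\mathrm{d}u + \sigma(W_s^i - W_t^i),
\]
and control each term separately. Squaring and using $(a+b)^2 \leq 2a^2 + 2b^2$, the Brownian part contributes exactly $2\sigma^2(s-t)$, which already has the right order. The main work is thus the drift term, which by Cauchy--Schwarz (in the time variable) satisfies
\[
\mathbb{E}\!\left[\,\Big|\int_t^s B_i(\boldsymbol{X}_u^N)\,\mathrm{d}u\Big|^2\right] \leq (s-t)\int_t^s \mathbb{E}\big[\,|B_i(\boldsymbol{X}_u^N)|^2\big]\,\mathrm{d}u.
\]

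Next I would use the explicit form of $B_i$ in \eqref{eq:def of func B}. Since $\nabla U$ and $\nabla V$ are Lipschitz by Assumption~\ref{assum:main}, they have linear growth, and one obtains the pointwise bound
\[
|B_i(\boldsymbol{x})|^2 \leq K\Big(1 + |x_i|^2 + \frac{1}{N}\sum_{j=1}^{N}|x_i - x_j|^2\Big) \leq K\Big(1 + |x_i|^2 + \frac{1}{N}\sum_{j=1}^N |x_j|^2\Big),
\]
with $K$ depending only on the Lipschitz constants of $\nabla U,\nabla V$ and on $|\nabla U(0)|,|\nabla V(0)|$, hence independent of $N$. Taking expectations and invoking Proposition~\ref{prop:basic_estimates11}(1) (applied with $p=2$), which supplies a uniform-in-$t,N$ bound on $\max_i \mathbb{E}[|X_u^{i,N}|^2]$, yields
\[
\sup_{u\geq 0} \max_{i\in\{1,\dots,N\}}\mathbb{E}\big[\,|B_i(\boldsymbol{X}_u^N)|^2\big] \leq K,
\]
again with $K$ independent of $N$ and $u$.

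Plugging this back in gives
\[
\mathbb{E}\!\left[\,\Big|\int_t^s B_i(\boldsymbol{X}_u^N)\,\mathrm{d}u\Big|^2\right] \leq K(s-t)^2,
\]
and using the hypothesis $s-t<1$ to replace $(s-t)^2$ by $(s-t)$, we conclude
\[
\max_{i\in\{1,\dots,N\}}\mathbb{E}\big[\,|X_s^{i,N} - X_t^{i,N}|^2\big] \leq 2K(s-t) + 2\sigma^2(s-t) = K'(s-t),
\]
with $K'$ independent of $s,t,N$, as required.

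There is no real obstacle here: the only subtle point is ensuring $N$-independence of the constant in the linear-growth bound for $B_i$, which is immediate from the boundedness of $\nabla^2 V$ (which controls the Lipschitz constant of $\nabla V$ uniformly) combined with the $1/N$ normalization in front of the interaction sum. The result is essentially a standard time-regularity estimate for SDEs, lifted to the IPS setting using the uniform-in-$N$ moment estimates already established in Proposition~\ref{prop:basic_estimates11}.
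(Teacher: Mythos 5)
Your proposal is correct and follows essentially the same route as the paper's proof: split the integral form into drift and noise, apply Cauchy--Schwarz (and Jensen) in time to the drift, use linear growth of $\nabla U,\nabla V$ together with the uniform-in-$t,N$ moment bound from Proposition~\ref{prop:basic_estimates11}, and conclude using $s-t<1$. The only cosmetic difference is that the paper casually labels the first step ``Itô's formula'' while you work directly from the integral representation, but the substance is identical.
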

\begin{proof}
    By  It\^o's formula and an application of the Cauchy--Schwarz and Jensen inequalities, we have that for any $s\geq t \geq 0,~(s-t)< 1$, $i\in\{1,\dots,N\}$,
    \begin{align*}
        \bE \Big[  &
        \big|X_{s}^{i,N}-X_{t}^{i,N}   \big|^2 
        \Big]
        =  \bE \bigg[ \bigg| 
        \int_t^s 
             -\nabla U(X_u^{i,N}) - \frac{1}{N}\sum_{j=1}^N \nabla V(X_u^{i,N}-X_u^{j,N}) \dd u
        + \sigma \int_t^s \dd  W_u^i
        \bigg|^2\bigg]
        \\
        & \leq K  (s-t)\int_t^s \bE \Big[  	\big|  \nabla U(X_u^{i,N})\big|^2\Big] + \frac{1}{N}\sum_{j=1}^N\bE \Big[ \big| \nabla V(X_u^{i,N}-X_u^{j,N})\big|^2	\Big]  \dd u 
        + K (s-t) 
        \\
        &\leq  K(s-t) \Big( 1 + \int_t^s \bE \Big[  	\big|  X_u^{i,N} \big|^2\Big] + 
        \frac{1}{N}\sum_{j=1}^N
        \bE \Big[  	\big|  X_u^{j,N} \big|^2\Big]\dd u \Big)         
        \leq K(s-t), 
    \end{align*}
where we used Proposition~\ref{prop:basic_estimates11} in the last estimate.      
\end{proof}

\subsection{Proof of Proposition~\ref{prop:basic_estimates22}}\label{appendix_proof_basic22} 
For clarity, we prove each of the proposition's statements separately. The main argument requires care as an analysis by cases is needed, but within each case the estimation procedure is standard. 
\begin{proof}[Proof of Proposition~\ref{prop:basic_estimates22} -- Statement (1): Moment estimates]

For the scheme defined in \eqref{eq: def : non-Markov Euler scheme}, we introduce the following notations for any $m \in \lbrace 0, \ldots, M\rbrace$,
\begin{align}
\label{eq-auc:1step of scheme}
    b_m^{i,N,h} \coloneqq -\nabla U(X^{i,N,h}_{t_{m}}) - \frac{1}{N}\sum_{j=1}^{N} \nabla V(X_{t_{m}}^{i,N,h} - X_{t_m}^{j,N,h}),
    \quad
    \Delta \overline{W}_m^{i,N,h} \coloneqq  \frac{\Delta W_{m}^i + \Delta W_{m+1}^i}{2}. 
\end{align}
Assumption~\ref{assum:main} implies that 
\begin{align*}
       \big( X_{t_{m}}^{i,N,h} - X_{t_m}^{j,N,h}    
      \big)   \cdot  &	\big(  
    ~b_m^{i,N,h}-  b_m^{j,N,h}  \big) 
    \leq 
    -\lambda 
    \big|    X_{t_{m}}^{i,N,h} - X_{t_m}^{j,N,h}  \big| ^2	,
    \\
   \big|~b_m^{i,N,h}-  b_m^{j,N,h} \big|^2  &\leq 
    2\big|\nabla U(X^{i,N,h}_{t_{m}})-\nabla U(X^{j,N,h}_{t_{m}}) \big|^2
    \\
    &\quad 
        + \frac{2}{N}\sum_{k=1}^{N}\big| \nabla V(X_{t_{m}}^{i,N,h} - X_{t_m}^{k,N,h})- \nabla V(X_{t_{m}}^{j,N,h} - X_{t_m}^{k,N,h})\big|^2
    \\
    & 
    \leq 2(\lambda^2+K_V^2)  |    X_{t_{m}}^{i,N,h} - X_{t_m}^{j,N,h}  | ^2
    \leq K  |    X_{t_{m}}^{i,N,h} - X_{t_m}^{j,N,h}  | ^2.
\end{align*}
An inspection of the above inequalities indicates that in order to establish the $L^p$-moments for $X_{t_m}^{i,N,h}$, we need $L^p$-estimates on the local differences $X_{t_m}^{i,N,h}-X_{t_m}^{j,N,h}$ (a term that appears from the interaction kernel). This proof is split accordingly; \textit{Part 1} deals with the latter while \textit{Part 2} with the former.
\smallskip

\textit{Part 1: Moments of local differences uniformly bounded in time.}
We first prove that for all $ i,j\in  \lbrace 1, \ldots, N \rbrace$, $m \in \lbrace 0, \ldots, M\rbrace, p\geq 2$, with $p$ an even number, that $\mathbb{E}\big[  |X_{t_m}^{i,N,h}-X_{t_m}^{j,N,h} |^{p}\big] $ is uniformly bounded in time. Note that due to the nature of the scheme, $ X_{t_{m}}^{i,N,h}$ is not independent of $\Delta \overline{W}_{m}^{i,N,h}$, and thus we analyze the different cases as time evolves, i.e., $m\in\{0,1,2\}$ and $m\geq 3$ below. (This same procedure will be used in \textit{Part 2} of the proof.)  
\medskip

\noindent
\textit{Case: $m=0$.} 
For any $ i,j\in \lbrace 1, \ldots, N \rbrace$, we have
\begin{align}
    \label{eq: apdx diff res m0}
    \bE \big[ |X_{t_0}^{i,N,h}-X_{t_0}^{j,N,h} |^{p} \big]
    &\leq K \bE \big[    |X_{t_0}^{i,N,h}  |^{p} + |X_{t_0}^{j,N,h}  |^{p}\big]
    =  K \bE\big[|\xi|^{p}\big].
\end{align}
\noindent
\textit{Case: $m=1$.} 
We get
\begin{align*}
    |X_{t_1}^{i,N,h}-X_{t_1}^{j,N,h} |^{2}
    & 
    =
    |X_{t_0}^{i,N,h}+b_0^{i,N,h}h-X_{t_0}^{j,N,h}-b_0^{j,N,h}h |^{2}
    +
     |\sigma\Delta \overline{W}_0^{i,N,h}-\sigma\Delta \overline{W}_0^{j,N,h}|^2
    \\
    &\quad+ 2
    \big(    X_{t_0}^{i,N,h}+b_0^{i,N,h}h-X_{t_0}^{j,N,h}-b_0^{j,N,h}h
     \big)   \cdot  	\big(   
    \sigma\Delta \overline{W}_0^{i,N,h}-\sigma\Delta \overline{W}_0^{j,N,h}
    \big) 
    \\
    & \leq
    |X_{t_0}^{i,N,h}-X_{t_0}^{j,N,h}|^{2} 
    \big(1-2\lambda h + Kh^2 \big)
     +
     |\sigma\Delta \overline{W}_0^{i,N,h}-\sigma\Delta \overline{W}_0^{j,N,h}|^2
    \\
    &\quad+ 2
    \big(   X_{t_0}^{i,N,h}+b_0^{i,N,h}h-X_{t_0}^{j,N,h}+b_0^{j,N,h}h
       \big)   \cdot  	\big(   
    \sigma\Delta \overline{W}_0^{i,N,h}-\sigma\Delta \overline{W}_0^{j,N,h}
    \big).
\end{align*}
Taking the power $p/2$ and expectation on both sides, we deduce that there exist positive constants $K_{p,1}$, $K_{p,2}$, $K$, $\kappa$ (all are independent of $h,T,M$ and $N$) such that  
\color{black}  
\begin{align*}    
\nonumber
    \bE \big[ |X_{t_1}^{i,N,h}-&X_{t_1}^{j,N,h} |^{p} \big]
    \\
    &
    \leq 
    \bE \bigg[ \Big|  
    |X_{t_0}^{i,N,h}-X_{t_0}^{j,N,h}|^{2} 
    \big(1-2\lambda h + Kh^2 \big)
     +
     |\sigma\Delta \overline{W}_0^{i,N,h}-\sigma\Delta \overline{W}_0^{j,N,h}|^2 
      \\
    &\quad+ 2
    \big(   X_{t_0}^{i,N,h}+b_0^{i,N,h}h-X_{t_0}^{j,N,h}+b_0^{j,N,h}h
       \big)   \cdot  	\big(   
    \sigma\Delta \overline{W}_0^{i,N,h}-\sigma\Delta \overline{W}_0^{j,N,h}  \big)
    \Big|^{p/2} \bigg] 
    \\
    &
    \leq
     \bE \big[ |X_{t_0}^{i,N,h}-X_{t_0}^{j,N,h} |^{p} \big] \big(1-2\lambda h + Kh^2 \big)^{p} 
     \\
     & 
     \quad + h \big( K_{p,1}\bE \big[ |X_{t_0}^{i,N,h}-X_{t_0}^{j,N,h} |^{p} \big]+ K_{p,2} \big)
     \\
    \nonumber
    & \quad +  p
    \bE \big[ |\sigma\Delta \overline{W}_0^{i,N,h}-\sigma\Delta \overline{W}_0^{j,N,h}|^2   \cdot   |X_{t_0}^{i,N,h}-X_{t_0}^{j,N,h} |^{p-2}  \big(1-2\lambda h + Kh^2 \big)^{p-2} \big]
    \\ \nonumber
    &\leq (1-K_{p,1} h ) \bE \big[ |X_{t_0}^{i,N,h}-X_{t_0}^{j,N,h} |^{p} \big]+~ K_{p,2} h\big(1+  \bE\big[|\xi|^{p-2}\big]  \big)
    \\
    &\leq
    K\big(  h+ \bE\big[|\xi|^{p }\big] + \bE\big[|\xi|^{p-2}\big]  \big)  
    \leq  
    K e^{\kappa t_1} \big( 1+\bE\big[|\xi|^{p}\big]e^{-  {\kappa t_1} }   \big),
\end{align*}
where we keep the $\cO(1)$ and $\cO(h)$ terms by expanding the formula in the first inequality and bound the $o(h)$ terms by $\cO(h)$ terms via Young's inequality.
\color{black}
Note that in this case (and the case $m=2$) the factor $e^{- \kappa t_1}$ ($e^{- \kappa t_2}$ for $m=2$) is only added to make it consistent with the estimates obtained for a general $m$. \\ \\
\noindent
\textit{Case: $m=2$.} 
Based on the calculations above, there exist positive constants $ K, {\kappa}$ (both are independent of $h,T,M$ and $N$) such that  
\begin{align}
    \label{eq: apdx diff res m2}
     \bE \big[ |X_{t_2}^{i,N,h}-X_{t_2}^{j,N,h} |^{p} \big]
    & 
    \leq   K e^{\kappa t_2}(1+\bE\big[|\xi|^{p}\big]e^{- {\kappa} t_2} ).
\end{align}
Note that below we will show that the constant on the right-hand side will not blow up as $m$ increases.  \\ \\
 \noindent
\textit{Case: $m \geq 3$.} 
More generally, for $m \geq 3$, we have 
\begin{align}
    \label{eq: m3 square expression}
    |X_{t_m}^{i,N,h}-X_{t_m}^{j,N,h} |^{2}
    & 
    \leq \big(1-2\lambda h + Kh^2 \big)
    |X_{t_{m-1}}^{i,N,h}-X_{t_{m-1} }^{j,N,h}|^{2} 
     +
    \sigma^2|\Delta \overline{W}_{m-1}^{i,N,h}-\Delta \overline{W}_{m-1}^{j,N,h}|^2
    \\
    \nonumber
    &\quad+ 2
     \big(   X_{t_{m-1}}^{i,N,h}+b_{m-1}^{i,N,h}h-X_{t_{m-1}}^{j,N,h}+b_{m-1}^{j,N,h}h
     \big)   \cdot  	\big(   
    \sigma\Delta \overline{W}_{m-1}^{i,N,h}-\sigma\Delta \overline{W}_{m-1}^{j,N,h}
    \big) .
\end{align}
Since for $m\geq 3$, $ X_{t_{m-1}}^{i,N,h}$ is not independent of $\Delta \overline{W}_{m-1}^{i,N,h}$, we further expand $ X_{t_{m-1}}^{\cdot,N,h}$ to get 
\begin{align*}
    \big(  &  X_{t_{m-1}}^{i,N,h} +b_{m-1}^{i,N,h}h-X_{t_{m-1}}^{j,N,h}+b_{m-1}^{j,N,h}h
      \big)   \cdot  	\big(   
    \sigma\Delta \overline{W}_{m-1}^{i,N,h}-\sigma\Delta \overline{W}_{m-1}^{j,N,h}
    \big)
    \\
    & 
    \leq
    \big(    X_{t_{m-2}}^{i,N,h} -X_{t_{m-2}}^{j,N,h} 
    + b_{m-2}^{i,N,h}h - b_{m-2}^{j,N,h}h
    + \sigma\Delta \overline{W}_{m-2}^{i,N,h}-\sigma\Delta \overline{W}_{m-2}^{j,N,h}
      \big)   \cdot  	\big(   
    \sigma\Delta \overline{W}_{m-1}^{i,N,h}-\sigma\Delta \overline{W}_{m-1}^{j,N,h}
    \big) 
    \\
    & \quad +
   Kh  \big|  X_{t_{m-1}}^{i,N,h}-X_{t_{m-1} }^{j,N,h} \big| 
     \big|
      \sigma\Delta \overline{W}_{m-1}^{i,N,h}-\sigma\Delta \overline{W}_{m-1}^{j,N,h}
    \big|. 
\end{align*}
Define the following local quantities: for all $i,j \in \lbrace 1, \ldots, N\rbrace	$  
\begin{align*}
    G_{m,1}^{i,j} &= \big(1-2\lambda h + Kh^2 \big)
    |X_{t_{m-1}}^{i,N,h}-X_{t_{m-1} }^{j,N,h}|^{2} 
    ,\quad
    G_{m,2}^{i,j} =
    \sigma^2|\Delta \overline{W}_{m-1}^{i,N,h}-\Delta \overline{W}_{m-1}^{j,N,h}|^2
    \\
     G_{m,3}^{i,j} & = 2\big(   X_{t_{m-2}}^{i,N,h} -X_{t_{m-2}}^{j,N,h}
     + \sigma\Delta \overline{W}_{m-2}^{i,N,h}-\sigma\Delta \overline{W}_{m-2}^{j,N,h}
      \big)   \cdot  	\big(   
    \sigma\Delta \overline{W}_{m-1}^{i,N,h}-\sigma\Delta \overline{W}_{m-1}^{j,N,h}
    \big) 
    ,
    \\
    G_{m,4}^{i,j} &=2
     \big(     b_{m-2}^{i,N,h}h - b_{m-2}^{j,N,h}h
     \big)   \cdot  	\big(   
    \sigma\Delta \overline{W}_{m-1}^{i,N,h}-\sigma\Delta \overline{W}_{m-1}^{j,N,h}
    \big) 
    \\
    G_{m,5}^{i,j} &= 2 Kh  \big|  X_{t_{m-1}}^{i,N,h}-X_{t_{m-1} }^{j,N,h} \big| 
     \big|
      \sigma\Delta \overline{W}_{m-1}^{i,N,h}-\sigma\Delta \overline{W}_{m-1}^{j,N,h}
    \big|.
\end{align*}
Using these local quantities, we can express the estimate \eqref{eq: m3 square expression} as follows:
\begin{align*}
    |X_{t_m}^{i,N,h}-X_{t_m}^{j,N,h} |^{2}
    & 
    \leq \big( G_{m,1}^{i,j}+ G_{m,2}^{i,j}+ G_{m,3}^{i,j}+ G_{m,4}^{i,j}+ G_{m,5}^{i,j} \big) .
\end{align*}
Now, taking the power of $p/2$ and expectations on both sides, in combination with Young’s inequality, we have for  some positive constants $K_{p,1}, K_{p,2}, K$ ( both are independent of $h,T,M$ and $N$)  such that  
\begin{align}
\nonumber
   \mathbb{E}  \Big[ &\big( G_{m,1}^{i,j}+ G_{m,2}^{i,j}+ G_{m,3}^{i,j}+ G_{m,4}^{i,j}+ G_{m,5}^{i,j} \big)^{p/2}  \Big] 
   \\ \nonumber 
   &
    =  \sum_{l=0}^{p/2} \binom{p/2}{l} \mathbb{E}  \Big[
   (G_{m,1}^{i,j})^{p/2-l} (G_{m,2}^{i,j}+ G_{m,3}^{i,j}+ G_{m,4}^{i,j}+ G_{m,5}^{i,j})^{l} \Big]
   \\
   \nonumber 
   & \leq \mathbb{E} \big[   | G_{m,1}^{i,j}|^{p/2} \big]  
    \\
    \label{eq: g terms 15}
    &
   \qquad + p\mathbb{E}    \Big[    | G_{m,1}^{i,j}|^{p/2-1} (G_{m,2}^{i,j} +G_{m,3}^{i,j}+G_{m,4}^{i,j}+ G_{m,5}^{i,j}) 
    + 
    | G_{m,1}^{i,j}|^{p/2-2}
    |G_{m,3}^{i,j}|^2
    \Big] 
    \\
    \label{eq: g terms 15-2}
    &\qquad 
    +    h^{3/2}  \Big(    K_{p,1}\bE \big[ \big|  X_{t_{m-1}}^{i,N,h}-X_{t_{m-1} }^{j,N,h} 
     \big|^{p} \big]  + K_{p,2}\bE \big[  \big|  X_{t_{m-2}}^{i,N,h}-X_{t_{m-2} }^{j,N,h} 
     \big|^{p}\big]       +K   \Big),
\end{align}
where we used the fact that 
\begin{align*}
    & \mathbb{E}  \Big[(G_{m,1}^{i,j})^{p/2-2} (G_{m,2}^{i,j}+ G_{m,3}^{i,j}+ G_{m,4}^{i,j}+ G_{m,5}^{i,j})^{2}  \Big]
    \\
    &\leq  
    \mathbb{E}    \Big[   
    | G_{m,1}^{i,j}|^{p/2-2}
    |G_{m,3}^{i,j}|^2
    \Big] 
    +
     h^{3/2}  \Big(    K\bE \big[ \big|  X_{t_{m-1}}^{i,N,h}-X_{t_{m-1} }^{j,N,h} 
     \big|^{p} \big]  + K\bE \big[  \big|  X_{t_{m-2}}^{i,N,h}-X_{t_{m-2} }^{j,N,h} 
     \big|^{p}\big]       +K   \Big)
     ,
\end{align*}
\color{black}
and the terms for $l \in \{3,\dots, p/2\} $ are also bounded by the  term in \eqref{eq: g terms 15-2}.  
The \eqref{eq: g terms 15} terms are estimated next: 
\color{black}
\begin{align*}
    & \bE \big[  (G_{m,1}^{i,j})^{p/2-1} G_{m,5}^{i,j} \big] 	
      \\
      & \leq Kh \mathbb{E}\left[
    \big|  X_{t_{m-2}}^{i,N,h}-X_{t_{m-2} }^{j,N,h} 
     +  b_{m-2}^{i,N,h}h - b_{m-2}^{j,N,h}h
    + \sigma\Delta \overline{W}_{m-2}^{i,N,h}-\sigma\Delta \overline{W}_{m-2}^{j,N,h}
     \big|^{p-1}
     \big|  \sigma\Delta \overline{W}_{m-1}^{i,N,h}-\sigma\Delta \overline{W}_{m-1}^{j,N,h} \big| \right]
     \\
     &
     \leq  Kh\bE \big[   	
     \big|  X_{t_{m-2}}^{i,N,h}-X_{t_{m-2} }^{j,N,h} 
     \big|^{p-1} \big|  \sigma\Delta \overline{W}_{m-1}^{i,N,h}-\sigma\Delta \overline{W}_{m-1}^{j,N,h} \big|\big]  + 
      K h^{3/2} \bE \big[   	
     \big|  X_{t_{m-2}}^{i,N,h}-X_{t_{m-2} }^{j,N,h} 
     \big|^{p-1}\big]  +Kh^{3/2}
     \\
     &\leq K h^{3/2} \bE \big[   	
     \big|  X_{t_{m-2}}^{i,N,h}-X_{t_{m-2} }^{j,N,h} 
     \big|^{p-1}\big]  + Kh^{3/2}.
\end{align*}
Also, note that we have the bound 
\begin{align*}
     \bE \big[  (G_{m,1}^{i,j})^{p/2}  \big] 	
      &\leq (1-Kh)   \bE \big[  	
     \big|  X_{t_{m-1}}^{i,N,h}-X_{t_{m-1} }^{j,N,h} 
     \big|^{p} \big].
\end{align*} 
In the following, $\varepsilon>0$ (arbitrarily small)  and  $K>0$ (both are independent of $h,T,M$ and $N$) will denote positive constants appearing due to the application of Young's inequality. In addition recall that $X_{t_{m-2}}^{\cdot,N,h}  $ is independent of $W_{m-1}^{\cdot,N,h}$. Consequently, we derive  
\begin{align*}
     \bE \big[ & ( G_{m,1}^{i,j})^{p/2-1} G_{m,2}^{i,j}  \big] 	
     \leq  
      (1-Kh)  \bE \big[  	
         \big|X_{t_{m-1}}^{i,N,h}-X_{t_{m-1} }^{j,N,h} 
     \big|^{p-2}  \sigma^2|\Delta \overline{W}_{m-1}^{i,N,h}-\Delta \overline{W}_{m-1}^{j,N,h}|^2\big] 
      \\
     &\leq 
      (1-Kh)  \bE \big[  	
       (K +\varepsilon\big|X_{t_{m-1}}^{i,N,h}-X_{t_{m-1} }^{j,N,h} 
     \big|^{p}) \sigma^2|\Delta \overline{W}_{m-1}^{i,N,h}-\Delta \overline{W}_{m-1}^{j,N,h}|^2\big]  
     \\
     &\leq
     Kh +\varepsilon
     \bE \big[  	
     \big|  X_{t_{m-2}}^{i,N,h}-X_{t_{m-2} }^{j,N,h} 
     +  b_{m-2}^{i,N,h}h - b_{m-2}^{j,N,h}h
    + \sigma\Delta \overline{W}_{m-2}^{i,N,h}-\sigma\Delta \overline{W}_{m-2}^{j,N,h}
     \big|^{p} \sigma^2|\Delta \overline{W}_{m-1}^{i,N,h}-\Delta \overline{W}_{m-1}^{j,N,h}|^2\big]
     \\
     &\leq
     Kh + \varepsilon
      \bE \big[  	
     \big|  X_{t_{m-2}}^{i,N,h}-X_{t_{m-2} }^{j,N,h}  
     \big|^{p} \sigma^2|\Delta \overline{W}_{m-1}^{i,N,h}-\Delta \overline{W}_{m-1}^{j,N,h}|^2\big]
     + K h^{3/2} 
     \\
     &\quad+
     K \bE \big[  	
     \big|   b_{m-2}^{i,N,h}h - b_{m-2}^{i,N,h}h  
     \big|^{p}\sigma^2|\Delta \overline{W}_{m-1}^{i,N,h}-\Delta \overline{W}_{m-1}^{j,N,h}|^2\big]
     \\
     &\quad+
     K \bE \big[  	
     \big|  \sigma\Delta \overline{W}_{m-2}^{i,N,h}-\sigma\Delta \overline{W}_{m-2}^{j,N,h} 
     \big|^{p}  \sigma^2|\Delta \overline{W}_{m-1}^{i,N,h}-\Delta \overline{W}_{m-1}^{j,N,h}|^2\big]
     \leq Kh +  \varepsilon h \bE \big[  	
     \big|  X_{t_{m-2}}^{i,N,h}-X_{t_{m-2} }^{j,N,h}  
     \big|^{p} \big].
\end{align*} 
Similarly, we have  
\begin{align*}
     \bE \big[ & ( G_{m,1}^{i,j})^{p/2-1} (G_{m,3}^{i,j}+G_{m,4}^{i,j})  \big] 
     \leq
     Kh +  \varepsilon h \bE \big[  	
     \big|  X_{t_{m-1}}^{i,N,h}-X_{t_{m-1} }^{j,N,h}  
     \big|^{p} \big]
     \\
     &
      \quad + 2(1-Kh) \bE \big[  	
     \big|  X_{t_{m-1}}^{i,N,h}-X_{t_{m-1} }^{j,N,h} 
     \big|^{p-2}  
     \big(   X_{t_{m-1}}^{i,N,h} -X_{t_{m-1}}^{j,N,h}
       \big)   \cdot  	\big(   
    \sigma\Delta \overline{W}_{m-1}^{i,N,h}-\sigma\Delta \overline{W}_{m-1}^{j,N,h}
     \big)
     \big] 
     \\
     &
     \leq K
     \bE \Big[ 
     \big|  \sigma\Delta \overline{W}_{m-2}^{i,N,h}-\sigma\Delta \overline{W}_{m-2}^{j,N,h} 
     \big|^{p-1}
     \big(    X_{t_{m-1}}^{i,N,h} -X_{t_{m-1}}^{j,N,h}
      \big)   \cdot  	\big(  
    \sigma\Delta \overline{W}_{m-1}^{i,N,h}-\sigma\Delta \overline{W}_{m-1}^{j,N,h}
    \big)
     \Big] 
     \\&\quad +K
     \bE \Big[ 
     \big|    X_{t_{m-2}}^{i,N,h}-X_{t_{m-2} }^{j,N,h}  
     \big|^{p-1}
     \big(   \sigma\Delta \overline{W}_{m-2}^{i,N,h}-\sigma\Delta \overline{W}_{m-2}^{j,N,h}
     \big)   \cdot  	\big(  
    \sigma\Delta \overline{W}_{m-1}^{i,N,h}-\sigma\Delta \overline{W}_{m-1}^{j,N,h}
     \big) 
     \Big]  
     \\&\quad +
     K
     \bE \Big[ 
        \big|  b_{m-2}^{i,N,h}h - b_{m-2}^{i,N,h}h  
     \big|^{p-1}
     \big( \sigma\Delta \overline{W}_{m-2}^{i,N,h}-\sigma\Delta \overline{W}_{m-2}^{j,N,h}
      \big)   \cdot  	\big(  
    \sigma\Delta \overline{W}_{m-1}^{i,N,h}-\sigma\Delta \overline{W}_{m-1}^{j,N,h}
     \big) 
     \Big] 
      \\&\quad 
     +  \varepsilon h \bE \big[  	
     \big|  X_{t_{m-1}}^{i,N,h}-X_{t_{m-1} }^{j,N,h}  
     \big|^{p} \big] +
     Kh + Kh^{3/2}
     \\
     &
     \leq K
     \bE \Big[ 
     \big|  \sigma\Delta \overline{W}_{m-2}^{i,N,h}-\sigma\Delta \overline{W}_{m-2}^{j,N,h} 
     \big|^{p-1}
     \big(     X_{t_{m-3}}^{i,N,h} -X_{t_{m-3}}^{j,N,h}
     \big)   \cdot  	\big(  
    \sigma\Delta \overline{W}_{m-1}^{i,N,h}-\sigma\Delta \overline{W}_{m-1}^{j,N,h}
     \big) 
     \Big] 
     \\&\quad +K
     \bE \Big[ 
     \big|    X_{t_{m-3}}^{i,N,h}-X_{t_{m-3} }^{j,N,h}  
     \big|^{p-1}
     \big(  \sigma\Delta \overline{W}_{m-2}^{i,N,h}-\sigma\Delta \overline{W}_{m-2}^{j,N,h}
      \big)   \cdot  	\big(   
    \sigma\Delta \overline{W}_{m-1}^{i,N,h}-\sigma\Delta \overline{W}_{m-1}^{j,N,h}
    \big) 
     \Big] 
     \\&\quad 
     +  \varepsilon h \bE \big[  	
     \big|  X_{t_{m-1}}^{i,N,h}-X_{t_{m-1} }^{j,N,h}  
     \big|^{p} \big] +
     Kh + Kh^{3/2}
     \\
     &  
     \leq  
     Kh + Kh^{3/2} + \varepsilon h \bE \big[  	
     \big|  X_{t_{m-3}}^{i,N,h}-X_{t_{m-3} }^{j,N,h}  
     \big|^{p} \big] + 
     \varepsilon h \bE \big[  	
     \big|  X_{t_{m-1}}^{i,N,h}-X_{t_{m-1} }^{j,N,h}  
     \big|^{p} \big] ,
\end{align*}
where we expanded the second term in the second inequality, used Young’s inequality and the fact that $X_{t_{m-3}}^{\cdot,N,h}  $ is independent of $W_{m-2}^{\cdot,N,h}$. For the last term, we have 
 \begin{align*}
     \bE &\big[  ( G_{m,1}^{i,j})^{p/2-2} |G_{m,3}^{i,j}|^2  \big] 	
     \\
     &
      \leq 
      K \bE \big[  	
     \big|  X_{t_{m-1}}^{i,N,h}-X_{t_{m-1} }^{j,N,h} 
     \big|^{p-4}  \big( 
     \big|  X_{t_{m-2}}^{i,N,h} -X_{t_{m-2}}^{j,N,h}
       \big|^2 
       +  \big|  
    \sigma\Delta \overline{W}_{m-2}^{i,N,h}-\sigma\Delta \overline{W}_{m-2}^{j,N,h}
     \big|^2
       \big)
      \\
      & \qquad \quad
       \cdot  	\big|  
    \sigma\Delta \overline{W}_{m-1}^{i,N,h}-\sigma\Delta \overline{W}_{m-1}^{j,N,h}
     \big|^2
     \big] 
      \\
     & 	\leq
      K \bE \big[  	
     \big|  X_{t_{m-2}}^{i,N,h}-X_{t_{m-2} }^{j,N,h} 
     +  b_{m-2}^{i,N,h}h - b_{m-2}^{j,N,h}h
    + \sigma\Delta \overline{W}_{m-2}^{i,N,h}-\sigma\Delta \overline{W}_{m-2}^{j,N,h} 
     \big|^{p-4}  
     \\
     & \qquad \quad \cdot 
     \big( 
     \big|  X_{t_{m-2}}^{i,N,h} -X_{t_{m-2}}^{j,N,h}
       \big|^2 
       +  \big|  
    \sigma\Delta \overline{W}_{m-2}^{i,N,h}-\sigma\Delta \overline{W}_{m-2}^{j,N,h}
     \big|^2
       \big)\cdot  	\big|  
    \sigma\Delta \overline{W}_{m-1}^{i,N,h}-\sigma\Delta \overline{W}_{m-1}^{j,N,h}
     \big|^2
     \big] 
      \\
     & 	\leq
      Kh    + \varepsilon h \bE \big[  	
     \big|  X_{t_{m-2}}^{i,N,h}-X_{t_{m-2} }^{j,N,h}  
     \big|^{p} \big]
     . 
\end{align*}
Hence, there exist positive constants $K_{p,3}, K_{p,4},K_{p,5}$ (all independent of $h,T,M$ and $N$)  
 satisfying $K_{p,3}>2 K_{p,4} $ 
 (by carefully choosing the constants in Young's inequality) such that  
\begin{align}
\nonumber
    \bE &\big[ |X_{t_m}^{i,N,h}-X_{t_m}^{j,N,h} |^{p} \big]
    \\
    \nonumber
    &
    \leq 
    (1-K_{p,3} h ) \bE \big[ |X_{t_{m-1}}^{i,N,h}-X_{t_{m-1}}^{j,N,h} |^{p} \big]
    \\ 
    \nonumber
    & \qquad \qquad 
    + K_{p,4} h \big( \bE \big[ |X_{t_{m-2}}^{i,N,h}-X_{t_{m-2}}^{j,N,h} |^{p} \big]
    + \bE \big[ |X_{t_{m-3}}^{i,N,h}-X_{t_{m-3}}^{j,N,h} |^{p} \big]
    \big) 
    +K_{p,5} h \\
    \label{eq: i j  pre  difference }
    &\leq
    K
    + Ke^{2\kappa}(1+\bE\big[|\xi|^{p}\big]  )e^{-(K_{p,3}-2K_{p,4})t_{m}/3},
\end{align}
for some constant $K$ (independent of $h,T,M$ and $N$). {\color{black}In the last estimate, we used Lemma~\ref{prop: sequence inequality 1} with $c_1 \equiv (1-K_{p,3} h ), c_2 \equiv K_{p,4} h, c_3 \equiv K_{p,4} h$, $C \equiv K_{p,5} h$ and the moment bounds for the differences process at $\{t_0,t_1,t_2\}$ in \eqref{eq: apdx diff res m0} and \eqref{eq: apdx diff res m2}}. Note that the condition $0<c_1 + c_2 + c_3  <1$ is satisfied for our choice $h\in(0,\min \{1/2\lambda,1\})$.

We conclude that there exist some positive constants $K,  {\kappa}$ (both independent of $h,T,M$ and $N$), such that for all $i,j \in \lbrace 1, \ldots, N \rbrace$, and $m\geq 0$
\begin{align}
    \label{eq: differences bound for shceme}
    \bE &\big[ |X_{t_m}^{i,N,h}-X_{t_m}^{j,N,h} |^{p} \big]
    \leq
    K\big(1+ \bE\big[|\xi|^{p}\big]e^{ -{\kappa} t_m } \big).
\end{align}
\smallskip

\noindent
\textit{Part 2: Moments are uniformly bounded in time.}
Let $p \geq 2$ be given. We now prove that for all $ i\in \lbrace 1, \ldots, N \rbrace, m\geq 0,$ $   \mathbb{E}\big[  |X_{t_m}^{i,N,h}|^{p}\big]$ is uniformly bounded in time. As in \textit{Part 1}, we separately consider the cases $m\in\{0,1,2\}$ then $m\geq 3$.

Let $ i\in \lbrace 1, \ldots, N \rbrace$ be arbitrary and set $m=0$. Then we have by assumption on the initial data 
\begin{align*}
    \bE\big[  |X_{0}^{i,N,h}|^{p}\big]
    = \bE\big[|\xi|^{p}\big] < \infty.
\end{align*}
\noindent
\textit{Case 1: $m=1$.}  
Due to Assumption~\ref{assum:main} and Jensen's inequality, it follows that 
\begin{align*}
    |X_{t_1}^{i,N,h} |^{2}
    & 
    =
    |X_{t_0}^{i,N,h}+b_0^{i,N,h}h  |^{2}
    +
     |\sigma\Delta \overline{W}_0^{i,N,h}|^2
    +
   2 \big(   X_{t_0}^{i,N,h}+b_0^{i,N,h}h 
     \big)   \cdot  	\big(   
    \sigma\Delta \overline{W}_0^{i,N,h}
    \big) 
    \\
    & \leq
    (1-2\lambda h+ Kh^2)|X_{t_0}^{i,N,h} |^{2}
    + 2 \big(    X_{t_0}^{i,N,h}+b_0^{i,N,h}h 
      \big)   \cdot  	\big(   
    \sigma\Delta \overline{W}_0^{i,N,h}
     \big) +  |\sigma\Delta \overline{W}_0^{i,N,h}|^2
    \\
    &\quad +
     \frac{2K_V h}{N}\sum_{j=1}^{N} | X_{t_{0}}^{i,N,h} - X_{t_0}^{j,N,h} ||X_{t_0}^{i,N,h} |
     + \frac{K_V^2 h^2}{N}\sum_{j=1}^{N} | X_{t_{0}}^{i,N,h} - X_{t_0}^{j,N,h} |^2.
\end{align*}
 
Raising to the power $p/2$ and taking the expectation on both sides above, an application of \eqref{eq: differences bound for shceme}, Jensen's inequality and Young's inequality shows that there exist  positive constants  $K_{p,6},\kappa$ (both independent of $h,T,M$ and $N$) such that 
\begin{align*}
    \bE \big[  |X_{t_1}^{i,N,h} |^{p} \big] 	
    &\leq
    e^{ \kappa t_1} K_{p,6}  \big( 1 + \bE\big[|\xi|^{p}\big] e^{- {\kappa} t_1 }
      \big).
\end{align*}
\textit{Case 2: $m=2$.} 
There exist positive constants  $K_{p,6}, \kappa$ (both are independent of $h,T,M$ and $N$) such that
\begin{align*}
    \bE \big[  |X_{t_2}^{i,N,h} |^{p} \big] 	
    &\leq
    e^{ \kappa t_2} K_{p,6}  \big( 1 + \bE\big[|\xi|^{p}\big] e^{- {\kappa} t_2 }
      \big).
\end{align*}
We will show that the constant on the right-hand side does not blow up as $m$ increases. 
\\ \\
\noindent
\textit{Case 3: $m \geq 3$.}  
We have  
\begin{align*}
    |X_{t_{m}}^{i,N,h} |^{2}
    & 
    =
    |X_{t_{m-1}}^{i,N,h}+b_{m-1}^{i,N,h}h  |^{2}
    +
    \sigma^2|\Delta \overline{W}_{m-1}^{i,N,h}|^2
    +2
    \big(  X_{t_{m-1}}^{i,N,h}+b_{m-1}^{i,N,h}h  
      \big)   \cdot  	\big(   
    \Delta \overline{W}_{m-1}^{i,N,h}
    \big)
    \\
    &\leq 
    (1-2\lambda h+ Kh^2)|X_{t_{m-1}}^{i,N,h} |^{2}
    +
    \sigma^2|\Delta \overline{W}_{m-1}^{i,N,h}|^2
    +2
    \big(  X_{t_{m-1}}^{i,N,h}+b_{m-1}^{i,N,h}h  
      \big)   \cdot  	\big(   
    \Delta \overline{W}_{m-1}^{i,N,h}
    \big)
    \\
    &\quad 
    + \frac{2K_V h}{N}\sum_{j=1}^{N} | X_{t_{m-1}}^{i,N,h} - X_{t_{m-1}}^{j,N,h} ||X_{t_0}^{i,N,h} |
     + \frac{K_V^2 h^2}{N}\sum_{j=1}^{N} | X_{t_{m-1}}^{i,N,h} - X_{t_{m-1}}^{j,N,h} |^2.
\end{align*}
Since for $m\geq 3$, $ X_{t_{m-1}}^{i,N,h}$ is not independent of $\Delta \overline{W}_{m-1}^{i,N,h}$, we further expand $ X_{t_{m-1}}^{i,N,h}$ and estimate
\begin{align*}
    \big(    X_{t_{m-1}}^{i,N,h}+b_{m-1}^{i,N,h}h 
      \big) 
      & 
      \cdot \big(   
    \Delta \overline{W}_{m-1}^{i,N,h}
    \big) 
    \\ 
    & 
    \leq h|b_{m-1}^{i,N,h}||\Delta \overline{W}_{m-1}^{i,N,h}|
    +  \big(   X_{t_{m-2}}^{i,N,h}+b_{m-2}^{i,N,h}h + \Delta \overline{W}_{m-2}^{i,N,h}
       \big)   \cdot  	\big(   
    \Delta \overline{W}_{m-1}^{i,N,h}
    \big).
\end{align*}
Note that $X_{t_{m-2}}^{\cdot,N,h} $ is independent of $W_{m-1}^{\cdot,N,h}$. Similar to the analysis of the first part, we define the following local quantities: for all $i \in \lbrace 1, \ldots, N\rbrace	$ ,

\begin{align*}
     &G_{m,1}^{i}
     = 
     (1-2\lambda h+ Kh^2)|X_{t_{m-1}}^{i,N,h} |^{2},
    \\
    &
    G_{m,2}^{i}
    = \sigma^2|\Delta \overline{W}_{m-1}^{i,N,h}|^2, 
    \\
    &
     G_{m,3}^{i}= 2\big(   X_{t_{m-2}}^{i,N,h}+b_{m-2}^{i,N,h}h + \Delta \overline{W}_{m-2}^{i,N,h}
       \big)   \cdot  	\big(   
    \Delta \overline{W}_{m-1}^{i,N,h}
    \big),
    \\ 
    &
    G_{m,4}^{i}
    =2h|b_{m-1}^{i,N,h}||\Delta \overline{W}_{m-1}^{i,N,h}|,
    \\
    &
    G_{m,5}^{i}=\frac{2K_V h}{N}\sum_{j=1}^{N} | X_{t_{m-1}}^{i,N,h} - X_{t_{m-1}}^{j,N,h} ||X_{t_{m-1}}^{i,N,h} |
     + \frac{K_V^2 h^2}{N}\sum_{j=1}^{N} | X_{t_{m-1}}^{i,N,h} - X_{t_{m-1}}^{j,N,h} |^2,
\end{align*}
and note that
\begin{align*}
|X_{t_{m}}^{i,N,h} |^{2}
    & 
    \leq  \big( G_{m,1}^{i}+ G_{m,2}^{i}+ G_{m,3}^{i}+ G_{m,4}^{i}+ G_{m,5}^{i} \big).
\end{align*}
Raising to the power $p/2$ and taking the expectation on both sides, we have
\begin{align*}
\nonumber
   \mathbb{E}  \Big[ &\big( G_{m,1}^{i}+ G_{m,2}^{i}+ G_{m,3}^{i}+ G_{m,4}^{i}+ G_{m,5}^{i} \big)^{p/2}  \Big] 
     \\ \nonumber 
     &=  \sum_{l=0}^{p/2} \binom{p/2}{l} \mathbb{E}  \Big[
   (G_{m,1}^{i})^{p/2-l} (G_{m,2}^{i}+ G_{m,3}^{i}+ G_{m,4}^{i}+ G_{m,5}^{i})^{l} \Big]
   \\
    & \leq \mathbb{E} \big[   | G_{m,1}^{i}|^{p/2} \big]  
    + p\mathbb{E}    \Big[    ( G_{m,1}^{i})^{p/2-1} (G_{m,2}^{i} +G_{m,3}^{i}+G_{m,4}^{i}+ G_{m,5}^{i}) 
    + 
    | G_{m,1}^{i}|^{p/2-2}
    |G_{m,3}^{i}|^2
    \Big] 
    \\
    \nonumber
    &\quad 
    +    h^{3/2}  \Big(    K_{p,1}\bE \big[ \big|  X_{t_{m-1}}^{i,N,h}  
     \big|^{p} \big]  + K_{p,2}\bE \big[  \big|  X_{t_{m-2}}^{i,N,h} 
     \big|^{p}\big]       +K   \Big).
\end{align*}
Using \eqref{eq: i j  pre  difference }, we obtain the following estimate:
\begin{align*}
     \bE \big[ & (G_{m,1}^{i})^{p/2-1} G_{m,5}^{i}   \big] 
     \\
     &
     \leq
     K  \bE \bigg[  	
     \big|  X_{t_{m-1}}^{i,N,h}  
     \big|^{p-2} 
     \Big( 
     \frac{2K_V h}{N}\sum_{j=1}^{N} | X_{t_{m-1}}^{i,N,h} - X_{t_{m-1}}^{j,N,h} ||X_{t_{m-1}}^{i,N,h} |
     + \frac{K_V^2 h^2}{N}\sum_{j=1}^{N} | X_{t_{m-1}}^{i,N,h} - X_{t_{m-1}}^{j,N,h} |^2
     \Big)
     \bigg]
     \\
     &
      \leq h \Big(   \varepsilon     \bE \big[  	
     \big|  X_{t_{m-1}}^{i,N,h}  
     \big|^{p} \big]     
     +  \frac{K}{N}\sum_{j=1}^{N}
     \bE \big[  	
     \big|  X_{t_{m-1}}^{i,N,h}-X_{t_{m-1} }^{j,N,h}  
     \big|^{p} \big]
     \Big)
     \\
     &\leq
     \varepsilon h\bE \big[  	
     \big|  X_{t_{m-1}}^{i,N,h}  
     \big|^{p} \big]  
     + Kh (1+ \bE\big[|\xi|^{p}\big] e^{-(K_{p,3}-2K_{p,4})t_{m}/3}).
\end{align*}
 The analysis of the other terms works similarly as in the proof of the first part. Hence,  
we conclude  that 
there exists positive constants $K_{p,7},K_{p,8},K_{p,9}$  (both independent of $h,T,M$ and $N$)  
 satisfying $K_{p,7}>2 K_{p,8} $ and $e^{-(K_{p,3}-2K_{p,4})h}\neq 1-(K_{p,7}-2K_{p,8})h > 0$ (for sufficiently small $h$ independent of  $T,M,N$), such that 
\begin{align*}
    \bE \big[  |X_{t_m}^{i,N,h} |^{p} \big] 
    &\leq
     (1-K_{p,7} h ) \bE \big[  |X_{t_{m-1}}^{i,N,h} |^{p} \big] 
     +
     K_{p,8} h   \big( \bE \big[  |X_{t_{m-2}}^{i,N,h} |^{p} \big]
     + \bE \big[  |X_{t_{m-3}}^{i,N,h} |^{p} \big]
     \big)
     \\
     &
     \hspace{6cm}+
     K_{p,9} h
      \big(1+ \bE\big[|\xi|^{p}\big]e^{ -(K_{p,3}-2K_{p,4} )t_m } \big)
     \\
     &
      \leq K
    + K (1+\bE\big[|\xi|^{p}\big]  ) \big( e^{-(K_{p,7}-2K_{p,8})t_{m}/3}
    + e^{-(K_{p,3}-2K_{p,4})t_{m}/3}
    \big),
\end{align*}
for some constant $K$ (independent of $h,T,M$ and $N$). In the last estimate, we used the second statement in Lemma~\ref{prop: sequence inequality 1} with $c_1 \equiv (1-K_{p,7} h ), c_2 \equiv K_{p,8} h, c_3 \equiv K_{p,8} h$, $c_4 = K_{p,9}\bE\big[|\xi|^{p}\big] h$, $c_5 = K_{p,3}-2K_{p,4}$,  $C \equiv K_{p,9} h$ and the moment bounds at $\{t_0,t_1,t_2\}$.   
\end{proof} 

\begin{proof}[Proof of Proposition~\ref{prop:basic_estimates22} -- Statement (2): the $L^2$-strong error]
Consider the IPS described in  \eqref{ModelIPS} and recall the auxiliary scheme of  \eqref{eq: def : aux Euler scheme 0}. 
For all $m \in \lbrace 0, \ldots, M-1 \rbrace$ with $\hat{X}_{t_0}^{i,N,h} = X^{i,N}_{t_0}$, we define 
\begin{align}
\nonumber 
    \hx^{i,N,h}_{t_{m+1}} = \hx^{i,N,h}_{t_{m}} - \Big( \nabla U( \hx^{i,N,h}_{t_{m}}+   \frac{\sigma}{2} 
    \Delta W_{m}^i) + \frac{1}{N}\sum_{j=1}^{N} \nabla V( \hx_{t_{m}}^{i,N,h}+  \frac{\sigma}{2} 
    \Delta W_{m}^i -  \hx_{t_m}^{j,N,h}-  \frac{\sigma}{2} 
    \Delta W_{m}^j)  \Big)  h   +  \sigma 
    \Delta W_{m}^i ,
\end{align}
 where $t_m := mh$,  $T:=Mh$, and $\Delta W_{m}^i = W^i_{t_{m}} - W^i_{t_{m-1}}$. Recall that $X^{i,N,h}_{t_{m}}=\hx^{i,N,h}_{t_{m}}+ \sigma \Delta W_{m}^i/2$.  
We compute the difference terms:
    \begin{align}
    \label{eq: se: Definition of Delta i,m}
        &\Delta_{i,m}: =
        X_{t_m}^{i,N} - \hx^{i,N,h}_{t_{m+1}}
        \\
         &=
        X_{t_m}^{i,N} - \bigg( X_{t_{m-1}}^{i,N} - \Big( \nabla U(  X_{t_{m-1}}^{i,N} +  \frac{\sigma}{2}  
    \Delta W_{m}^i) + \frac{1}{N}\sum_{j=1}^{N} \nabla V(  X_{t_{m-1}}^{i,N} +  \frac{\sigma}{2}  
    \Delta W_{m}^i -  X_{t_{m-1}}^{j,N} -  \frac{\sigma}{2} 
    \Delta W_{m}^j)  \Big)  h   +  \sigma 
    \Delta W_{m}^i\bigg)
   \notag \\
    &\quad 
    +\bigg( X_{t_{m-1}}^{i,N} - \Big( \nabla U(  X_{t_{m-1}}^{i,N} +  \frac{\sigma}{2} 
    \Delta W_{m}^i) + \frac{1}{N}\sum_{j=1}^{N} \nabla V(  X_{t_{m-1}}^{i,N} +  \frac{\sigma}{2} 
    \Delta W_{m}^i -  X_{t_{m-1}}^{j,N} -  \frac{\sigma}{2} 
    \Delta W_{m}^j)  \Big)  h   +  \sigma 
    \Delta W_{m}^i\bigg)
  \notag \\
    &\quad -
    \bigg(\hx^{i,N,h}_{t_{m}} - \Big( \nabla U( \hx^{i,N,h}_{t_{m}}+  \frac{\sigma}{2} 
    \Delta W_{m}^i) + \frac{1}{N}\sum_{j=1}^{N} \nabla V( \hx_{t_{m}}^{i,N,h}+  \frac{\sigma}{2} 
    \Delta W_{m}^i -  \hx_{t_m}^{j,N,h}-  \frac{\sigma}{2} 
    \Delta W_{m}^j)  \Big)  h   +  \sigma 
    \Delta W_{m}^i\bigg) \notag \\
    & =: R^{i,1}_{t_m} + R^{i,2}_{t_m}. \notag
    \end{align}
Note that here we match $ X_{t_m}^{i,N} $ with $ \hx^{i,N,h}_{t_{m+1}}$ instead of $\hx^{i,N,h}_{t_{m}}$. 

We estimate the above terms separately and collect all the estimates an the end. For the first term, taking squares and expectations yields 
\begin{align*}
 \mathbb{E}&\big[|R^{i,1}_{t_m}|^2\big]  
  \\
  &=  \bE\bigg[  \Big|  ~ X_{t_m}^{i,N} - \bigg( X_{t_{m-1}}^{i,N} - \Big( \nabla U(  X_{t_{m-1}}^{i,N} + \frac{\sigma}{2}  
    \Delta W_{m}^i) 
    \\
    & \quad\quad\quad  \quad\quad+
    \frac{1}{N}\sum_{j=1}^{N} \nabla V(  X_{t_{m-1}}^{i,N} +  \frac{\sigma}{2} 
    \Delta W_{m}^i -  X_{t_{m-1}}^{j,N} -  \frac{\sigma}{2}  
    \Delta W_{m}^j)  \Big)  h   +  \sigma 
    \Delta W_{m}^i\bigg) ~\Big|^2\bigg]
        \\
    & = \bE\bigg[ \Big|-\int_{t_{m-1} }^{t_m} 
    ~\Big(
       \nabla U(  X_{s}^{i,N}) - \nabla U(  X_{t_{m-1}}^{i,N} +  \frac{\sigma}{2} 
    \Delta W_{m}^i)
      \\
    & \quad\quad\quad  \quad\quad+
       \frac{1}{N}\sum_{j=1}^{N} \nabla V(  X_{s}^{i,N} -  X_{s}^{j,N}   )
    -
    \frac{1}{N}\sum_{j=1}^{N} \nabla V(  X_{t_{m-1}}^{i,N} +  \frac{\sigma}{2} 
    \Delta W_{m}^i -  X_{t_{m-1}}^{j,N} -  \frac{\sigma}{2}  
    \Delta W_{m}^j)  \Big)\dd s
        ~\Big|^2\bigg] 
    \\
    &\leq 
    2h \int_{t_{m-1} }^{t_m}  \bE \Big[  
    \big| \nabla U(  X_{s}^{i,N}) - \nabla U(  X_{t_{m-1}}^{i,N} + \frac{\sigma}{2}  
    \Delta W_{m}^i)\big|^2 \Big]\dd s 
    \\
    &\quad +
    2h \int_{t_{m-1} }^{t_m}   \frac{1}{N}\sum_{j=1}^{N}\bE \Big[
   \big| \nabla V(  X_{s}^{i,N} -  X_{s}^{j,N}   )-
    \nabla V(  X_{t_{m-1}}^{i,N} + \frac{\sigma}{2} 
    \Delta W_{m}^i -  X_{t_{m-1}}^{j,N} -  \frac{\sigma}{2} 
    \Delta W_{m}^j)
    \big|^2 ~\dd s
    \Big]
    \\
    &\leq K h \int_{t_{m-1} }^{t_m}  \bigg( \bE \Big[  \big|  X_{s}^{i,N}-X_{t_{m-1}}^{i,N}\big|^2\Big]   
    +
     \bE \Big[  \big| \frac{\sigma}{2}  
    \Delta W_{m}^i \big|^2\Big] 
    + 
     \frac{1}{N}\sum_{j=1}^{N} \Big(
     \bE \Big[  \big|  X_{s}^{j,N}-X_{t_{m-1}}^{j,N}\big|^2\Big]   
     +
     \bE \Big[  \big| \frac{\sigma}{2}  
    \Delta W_{m}^j \big|^2\Big] 
     \Big)~
    \bigg)
    \dd s 
     \\
    &\leq K h \int_{t_{m-1} }^{t_m}  \Big( \max_{i \in \lbrace 1, \ldots, N \rbrace}\bE \Big[  \big|  X_{s}^{i,N}-X_{t_{m-1}}^{i,N}\big|^2\Big]   +
    h 
    \Big)
    \dd s ~\leq Kh^2  \big(1 + \mathbb{E}\big[\,|\xi|^{2}\big] e^{-\kappa  t_m}\big), 
\end{align*}
where we used Jensen's inequality, the Lipschitz continuity of the potentials and Proposition~\ref{prop:basic_estimates11}, $\kappa,K >0$ are independent of $h,T,M$ and $N$. 
Next, we consider the second term $R^{i,2}_{t_m} $. Taking squares and expectations, we have that 
\begin{align*}  
\mathbb{E}& \big[|R^{i,2}_{t_m}|^2\big]  \\
=& \bE\bigg[   
    \bigg| ~\Big(X_{t_{m-1}}^{i,N} - \Big( \nabla U(  X_{t_{m-1}}^{i,N} +  \frac{\sigma}{2}      \Delta W_{m}^i) 
    + 
    \frac{1}{N}\sum_{j=1}^{N} \nabla V(  X_{t_{m-1}}^{i,N} +  \frac{\sigma}{2}  
    \Delta W_{m}^i -  X_{t_{m-1}}^{j,N} -  \frac{\sigma}{2}  
    \Delta W_{m}^j)  \Big)  h   
    \Big)
    \\
    &\quad -
    \Big(\hx^{i,N,h}_{t_{m}} - \Big( \nabla U( \hx^{i,N,h}_{t_{m}}+  \frac{\sigma}{2}  
    \Delta W_{m}^i) + \frac{1}{N}\sum_{j=1}^{N} \nabla V( \hx_{t_{m}}^{i,N,h}+  \frac{\sigma}{2}  
    \Delta W_{m}^i -  \hx_{t_m}^{j,N,h}-  \frac{\sigma}{2}  
    \Delta W_{m}^j)  \Big)  h   
    \Big)~\bigg|^2\bigg]
    \\
    &\leq \bE \Big[    \big|  X_{t_{m-1}}^{i,N} - \hx^{i,N,h}_{t_{m}}\big|^2    \Big] 
    -
    h\bE \Big[    	\Big(     X_{t_{m-1}}^{i,N} - \hx^{i,N,h}_{t_{m}}   \Big)   \cdot	\Big(  
     \nabla U(  X_{t_{m-1}}^{i,N} +  \frac{\sigma}{2}     \Delta W_{m}^i) -
     \nabla U( \hx^{i,N,h}_{t_{m}}+  \frac{\sigma}{2}     \Delta W_{m}^i)
      \Big)   \Big] 
    \\
    &\quad -
    \frac{ h}{N}\sum_{j=1}^{N}\bE \Big[  \Big(   	    X_{t_{m-1}}^{i,N} - \hx^{i,N,h}_{t_{m}}
       \Big)   \cdot	\Big(   
    \nabla V(  X_{t_{m-1}}^{i,N} +  \frac{\sigma}{2}  
    \Delta W_{m}^i -  X_{t_{m-1}}^{j,N} -  \frac{\sigma}{2}  
    \Delta W_{m}^j)  
    \\
    &\qquad \qquad\qquad\quad -
     \nabla V( \hx_{t_{m}}^{i,N,h}+  \frac{\sigma}{2}  
    \Delta W_{m}^i -  \hx_{t_m}^{j,N,h}-  \frac{\sigma}{2}   
    \Delta W_{m}^j)
       \Big)\Big] 
     \\
    &\quad
    + 2 h^2 
    \bE \Big[       \big|       \nabla U(  X_{t_{m-1}}^{i,N} +  \frac{\sigma}{2}    \Delta W_{m}^i) -
     \nabla U( \hx^{i,N,h}_{t_{m}}+ \frac{\sigma}{2}      \Delta W_{m}^i)  \big|^2 \Big]  
    \\
    &\quad + \frac{ 2h^2}{N}\sum_{j=1}^{N}  \bE \Big[       \big|  \nabla V(  X_{t_{m-1}}^{i,N} +  \frac{\sigma}{2}  
    \Delta W_{m}^i -  X_{t_{m-1}}^{j,N} -  \frac{\sigma}{2}  
    \Delta W_{m}^j)
    \\
    &\qquad \qquad\qquad\quad
    -
     \nabla V( \hx_{t_{m}}^{i,N,h}+  \frac{\sigma}{2}  
    \Delta W_{m}^i -  \hx_{t_m}^{j,N,h}-  \frac{\sigma}{2}  
    \Delta W_{m}^j)       \big|^2 \Big] 
    \\
    & \leq \bE \Big[    \big|  X_{t_{m-1}}^{i,N} - \hx^{i,N,h}_{t_{m}}\big|^2    \Big] 
      -
    h\bE \Big[    	\Big(     X_{t_{m-1}}^{i,N} - \hx^{i,N,h}_{t_{m}}   \Big)   \cdot	\Big(  
     \nabla U(  X_{t_{m-1}}^{i,N} +  \frac{\sigma}{2}     \Delta W_{m}^i) -
     \nabla U( \hx^{i,N,h}_{t_{m}}+  \frac{\sigma}{2}     \Delta W_{m}^i)
      \Big)   \Big] 
    \\
    &\quad + Kh^2  \big(1 + \mathbb{E}\big[\,|\xi|^{2}\big] e^{-\kappa  t_m}\big) -
    \frac{ h}{N}\sum_{j=1}^{N}\bE \Big[  \Big(   	    X_{t_{m-1}}^{i,N} - \hx^{i,N,h}_{t_{m}}
       \Big)   
    \\
    &\quad \qquad  
    \cdot	\Big(   
    \nabla V(  X_{t_{m-1}}^{i,N} +  \frac{\sigma}{2}  
    \Delta W_{m}^i -  X_{t_{m-1}}^{j,N} -  \frac{\sigma}{2}  
    \Delta W_{m}^j) -
     \nabla V( \hx_{t_{m}}^{i,N,h}+  \frac{\sigma}{2}  
    \Delta W_{m}^i -  \hx_{t_m}^{j,N,h}-  \frac{\sigma}{2}   
    \Delta W_{m}^j)
       \Big)\Big], 
\end{align*}
where we used Jensen's inequality,   Propositions \ref{prop:basic_estimates11} and the Statement (1) in  Proposition~\ref{prop:basic_estimates22}.
We further estimate
\begin{align*}
     & \mathbb{E}[|R^{i,2}_{t_m}|^2]  \leq (1-\lambda h) \bE \Big[    \big|  X_{t_{m-1}}^{i,N} - \hx^{i,N,h}_{t_{m}}\big|^2    \Big]  + Kh^2 \big(1 + \mathbb{E}\big[\,|\xi|^{2}\big] e^{-\kappa  t_m}\big)
    \\
    &\quad 
     -
    \frac{ h}{2N^2}\sum_{i,j=1}^{N}\bE \Big[     \Big(    	     (  X_{t_{m-1}}^{i,N} +  \frac{\sigma}{2}   
    \Delta W_{m}^i -  X_{t_{m-1}}^{j,N} -  \frac{\sigma}{2}   
    \Delta W_{m}^j)-
    ( \hx_{t_{m}}^{i,N,h}+  \frac{\sigma}{2}   
    \Delta W_{m}^i -  \hx_{t_m}^{j,N,h}-  \frac{\sigma}{2}   
    \Delta W_{m}^j)\Big) 
    \\
    &\qquad   \quad   \cdot	\Big(   
    \nabla V(  X_{t_{m-1}}^{i,N} +  \frac{\sigma}{2}   
    \Delta W_{m}^i -  X_{t_{m-1}}^{j,N} -  \frac{\sigma}{2}   
    \Delta W_{m}^j) -
     \nabla V( \hx_{t_{m}}^{i,N,h}+  \frac{\sigma}{2}   
    \Delta W_{m}^i -  \hx_{t_m}^{j,N,h}-  \frac{\sigma}{2}   
    \Delta W_{m}^j)
      \Big) \Big]
    \\
    &\leq (1-\lambda h)  \bE  \Big[       \big|    \Delta_{i,m-1}    \big|^2 \Big]+Kh^2 \big(1 + \mathbb{E}\big[\,|\xi|^{2}\big] e^{-\kappa  t_m}\big) ,
\end{align*}
 with $\Delta_{i,m}$ as defined in \eqref{eq: se: Definition of Delta i,m}.   
We used the \textit{`symmetrization trick' }
 in \eqref{eq: symmetric trick} to handle the convolution term.
 Note that the positive constant $K$ is independent of $h,T,M$ and $N$.
Hence for all $m\in \lbrace 1, \ldots, M-1\rbrace,~ i \in \lbrace 1, \ldots, N\rbrace	$,  there exists a positive constant $K$ such that
\begin{align*}
     \bE \Big[       \big|    \Delta_{i,m}    \big|^2 \Big]  
      &\leq  (1-\lambda h)  \bE \Big[       \big|    \Delta_{i,m-1}    \big|^2 \Big]+Kh^2 \big(1 + \mathbb{E}\big[\,|\xi|^{2}\big] e^{-\kappa  t_m}\big)
      \\
      & = 
      (1-\lambda h)^m  \bE \Big[       \big|    \Delta_{i,0}    \big|^2 \Big]
      +\big(1 + \mathbb{E}\big[\,|\xi|^{2}\big] e^{-\kappa  t_m}\big) Kh^2 \sum_{j=0}^{m-1} (1-\lambda h)^j
      \\
      & 
       \leq 
       \Big( Kh+\frac{Kh}{\lambda} \Big) \big(1 + \mathbb{E}\big[\,|\xi|^{2}\big]  \big),
\end{align*}
where we used $ \bE  \big[       |    \Delta_{i,0}    |^2  \big] \leq K h \mathbb{E}\big[\,|\xi|^{2}\big] $. Recall that $X^{i,N,h}_{t_{m}}=\hx^{i,N,h}_{t_{m}}+ \sigma \Delta W_{m}^i/2$. Using Statement (1) in Proposition~\ref{prop:basic_estimates22} and \eqref{eq: def : non-Markov Euler scheme}, we further have  
\begin{align*}
    \bE \Big[       \big|  X^{i,N,h}_{t_{m}}-\hx^{i,N,h}_{t_{m}}   \big|^2 \Big]
    &= \bE \Big[       \big|\frac{\sigma}{2}   \Delta W_{m}^i  \big|^2 \Big] \leq Kh,
    \\
     \textrm{and}\quad 
     \bE \Big[       \big|  X^{i,N,h}_{t_{m+1}}-X^{i,N,h}_{t_{m}}   \big|^2 \Big] &\leq Kh\big(1 + \mathbb{E}\big[\,|\xi|^{2}\big] e^{-\kappa  t_m}\big).
\end{align*}
Collecting the last 3 estimates, we have for all $m\in \lbrace 1, \ldots, M-1\rbrace,~ i \in \lbrace 1, \ldots, N\rbrace$, 
\begin{align*}
     \bE    \Big[      \big|  X_{t_m}^{i,N} -   X_{t_m}^{i,N,h} \big|^2  \Big] 
     &= 
     \bE    \Big[      \big|  \Delta_{i,m} +( \hx^{i,N,h}_{t_{m+1}} -X^{i,N,h}_{t_{m+1}})
     +(
      X^{i,N,h}_{t_{m+1}}-X^{i,N,h}_{t_{m}}
     )
     \big|^2  \Big] 
     \\
     &\leq 
     K\Big(   \bE \Big[       \big|    \Delta_{i,m}    \big|^2 \Big]
     +    \bE \Big[       \big|  X^{i,N,h}_{t_{m+1}}-\hx^{i,N,h}_{t_{m+1}}   \big|^2 \Big]+
     \bE \Big[       \big|  X^{i,N,h}_{t_{m+1}}-X^{i,N,h}_{t_{m}}   \big|^2 \Big]
     \Big)  
     \\
     &
     \leq Kh \big(1 + \mathbb{E}\big[\,|\xi|^{2}\big] \big).
\end{align*}
As $K$ is independent of the critical quantities $M$ and $N$, maximizing over $i$ and $m$ yields the final result.
\end{proof}

\subsection{Auxiliary results  }

The following statement is an auxiliary result on the differences of SDE starting at different times ($t$ and $s$ with $t\leq s$) at the same point $\boldsymbol{x}$ and is used in the proof of Proposition~\ref{propsition:  first variation bound prop v2}. 
\begin{lemma}
    \label{lemma: 4 moment difference}
    Let the assumptions and setup of Proposition~\ref{propsition:  first variation bound prop v2} hold and let  $r 
 \geq s \geq t \geq 0$, $u\geq 0$ with $s-t<1$. Let the starting positions $x_i\in L^4(\Omega, \bR)$ be $\mathcal{F}_t$-measurable random variables that are identically distributed over all $ i  \in \lbrace 1, \ldots, N \rbrace$.    
    Let {\color{black}$   (\boldsymbol{X}^{t,\boldsymbol{x},N}_s  )_{ s \geq t \geq 0} $ and $ (  \boldsymbol{X}^{s,\boldsymbol{x},N}_r)_{ r \geq s \geq 0}$ }be the solutions of \eqref{eq: def of Bi} starting from $\boldsymbol{x}$ at time $t$ and $s$, respectively. 
       Then there exist some $\lambda_2\in(0,\min \{   \lambda-2K_V,\lambda_1\} )$ and $K>0$ (both are independent of $s,t,N$), such that for any $i\in\{1,\dots,N\}$ 
   \begin{align*}
        \bE \Big[       \big| X^{t,x_i,i,N}_{s+u}  - X^{s,x_i,i,N}_{s+u}     \big|^4 \Big]
      \leq K(s-t)^2 e^{- 4\lambda_2 u}.
    \end{align*}     
\end{lemma}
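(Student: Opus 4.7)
The plan is to exploit that on the interval $[s,s+u]$ the two flow processes $X^{t,x_i,i,N}$ and $X^{s,x_i,i,N}$ are driven by the \emph{same} Brownian motion $W^i$, so that the Brownian increments cancel in the difference:
\begin{align*}
X^{t,x_i,i,N}_v - X^{s,x_i,i,N}_v
\;=\; \big(X^{t,x_i,i,N}_s - x_i\big)
\;+\; \int_s^v \Big(B_i(\boldsymbol{X}^{t,\boldsymbol{x},N}_r) - B_i(\boldsymbol{X}^{s,\boldsymbol{x},N}_r)\Big)\dd r,
\qquad v\in[s,s+u].
\end{align*}
Thus the difference is absolutely continuous on $[s,s+u]$ and is a pathwise ODE, so no It\^o correction appears. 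The strategy is then a two-step scheme: (i) propagate a pathwise contraction estimate on $[s,s+u]$ to get an $e^{-4\lambda u}$ factor, and (ii) estimate the ``initial'' $L^4$-gap at time $s$, namely $\bE[|X^{t,x_i,i,N}_s - x_i|^4]$, of order $(s-t)^2$.

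For step (i), I would apply the chain rule to $\sum_{i=1}^N |X^{t,x_i,i,N}_v - X^{s,x_i,i,N}_v|^4$, writing $\Delta_i(v):=X^{t,x_i,i,N}_v - X^{s,x_i,i,N}_v$. The $\nabla U$-contribution is handled by strong convexity (Assumption~\ref{assum:main}(1)), yielding an upper bound $-4\lambda\sum_i |\Delta_i|^4$. The $\nabla V$-contribution is treated by the \emph{symmetrization trick} of Remark~\ref{Remark:symmetrization trick}: after symmetrizing in $(i,j)$ and using the integral representation of $\nabla V(X^{t,x_i,i,N}_v-X^{t,x_j,j,N}_v) - \nabla V(X^{s,x_i,i,N}_v-X^{s,x_j,j,N}_v)$ through $\nabla^2 V\geq 0$, the contribution reduces to
\begin{align*}
-\frac{2}{N}\sum_{i,j=1}^N \big(|\Delta_i|^2\Delta_i - |\Delta_j|^2\Delta_j\big)\int_0^1 \nabla^2 V(\cdots)\,\dd\theta\,(\Delta_i-\Delta_j),
\end{align*}
which is nonpositive by the elementary one-dimensional inequality $(|a|^2 a - |b|^2 b)(a-b)\geq 0$ (i.e., $x\mapsto x^3$ is monotone on $\mathbb{R}$) combined with $\nabla^2 V\geq 0$. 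A pathwise Gronwall argument then yields
$\sum_i|\Delta_i(s+u)|^4 \leq e^{-4\lambda u}\sum_i|\Delta_i(s)|^4$, which after taking expectations gives the sought exponential decay in $u$ (with rate $4\lambda\geq 4\lambda_2$, so the claimed bound with $\lambda_2$ follows for free).

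For step (ii), I would bound the initial gap directly from the SDE: at $v=s$,
\begin{align*}
X^{t,x_i,i,N}_s - x_i = \int_t^s B_i(\boldsymbol{X}^{t,\boldsymbol{x},N}_r)\dd r + \sigma(W^i_s-W^i_t).
\end{align*}
Jensen's inequality on the drift term yields $K(s-t)^3\int_t^s \bE[|B_i(\boldsymbol{X}^{t,\boldsymbol{x},N}_r)|^4]\dd r\leq K(s-t)^4$, using the linear growth of $B_i$ together with uniform-in-time $L^4$ moment bounds for $\boldsymbol{X}^{t,\boldsymbol{x},N}_r$ from Proposition~\ref{prop:basic_estimates11} (which apply since $x_i\in L^4(\Omega,\bR)$). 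The Brownian part contributes $3\sigma^4(s-t)^2$. With $s-t<1$, the two estimates combine to $\bE[|X^{t,x_i,i,N}_s - x_i|^4]\leq K(s-t)^2$.

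Finally, collecting the two steps gives $\sum_{i=1}^N \bE[|\Delta_i(s+u)|^4] \leq KN(s-t)^2 e^{-4\lambda_2 u}$, and by the assumption that the starting positions $x_i$ are identically distributed the particles are exchangeable, so each summand equals the target $\bE[|X^{t,x_i,i,N}_{s+u}-X^{s,x_i,i,N}_{s+u}|^4]$; dividing by $N$ concludes the argument. The main technical point (and the only step that needs real care) is verifying the nonpositivity of the symmetrized $\nabla V$-contribution in $L^4$; but since the analysis is one-dimensional, this reduces to the monotonicity of $x\mapsto x^3$, and the rest follows along exactly the lines of Proposition~\ref{propsition:  first variation bound prop v2}.
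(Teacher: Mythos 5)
Your proposal is correct and reaches the same conclusion, but by a sharper route than the paper. Both start from the same observation that on $[s,s+u]$ the two flows share the same Brownian increments, so $\Delta_i(v) := X^{t,x_i,i,N}_v - X^{s,x_i,i,N}_v$ solves a pathwise ODE and no It\^o correction appears; and both split the argument into a contraction estimate in $u$ plus a bound on the initial gap $\bE[|X^{t,x_i,i,N}_s - x_i|^4]$ of order $(s-t)^2$, the latter obtained by exactly the argument you describe.

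Where you diverge is in the treatment of the interaction term. The paper works at the level of a \emph{single} particle index $i$: it bounds $\nabla V(X^{t,i}-X^{t,l}) - \nabla V(X^{s,i}-X^{s,l})$ crudely via $|\nabla^2 V|_\infty \leq K_V$ and Young's inequality, producing a cross-term $\frac{K_V}{N}\sum_l \bE[|\Delta_l|^4]$ that is then absorbed using exchangeability. This yields the weaker decay coefficient $-4(\lambda - \tfrac74 K_V)$ and requires the convexity margin $\lambda > 2K_V$. You instead sum the $L^4$ functional over $i$ and apply the symmetrization trick together with monotonicity of $x\mapsto x^3$ and $\nabla^2 V \geq 0$, which makes the entire $\nabla V$ contribution nonpositive and gives the full contraction rate $4\lambda$. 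Your argument is cleaner, requires no convexity margin from $V$, and closely parallels what the paper itself does for the first variation process in Lemma~\ref{lemma: first variation bound noiid} — so it is arguably the more natural proof. Since $\lambda_2 < \lambda$ in the statement, both approaches deliver the claimed bound; yours proves a slightly stronger one. The final step (passing from $\sum_i \bE[|\Delta_i|^4] \leq KN(s-t)^2 e^{-4\lambda u}$ to the single-index claim by exchangeability of the $x_i$) is also correct.
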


\begin{proof}
    By  It\^o's formula, we have, for any $s \geq t \geq 0$, $u \geq 0$, $i\in\{1,\dots,N\}$ 
    \begin{align*}
    & \bE \Big[      \big| X^{t,x_i,i,N}_{s+u}  - X^{s,x_i,i,N}_{s+u}     \big|^4 \Big]
    \\
    &\leq 
     \bE \Big[       \big| X^{t,x_i,i,N}_{s}  - X^{s,x_i,i,N}_{s}     \big|^4 \Big]
     \\
     & \qquad -4        
     \int_{0}^u
     \bE \Big[  \big|      X^{t,x_i,i,N}_{s+w}  - X^{s,x_i,i,N}_{s+w}  \big|^2
    \Big(   X^{t,x_i,i,N}_{s+w}  - X^{s,x_i,i,N}_{s+w}
       \Big)   \cdot	\bigg(  ~ \Big( 
    \nabla U(X^{t,x_i,i,N}_{s+w}) -
    \nabla U(X^{s,x_i,i,N}_{s+w})
    \Big) 
     \\
     & \qquad + 
    \Big(   
      \frac{1}{N} \sum_{l=1}^{N}
      \nabla V(X^{t,x_i,i,N}_{s+w}-X^{t,x_l,l,N}_{s+w})   -
      \nabla V(X^{s,x_i,i,N}_{s+w}-X^{s,x_l,l,N}_{s+w})
      \Big) 
    \bigg)   \Big] \dd w 
     \\
     & \leq  \bE \Big[       \big| X^{t,x_i,i,N}_{s}  - X^{s,x_i,i,N}_{s}     \big|^4 \Big] 
     -4\Big(\lambda-\left(1+\frac{3}{4} \right)K_V\Big)  \int_{0}^u
      \bE \Big[  \big| X^{t,x_i,i,N}_{s+w}   - X^{s,x_i,i,N}_{s+w}     \big|^4\Big]  \dd w 
      \\
      &\qquad 
      + 
      \frac{K_V}{ N}  \sum_{l=1}^{N} 
      \int_{0}^u \bE \Big[  \big| X^{t,x_l,l,N}_{s+w}   - X^{s,x_l,l,N}_{s+w}     \big|^4\Big]
      \dd w,
\end{align*}
where we used Young's inequality along with Assumption~\ref{assum:main}. 
From the proof of Proposition~\ref{propsition:  first variation bound prop v2} (see, equation \ref{eq: first var diff initial diff}), we deduce that $ \bE \big[  | X^{t,x_i,i,N}_{s}  - X^{s,x_i,i,N}_{s} |^4 \big] \leq K(s-t)^2$. Using the fact that $\lambda > 2 K_V$, we conclude the claim. 
\end{proof}

The next statement concerning the second-order variation process is similar to Proposition~\ref{propsition:  first variation bound prop v2} which is used in the proof in   Lemma~\ref{lemma: analysis of the b0 term summation integration}.       
\begin{proposition}
    \label{prop: second var extension}
 
    Let the assumptions and set up of Lemma~\ref{lemma: first variation bound noiid} and Proposition~\ref{propsition:  first variation bound prop v2} hold.
    Then there exist some  $\lambda_4\in (0, \min \lbrace \lambda-2K_V, \lambda_3 \rbrace)$ and 
    $K>0$ (both are independent of $h,T,M$ and $N$) 
    such that
     for all $T\geq s\geq t\geq 0$ (with $s-t< 1$)  and $i\in\{1,\dots,N\}$ 
    \begin{align*}
     \mathbb{E}\Big[|X^{t,x_i,i,N}_{T,x_i,x_i}-X^{s,x_i,i,N}_{T,x_i,x_i}|^2\Big] 
     & 
     \leq  K(s-t)  e^{- {2\lambda_4  }(T-s)},
     \\
     \sum_{i,j,k=1, i\neq j\neq k }^{N}  \mathbb{E}\Big[|X^{t,x_i,i,N}_{T,x_j,x_k}-X^{s,x_i,i,N}_{T,x_j,x_k}|^2\Big] 
     & \leq  \frac{K(s-t)}{N } e^{-2\lambda_4 (T-s)},
          \end{align*}
and 
\begin{align*}
    \sum_{i,k=1, i\neq k }^{N}
    \bE \Big[  
    |X^{t,x_i,i,N}_{T,x_k,x_k}-X^{s,x_i,i,N}_{T,x_k,x_k}|^2
    +
    |X^{t,x_i,i,N}_{T,x_i,x_k}-X^{s,x_i,i,N}_{T,x_i,x_k}|^2
    +
    |X^{t,x_i,i,N}_{T,x_k,x_i} 
    & -X^{s,x_i,i,N}_{T,x_k,x_i}|^2
     \Big] 
     \\
     &
     \leq  K(s-t) e^{-2\lambda_4 (T-s)}.
    \end{align*}
\end{proposition}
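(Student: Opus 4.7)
}
The plan is to adapt the strategy of Proposition \ref{propsition:  first variation bound prop v2} to the second-order case, treating separately the three index patterns $\hco((i,j,k)) \in \{1,2,3\}$ in analogy with Lemma \ref{lemma: second variation bound noiid details}. First, I would write out the linear ODE with random coefficients satisfied by the difference $D^{i,j,k}_{u} := X^{t,x_i,i,N}_{u,x_j,x_k}-X^{s,x_i,i,N}_{u,x_j,x_k}$ for $u \geq s$, starting from the value $D^{i,j,k}_s = X^{t,x_i,i,N}_{s,x_j,x_k}$ (since the $s$-flow starts at zero at time $s$). Applying the product rule to $e^{2\lambda_4(T-s)} \mathbb{E}[|D^{i,j,k}_T|^2]$ yields an integral expression whose drift contains four types of contributions: (a) the confining term $-\nabla^2 U \cdot D^{i,j,k}$, handled by the uniform convexity in Assumption \ref{assum:main}; (b) a convolution term involving $\nabla^2 V$ applied to $D^{i,j,k} - D^{l,j,k}$, to be handled by the symmetrization trick (Remark \ref{Remark:symmetrization trick}) as in \eqref{eq: evenness of laplacian of V}; (c) a cross difference arising from adding and subtracting an auxiliary process $R^{i,t,s}_{u}$ of the form used in Proposition \ref{propsition:  first variation bound prop v2}; and (d) the new second-order contribution involving $\partial^2_{x_l,x_{l'}} B_i$ and the difference of the product $X^{t,\cdot,l,N}_{u,x_j} X^{t,\cdot,l',N}_{u,x_k} - X^{s,\cdot,l,N}_{u,x_j} X^{s,\cdot,l',N}_{u,x_k}$, which is new in this proof.

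The new term (d) is the main source of difficulty. I would split this product difference via the identity $ab - a'b' = (a-a')b + a'(b-b')$, leading to cross terms of the form $(X^{t}_{u,x_j}-X^{s}_{u,x_j}) X^{t}_{u,x_k}$ and similar. I would then apply the Cauchy--Schwarz inequality to separate the first-variation factor from its time-difference, bounding the first using Lemma \ref{lemma: first variation bound noiid} and the second using Proposition \ref{propsition:  first variation bound prop v2} (to get a factor of $(s-t)$). A careful bookkeeping of the scaling in $N$ is needed here, for which I would partition the outer summations by $\hco((i,j,k))$ exactly as in Lemma \ref{lemma: second variation bound noiid details}, weighting each class by the appropriate power of $N$ to construct an analogue of $I^{2,2}_{t,s}$ from \eqref{eq: second var Ipst}. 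The bounds from Lemma \ref{lemma: n-var process result} (with $n=2$) together with the differences-of-first-variation bounds from Proposition \ref{propsition:  first variation bound prop v2} will provide the correct decay factor $e^{-2\lambda_4(T-s)}$ multiplied by $(s-t)$ and the correct power of $1/N$.

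Next, I would estimate the initial value $\mathbb{E}[|X^{t,x_i,i,N}_{s,x_j,x_k}|^2]$ at time $s$, by using the integral representation \eqref{second_var_process noiid} over $[t,s]$: applying Jensen's inequality, Assumption \ref{assum:main_weak error} on derivatives of $B$, and the moment bounds on first/second variations from Lemmas \ref{lemma: first variation bound noiid}--\ref{lemma: second variation bound noiid details}, one obtains a bound of order $(s-t)\cdot N^{-2(\hco((i,j,k))-1)}$ when summed appropriately. This is analogous to the estimate \eqref{eq: fisrt var diff 2 intial} in the first-variation proof.

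Finally, collecting terms (a)--(d) and the initial value bound, one arrives at a Gronwall-type integral inequality for a weighted sum (analogous to $I^{2,2}_{t,s}$ partitioned by $\hco$), with the negative drift coming from $\lambda_4 - \lambda + 2K_V + \mathcal{O}(1/N)$, kept strictly negative by the choice $\lambda_4 \in (0,\min\{\lambda-2K_V,\lambda_3\})$ (here the $\lambda > 2K_V$ condition of $\lambda \geq 7K_V$ is amply sufficient). Gronwall's inequality (Lemma \ref{lemma: appendix: gronwall}) then yields the claimed exponential decay with the leading $(s-t)$ factor. The hard part will be the careful case-by-case decomposition of the mixed product terms in (d) by values of $\hco$ and $\hco$-patterns of the various $(l,l',i,j,k)$ tuples --- essentially a second-order version of the bookkeeping performed in Part 2 of the proof of Proposition \ref{propsition:  first variation bound prop v2} --- but no conceptually new estimates beyond those already developed in Sections \ref{section: analysis of var process total} are required.
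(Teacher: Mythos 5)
Your proposal is correct and follows essentially the same route as the paper: the paper's own proof of this proposition is only a brief sketch (it writes out the integral identity for $|X^{t,x_i,i,N}_{T,x_j,x_k}-X^{s,x_i,i,N}_{T,x_j,x_k}|^2$ and then states "The remaining steps are similar to those in the proof of Lemma~\ref{lemma: second variation bound noiid details} and Proposition~\ref{propsition:  first variation bound prop v2} and we therefore omit a detailed analysis"), and your four-way decomposition (a)--(d), with (d) the $\partial^2 B_i$-term being the genuinely new ingredient, is exactly the combination of those two methods the paper is invoking. You correctly identify the initial-value bound at time $s$ as the analogue of \eqref{eq: fisrt var diff 2 intial}, the symmetrization trick for the convolution term, the $R^{i,t,s}_u$ auxiliary process, and the partition-by-$\hco$ bookkeeping feeding into a Gronwall argument.

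One small caveat worth noting: to close the estimate on term (d) via Cauchy--Schwarz as you propose, you in fact need an $L^4$ (rather than $L^2$) version of the first-variation-difference bounds of Proposition~\ref{propsition:  first variation bound prop v2}. Proposition~\ref{propsition:  first variation bound prop v2} as stated gives only $L^2$; extending it to $L^4$ is a routine repetition of its proof (exactly as Lemma~\ref{lemma: first variation bound noiid} is stated for general $p$) but entails replacing Lemma~\ref{lemma: 4 moment difference}'s $L^4$ flow-difference bound by an $L^8$ version, which raises the integrability requirement on the starting positions $x_i$. The paper itself silently uses $L^4$ bounds on second-variation differences when Proposition~\ref{prop: second var extension} is invoked in \eqref{eq: partial2 u results 22}, so this is a bookkeeping point in the paper as much as in your proposal, not a flaw unique to your plan.
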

\begin{proof}
 This proof is a combination of the methods used to prove Proposition~\ref{propsition:  first variation bound prop v2} and Lemma~\ref{lemma: second variation bound noiid details} and we streamline the presentation. For any $i,j, k \in \lbrace 1, \ldots, N \rbrace$,  we have that
    \begin{align*}
        |&X^{t,x_i,i,N}_{T,x_j,x_k}-X^{s,x_i,i,N}_{T,x_j,x_k}|^2
        =
        |X^{t,x_i,i,N}_{s,x_j,x_k}-X^{s,x_i,i,N}_{s,x_j,x_k}|^2
        \\
        &\quad \quad 
        -
         2\int_{0}^{T-s}  \Big(   X^{t,x_i,i,N}_{s+u,x_j}-X^{s,x_i,i,N}_{s+u,x_j}
           \Big)   \cdot	\Big(   
         \nabla^{2} U(X^{t,x_i,i,N}_{s+u}) X^{t,x_i,i,N}_{s+u,x_j,x_k} 
         -\nabla^{2} U(X^{s,x_i,i,N}_{s+u})X^{s,x_i,i,N}_{s+u,x_j,x_k}        \Big) \mathrm{d}u 
         \\
        &\quad \quad 
        -
         2\int_{0}^{T}  \Big(     X^{t,x_i,i,N}_{s+u,x_j,x_k}-X^{s,x_i,i,N}_{s+u,x_j,x_k}
           \Big)   \cdot	\Big(   
         \frac{1}{N} \sum_{l=1}^{N}  
        \nabla^2 V(X^{t,x_i,i,N}_{s+u}-X^{t,x_l,l,N}_{s+u})  ( X^{t,x_i,i,N}_{{s+u},x_j,x_k}-X^{t,x_l,l,N}_{{s+u},x_j,x_k})
         \\
        &\quad \quad \quad\qquad \qquad 
        - \frac{1}{N} \sum_{l=1}^{N} \nabla^2 V(X^{s,x_i,i,N}_{s+u}-X^{s,x_l,l,N}_{s+u})  ( X^{s,x_i,i,N}_{{s+u},x_j,x_k}-X^{s,x_l,l,N}_{{s+u},x_j,x_k})
             \Big)   \mathrm{d}u 
         \\
        & \quad \quad 
        -
         2\int_{0}^{T-s}
        \Big(    X^{t,x_i,i,N}_{s+u,x_j,x_k}-X^{s,x_i,i,N}_{s+u,x_j,x_k}
          \Big)   \cdot	\Big(  
        \sum_{l=1}^{N} \sum_{l'=1}^{N}\partial^2_{x_l,x_{l'}}  B_i(\bodX_{u}^{t,\bodx,N}) X^{t,x_l,l,N}_{s+u,x_j}  X^{t,x_{l'},l',N}_{s+u,x_k}
         \\
        &\quad\qquad \qquad\quad \quad 
        - 
        \sum_{l=1}^{N} \sum_{l'=1}^{N}\partial^2_{x_l,x_{l'}}  B_i(\bodX_{u}^{s,\bodx,N}) X^{s,x_l,l,N}_{s+u,x_j}  X^{s,x_{l'},l',N}_{s+u,x_k}
          \Big)
        ~ \dd u.
\end{align*} 

The remaining steps are similar to those in the proof of Lemma~\ref{lemma: second variation bound noiid details} and Proposition~\ref{propsition:  first variation bound prop v2} and we therefore omit a detailed analysis.
\end{proof}



The next statement is a classical result on the stability of SDEs with respect their initial condition.
\begin{lemma}
    \label{lemma: 2 moment difference}
    Let Assumption~\ref{assum:main} hold (with $\lambda > 0$ denoting the convexity parameter) and let $p \geq 2$ be given. Let $(  \boldsymbol{X}^{N}_t)_{t \geq 0}$, and $(  \boldsymbol{Y}^{N}_t)_{t \geq 0}$ be generated from \eqref{ModelIPS} with i.i.d.\ initial states $X^{i,N}_0 \sim \mu, Y^{i,N}_0 \sim \nu \in \mathcal{P}_p(\mathbb{R})$, where $\boldsymbol{X}^{N}_0 =(X^{1,N}_0,\ldots, X^{N,N}_0)$ and $\boldsymbol{Y}^{N}_0 =(Y^{1,N}_0,\ldots, Y^{N,N}_0)$. Then for any $i \in \lbrace 1, \ldots, N \rbrace$, $t \geq 0$, we have  
    \begin{align}
    \label{eq:  2 moment difference }
        \bE \Big[       \big| X^{i,N}_{t}  - Y^{i,N}_{t}    \big|^2 \Big]
      \leq   e^{- 2\lambda t} \bE \Big[ \,  \big| X^{i,N}_{0}  - Y^{i,N}_{0}    \big|^2 \Big].
    \end{align}
\end{lemma}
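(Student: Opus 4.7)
The plan is to apply It\^o's formula to the weighted squared difference $e^{2\lambda t}\sum_{i=1}^N |X_t^{i,N}-Y_t^{i,N}|^2$. Since the diffusion coefficient $\sigma$ is constant, the stochastic integral terms cancel (the two systems share the same Brownian motions by construction of the IPS), so after differentiating I obtain a purely pathwise identity governed by the drift $B_i$. The only two contributions are the confinement term involving $\nabla U$ and the interaction term involving $\nabla V$, and both must be controlled by non-positive quantities modulo the $\lambda$-decay.

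First I would handle the confinement term: using the uniform convexity of $U$ from Assumption~\ref{assum:main}(1), one has $(X_t^{i,N}-Y_t^{i,N})\cdot\big(\nabla U(X_t^{i,N})-\nabla U(Y_t^{i,N})\big)\geq\lambda|X_t^{i,N}-Y_t^{i,N}|^2$, which exactly absorbs the $e^{2\lambda t}$ weight. Next, for the interaction term, summing over $i\in\{1,\dots,N\}$ and applying the \emph{symmetrization trick} of Remark~\ref{Remark:symmetrization trick} (as used in Lemma~\ref{lemma: first variation bound noiid}, equations \eqref{eq: symmetric trick}--\eqref{eq: evenness of laplacian of V}), the convolution contribution becomes
\begin{align*}
-\frac{1}{2N}\sum_{i,j=1}^{N}\Big((X_t^{i,N}-Y_t^{i,N})-(X_t^{j,N}-Y_t^{j,N})\Big)\cdot\Big(\nabla V(X_t^{i,N}-X_t^{j,N})-\nabla V(Y_t^{i,N}-Y_t^{j,N})\Big),
\end{align*}
which is non-positive by the convexity of $V$ in Assumption~\ref{assum:main}(2).

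Collecting these two observations gives
\begin{equation*}
\frac{d}{dt}\Big(e^{2\lambda t}\sum_{i=1}^{N}|X_t^{i,N}-Y_t^{i,N}|^2\Big)\leq 0,
\end{equation*}
so that taking expectations and integrating from $0$ to $t$ yields $\sum_{i=1}^{N}\mathbb{E}[|X_t^{i,N}-Y_t^{i,N}|^2]\leq e^{-2\lambda t}\sum_{i=1}^{N}\mathbb{E}[|X_0^{i,N}-Y_0^{i,N}|^2]$. Finally, since the initial states are i.i.d.\ (both $\mu$-samples on one side and $\nu$-samples on the other, coupled componentwise) and the Brownian motions $W^i$ are i.i.d., the law of $(X_t^{i,N},Y_t^{i,N})$ does not depend on $i$, so dividing the summed estimate by $N$ gives \eqref{eq:  2 moment difference }. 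The only subtle step is verifying that the stochastic integrals indeed cancel and that exchangeability is preserved under the coupling; both follow straightforwardly from the constant-diffusion structure and the symmetric form of $B_i$, so I do not expect any real obstacle.
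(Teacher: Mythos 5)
Your proposal is correct and follows essentially the same approach as the paper: apply It\^o to the squared difference, use the cancellation of the stochastic integrals (same driving noise, constant diffusion), absorb the $\nabla U$ contribution via uniform convexity, and kill the $\nabla V$ contribution via the symmetrization trick combined with the monotonicity of $\nabla V$. Your presentation is in fact slightly tidier than the paper's (which implicitly invokes exchangeability by switching from a fixed $i$ to a double sum mid-estimate, whereas you sum over $i$ throughout and then divide by $N$ at the end using the exchangeability of the coupled pair $(X^{i,N},Y^{i,N})$), but the mathematical content is identical.
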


\begin{proof} 

    This result is classical and we present only a sketch of its proof. By It\^o's formula, we have 
    \begin{align*}
    & \bE \Big[      \big| X^{i,N}_{t}  - Y^{i,N}_{t}    \big|^2 \Big]
    \\
    &\leq 
     \bE \Big[       \big| X^{i,N}_{0}  - Y^{i,N}_{0}    \big|^2 \Big]
     -2        
     \int_{0}^t
     \bE \Big[ 
    \Big(    X^{i,N}_{s}  - Y^{i,N}_{s} 
       \Big)   \cdot	\Big(   
    \nabla U(X^{i,N}_{s} ) -
    \nabla U(Y^{i,N}_{s})
    \Big)   \Big] \dd s
     \\
     &  \qquad -2 \int_{0}^t
     \bE \Big[   
      \Big(      X^{i,N}_{s}  - Y^{i,N}_{s}
        \Big)   \cdot	\Big(   
      \frac{1}{N} \sum_{l=1}^{N}
      \nabla V(X^{i,N}_{s}-X^{l,N}_{s})   -
      \nabla V(Y^{i,N}_{s} -Y^{l,N}_{s})
      \Big)  \Big] 
     \dd s
     \\
     & \leq  \bE \Big[        \big| X^{i,N}_{0}  - Y^{i,N}_{0}    \big|^2 \Big] -2\lambda  \int_{0}^t
      \bE \Big[  \big| X^{i,N}_{s}  - Y^{i,N}_{s}     \big|^2\Big]  \dd s
       \\
     &  \quad 
      - 
      \frac{1}{2N^{2}}  \sum_{i=1}^{N}\sum_{l=1}^{N} 
      \int_{0}^t 
      \bE \Big[   
      \Big(      (X^{i,N}_{s}-X^{l,N}_{s})  - 
      (Y^{i,N}_{s} -Y^{l,N}_{s})
        \Big)   \cdot	\Big(   
      \nabla V(X^{i,N}_{s}-X^{l,N}_{s})   -
      \nabla V(Y^{i,N}_{s} -Y^{l,N}_{s})
      \Big)  \Big] 
      \dd u
      \\
     & \leq  \bE \Big[        \big| X^{i,N}_{0}  - Y^{i,N}_{0}    \big|^2 \Big] -2\lambda  \int_{0}^t
      \bE \Big[  \big| X^{i,N}_{s}  - Y^{i,N}_{s}     \big|^2\Big]  \dd s,
\end{align*}
where we used Assumption~\ref{assum:main}. This estimate allows to deduce the first result \eqref{eq:  2 moment difference }.

\end{proof}

\begin{lemma}
    \label{prop: moment bound extension}
 Let the assumptions and set up of  Proposition~\ref{prop:basic_estimates22} 
    hold with $\xi \in L^{p}(\Omega,\mathbb{R})$ for some given $p \geq 2$. 
    Then for the processes defined in \eqref{eq-aux: continuous  extension Scheme},  
 \eqref{eq: def : aux Euler scheme 0} and \eqref{process:taylor}, respectively, there exists a constant
    $K>0$ (independent of $h,T,M$ and $N$) 
    such that
     for all $m \in \lbrace 0, \ldots, M-1 \rbrace $
    \begin{align*}
    \max_{i \in \lbrace 1, \ldots, N \rbrace } \sup_{s\in[0,h]}\mathbb{E}\big[\,|X_{t_m+s}^{i,N,h}|^{p}\big] 
    \leq K \big(1 + \mathbb{E}\big[\,|\xi|^{p}\big] e^{-\kappa  t_m}\big),
    \\
    \max_{i \in \lbrace 1, \ldots, N \rbrace }  \mathbb{E}\big[\,| \hx^{i,N,h}_{t_{m}} |^{p}\big] 
    \leq K \big(1 + \mathbb{E}\big[\,|\xi|^{p}\big] e^{-\kappa  t_m}\big),
    \\
{\color{black}    \max_{i \in \lbrace 1, \ldots, N \rbrace } \sup_{s\in[0,h)}\mathbb{E}\big[\,|\overline{X}_{t_m+s}^{i,N,h}|^{p}\big] 
    \leq K \big(1 + \mathbb{E}\big[\,|\xi|^{p}\big] e^{-\kappa  t_m}\big).}
    \end{align*}

\end{lemma}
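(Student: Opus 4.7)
The plan is to reduce all three bounds to the moment estimate already established for the non-Markovian Euler scheme $(X^{i,N,h}_{t_m})_{m}$ in Proposition~\ref{prop:basic_estimates22}(1), exploiting the algebraic relations between the three auxiliary processes and the scheme. Throughout, $K$ will denote a generic constant independent of $h,T,M,N$, and we use that $\xi\in L^p(\Omega,\bR)$ and $h<\min\{1/(2\lambda),1\}$.

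\emph{Step 1: the bound on $\hat{X}^{i,N,h}_{t_m}$.} Using $X^{i,N,h}_{t_m}=\hat{X}^{i,N,h}_{t_m}+\frac\sigma2\Delta W^i_m$, we write $\hat{X}^{i,N,h}_{t_m}=X^{i,N,h}_{t_m}-\frac\sigma2\Delta W^i_m$. A triangle inequality together with $\mathbb{E}[|\Delta W^i_m|^p]\leq K h^{p/2}\leq K$ (Gaussian moments, $h<1$) yields
\begin{align*}
\mathbb{E}\big[|\hat{X}^{i,N,h}_{t_m}|^p\big]\leq K\Big(\mathbb{E}\big[|X^{i,N,h}_{t_m}|^p\big]+\mathbb{E}\big[|\Delta W^i_m|^p\big]\Big)\leq K\big(1+\mathbb{E}[|\xi|^p]e^{-\kappa t_m}\big).
\end{align*}

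\emph{Step 2: the bound on $\overline{X}^{i,N,h}_{t_{m-1}+s}$.} Since $\overline{X}^{i,N,h}_{t_{m-1}+s}=\hat{X}^{i,N,h}_{t_m}+\frac\sigma2\Delta W^i_{m,s}$ and $\Delta W^i_{m,s}=W^i_{t_{m-1}+s}-W^i_{t_{m-1}}$ has Gaussian moments bounded by $K s^{p/2}\leq K h^{p/2}\leq K$, the same triangle inequality combined with Step 1 delivers the desired bound (after reindexing, with the resulting exponential decay factor $e^{-\kappa t_m}$ as stated).

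\emph{Step 3: the bound on $X^{i,N,h}_{t_m+s}$.} From the explicit form \eqref{eq-aux: continuous  extension Scheme} we have
\begin{align*}
X^{i,N,h}_{t_m+s}=X^{i,N,h}_{t_m}+B_i(\boldsymbol{X}^{N,h}_{t_m})s+\frac{\sigma s}{2h}\Delta W^i_m+\frac{\sigma}{2}\Delta W^i_{m+1,s}.
\end{align*}
Taking $p$-th moments, the triangle inequality leaves four contributions. The first is handled directly by Proposition~\ref{prop:basic_estimates22}(1). The second uses the linear growth of $\nabla U,\nabla V$ (Assumption~\ref{assum:main_weak error}(1)) to obtain $|B_i(\boldsymbol{X}^{N,h}_{t_m})|\leq K(1+\max_j |X^{j,N,h}_{t_m}|)$, followed again by Proposition~\ref{prop:basic_estimates22}(1) and the trivial bound $s\leq h<1$. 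For the third term, independence of $\Delta W^i_m$ from the coefficient $s/h$ gives $\mathbb{E}[|\sigma s\Delta W^i_m/(2h)|^p]\leq K(s/h)^p h^{p/2}\leq K h^{p/2}\leq K$, since $s/h\leq 1$. The fourth term is controlled by $\mathbb{E}[|\Delta W^i_{m+1,s}|^p]\leq K s^{p/2}\leq K$. Collecting these estimates yields the claimed bound uniformly in $s\in[0,h]$.

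\emph{Expected obstacle.} The only subtle point is the apparently singular factor $\sigma\Delta W^i_m/(2h)$ in Step~3, which naively blows up as $h\to 0$; the bookkeeping is resolved by pairing it with the factor $s\leq h$ and using that $(s/h)^p h^{p/2}$ remains uniformly bounded. Once this observation is in place, the remaining arguments are routine triangle inequalities plus Gaussian moment bounds.
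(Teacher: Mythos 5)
Your proposal follows the same strategy as the paper: write each auxiliary process as the non-Markovian scheme value plus controlled corrections (Gaussian increments, drift increments), then reduce everything to Proposition~\ref{prop:basic_estimates22}(1) and bounded Gaussian moments. The observation about the $\sigma\Delta W^i_m/(2h)$ factor being harmless once paired with $s\le h$ is exactly the point the paper also uses. Steps 1 and 2 are correct as written (noting $\Delta W^i_0=0$ covers $m=0$ automatically).

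There is, however, one slip in Step 3. You bound $|B_i(\boldsymbol{X}^{N,h}_{t_m})|\leq K(1+\max_{j}|X^{j,N,h}_{t_m}|)$ and then invoke Proposition~\ref{prop:basic_estimates22}(1). But that proposition controls $\max_i\mathbb{E}[|X^{i,N,h}_{t_m}|^p]$, not $\mathbb{E}[(\max_j|X^{j,N,h}_{t_m}|)^p]$; the latter can grow with $N$ (in the worst case by a factor $N$), which would destroy the uniformity in $N$ that the lemma explicitly requires. The fix is to keep the averaging structure of the interaction term: linear growth gives $|B_i(\boldsymbol{x})|\leq K\big(1+|x_i|+\tfrac1N\sum_{j}|x_j|\big)$, so Jensen's inequality yields $\mathbb{E}\big[|B_i(\boldsymbol{X}^{N,h}_{t_m})|^p\big]\leq K\big(1+\mathbb{E}[|X^{i,N,h}_{t_m}|^p]+\tfrac1N\sum_j\mathbb{E}[|X^{j,N,h}_{t_m}|^p]\big)$, which is indeed bounded by $K(1+\mathbb{E}[|\xi|^p]e^{-\kappa t_m})$ with $K$ independent of $N$. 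This is precisely how the paper handles the drift term; once you make that substitution, your argument matches the paper's in full.
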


\begin{proof}
Using Proposition~\ref{prop:basic_estimates22} and taking the definition of the processes in  \eqref{eq-aux: continuous  extension Scheme} into account, we have for all $m \in \lbrace 0, \ldots, M-1 \rbrace,~ s\in [0,h]$   and $i\in\{1,\dots,N\}$, 
\begin{align*}
    \bE \big[\,|  X_{t_m+s}^{i,N,h}   |^p  \big]
    \leq &
    K \Big( 
    \bE \big[\,|   X^{i,N,h}_{t_{m}}    |^p  \big]
    +
    \bE \big[\,|   \nabla U(X^{i,N,h}_{t_{m}})    |^p  \big] h^p 
    +
    \frac{1}{N}\sum_{j=1}^{N}\bE \big[\,|   \nabla V(X_{t_{m}}^{i,N,h} - X_{t_m}^{j,N,h})     |^p  \big] h^p 
    \\
    & \quad 
    +
    \bE \big[\,|     \Delta W_{m}^i   |^p  \big] 
    + \bE \big[\,|     \Delta W_{m+1,s}^i   |^p  \big]
    \Big)
    \\
    &\leq 
    K(1+h^p) \Big( \bE \big[\,|   X^{i,N,h}_{t_{m}}    |^p  \big] +  \bE \big[\,|   \nabla U(X^{i,N,h}_{t_{m}})    |^p  \big] +\frac{1}{N} \sum_{j=1}^{N} \bE \big[\,|   \nabla U(X^{j,N,h}_{t_{m}})    |^p  \big] +1\Big)
    \\
    &\leq K \big(1 + \mathbb{E}\big[\,|\xi|^{p}\big] e^{-\kappa  t_m}\big),
\end{align*} 
 where we used Jensen's inequality and the fact that $\nabla U,\nabla V$ are of linear growth (Assumption~\ref{assum:main}).
 
For the second and the last estimate, recall  that $X^{i,N,h}_{t_{m}}=\hx^{i,N,h}_{t_{m}}+ \sigma \Delta W_{m}^i/2$ and $\overline{X}^{i,N,h}_{t_{m-1}+s}  = \hx^{i,N,h}_{t_m} +  \sigma   \Delta W_{m,s}^i/2$. We have that for all $m \in \lbrace 1, \ldots, M -1 \rbrace,~ s\in [0,h)$ (recall $h \in (0,1)$ is sufficiently small)  and $i\in\{1,\dots,N\}$, we have 
\begin{align*}
    \bE \big[\,|  \hx_{t_m}^{i,N,h}   |^p  \big]
    &\leq  K \big( 
    \bE \big[\,|   X^{i,N,h}_{t_{m}}    |^p  \big]
    +
    \bE \big[\,|     \Delta W_{m}^i   |^p  \big]
    \big) \leq 
    K \big(1 + \mathbb{E}\big[\,|\xi|^{p}\big] e^{-\kappa  t_m}\big),
    \\
     \bE \big[\,|  \hx_{t_0}^{i,N,h}   |^p  \big]
     &= 
     \bE \big[\,|  X^{i,N}_{t_0}   |^p  \big]
     = \mathbb{E}\big[\,|\xi|^{p}\big], 
    \\
    \bE \big[\,| \overline{X}^{i,N,h}_{t_{m-1}+s}   |^p  \big]
    &\leq  K \big( 
    \bE \big[\,|   \hx^{i,N,h}_{t_{m}}    |^p  \big]
    +
    \bE \big[\,|     \Delta W_{m,s}^i   |^p  \big]
    \big) \\
    & \leq  K \big(1 + \mathbb{E}\big[\,|\xi|^{p}\big] e^{-\kappa  t_{m-1}} e^{-\kappa h}\big)
    \\
    &
    \leq 
    K \big(1 + \mathbb{E}\big[\,|\xi|^{p}\big] e^{-\kappa  t_{m-1}}\big)
    .
\end{align*} 

\end{proof}

 \begin{lemma} 
\label{lemma: appendix: gronwall}
(Gronwall's inequality). Let $T>0$ and let $\alpha, \beta$ and $u$ be real-valued functions defined on $[0, T]$. Assume that $\alpha$ and $u$ are continuous and that the negative part of $\beta$ is integrable on every closed and bounded subinterval of $[0, T]$.
If $\alpha$ is non-negative and if $u$ satisfies the integral inequality
\begin{align*}
    u(t) \leq \beta(t)+\int_0^t \alpha(s) u(s) \dd s, \quad \forall t \in[0, T],
\end{align*}
then
\begin{align*}
    u(t) \leq \beta(t)+\int_0^t \alpha(s) \beta(s) \exp \left(\int_s^t \alpha(r) \dd r\right) \dd s, \quad \forall t \in[0, T].
\end{align*}
 If we further have that $\beta$ is non-decreasing, then 
\begin{align*}
    u(t) \leq \beta(t) \exp \left(\int_0^t \alpha(s) \dd s\right) , \quad \forall t \in[0, T] .
\end{align*}
\end{lemma}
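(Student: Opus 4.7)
The plan is to establish the first inequality via the standard integrating-factor method applied to an auxiliary function, and then derive the second inequality as a direct consequence of the first by exploiting monotonicity of $\beta$.

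First I would introduce the auxiliary function $v(t) := \int_0^t \alpha(s) u(s)\, \mathrm{d}s$, which is absolutely continuous on $[0,T]$ thanks to the integrability assumption on $\alpha u$ (continuity of $u$ and $\alpha$ suffices). The hypothesis can then be rewritten as $u(t) \leq \beta(t) + v(t)$. Differentiating, $v'(t) = \alpha(t) u(t) \leq \alpha(t)\beta(t) + \alpha(t) v(t)$ for a.e.\ $t$, which rearranges to the linear differential inequality $v'(t) - \alpha(t) v(t) \leq \alpha(t)\beta(t)$. Multiplying by the integrating factor $\mu(t) := \exp\bigl(-\int_0^t \alpha(r)\,\mathrm{d}r\bigr)$, which is well-defined and positive since $\alpha \geq 0$ and $\alpha$ is continuous, one obtains
\begin{equation*}
\frac{\mathrm{d}}{\mathrm{d}t}\bigl( \mu(t) v(t) \bigr) \leq \mu(t)\, \alpha(t) \beta(t) \quad \text{for a.e.\ } t \in [0,T].
\end{equation*}
Integrating from $0$ to $t$ and using $v(0) = 0$ yields $\mu(t) v(t) \leq \int_0^t \mu(s)\alpha(s)\beta(s)\,\mathrm{d}s$, i.e.,
\begin{equation*}
v(t) \leq \int_0^t \alpha(s)\beta(s) \exp\!\left(\int_s^t \alpha(r)\,\mathrm{d}r\right)\mathrm{d}s.
\end{equation*}
Substituting back into $u(t) \leq \beta(t) + v(t)$ delivers the first claimed inequality.

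For the second statement, when $\beta$ is non-decreasing I would bound $\beta(s) \leq \beta(t)$ inside the integral for $s \in [0,t]$, which gives
\begin{equation*}
u(t) \leq \beta(t) + \beta(t)\int_0^t \alpha(s) \exp\!\left(\int_s^t \alpha(r)\,\mathrm{d}r\right)\mathrm{d}s.
\end{equation*}
The remaining integral is explicit: the integrand equals $-\tfrac{\mathrm{d}}{\mathrm{d}s}\exp\bigl(\int_s^t \alpha(r)\,\mathrm{d}r\bigr)$, so the integral evaluates to $\exp\bigl(\int_0^t \alpha(r)\,\mathrm{d}r\bigr) - 1$. Combining terms yields $u(t) \leq \beta(t) \exp\bigl(\int_0^t \alpha(s)\,\mathrm{d}s\bigr)$, completing the argument.

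There is no serious obstacle here; the only point requiring mild care is justifying that $v$ is differentiable almost everywhere with $v'(t) = \alpha(t) u(t)$, which follows from the Lebesgue differentiation theorem since $\alpha u$ is locally integrable (continuous, in fact, under the stated hypotheses). The integrability assumption on the negative part of $\beta$ plays no role in the derivation beyond ensuring the integrals involving $\beta$ are well-defined, and the non-negativity of $\alpha$ is used only to guarantee that the integrating factor and the exponentials are positive, so that the inequality is preserved under multiplication.
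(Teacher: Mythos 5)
Your proof is correct: the integrating-factor argument applied to $v(t)=\int_0^t \alpha(s)u(s)\,\mathrm{d}s$, followed by the bound $\beta(s)\leq\beta(t)$ and the explicit evaluation $\int_0^t \alpha(s)\exp\big(\int_s^t\alpha(r)\,\mathrm{d}r\big)\mathrm{d}s=\exp\big(\int_0^t\alpha(r)\,\mathrm{d}r\big)-1$, is the standard proof of Gronwall's lemma in this integral form. The paper states this lemma without proof as a classical auxiliary result, so there is no in-paper argument to compare against; your derivation matches the textbook one (with the only point of care, which you note, being that the integral involving $\beta$ is well-defined in the extended sense, and that $\alpha\geq 0$ is what lets you multiply $u\leq\beta+v$ by $\alpha$ and drop $\beta(s)$ for $\beta(t)$ without reversing inequalities).
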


The following auxiliary result is needed in the proof of Proposition~\ref{prop:basic_estimates22}.

\begin{lemma}
\label{prop: sequence inequality 1}
    Let $c_1,c_2,c_3,c_4,c_5>0, C>0$ be real constants with $c_1+c_2+c_3<1$. Let $(a_n)_{n \in \mathbb{N}}$ be a real-valued sequence satisfying $a_{n+3}\leq c_3a_{n+2}+c_2a_{n+1}+c_1a_{n}+C$ with initial values $0<a_1,a_2,a_3< K$, for some constant $K >0$. Then there exist some constants $K_1,K_2>0$ (both are independent of $n$) such that for all $n \geq 4$ 
\begin{align*}
    a_n\leq K_1+K_2\max\{a_{1},a_{2},a_{3}\} e^{-(1-c_1-c_2-c_3)n/3},
\end{align*}
    Moreover, if $a_{n+3}\leq c_3a_{n+2}+c_2a_{n+1}+c_1a_{n}+C+c_4e^{-c_5n}$ and $(c_1+c_2+c_3)\neq e^{-c_5}$, then there exists constants $K_3,K_4>0$ (both are independent of $n$) such that for all $n \geq 4$ 
\begin{align*}
    a_n\leq K_3+K_4\max\{a_{1},a_{2},a_{3}\} e^{-(1-c_1-c_2-c_3)n/3}.
\end{align*}
    
\end{lemma}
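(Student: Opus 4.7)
The plan is to reduce the recurrence to a homogeneous one by subtracting the equilibrium value $K_1 := C/(1-\rho)$, where $\rho := c_1 + c_2 + c_3 \in (0,1)$. Setting $b_n := a_n - K_1$, the identity $K_1 = \rho K_1 + C$ yields $b_{n+3} \leq c_3 b_{n+2} + c_2 b_{n+1} + c_1 b_n$. Since $b_n$ may be negative, I would work with its positive part $B_n := \max(b_n,0)$; because $c_1,c_2,c_3 \geq 0$, the same inequality carries over to give $B_{n+3} \leq c_3 B_{n+2} + c_2 B_{n+1} + c_1 B_n$.

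The crux is a three-step block argument on the running maximum $M_n := \max(B_n, B_{n+1}, B_{n+2})$. The homogeneous inequality directly gives $B_{n+3} \leq \rho M_n$, and since $\rho<1$ one also gets $B_{n+4} \leq (c_3 \rho + c_2 + c_1) M_n \leq \rho M_n$ and similarly $B_{n+5} \leq \rho M_n$, so $M_{n+3} \leq \rho M_n$. Iterating delivers $M_n \leq C_0 \rho^{\lfloor (n-3)/3 \rfloor} \max(B_1,B_2,B_3)$ for an explicit absolute constant $C_0 > 0$. The stated exponential rate then follows from the elementary inequality $-\log \rho \geq 1-\rho$ valid for $\rho \in (0,1)$, so that $\rho^{n/3} \leq e^{-(1-\rho)n/3}$. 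Undoing the reduction $a_n = K_1 + b_n \leq K_1 + B_n$, and using $B_i \leq a_i$ (since $a_i > 0$), completes the first claim with an appropriate $K_2$.

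For the second statement I would seek a particular solution of exponential type $\alpha e^{-c_5 n}$ to the perturbed recurrence. Direct substitution forces $\alpha\, p(e^{-c_5}) = c_4$ with characteristic polynomial $p(x) := x^3 - c_3 x^2 - c_2 x - c_1$, and the non-resonance hypothesis $(c_1+c_2+c_3) \neq e^{-c_5}$ is invoked precisely to ensure $p(e^{-c_5}) \neq 0$ and hence a finite $\alpha$. Setting $\hat b_n := b_n - \alpha e^{-c_5 n}$ cancels the forcing so that $\hat b_n$ obeys the homogeneous inequality of Part 1; applying Part 1 to (the positive part of) $\hat b_n$ and then unpacking $a_n = K_1 + \hat b_n + \alpha e^{-c_5 n}$, the bounded contribution $\alpha e^{-c_5 n}$ is absorbed into $K_3$ (using $e^{-c_5 n} \leq 1$) while the $\hat b_n$ part inherits the desired exponential decay weighted by $\max(a_1,a_2,a_3)$.

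The routine parts are the constant bookkeeping (tracking $|\alpha|$, $K_1$, and absorbing bounded contributions into $K_3$), while the main structural step is the three-step block contraction $M_{n+3} \leq \rho M_n$: it is essential that the right-hand side of the recurrence is a convex-combination-type expression in past values with total weight $\rho < 1$. The subtle point in Part 2 is the role of the non-resonance hypothesis ensuring a well-defined exponential ansatz; were $e^{-c_5}$ a root of $p$, one would need a modified particular solution of the form $\alpha n e^{-c_5 n}$, but that lies outside the stated assumptions.
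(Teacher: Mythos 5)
Your Part~1 argument is essentially the same block-contraction proof as in the paper. You remove the forcing constant up front by subtracting the equilibrium value $K_1 = C/(1-\rho)$ and then run the three-step maximum contraction on the reduced sequence; the paper instead runs the three-step contraction on the raw maximum $\max\{a_n,a_{n+1},a_{n+2}\}$ and performs the equivalent constant shift $\tfrac{3C}{\rho-1}$ mid-argument. Both give $\rho^{n/3}$-decay and then use $\rho \le e^{-(1-\rho)}$, so the two proofs are the same up to reordering; Part~1 is correct.

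Part~2, however, contains a genuine gap. You assert that the hypothesis $c_1+c_2+c_3 \ne e^{-c_5}$ is "invoked precisely to ensure $p(e^{-c_5}) \neq 0$," where $p(x) = x^3 - c_3 x^2 - c_2 x - c_1$ is the third-order characteristic polynomial, and then explicitly set aside the case $p(e^{-c_5})=0$ as being "outside the stated assumptions." This identification is false: $\rho := c_1+c_2+c_3$ is generally not a root of $p$. Indeed $p$ has a unique positive root $r\in(0,1)$ (since $p(0)=-c_1<0$ and $p(1)=1-\rho>0$), and that root differs from $\rho$ for generic parameters (for example $c_1=c_2=c_3=1/6$ gives $\rho=1/2$ but $p(1/2)=-1/6 \ne 0$). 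Choosing $c_5 = -\log r$ then satisfies the lemma's hypothesis $\rho\ne e^{-c_5}$ but defeats your ansatz, since $p(e^{-c_5})=0$ and there is no finite $\alpha$. So your proof plan does not cover a genuine region of the hypothesis space.

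The correct resonance condition matching the stated hypothesis comes from doing the exponential subtraction at the level of the block-maximum sequence rather than of $a_n$ itself. After one round of the Part~1 manipulations, $M_n := \max\{a_n,a_{n+1},a_{n+2}\}$ obeys a first-order (step-$3$) linear inequality $M_{n+3} \le \rho M_n + 3C + 3c_4' e^{-c_5 n}$, whose "characteristic polynomial" is simply $x-\rho$; the non-resonance needed for an exponential particular solution of this recursion is exactly $\rho \ne e^{-c_5}$. This is how the paper's Part~2 proceeds: it subtracts both $\tfrac{3C}{\rho-1}$ and the decaying correction $\tfrac{3e^{-c_5 (n+1)}}{\rho - e^{-c_5}}$ from $M_n$ and then iterates the homogeneous inequality. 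You should restructure your Part~2 so that the exponential ansatz is applied after passing to the max sequence, not before.
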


\begin{proof}
    By the condition satisfied by the sequence $(a_n)_{n \in \mathbb{N}}$, we deduce for any $n \geq 1$
    \begin{align}
    \nonumber
        a_{n+3}&\leq c_3a_{n+2}+c_2a_{n+1}+c_1a_{n}+C\leq (c_1+c_2+c_3)\max\{a_{n},a_{n+1},a_{n+2}\}+C,
        \\ \nonumber
         a_{n+4}&\leq c_3a_{n+3}+c_2a_{n+2}+c_1a_{n+1}+C
         \leq (c_1+c_2+c_3)\max\{a_{n+1},a_{n+2},a_{n+3}\}+C
        \\ \nonumber&
        \leq (c_1+c_2+c_3)\max\big\{a_{n+1},a_{n+2}, (c_1+c_2+c_3)\max\{a_{n},a_{n+1},a_{n+2}\}+C\big\}+C
         \\ \nonumber&
         \leq (c_1+c_2+c_3) \max\{a_{n},a_{n+1},a_{n+2}\}+2C,
         \\  \nonumber
          a_{n+5}&
          \leq c_3a_{n+4}+c_2a_{n+3}+c_1a_{n+2}+C
         \leq (c_1+c_2+c_3)\max\{a_{n+2},a_{n+3},a_{n+4}\}+C
         \\&
         \leq (c_1+c_2+c_3) \max\{a_{n},a_{n+1},a_{n+2}\}+3C.
         \label{eq: lemma: a123 leq}
    \end{align}
Hence,
\begin{align*}
    \max\{a_{n+3},a_{n+4},a_{n+5}\}
    &\leq (c_1+c_2+c_3) \max\{a_{n},a_{n+1},a_{n+2}\}+3C.
 \end{align*}
Consequently, adding $\tfrac{3C}{(c_1+c_2+c_3)-1} < 0$ on both sides, we observe
 \begin{align*}
    \max\{a_{n+3}, & a_{n+4},a_{n+5}\}
    + 
    \frac{3C}{(c_1+c_2+c_3)-1}
    \\
    &\leq 
    (c_1+c_2+c_3) \Big(  \max\{a_{n},a_{n+1},a_{n+2}\}+ \frac{3C}{(c_1+c_2+c_3)-1}\Big).
    \end{align*}
Further, we derive for $n \geq 1$ 
    \begin{align*}
    \max\{a_{3n+1}, & a_{3n+2},a_{3n+3}\} + \frac{3C}{(c_1+c_2+c_3)-1}
    \\
    &\leq 
    (c_1+c_2+c_3)^{n} \Big(  \max\{a_{1},a_{2},a_{3}\}+ \frac{3C}{(c_1+c_2+c_3)-1}\Big),
    \end{align*}
    which implies
    \begin{align*}
     \max\{a_{3n+1},a_{3n+2},a_{3n+3}\}
    & \leq  (c_1+c_2+c_3)^{n} \max\{a_{1},a_{2},a_{3}\} + \frac{3C}{1-(c_1+c_2+c_3)} \\
    & \leq e^{-(1-c_1 - c_2- c_3)n} \max\{a_{1},a_{2},a_{3}\}+ K,
\end{align*}
for some $K>0$,
where we used the inequality $e^{x}\geq 1+x$, for any $x \in \mathbb{R}$ and $c_1+c_2+c_3<1$.

Similarly, for the second claim, using the fact that $e^{-c_5(n+2)}<e^{-c_5(n+1)}< e^{-c_5n}$, we derive as in \eqref{eq: lemma: a123 leq}:
\begin{align*}
   \max\{a_{n+3},a_{n+4},a_{n+5}\}
    &\leq 
    (c_1+c_2+c_3)  \max\{a_{n},a_{n+1},a_{n+2}\}+ 3C + 3c_4 e^{-c_5n}.
\end{align*}
Adding  $\frac{3C}{(c_1+c_2+c_3)-1}
    + \frac{3e^{-c_5(n+1)} }{(c_1+c_2+c_3)-e^{-c_5}}$ on both sides, we observe that
 \begin{align*}
    & \max\{a_{3n+1},a_{3n+2},a_{3n+3}\}+ \frac{3C}{(c_1+c_2+c_3)-1}
    + \frac{3e^{-c_5(n+1)} }{(c_1+c_2+c_3)-e^{-c_5}} 
    \\
    &\leq 
    (c_1+c_2+c_3) \Big(  \max\{a_{3n-2},a_{3n-1},a_{3n}\}+ \frac{3C}{(c_1+c_2+c_3)-1}
    + 
    \frac{3e^{-c_5 n} }{(c_1+c_2+c_3)-e^{-c_5}} 
    \Big).
    \end{align*}
 Consequently,   we have   
\begin{align*}
& \max\{a_{3n+1},a_{3n+2} 
,a_{3n+3}\}  
+
\frac{3C}{(c_1+c_2+c_3)-1}
    + \frac{3e^{-c_5(n+1)} }{(c_1+c_2+c_3)-e^{-c_5}} 
    \\
    & \leq 
    (c_1+c_2+c_3)^n \Big(  \max\{a_1,a_2,a_3\}+ \frac{3C}{(c_1+c_2+c_3)-1}
    + 
    \frac{3e^{-c_5} }{(c_1+c_2+c_3)-e^{-c_5}} 
    \Big),
\end{align*}
and therefore
\begin{align*}
    &\max\{a_{3n+1},a_{3n+2},a_{3n+3}\}
    \\
    &\quad 
    \leq e^{-(1-c_1 - c_2- c_3)n} \max\{a_{1},a_{2},a_{3}\}
    +
      \frac{3C}{1-(c_1+c_2+c_3)} +
     \frac{3e^{-c_5} }{|(c_1+c_2+c_3)-e^{-c_5}|} 
     (1-e^{-c_5n})
     \\
    &\quad 
    \leq e^{-(1-c_1 - c_2- c_3)n} \max\{a_{1},a_{2},a_{3}\}+ \tilde{K}_2,   
\end{align*}
for some positive constant $ \tilde{K}_2$.
\end{proof}

\subsection{Auxiliary results from Section \ref{section: proof of B0 terms}}

\color{black}

\begin{lemma}
\label{lemma:ZeroMeanResult}
Recall the operator $L$ given in \eqref{eq: B0 formula} and $\mu^{N,*}(  \boldsymbol{x})$ given by \eqref{eq:NparticleGibbsDistro}, then 
\begin{align}
\label{lemma:statementauxiliaryintegraliszero}
   \bE \Big[ L(0,\olbodX_0^{N})  \Big] = \int_{\bR^N} L(0,\boldsymbol{x}) \mu^{N,*}(\dd \boldsymbol{x}) =0.
\end{align}
\end{lemma}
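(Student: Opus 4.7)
The plan is to use integration by parts against the explicit Gibbs density $\mu^{N,*}$ to show that the three terms making up $L$ cancel each other when integrated against $\mu^{N,*}$.

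First, I would recognise that the drift $B$ is a gradient field. Writing $\Psi(\boldsymbol{x}) := \sum_{i=1}^{N} U(x_i) + \frac{1}{2N}\sum_{i,j=1}^{N} V(x_i - x_j)$, a direct computation that exploits the evenness of $V$ (so that $\nabla V$ is odd) yields $\partial_{x_i}\Psi(\boldsymbol{x}) = \nabla U(x_i) + \tfrac{1}{N}\sum_{j=1}^{N}\nabla V(x_i-x_j) = -B_i(\boldsymbol{x})$. Consequently $\mu^{N,*}(\boldsymbol{x}) = Z^{-1} e^{-(2/\sigma^2)\Psi(\boldsymbol{x})}$ and $\partial_{x_j}\mu^{N,*} = (2/\sigma^2) B_j \mu^{N,*}$, which translates into the integration-by-parts identity
\begin{align*}
\int_{\bR^N} f\, \partial_{x_j} g\, \mu^{N,*}\dd\boldsymbol{x}
&= -\int_{\bR^N} (\partial_{x_j} f)\,g\,\mu^{N,*}\dd\boldsymbol{x}
  -\frac{2}{\sigma^2}\int_{\bR^N} f\, g\, B_j \,\mu^{N,*}\dd\boldsymbol{x},
\end{align*}
valid for sufficiently regular, integrable $f,g$.

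Next I would apply this identity to the middle sum in $L(0,\boldsymbol{x})$ (see \eqref{eq: B0 formula}). Choosing $f = \partial_{x_j} B_i$, $g = \partial_{x_i} u(0,\boldsymbol{x})$ and rewriting $\partial^2_{x_i,x_j}u = \partial_{x_j}(\partial_{x_i}u)$, then summing over $i,j$, produces
\begin{align*}
\tfrac{\sigma^2}{2}\sum_{i,j=1}^{N}\int \partial_{x_j}B_i\, \partial^2_{x_i,x_j}u\, \mu^{N,*}\dd\boldsymbol{x}
&= -\tfrac{\sigma^2}{2}\sum_{i,j=1}^{N}\int \partial^2_{x_j,x_j}B_i\, \partial_{x_i}u\, \mu^{N,*}\dd\boldsymbol{x}
\\&\qquad -\sum_{i,j=1}^{N}\int B_j\, \partial_{x_j}B_i\, \partial_{x_i}u\, \mu^{N,*}\dd\boldsymbol{x}.
\end{align*}
Substituting this expression back into $2\int L(0,\boldsymbol{x})\mu^{N,*}(\dd\boldsymbol{x})$, the three original terms pair up and cancel exactly, giving the claimed identity. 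The final equality $\bE[L(0,\olbodX_0^{N})]=\int L(0,\boldsymbol{x})\mu^{N,*}(\dd\boldsymbol{x})$ is immediate from the law of $\olbodX_0^{N}$.

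The only real technical point is justifying that the boundary terms in the integration-by-parts vanish: this is routine since $\mu^{N,*}$ decays super-polynomially at infinity (because $\Psi$ is uniformly convex under Assumption~\ref{assum:main}), while $B$ and its derivatives are of at most linear growth by Assumption~\ref{assum:main_weak error} and the derivatives of $u$ admit the polynomial-in-$\boldsymbol{x}$ representations established in Section~\ref{section: analysis of the dus}. Conceptually, the cancellation is nothing but the statement that $\mu^{N,*}$ is the invariant measure of the overdamped Langevin dynamics generated by $B=-\nabla\Psi$, rephrased at the level of the Talay--Tubaro leading-order correction.
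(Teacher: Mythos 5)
Your proposal is correct and follows essentially the same route as the paper's own proof: identify $B$ as the gradient of the potential $\Psi$ (the paper's $H$), exploit that $\mu^{N,*}$ is the corresponding Gibbs measure, and integrate by parts so the three summands of $L$ cancel. The only cosmetic difference is that the paper combines the two $\sigma^2$-weighted terms into the single total derivative $\partial_{x_j}\big(\partial_{x_j}B_i\,\partial_{x_i}u\big)$ before integrating by parts once, whereas you integrate the middle term and show it produces the negatives of the other two — an equivalent reordering of the same computation.
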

The proof relies on using integration by parts. The argument is found in \cite[Equation~(3.29)]{leimkuhler2014long} and leverages the known closed form (up to a scaling constant) of the invariant density map $\mu^{N,*}$ and the definition of $L$.

\begin{proof}
  Recall the expression for $B_i(\boldsymbol{x})$ in \eqref{eq:def of func B} 
 \begin{align*}
    B_i(\boldsymbol{x})=B_i(x_1, \ldots,x_N) \coloneqq - \nabla U(x_i) - \frac{1}{N} \sum_{j=1}^{N} \nabla V(x_i - x_j).
\end{align*}
Define $H:\bR^N\to N$ for $\boldsymbol{x}=(x_1, \ldots,x_N)\in \bR^N$ as 
\begin{align*}
    H(\boldsymbol{x}):= 
     \sum_{k=1}^{N} U(x_k) + \frac{1}{2N} \sum_{k=1}^{N}\sum_{\ell=1}^{N} V(x_k - x_\ell), 
\end{align*}
then we can write $\mu^{N,*}(  \boldsymbol{x})$ in  \eqref{eq:NparticleGibbsDistro} as 
$\mu^{N,*}(  \boldsymbol{x})\propto \exp\big( - 2 H(\boldsymbol{x}) / \sigma^2  \big)$. 
Since $V$ is assumed even, we have that $\nabla V$ is odd and thus $\partial_{x_i} H(\boldsymbol{x})=-B_i(\boldsymbol{x})$.

Let us now focus on the operator $L$ given in \eqref{eq: B0 formula}. Note that it can be written as $L=\frac12 (L_1 + \frac{\sigma^2}2 L_2)$ where $L_1$ captures the first summand in $L$ and $L_2$ captures the remaining two terms of $L$ that are multiplied by the weights $\sigma^2/2$. 

To prove \eqref{lemma:statementauxiliaryintegraliszero}, we use integration by parts and show that  
\begin{align*}
\int_{\bR^N} \frac{\sigma^2}2 
& 
L_2(0,\boldsymbol{x}) \mu^{N,*}(\dd \boldsymbol{x}) = -\int_{\bR^N} L_1(0,\boldsymbol{x}) \mu^{N,*}(\dd \boldsymbol{x}) \Leftrightarrow
\\
\int_{\bR^N}
    \frac{\sigma^2}{2}  
    &
    \bigg( \sum_{i,j=1}^{N} \partial_{x_j}B_i(\boldsymbol{x}) 
      \partial_{x_i, x_j}^2 u(t,\boldsymbol{x}) 
     +   \sum_{i,j=1}^{N} \partial^2_{x_j, x_j}B_i(\boldsymbol{x})  \partial_{x_i} u(t,\boldsymbol{x}) \bigg) 
    \exp \Big( - \frac2{\sigma^2} H(\boldsymbol{x}) \Big) 
     \dd \boldsymbol{x}
     \\
     & =-
     \int_{\bR^N}\bigg( \sum_{i,j=1}^{N} 
     B_j(\boldsymbol{x})
     \partial_{x_j}B_i(\boldsymbol{x})  \partial_{x_i} u(t,\boldsymbol{x}) 
    \bigg) 
    \exp \Big( - \frac2{\sigma^2} H(\boldsymbol{x}) \Big) 
     \dd \boldsymbol{x}. 
\end{align*}
Recognizing the sum terms (in the mid-line) as the product rule for derivatives, we have  using integration by parts and the identity $\partial_{x_j} H(\boldsymbol{x})=-B_j(\boldsymbol{x})$
\begin{align*}
& \int_{\bR^N} \frac{\sigma^2}2 
L_2(0,\boldsymbol{x}) \mu^{N,*}(\dd \boldsymbol{x})
\\
& 
= \sum_{i,j=1}^{N}
    \int_{\bR^N}
    \frac{\sigma^2}{2}
    \bigg(          
    \partial_{x_j} \Big(   \partial_{x_j} B_i(\boldsymbol{x})  \partial_{x_i} u(0,\boldsymbol{x}) 
    \Big)   
     \bigg) 
     \exp \Big( - \frac2{\sigma^2} H(\boldsymbol{x}) \Big)
     \dd \boldsymbol{x}
\\
&
= \sum_{i,j=1}^{N}      
     \int_{\bR^{N-1}}
     \Bigg(
     \frac{\sigma^2}{2}
     \bigg(  \partial_{x_j}B_i(\boldsymbol{x})  \partial_{x_i} u(t,\boldsymbol{x}) 
      \bigg) 
     \exp \Big( - \frac2{\sigma^2} H(\boldsymbol{x}) \Big)
     \Bigg)
     \bigg |_{x_j=-\infty}^{x_j=+\infty}  
     \dd \boldsymbol{x}^{-j} 
\\ 
& \qquad 
 -  \sum_{i,j=1}^{N}
     \int_{\bR^N}\Big( 
     \frac{\sigma^2}{2}
     \partial_{x_j}B_i(\boldsymbol{x})  \partial_{x_i} u(t,\boldsymbol{x}) 
    \Big) 
    \partial_{x_j} \bigg (\exp \Big( - \frac2{\sigma^2} H(\boldsymbol{x}) \Big)\bigg)
     \dd \boldsymbol{x}
\\
& =-\sum_{i,j=1}^{N}  
     \int_{\bR^N}\Big(  
     B_j(\boldsymbol{x})
     \partial_{x_j}B_i(\boldsymbol{x})  \partial_{x_i} u(t,\boldsymbol{x}) 
    \Big) 
    \exp \Big( - \frac2{\sigma^2} H(\boldsymbol{x}) \Big)
\dd \boldsymbol{x}
= -\int_{\bR^N} L_1(0,\boldsymbol{x}) \mu^{N,*}(\dd \boldsymbol{x}),
\end{align*}
where the integration over $\dd \boldsymbol{x}^{-j}$ means that we have integrated the variable $x_j$ out and $N-1$ integrations remain; the boundary term (over the $x_j$ variable) is zero due to the boundedness assumptions of the derivatives of $u$ and $B$ and that \eqref{eq:NparticleGibbsDistro} is a density that vanishes at infinity.
\end{proof}

\color{black}

\subsection{Omitted residual terms of Section \ref{sec:Remainder terms R}  }
\label{appendix_aux_RemainderSection6.2}

In this part, we show the  exact expectation form for the residual terms $R^2_{t_m}, R^3_{t_m}, R^5_{t_m}, R^7_{t_m}$ and $R^8_{t_m}$ in Lemma~\ref{lemma: analysis of the residual term}. The positive constant $K$ below is independent of $h,T,M$ and $N$ and may have a different value in each line. 
For the residual term $R^2_{t_m}$, we have 
\begin{align*}
     &\mathbb{E}  \big[          R^2_{t_m}        \big]=  K
     \bE \bigg[  
     \int_{t_m}^{t_m+h} 
    \int_{t_m}^{q_1}  \int_{t_m}^{q_2}
    \sum_{ \gamma \in \Pi_3^N }
     \Delta W_{m,2h}^{\gamma_3}
     \cdot 
     \partial^5_{x_{  \gamma_1},x_{  \gamma_2},x_{  \gamma_2},x_{\gamma_3  },x_{\gamma_3  }} u\Big(t_{m+1},\overline{\bodX}_{ q_3}^{N,h} ) \Big)
     \dd  W^{\gamma_1}_{q_3} \ \dd  {q_2}\    \dd  W^{\gamma_3}_{q_1} \bigg]
     \\
     &+K\bE \bigg[  
     \int_{t_m}^{t_m+h} 
    \int_{t_m}^{q_1}  \int_{t_m}^{q_2}
     \sum_{ \gamma \in \Pi_3^N } 
     \Delta W_{m,2h}^{\gamma_3}
     \cdot 
     \partial^6_{x_{  \gamma_1},x_{  \gamma_1},x_{  \gamma_2},x_{  \gamma_2},x_{\gamma_3  },x_{\gamma_3  }}  u\Big(t_{m+1},\overline{\bodX}_{ q_3}^{N,h} ) \Big)
     \dd  {q_3} \ \dd  {q_2}\  \dd  W^{\gamma_3}_{q_1} \bigg].
\end{align*}
For the residual term $R^3_{t_m}$, we have    
\begin{align*}
     \mathbb{E} & \big[          R^3_{t_m}        \big]
     \\
     =&
      K h
     \bE \bigg[  
     \int_{t_m}^{t_m+h} 
    \int_{t_m}^{q_1}  
\qquad 
    \sum_{ \gamma \in \Pi_3^N }
     \Delta W_{m,2h}^{\gamma_3}
     \cdot 
     \partial_{x_{  \gamma_1}}\bigg(
      \partial_{x_{  \gamma_3}} B_{\gamma_2 }   ( \overline{\bodX}_{ q_2}^{N,h} )
     \partial^2_{ x_{  \gamma_2},x_{\gamma_3  } } u(t_{m+1},\overline{\bodX}_{ q_2}^{N,h})
     \bigg) 
    \dd  W^{\gamma_1}_{q_2}  \    \dd  W^{\gamma_3}_{q_1} \bigg]
     \\
     &+
      K h
     \bE \bigg[  
     \int_{t_m}^{t_m+h} 
    \int_{t_m}^{q_1}   
    \qquad 
    \sum_{ \gamma \in \Pi_3^N }
     \Delta W_{m,2h}^{\gamma_3}
     \cdot 
     \partial^2_{x_{  \gamma_1},x_{  \gamma_1}}\bigg(
      \partial_{x_{  \gamma_3}} B_{\gamma_2 }   (  \overline{\bodX}_{ q_2}^{N,h}   )
     \partial^2_{ x_{  \gamma_2},x_{\gamma_3  } } u(t_{m+1}, \overline{\bodX}_{ q_2}^{N,h} )
     \bigg) 
     \dd   {q_2}  \    \dd  W^{\gamma_3}_{q_1} \bigg]
     \\
     &+ 
      K h
     \bE \bigg[  
     \int_{t_m}^{t_m+h} 
    \int_{t_m}^{q_1}  
    \qquad 
    \sum_{ \gamma \in \Pi_3^N }
     \Delta W_{m,2h}^{\gamma_3}
     \cdot 
     \partial_{x_{  \gamma_1}}
     \bigg(
       B_{\gamma_2 }   (  \overline{\bodX}_{ q_2}^{N,h}  )
     \partial^3_{ x_{  \gamma_2},x_{\gamma_3  } ,x_{\gamma_3  }} u(t_{m+1}, \overline{\bodX}_{ q_2}^{N,h} )
     \bigg) 
    \dd  W^{\gamma_1}_{q_2}  \    \dd  W^{\gamma_3}_{q_1} \bigg]
     \\
     &+
      K h
     \bE \bigg[  
     \int_{t_m}^{t_m+h} 
    \int_{t_m}^{q_1}   
     \qquad 
    \sum_{ \gamma \in \Pi_3^N }
     \Delta W_{m,2h}^{\gamma_3}
     \cdot 
     \partial^2_{x_{  \gamma_1},x_{  \gamma_1}}
     \bigg(
       B_{\gamma_2 }   ( \overline{\bodX}_{ q_2}^{N,h}  )
     \partial^3_{ x_{  \gamma_2},x_{\gamma_3  } ,x_{\gamma_3  }} u(t_{m+1}, \overline{\bodX}_{ q_2}^{N,h})
     \bigg)
     \dd   {q_2}  \    \dd  W^{\gamma_3}_{q_1} \bigg]
     \\
     &+ K
     \bE \bigg[ 
     \int_{t_m}^{t_m+h} 
    \int_{t_m}^{q_1}  \int_{t_m}^{q_2} 
     \qquad 
     \sum_{ \gamma \in \Pi_5^N } 
     \bigg( 
     \Delta W_{m,2h}^{\gamma_4}\Delta W_{m,2h}^{\gamma_5}
     \cdot  
     \partial^5_{x_{  \gamma_1},\dots,x_{\gamma_5  }} u (t_{m+1},\overline{\bodX}_{ q_3}^{N,h}  ) ~ 
     \dd  W^{\gamma_1}_{q_3} \ \dd  W^{\gamma_2}_{q_2}\    \dd  W^{\gamma_3}_{q_1}
     \\&
     \qquad \qquad\qquad
     + \Delta W_{m,2h}^{\gamma_4}\Delta W_{m,2h}^{\gamma_5}
     \cdot  
     \partial^6_{x_{  \gamma_1},x_{  \gamma_1},x_{  \gamma_2} ,x_{  \gamma_3} ,x_{  \gamma_4} ,x_{\gamma_5  }} u (t_{m+1},\overline{\bodX}_{ q_3}^{N,h}   ) ~ 
     \dd   {q_3} \ \dd  W^{\gamma_2}_{q_2}\    \dd  W^{\gamma_3}_{q_1}
     \bigg)~ 
     \bigg]
     \\
     & +  K
     \bE \bigg[ 
     \int_{t_m}^{t_m+h} 
    \int_{t_m}^{q_1} 
     \qquad 
     \sum_{ \gamma \in \Pi_4^N } 
     \bigg( 
     \Delta W_{m,2h}^{\gamma_3}\Delta W_{m,2h}^{\gamma_4}
     \cdot  
     \partial^5_{x_{  \gamma_1},x_{  \gamma_1},x_{  \gamma_2},x_{  \gamma_3},x_{\gamma_4  }} u (t_{m+1},\overline{\bodX}_{ q_2}^{N,h}  ) 
     \dd  {q_2} \    \dd  W^{\gamma_2}_{q_1}
     \\&
     \qquad\qquad\qquad
     +\Delta W_{m,2h}^{\gamma_3}\Delta W_{m,2h}^{\gamma_4}
     \cdot  
     \partial^5_{x_{  \gamma_1},x_{  \gamma_2},x_{  \gamma_2},x_{  \gamma_3},x_{\gamma_4  }} u (t_{m+1},\overline{\bodX}_{ q_2}^{N,h}   ) 
     \dd  W^{\gamma_1}_{q_2} \    \dd  {q_1}
     \\&
     \qquad \qquad\qquad
     + \Delta W_{m,2h}^{\gamma_3}\Delta W_{m,2h}^{\gamma_4}
     \cdot  
     \partial^6_{x_{  \gamma_1},x_{  \gamma_1},x_{  \gamma_2},x_{  \gamma_2},x_{  \gamma_3},x_{\gamma_4  }} u (t_{m+1},\overline{\bodX}_{ q_2}^{N,h}  ) 
     \dd   {q_2}  
     \    \dd   {q_1}
     \bigg)~ 
     \bigg].
\end{align*}
For the residual term $R^5_{t_m}$, we have 
\begin{align*}
     \mathbb{E}  \big[ &        R^5_{t_m}        \big]
      = \sum_{ \gamma \in \Pi_5^N  } 
     \bE \bigg[\int_{t_m}^{t_m+h} 
     \partial_{x_{  \gamma_1}}\bigg(\Delta W_{m,2h}^{\gamma_2} \Delta W_{m,2h}^{\gamma_3}\Delta W_{m,2h}^{\gamma_4}\Delta W_{m,2h}^{\gamma_5}\cdot
   {  \partial^{4}_{ x_{  \gamma_2},x_{  \gamma_3}, x_{  \gamma_4}   , x_{  \gamma_5}  }  }
     u(t_{m+1},\overline{\bodX}_{q_1}^{N,h})    \bigg) 
     \dd  W^{\gamma_1}_{q_1} \bigg]
     \\
     \nonumber
     & \quad + \sum_{ \gamma \in \Pi_4^N  } 
     \bE \bigg[\int_{t_m}^{t_m+h} 
     \partial^2_{x_{  \gamma_1},x_{  \gamma_1}}\bigg(\Delta W_{m,2h}^{\gamma_2} \Delta W_{m,2h}^{\gamma_3}\Delta W_{m,2h}^{\gamma_4}\Delta W_{m,2h}^{\gamma_5}
     \cdot
     \partial^{4}_{ x_{  \gamma_2},x_{  \gamma_3}, x_{  \gamma_4} , x_{  \gamma_5}    }  
     u(t_{m+1},\overline{\bodX}_{q_1}^{N,h})    \bigg) 
     \dd  {q_1} \bigg]
      \\
     & \quad +
     K h
     \sum_{ \gamma \in \Pi_4^N  }
     \bE\Big[  B_{\gamma_1}( {\bodX}_{t_m}^{N,h})  
    \Delta W_{m,2h}^{\gamma_2}
     \Delta W_{m,2h}^{\gamma_3}
    \Delta W_{m,2h}^{\gamma_4}
     \cdot
     \partial^{4}_{ x_{  \gamma_1},x_{  \gamma_2}, x_{  \gamma_3}    , x_{  \gamma_4}   }  
     u(t_{m+1},\bodX_{t_m}^{N,h})  \Big]
     \\
     & \quad +
     K h^2
     \sum_{ \gamma \in \Pi_4^N  }
     \bE\Big[  B_{\gamma_1}( {\bodX}_{t_m}^{N,h})  
     B_{\gamma_2}( {\bodX}_{t_m}^{N,h})
     \Delta W_{m,2h}^{\gamma_3}
    \Delta W_{m,2h}^{\gamma_4}
     \cdot
     \partial^{4}_{ x_{  \gamma_1},x_{  \gamma_2}, x_{  \gamma_3}    , x_{  \gamma_4}   }  
     u(t_{m+1},\bodX_{t_m}^{N,h})  \Big]
     \\
     & \quad +
     K h^3
     \sum_{ \gamma \in \Pi_4^N  }
     \bE\Big[  B_{\gamma_1}( {\bodX}_{t_m}^{N,h})  
     B_{\gamma_2}( {\bodX}_{t_m}^{N,h})
     B_{\gamma_3}( {\bodX}_{t_m}^{N,h})
    \Delta W_{m,2h}^{\gamma_4}
     \cdot
     \partial^{4}_{ x_{  \gamma_1},x_{  \gamma_2}, x_{  \gamma_3}    , x_{  \gamma_4}   }  
     u(t_{m+1},\bodX_{t_m}^{N,h})  \Big]
     \\
     & \quad +
     K h^4
     \sum_{ \gamma \in \Pi_4^N  }
     \bE\Big[  B_{\gamma_1}( {\bodX}_{t_m}^{N,h})  
     B_{\gamma_2}( {\bodX}_{t_m}^{N,h})
     B_{\gamma_3}( {\bodX}_{t_m}^{N,h})
     B_{\gamma_4}( {\bodX}_{t_m}^{N,h})
     \cdot
     \partial^{4}_{ x_{  \gamma_1},x_{  \gamma_2}, x_{  \gamma_3}    , x_{  \gamma_4}   }  
     u(t_{m+1},\bodX_{t_m}^{N,h})  \Big].
\end{align*}
For the residual term $R^7_{t_m}$, we have
\begin{align*}
     \mathbb{E}  \big[          R^7_{t_m}        \big]
     =&  K h
     \bE \bigg[  
     \int_{t_m}^{t_m+h} 
    \int_{t_m}^{q_1}  \int_{t_m}^{q_2}
    \sum_{ \gamma \in \Pi_5^N }
     \partial^5_{x_{  \gamma_1},x_{  \gamma_2},x_{  \gamma_3},x_{  \gamma_4},x_{\gamma_5  }} u (t_{m+1},\overline{\bodX}_{ q_3}^{N,h}    )
     \dd  W^{\gamma_1}_{q_3} \ \dd  W^{\gamma_2}_{q_2}\    \dd  W^{\gamma_3}_{q_1} \bigg]
     \\
       &+  K h
     \bE \bigg[  
     \int_{t_m}^{t_m+h} 
    \int_{t_m}^{q_1}  \int_{t_m}^{q_2}
    \sum_{ \gamma \in \Pi_5^N }
     \partial^6_{x_{  \gamma_1},x_{  \gamma_1},x_{  \gamma_2},x_{  \gamma_3},x_{  \gamma_4}, ,x_{\gamma_5  }} u (t_{m+1},\overline{\bodX}_{ q_3}^{N,h}    )
     \dd   {q_3} \ \dd  W^{\gamma_2}_{q_2}\    \dd  W^{\gamma_3}_{q_1} \bigg]
     \\
       &+  K h
     \bE \bigg[  
     \int_{t_m}^{t_m+h} 
    \int_{t_m}^{q_1}   
    \sum_{ \gamma \in \Pi_4^N }
     \partial^5_{x_{  \gamma_1},x_{  \gamma_1},x_{  \gamma_2},x_{  \gamma_3},x_{\gamma_4  }} u (t_{m+1},\overline{\bodX}_{ q_2}^{N,h}    )
      \dd   {q_2}\    \dd  W^{\gamma_2}_{q_1} \bigg]
     \\
     &+  K h
     \bE \bigg[  
     \int_{t_m}^{t_m+h} 
    \int_{t_m}^{q_1}   
    \sum_{ \gamma \in \Pi_4^N }
     \partial^5_{x_{  \gamma_1},x_{  \gamma_2},x_{  \gamma_2},x_{  \gamma_3},x_{\gamma_4  }} u (t_{m+1},\overline{\bodX}_{ q_2}^{N,h}    )
       \dd  W^{\gamma_1}_{q_2} \ \dd   {q_1}     \bigg]
       \\
     &+  K h
     \bE \bigg[  
     \int_{t_m}^{t_m+h} 
    \int_{t_m}^{q_1}   
    \sum_{ \gamma \in \Pi_4^N }
     \partial^6_{x_{  \gamma_1},x_{  \gamma_1},x_{  \gamma_2},x_{  \gamma_2},x_{  \gamma_3},x_{\gamma_4  }} u (t_{m+1},\overline{\bodX}_{ q_2}^{N,h}    )
       \dd   {q_2} \ \dd   {q_1}     \bigg].
\end{align*}
For the residual term $R^8_{t_m}$, we have 
\begin{align*}
     \mathbb{E}  \big[          R^8_{t_m}        \big]
     &=  K h^2 
     \bE \bigg[  
     \int_{t_m}^{t_m+h} \sum_{ \gamma \in \Pi_3^N  } \partial_{x_{  \gamma_1}} \bigg(
     B_{\gamma_2}( 
\overline{\bodX}_{q_1}^{N,h}
  )
     \partial^{3}_{ x_{  \gamma_2},x_{ \gamma_3}, x_{  \gamma_3}     }  
     u(t_{m+1}, 
\overline{\bodX}_{q_1}^{N,h}
   )  \bigg)   ~ \dd   W^{\gamma_1}_{q_1} \bigg]
     \\
     & \quad +
      K h^2 
     \bE \bigg[  
     \int_{t_m}^{t_m+h} \sum_{ \gamma \in \Pi_3^N  } \partial^2_{x_{  \gamma_1},x_{  \gamma_1}} \bigg(
     B_{\gamma_2}( 
\overline{\bodX}_{q_1}^{N,h}
  )
     \partial^{3}_{ x_{  \gamma_2},x_{ \gamma_3}, x_{  \gamma_3}     }  
     u(t_{m+1}, 
\overline{\bodX}_{q_1}^{N,h}
   )  \bigg)   ~ \dd   {q_1} \bigg]
    \\
    & \quad + K h^2 
     \bE \bigg[  
     \int_{t_m}^{t_m+h} \sum_{ \gamma \in \Pi_3^N  } \partial_{x_{  \gamma_1}} \bigg(
     \partial_{x_{  \gamma_3}}  B_{\gamma_2}( 
\overline{\bodX}_{q_1}^{N,h}
  )
     \partial^{2}_{ x_{  \gamma_2},x_{ \gamma_3}     }  
     u(t_{m+1}, 
\overline{\bodX}_{q_1}^{N,h}
   )  \bigg)   ~ \dd   W^{\gamma_1}_{q_1} \bigg]
     \\
     & \quad +
      K h^2 
     \bE \bigg[  
     \int_{t_m}^{t_m+h} \sum_{ \gamma \in \Pi_3^N  } \partial^2_{x_{  \gamma_1},x_{  \gamma_1}}\bigg(
     \partial_{x_{  \gamma_3}}  B_{\gamma_2}( 
\overline{\bodX}_{q_1}^{N,h}
  )
     \partial^{2}_{ x_{  \gamma_2},x_{ \gamma_3}     }  
     u(t_{m+1}, 
\overline{\bodX}_{q_1}^{N,h}
   )  \bigg)   ~ \dd   {q_1} \bigg]
    \\
     & \quad + K h^2 
     \bE \bigg[  
     \int_{t_m}^{t_m+h} \sum_{ \gamma \in \Pi_3^N  }  
     \partial^{5}_{ x_{  \gamma_1},x_{  \gamma_2},x_{  \gamma_2},x_{ \gamma_3}, x_{  \gamma_3}     }  
     u(t_{m+1}, 
\overline{\bodX}_{q_1}^{N,h}
   )    ~ \dd   W^{\gamma_1}_{q_1} \bigg]
     \\
     & \quad +
      K h^2 
     \bE \bigg[  
     \int_{t_m}^{t_m+h} \sum_{ \gamma \in \Pi_3^N  }   
     \partial^{6}_{ x_{  \gamma_1},x_{  \gamma_1},x_{  \gamma_2},x_{  \gamma_2},x_{ \gamma_3}, x_{  \gamma_3}     }  
     u(t_{m+1}, 
\overline{\bodX}_{q_1}^{N,h}
   )    ~  \dd   {q_1} \bigg].
\end{align*}

\bibliographystyle{amsplain}



\providecommand{\bysame}{\leavevmode\hbox to3em{\hrulefill}\thinspace}
\providecommand{\MR}{\relax\ifhmode\unskip\space\fi MR }
\providecommand{\MRhref}[2]{%
  \href{http://www.ams.org/mathscinet-getitem?mr=#1}{#2}
}
\providecommand{\href}[2]{#2}

\end{document}